\documentclass[12pt]{article}
\usepackage{amsmath, amsfonts, tikz, geometry, ytableau, mathrsfs,amssymb}
\usepackage[indent,margin=1cm]{caption}
\usepackage{fullpage}
\usepackage[colorlinks,linkcolor=blue,anchorcolor=blue,citecolor=blue]{hyperref}
\usepackage{enumitem}
\usepackage{amsthm}

\numberwithin{equation}{section}
\numberwithin{figure}{section}

\hypersetup{colorlinks = true}
\newtheorem{thm}{Theorem}[section]
\newtheorem{conj}[thm]{Conjecture}

\newtheorem{defi}[thm]{Definition}

\newtheorem{cor}[thm]{Corollary}
\newtheorem{lem}[thm]{Lemma}
\newtheorem{prop}[thm]{Proposition}

\newtheorem{rem}[thm]{Remark}

\allowdisplaybreaks

\allowdisplaybreaks

\begin{document}
\begin{center}
	{\large \bf Unimodality of independence polynomials of two family of trees}
\end{center}

\begin{center}
Grace M.X. Li\\[6pt]
\end{center}

\begin{center}
School of Mathematics and Data Science, \\
Shaanxi University of Science and
Technology, Xi'an, Shaanxi 710021, P. R. China\\[6pt]

Email: {\tt grace\_li@sust.edu.cn}
\end{center}

\noindent\textbf{Abstract.}

In 1987, Alavi, Malde, Schwenk and Erd\H{o}s conjectured that the independence polynomials of trees are unimodal. Subsequently, many researchers proposed strengthening this conjecture to log-concavity.
In 2023, Kadrawi, Levit, Yosef, and Mizrachi  discovered independence polynomials of trees of order 26 that are not log-concave, which led them to construct two infinite families of such polynomials, denoted by $T_{3,m,n}$ and $T_{3,m,n}^*$. In this paper, we show that these two infinite families also satisfy the unimodal conjecture raised by Alavi, Malde, Schwenk, and Erd\H{o}s.

\noindent \emph{AMS Mathematics Subject Classification 2020:} 05C69, 05E05, 05C05, 05C15

\noindent \emph{Keywords:} independence polynomial; unimodality; chromatic symmetric function; Schur-positivity

\section{Introduction}\label{intro}

Let $G$ be a simple graph with finite vertex set $V(G)$ and edge set $E(G)$. An \textit{independent set} $I$ (or \textit{stable set}) is a subset of $V(G)$ in which no two vertices are adjacent.
The size of a largest independent set is the \textit{independence number}, denoted $\alpha(G)$. Let $i_k$ denote the number of independent sets in $G$ of size $k$, with $i_0 = 1$ by convention. The generating function:
\[
I_G(t) = \sum_{k=0}^{\alpha(G)} i_k t^k,
\]
is called the \textit{independence polynomial} of $G$ \cite{GH83}.

A sequence of real numbers $a_0, a_1, \ldots, a_n$ is said to be \textit{unimodal} if there exists an index $j$ ($0 \leq j \leq n$) such that:
\[
a_0 \leq a_1 \leq \cdots \leq a_{j-1} \leq a_j \geq a_{j+1} \geq \cdots \geq a_n;
\]
it is \textit{log-concave} if $a_j^2 \ge a_{j-1}a_{j+1}$ for all $j$ with $1 \le j \le n-1$.
It is known that for a positive sequence, log-concavity implies unimodality.
A polynomial is called unimodal (respectively, log-concave) if its coefficient sequence is unimodal (respectively, log-concave).

In 1987, Alavi, Malde, Schwenk and Erd\H{o}s \cite{AMSE87} conjectured that the independence polynomial of every tree or forest is unimodal.

\begin{conj}[{\cite[Problem 3]{AMSE87}}]\label{conj-Erdos}
For every forest $F$, the independence polynomial $I_F(t)$ is unimodal.
\end{conj}

This celebrated conjecture has received considerable attention. For more recent work
on this conjecture, see \cite{BES18,BG21,Ben18,BH25,
HLMP24,LTZ25,WZ11,Zhu18} and references therein.

In particular, Brown, Dilcher and Nowakowski \cite{BDN00} conjectured that every well-covered graph (i.e., one in which all maximal independent sets have the same cardinality) has a unimodal independence polynomial.
Although Michael and Traves \cite{MT03} later provided counterexamples to this unimodality conjecture, it remains open for the subclass of very well-covered graphs.
Levit and Mandrescu \cite{LM04} showed that for any integer $\alpha$, there exists a very well-covered tree $T$ with $\alpha(T)=\alpha$ whose independence polynomial is log-concave. Based on this, they conjectured that every forest has a log-concave independence polynomial.
This conjecture was later verified by Yosef, Mizrachi and Kadrawi \cite{YMK21} up to 20 vertices, and by Radcliffe (see \cite{BGHW22}) up to 25 vertices.
Using a dynamic programming algorithm, Kadrawi, Levit, Yosef and Mizrachi \cite{KLYM23} checked all trees up to 26 vertices and found two trees of order 26 with non-log-concave independence polynomials, specifically $T_{3,4,4}$ and $T^*_{3,3,4}$.
For convenience, we define two families of trees $T_{3,m,n}$ and $T^*_{3,m,n}$, where $m, n$ are nonnegative integers. Both $T_{3,m,n}$ and $T^*_{3,m,n}$ have a root vertex $v_0$ with three children $v_1, v_2, v_3$. In $T_{3,m,n}$, $v_1$ has three children $v_{11}, v_{12},  v_{13}$, $v_2$ has $m$ children $v_{21}, v_{22}, \ldots, v_{2m}$, and $v_3$ has $n$ children $v_{31}, v_{32}, \ldots, v_{3n}$, and then each $v_{ij}$ has a child $v'_{ij}$. See Figure \ref{fig-tmnk} for an example.
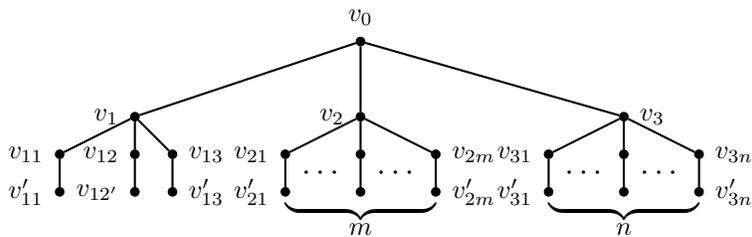
\begin{figure}[h]
  \centering
\begin{tikzpicture}
[thick, every label/.style={font=\footnotesize}, place/.style={thick,fill=black!100,circle,inner sep=0pt,minimum size=1mm,draw=black!100}]
\node [place,label=above:{$v_0$}] (v4) at (0,2) {};
\node [place,label=left:{$v_2$}] (v5) at (0,1) {};
\node [place,label=left:{$v_{21}$}] (v6) at (-1,0.5) {};
\node [place,label=left:{$v_{21}'$}] (v7) at (-1,0) {};
\node[place] (v14) at (0,0.5) {};
\node[place] (v15) at (0,0) {};
\node[place,label=right:{$\footnotesize{v_{2m}}$}] (v16) at (1,0.5) {};
\node[place,label=right:{$v_{2m}'$}] (v17) at (1,0) {};
\node[place,label=right:{$v_{13}$}] (v10) at (-2.5,0.5) {};
\node[place,label=right:{$v_{13}'$}] (v11) at (-2.5,0) {};
\node [place,label=left:{$v_{12}$}] (v8) at (-3,0.5) {};
\node [place,label=left:{$v_{12'}$}] (v9) at (-3,0) {};
\node[place,label=left:{$v_{11}$}] (v2) at (-4,0.5) {};
\node[place,label=left:{$v_{11}'$}] (v1) at (-4,0) {};
\node[place,label=left:{$v_{1}$}] (v3) at (-3,1) {};
\node[place,label=left:{$v_{31}$}] (v19) at (2.5,0.5) {};
\node[place,label=left:{$v_{31}'$}] (v20) at (2.5,0) {};
\node[place] (v21) at (3.5,0.5) {};
\node[place] (v22) at (3.5,0) {};
\node[place,label=right:{$v_{3n}$}] (v13) at (4.5,0.5) {};
\node[place,label=right:{$v_{3n}'$}] (v12) at (4.5,0) {};
\node[place,label=right:{$v_{3}$}] (v18) at (3.5,1) {};
\node at (-0.5,0.25) {$\cdots$};
\node at (0.5,0.25) {$\cdots$};
\node at (3,0.25) {$\cdots$};
\node at (4,0.25) {$\cdots$};
\draw (v1) -- (v2) -- (v3) -- (v4) -- (v18) -- (v13) -- (v12);
\draw (v9) -- (v8) -- (v3) -- (v10) -- (v11);
\draw (v7) -- (v6) -- (v5) -- (v4);
\draw (v15) -- (v14) -- (v5) -- (1,0.5) -- (v17);
\draw (v20) -- (v19) -- (v18) -- (v21) -- (v22);
\node[rotate = 0] at (0,-0.25) {$\underbrace{\hspace{2cm}}$};
\node at (0,-0.5) {\footnotesize $m$};
\node[rotate = 0] at (3.5,-0.25) {$\underbrace{\hspace{2cm}}$};
\node at (3.5,-0.5) {\footnotesize $n$};
\end{tikzpicture}
\caption{The graph $T_{3,m,n}$.}\label{fig-tmnk}
\end{figure}
If we replace the edge $v_{13}v'_{13}$ with a path $P_4$, which we lable as $v_{13}, v'_{13}, x, y$, then we get the tree $T^*_{3,m,n}$. See Figure \ref{fig-tmnk-1} for an illustration.
\begin{figure}[h]
  \centering
\begin{tikzpicture}
[thick, every label/.style={font=\footnotesize}, place/.style={thick,fill=black!100,circle,inner sep=0pt,minimum size=1mm,draw=black!100}]
\node [place,label=above:{$v_0$}] (v4) at (0,2) {};
\node [place,label=left:{$v_2$}] (v5) at (0,1) {};
\node [place,label=left:{$v_{21}$}] (v6) at (-1,0.5) {};
\node [place,label=left:{$v_{21}'$}] (v7) at (-1,0) {};
\node[place] (v14) at (0,0.5) {};
\node[place] (v15) at (0,0) {};
\node[place,label=right:{$v_{2m}$}] (v16) at (1,0.5) {};
\node[place,label=right:{$v_{2m}'$}] (v17) at (1,0) {};
\node[place,label=right:{$v_{13}$}] (v10) at (-2.5,0.5) {};
\node[place,label=right:{$v_{13}'$}] (v11) at (-2.5,0) {};
\node [place,label=left:{$v_{21}$}] (v8) at (-3,0.5) {};
\node [place,label=left:{$v_{21'}$}] (v9) at (-3,0) {};
\node[place,label=left:{$v_{11}$}] (v2) at (-4,0.5) {};
\node[place,label=left:{$v_{11}'$}] (v1) at (-4,0) {};
\node[place,label=left:{$v_{1}$}] (v3) at (-3,1) {};
\node[place,label=left:{$v_{31}$}] (v19) at (2.5,0.5) {};
\node[place,label=left:{$v_{31}'$}] (v20) at (2.5,0) {};
\node[place] (v21) at (3.5,0.5) {};
\node[place] (v22) at (3.5,0) {};
\node[place,label=right:{$v_{3n}$}] (v13) at (4.5,0.5) {};
\node[place,label=right:{$v_{3n}'$}] (v12) at (4.5,0) {};
\node[place,label=right:{$v_{3}$}] (v18) at (3.5,1) {};
\node at (-0.5,0.25) {$\cdots$};
\node at (0.5,0.25) {$\cdots$};
\node at (3,0.25) {$\cdots$};
\node at (4,0.25) {$\cdots$};
\draw (v1) -- (v2) -- (v3) -- (v4) -- (v18) -- (v13) -- (v12);
\draw (v9) -- (v8) -- (v3) -- (v10) -- (v11);
\draw (v7) -- (v6) -- (v5) -- (v4);
\draw (v15) -- (v14) -- (v5) -- (1,0.5) -- (v17);
\draw (v20) -- (v19) -- (v18) -- (v21) -- (v22);
\node[rotate = 0] at (0,-0.25) {$\underbrace{\hspace{2cm}}$};
\node at (0,-0.5) {\footnotesize $m$};
\node[rotate = 0] at (3.5,-0.25) {$\underbrace{\hspace{2cm}}$};
\node at (3.5,-0.5) {\footnotesize $n$};
\node [place,label=left:{$x$}] (v23) at (-2.5,-0.5) {};
\node [place,label=left:{$y$}] (v24) at (-2.5,-1) {};
\draw (v11) -- (v23) -- (v24);
\end{tikzpicture}
\caption{The graph $T^{\ast}_{3,m,n}$.}\label{fig-tmnk-1}
\end{figure}

Kadrawi, Levit, Yosef and Mizrachi \cite{KLYM23}  showed the following result.
\begin{thm}\cite{KLYM23}
For any $k\ge 3$, both $T_{3,k+1,k+1}$ and $T_{3,k,k+1}^*$ have non-log-concave independence polynomials.
\end{thm}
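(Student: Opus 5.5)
The plan is to write down $I_{T_{3,m,n}}(t)$ and $I_{T^*_{3,m,n}}(t)$ in closed form and show that log-concavity fails at the index $j=\alpha-1$, where $\alpha$ is the independence number. That is, we aim to prove $i_{\alpha-1}^2<i_{\alpha}\,i_{\alpha-2}$.

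\textbf{Closed forms.} We condition on whether $v_0$ lies in the independent set. A pendant edge $v_{ij}v'_{ij}$ contributes $1+2t$ when its attachment vertex $v_i$ is free, and $1+t$ when $v_i$ is chosen. So the subtree at $v_i$ with $r$ pendant edges has polynomial $A_r=(1+2t)^r+t(1+t)^r$, and it contributes $(1+2t)^r$ when $v_i$ is forbidden. This gives
\[
I_{T_{3,m,n}}(t)=A_3A_mA_n+t(1+2t)^{3+m+n}.
\]
For $T^*_{3,m,n}$ the same argument applies, with the $v_1$-branch replaced. The hanging $P_4$ has $I_{P_4}=1+4t+3t^2$, and if $v_1$ is chosen the remaining $P_3$ has $I_{P_3}=1+3t+t^2$. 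Hence $A_3$ becomes
\[
A^*_3=(1+2t)^2(1+4t+3t^2)+t(1+t)^2(1+3t+t^2),
\]
and the last term becomes $t(1+2t)^{2+m+n}(1+4t+3t^2)$.

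\textbf{Top coefficients for $T_{3,m,n}$.} Put $N=3+m+n$. The top degree is $\alpha=N+3$, attained only by $t^3(1+t)^N$ (choose $v_1,v_2,v_3$ and all primed leaves), so $i_\alpha=1$. In each factor $A_r$, replacing $t(1+t)^r$ by $(1+2t)^r$ lowers the degree by one and has leading coefficient $2^r$. Hence
\[
i_{\alpha-1}=N+2^3+2^m+2^n .
\]
The term $t(1+2t)^N$ has degree $N+1=\alpha-2$, so it contributes $2^N$ to $i_{\alpha-2}$. All other contributions to $i_{\alpha-2}$ are nonnegative, so $i_{\alpha-2}\ge 2^N$.

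\textbf{The inequality for $T_{3,k+1,k+1}$.} Here $N=2k+5$, and it suffices to show
\[
(2k+13+2^{k+2})^2<2^{2k+5}.
\]
The left side is $2^{2k+4}+2^{k+3}(2k+13)+(2k+13)^2$. So the inequality reduces to
\[
2^{k+3}(2k+13)+(2k+13)^2<2^{2k+4}.
\]
Dividing by $2^{k+3}$, this becomes roughly $2k+13+o(1)<2^{k+1}$. That holds for $k\ge 4$ but not comfortably at $k=3$: at $k=3$ the left side is $19\cdot 64+361=1577$ and the right side is $1024$. So the crude bound $i_{\alpha-2}\ge 2^N$ is insufficient for small $k$. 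I will therefore compute $i_{\alpha-2}$ exactly. It also contains $\binom N2$ from $t^3(1+t)^N$, the terms where two of the three factors $A_r$ drop a degree, and the second-highest coefficients of single drops. This gives an explicit expression, roughly
\[
2^N+\binom N2+\sum_{\text{pairs}}2^{r}2^{s}+\sum_r 2^r\bigl(\text{linear in } N\bigr).
\]
The pair sum alone includes $2^{m+n}=2^{2k+2}$, which raises $i_{\alpha-2}$ to about $2^{2k+5}+2^{2k+2}+\dots$. With these terms the inequality becomes a comparison of $2^{2k}$-scale terms with margin. That margin should absorb the $2^{k}\cdot k$ cross terms for all $k\ge 3$, with $k=3$ checked numerically against the known $T_{3,4,4}$ data.

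\textbf{The tree $T^*_{3,k,k+1}$.} We repeat the computation. Now $\alpha=N+3$ with $N=2k+5$ again (the $P_4$ adds one pendant vertex pair of extra freedom). The leading coefficients of $A^*_3$ have to be recomputed. The extra term $t(1+2t)^{2k+3}(1+4t+3t^2)$ has degree $2k+6=\alpha-2$ and leading coefficient $3\cdot 2^{2k+3}$, which again dominates.

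\textbf{Main obstacle.} The hardest step is the exact bookkeeping of $i_{\alpha-2}$, and of $i_{\alpha-1}$ for $T^*$, so that the inequality holds uniformly down to $k=3$ rather than only asymptotically. I would organize it by expanding each factor as a polynomial in $s=1/t$ near the top degree, keeping only the first three coefficients.
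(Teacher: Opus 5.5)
The paper gives no proof of this statement; it is quoted from Kadrawi--Levit--Yosef--Mizrachi, so there is no internal proof to compare against. Your strategy is sound and your setup is correct. Conditioning on $v_0$ gives $I_{T_{3,m,n}}=A_3A_mA_n+t(1+2t)^{3+m+n}$ and $I_{T^*_{3,m,n}}=A_3^*A_mA_n+t(1+2t)^{2+m+n}(1+4t+3t^2)$. The failure is indeed at $j=\alpha-1$, with $i_\alpha=1$ and $i_{\alpha-1}=N+8+2^m+2^n$. Your crude bound $i_{\alpha-2}\ge 2^N$ does settle $T_{3,k+1,k+1}$ for $k\ge 4$.

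The gap is that the proposal stops where the statement is tight, namely at $k=3$ for both families.

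\textbf{$T_{3,k+1,k+1}$ at $k=3$.} Your heuristic that the $4^k$-scale margin absorbs the cross terms is false here. Even keeping the pair term, $2^{11}+2^{8}=2304<51^2=2601$, so the $2^k$-scale part of $i_{\alpha-2}$ is also needed. Carrying out your bookkeeping for $m=n=k+1$ gives
\[
i_{\alpha-2}=36\cdot 4^k+(6k+50)2^k+2k^2+25k+38,\qquad i_{\alpha-2}-i_{\alpha-1}^2=20\cdot 4^k-(10k+54)2^k-(2k^2+27k+131).
\]
This equals $378$ at $k=3$ (top coefficients $2979,51,1$) and is clearly positive for larger $k$.

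\textbf{$T^*_{3,k,k+1}$.} You never compute $A_3^*$, and without it $i^*_{\alpha-1}$ is unknown. In fact $A_3^*=t^5+17t^4+36t^3+28t^2+9t+1$, so $i^*_{\alpha-1}=3\cdot 2^k+2k+18$. Your claim that $3\cdot 2^{2k+3}$ ``again dominates'' is enough only for $k\ge 4$; at $k=3$ it gives $1536<48^2=2304$. The exact count is $i^*_{\alpha-2}=26\cdot 4^k+(\tfrac92 k+53)2^k+2k^2+35k+53$, whence
\[
i^*_{\alpha-2}-(i^*_{\alpha-1})^2=17\cdot 4^k-\bigl(\tfrac{15}{2}k+55\bigr)2^k-(2k^2+37k+271).
\]
This is only $68$ at $k=3$ (top coefficients $2372,48,1$), and positive for all $k\ge 3$.

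With these values filled in, your argument is a complete proof. Without them, the base cases $T_{3,4,4}$ and $T^*_{3,3,4}$ (the original order-26 counterexamples) are asserted rather than shown.
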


Kadrawi and Levit \cite{KL25} found more type of $T_{3,m,n}$ and $T^*_{3,m,n}$ which have non-log-concave independence polynomials. To be specific, they showed the following result.
\begin{thm}[\cite{KL25}]
For any $k\ge 4$,  $T_{3,k,k+1}$, $T_{3,k,k+2}$,  $T_{3,k-1,k+1}^*$, $T_{3,k,k+3}^*$ and $T_{3,k,k}^*$ all have non-log-concave independence polynomials.
\end{thm}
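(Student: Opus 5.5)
The plan is to compute the independence polynomials of $T_{3,m,n}$ and $T^*_{3,m,n}$ in closed form, and then to show that log-concavity fails at one of the last few coefficients, near $\alpha$.

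\textbf{Closed forms.} For a vertex $v$ with $s$ children, each carrying a pendant vertex, conditioning on whether $v$ lies in the independent set gives
\[
A_s(t)=(1+2t)^s+t(1+t)^s .
\]
Conditioning at the root $v_0$ in the same way:
\begin{itemize}
\item if $v_0$ is excluded, the three branches are independent;
\item if $v_0$ is included, deleting $v_0,v_1,v_2,v_3$ leaves $m+n+3$ disjoint copies of $P_2$.
\end{itemize}
Hence
\[
I_{T_{3,m,n}}(t)=A_3(t)A_m(t)A_n(t)+t(1+2t)^{m+n+3}.
\]
For $T^*_{3,m,n}$ only the first branch changes. Using $I_{P_4}=1+4t+3t^2$ and $I_{P_3}=1+3t+t^2$, the factor $A_3$ is replaced by
\[
A_3^*(t)=(1+2t)^2(1+4t+3t^2)+t(1+t)^2(1+3t+t^2),
\]
and the extra term becomes $t(1+2t)^{m+n+2}(1+3t+t^2)$.

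\textbf{Reversal.} The independence number is $\alpha=m+n+4$ for $T_{3,m,n}$ and $\alpha=m+n+5$ for $T^*_{3,m,n}$; it is attained by all the leaves together with $v_0$. Since the known counterexamples fail log-concavity near the top, I would pass to the reversed polynomial $u^{\alpha}I(1/u)$ and read off the last three or four coefficients $i_\alpha, i_{\alpha-1}, i_{\alpha-2},\dots$ from its low-order coefficients.

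In the reversal, each factor $(1+2t)^s$ becomes $(2+u)^s$ and each $t(1+t)^s$ becomes $(1+u)^s$, up to a power of $u$. So every top coefficient is an explicit expression of the form
\[
\text{(polynomial in } m,n)\cdot 2^{m+n}+\text{(terms } 2^m,\ 2^n,\ 1).
\]
I would carry out this expansion symbolically for a general pair $(m,n)$, and only then specialize to the five families $(k,k+1)$ and $(k,k+2)$ for $T$, and $(k-1,k+1)$, $(k,k+3)$, $(k,k)$ for $T^*$.

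\textbf{Finding the failure.} For each family I would locate the index $j$ at which $i_j^2<i_{j-1}i_{j+1}$. I expect $j=\alpha-1$, possibly $\alpha-2$; the small cases $k=4,5$, computed by hand or by the formula, would confirm which. For that $j$ I would form
\[
D(k)=i_{j-1}i_{j+1}-i_j^2 .
\]
Its leading part is a quadratic or cubic polynomial in $k$ times $4^{k}$. The delicate point is that the $2^{2k}$-order contributions to $i_{j-1}i_{j+1}$ and to $i_j^2$ nearly cancel. The mixed terms $2^k\cdot 1$ coming from the $t(1+t)^s$ pieces are what tip the sign, since $A_m$ contributes to the top coefficients only through $(1+u)^m$-type terms of comparable size.

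\textbf{Main obstacle.} The hardest step is this cancellation. I would handle it by writing $D(k)=4^k P(k)+2^k Q(k)+R(k)$ with explicit polynomials $P,Q,R$. The goal is to show that $P$ has the right sign and dominates for all $k\ge 4$, using crude bounds such as $2^k\ge k^3$ for $k\ge 10$, and to check the remaining finitely many values of $k$ directly. If $P$ vanishes identically, I would instead show that $Q$ is positive and that it dominates $R$.
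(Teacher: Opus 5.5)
The paper gives no proof of this statement; it only quotes it from Kadrawi and Levit. So the plan has to stand on its own, and as written it does not. The skeleton is sound: condition at $v_0$, then show $i_{\alpha-1}^2<i_{\alpha-2}i_\alpha$. Your formulas for $A_s$, for $I_{T_{3,m,n}}$ and for $A_3^*$ are correct. The break really is at $j=\alpha-1$, consistent with Lemma~\ref{lem-29}, which shows that for $T_{3,m,n}$ the only possible failure is at $k=m+n+5$. Two concrete errors, however, would derail the execution.

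\textbf{Independence numbers.} In fact $\alpha(T_{3,m,n})=m+n+6$ and $\alpha(T^*_{3,m,n})=m+n+7$. The unique maximum independent set is all leaves together with $v_1,v_2,v_3$ (for $T^*$, also add $v'_{13}$), not the leaves together with $v_0$. Your own formula already has degree $m+n+6$, since $\deg A_s=s+1$. So the coefficients you planned to read off from $u^{\alpha}I(1/u)$ are the wrong ones.

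\textbf{The $v_0$-term for $T^*$.} Putting $v_0$ in the set deletes only $v_1,v_2,v_3$. Since $v_{13}$ is not adjacent to $v_0$, the whole path $v_{13}v'_{13}xy$ survives, and the term is $t(1+2t)^{m+n+2}(1+4t+3t^2)$, not $t(1+2t)^{m+n+2}(1+3t+t^2)$. This changes the answer at leading order. The correct top coefficients of $T^*_{3,m,n}$ are $i_\alpha=1$, $i_{\alpha-1}=2^m+2^n+m+n+17$ and $i_{\alpha-2}=13\cdot 2^{m+n}+O((m+n)2^{\max(m,n)})$. With your term the $2^{m+n}$-coefficient of $i_{\alpha-2}$ would be $5$ instead of $13$. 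Then $i_{\alpha-1}^2<i_{\alpha-2}$ would hold asymptotically only when $3\cdot 2^{m+n}>4^m+4^n$, i.e.\ $|m-n|\le 1$. So for $T^*_{3,k-1,k+1}$ and $T^*_{3,k,k+3}$ your computation would give the reverse inequality for large $k$, and those two cases could not be proved.

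\textbf{The main obstacle is misdiagnosed.} There is no near-cancellation at order $4^k$. For $T_{3,m,n}$ one gets $i_\alpha=1$, $i_{\alpha-1}=2^m+2^n+m+n+11$ and $i_{\alpha-2}=9\cdot 2^{m+n}+O((m+n)2^{\max(m,n)})$. Hence $i_{\alpha-2}i_\alpha-i_{\alpha-1}^2=7\cdot 2^{m+n}-4^m-4^n+O((m+n)2^{\max(m,n)})$. For $T^*$ the main term is $11\cdot 2^{m+n}-4^m-4^n$.

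The coefficient of $4^k$ is therefore a nonzero constant, not a polynomial in $k$. It is positive exactly when $|m-n|\le 2$ (resp.\ $|m-n|\le 3$), which is precisely why these families appear. The exponential terms come from the $(1+2t)^s$ factors and from the $v_0$-term, not from the $t(1+t)^s$ pieces.

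What forces $k\ge 4$ is the negative $O(k2^k)$ remainder. For example, $T_{3,3,4}$ gives $42^2=1764>1653$ (no break), while $T_{3,4,5}$ gives $68^2=4624<5606$ (break). A crude bound plus a finite check disposes of it.
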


Further Ramos and Sun \cite{RS25} used AI to find more counterexamples, and Galvin \cite{Gal25} and Bautista-Ramos \cite{Bau25} studied the specific positions where the coefficients of independence polynomials of trees may fail to be log-concave (referred to as breaks).

The main result of this paper is to give the unimodality of $T_{3,m,n}$ and $T^*_{3,m,n}$, thereby providing further evidence supporting  of Conjecture \ref{conj-Erdos}. To be specific, we prove the following theorems.

\begin{thm}\label{thm-uni-t3mn}
For any $m,n\ge 1$, the independence polynomial of $T_{3,m,n}$ is unimodal.
\end{thm}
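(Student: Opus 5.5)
Compute $I_{T_{3,m,n}}(t)$ explicitly by deleting the root $v_0$. If $v_0$ is not in the independent set, the forest splits into three spiders rooted at $v_1,v_2,v_3$. A spider $S_k$ (center with $k$ pendant paths $P_2$) has
\[
I_{S_k}(t)=(1+2t)^k+t(1+t)^k ,
\]
since either the center is excluded (each $P_2$ contributes $1+2t$) or included (each leg contributes $1+t$). If $v_0$ is included, then $v_1,v_2,v_3$ are excluded and $3+m+n$ disjoint edges remain, contributing $t(1+2t)^{3+m+n}$. Hence
\[
I_{T_{3,m,n}}(t)=\prod_{k\in\{3,m,n\}}\bigl((1+2t)^k+t(1+t)^k\bigr)+t(1+2t)^{m+n+3}.
\]
Expanding the product, $I$ is a sum of terms of the form $t^{a}(1+2t)^{p}(1+t)^{q}$ with $a\le 4$.

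The plan is to use the standard tools for unimodality of sums. First, products of real-rooted polynomials are real-rooted, hence log-concave with no internal zeros, so each summand $t^a(1+2t)^p(1+t)^q$ is log-concave and unimodal. Second, the sum of unimodal sequences is unimodal whenever their modes are sufficiently aligned. More precisely, if $A$ is unimodal with mode interval $[r,s]$, $B$ has mode interval $[r',s']$, and these intervals intersect, then $A+B$ is unimodal. Otherwise I need a more careful comparison of the tails.

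I would group the eight terms so that each group is real-rooted. For example, $(1+2t)^{3+m+n}(1+t)+t(1+2t)^{3+m+n}=(1+2t)^{3+m+n}(1+2t)$ absorbs the extra summand nicely. Mixed terms like $t(1+t)^3(1+2t)^{m+n}$ also pair with neighbors through common factors. The goal is to reduce $I$ to a short sum of polynomials $P_i$ that are real-rooted, with nonnegative coefficients and the same degree $d=\alpha(T)=m+n+4$ or close to it. Then I would locate each mode using Darroch's theorem: the mode of a real-rooted polynomial lies within $1$ of $P'(1)/P(1)$. For $t^a(1+2t)^p(1+t)^q$ this mean is $a+\tfrac{2p}{3}+\tfrac{q}{2}$, which is explicit. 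Since $p+q$ is roughly $m+n+3$ in every term and $q\le m+n+3$, the means range from about $\tfrac{1}{2}(m+n)$ to about $\tfrac{2}{3}(m+n)$. So alignment is not automatic, and the terms with large $q$ must be handled separately.

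The main obstacle is this mode misalignment. The dominant term $(1+2t)^{m+n+4}$ has mean near $\tfrac{2}{3}(m+n)$, while $t^3(1+t)^{m+n+3}$ peaks near $\tfrac{1}{2}(m+n)$. My approach is to show that the large-$q$ terms are negligible in the relevant range. Let $j_0$ be the mode of the dominant part $D(t)$, which consists of all terms with mode $\ge$ some threshold. I would verify coefficientwise that $D$ is increasing up to $j_0$, and that the remaining part $R$ is dominated: $[t^{j}]R\le [t^{j+1}]D-[t^{j}]D$ for $j<j_0$, and similarly on the decreasing side. Here $[t^j]D$ grows geometrically with ratio about $2(d-j)/j$, while the $R$ coefficients carry factor $1$ instead of $2^{j}$, so $R$ is exponentially smaller away from small $j$. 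I expect the small cases (small $m,n$, or $j$ near $0$) to need a direct check or a computer verification for $m+n$ below some bound. Symmetry in $m,n$ can be used throughout to assume $m\le n$.
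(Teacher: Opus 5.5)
Your formula
\[
I_{T_{3,m,n}}(t)=\prod_{k\in\{3,m,n\}}\bigl((1+2t)^k+t(1+t)^k\bigr)+t(1+2t)^{m+n+3}
\]
is correct, and it settles the top end at once: $i_{m+n+6}=1$ and $i_{m+n+5}=m+n+11+2^m+2^n$. Beyond that, the proposal is a heuristic, and its bookkeeping is off. The degree is $\alpha(T_{3,m,n})=m+n+6$ (from $t^3(1+t)^{m+n+3}$), not $m+n+4$. The product terms have $a\le 3$. The term with no $(1+t)$ factor is $(1+2t)^{m+n+3}$, which merges with $t(1+2t)^{m+n+3}$ into $(1+t)(1+2t)^{m+n+3}$, not $(1+2t)^{m+n+4}$. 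You also cannot assume a regrouping into real-rooted blocks: $(1+2t)^3+t(1+t)^3=1+7t+15t^2+11t^3+t^4$ already has two non-real zeros. What you actually have is a handful of real-rooted summands $t^a(1+t)^q(1+2t)^p$ whose means are spread out. For example, $(1+t)(1+2t)^{m+n+3}$ has mean $\frac{2(m+n)}{3}+\frac52$, while $t(1+t)^m(1+2t)^{n+3}$ has mean $\frac{2(m+n)}{3}-\frac m6+3$.

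The genuine gap is that the step carrying the whole theorem, unimodality of this sum of misaligned pieces, is only announced (``I would verify'', ``I expect''). The criterion you propose for it also fails exactly where it matters.
\begin{itemize}
\item Requiring $[t^j]R\le[t^{j+1}]D-[t^j]D$ for all $j<j_0$ asks the increments of $D$ to dominate a positive quantity right up to the peak. There those increments have relative size only $O(1/(m+n))$, and can be much smaller.
\item $R$ is not small at the peak unless $\min(m,n)$ is large. The pieces $t(1+t)^m(1+2t)^{n+3}$ and $t^2(1+t)^{m+3}(1+2t)^n$ carry mass a factor of order $(2/3)^m$ relative to the main term. So for bounded $m$ and $n\to\infty$ they are a fixed fraction of $D$ near its peak, with means shifted by a constant.
\item If you move those pieces into $D$ instead, proving $D$ unimodal is the original problem again.
\item A computer check ``for $m+n$ below some bound'' does not finish the proof either. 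The regime with $\min(m,n)$ bounded and $\max(m,n)\to\infty$ is infinite. It needs effective estimates uniform in the large parameter, and near the peak a genuinely local argument (e.g.\ showing that the ratios $i_{j+1}/i_j$ cross $1$ only once). None of this is supplied.
\end{itemize}
For comparison, the paper avoids this analysis entirely. It proves $i_k^2\ge i_{k-1}i_{k+1}$ for $1\le k\le m+n+4$ by showing $[s_{(k,k)}]Y_{T_{3,m,n}}\ge 0$ via sign-reversing injections on the maps $\alpha$. It then handles only the final comparison $i_{m+n+5}\ge i_{m+n+6}$ separately.
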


\begin{thm}\label{thm-uni-t3mn-star}
For any $m,n\ge 1$, the independence polynomial of $T_{3,m,n}^*$ is unimodal.
\end{thm}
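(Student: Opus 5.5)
The plan is to compute $I_{T^*_{3,m,n}}(t)$ in closed form, write it as a sum of products of real-rooted polynomials, and check unimodality by locating the modes of the summands and comparing their size.

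\medskip
\noindent\textbf{Closed form.} Condition on whether the root $v_0$ lies in the independent set, and, inside each branch, on whether $v_1,v_2,v_3$ do.
\begin{itemize}
\item For $v_2$ with its $m$ pendant edges, the branch contributes $(1+2t)^m$ when $v_2$ is excluded and $t(1+t)^m$ when $v_2$ is included.
\item For $v_1$ in $T^*_{3,m,n}$, the pendant $P_4$ hanging at $v_{13}$ has polynomial $1+4t+3t^2=(1+t)(1+3t)$. Deleting $v_{13}$ leaves $P_3$, with polynomial $1+3t+t^2$.
\item Hence the $v_1$-branch contributes $E(t)=(1+2t)^2(1+t)(1+3t)$ when $v_1$ is excluded and $F(t)=t(1+t)^2(1+3t+t^2)$ when $v_1$ is included.
\end{itemize}
This gives
\begin{align*}
I_{T^*_{3,m,n}}(t) &= \bigl(E+F\bigr)\bigl((1+2t)^m+t(1+t)^m\bigr)\bigl((1+2t)^n+t(1+t)^n\bigr) + tE\,(1+2t)^{m+n}.
\end{align*}
Put $N=m+n$. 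Expanding yields a short list of summands of the form $c(t)\,t^j(1+t)^a(1+2t)^b$, where $c(t)$ is a fixed low-degree polynomial. The dominant summand is
\[
D(t)=(1+2t)^{N}\bigl(E+F+tE\bigr),
\]
and the others carry a factor $(1+t)^m$ or $(1+t)^n$ in place of a power of $(1+2t)$. Every factor here is real-rooted except possibly $c(t)$. So each summand is log-concave (possibly after checking $c$ directly), and by Newton's inequalities its mode is explicit up to $O(1)$: a factor $(1+2t)^b$ has mode about $2b/3$, and a factor $(1+t)^a$ has mode about $a/2$.

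\medskip
\noindent\textbf{Increasing and decreasing ranges.} Using these mode estimates, every summand is nondecreasing for $k\le N/2-C$ and nonincreasing for $k\ge 2N/3+C$, for an absolute constant $C$. It follows that the coefficient sequence $i_k$ increases on the first range and decreases on the second. Symmetry in $m,n$ lets us assume $m\le n$.

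\medskip
\noindent\textbf{The window (main obstacle).} What remains is the window $N/2-C\le k\le 2N/3+C$, where the $(1+t)$-heavy summands may already be decreasing while $D$ is still increasing, or conversely. This is where I expect the real difficulty. There are two tools:
\begin{itemize}
\item The successive ratios of $D$'s coefficients are explicit, roughly $\frac{2(N-k)}{k+1}$, so for $k\le 2N/3-C$ the increment $d_{k+1}-d_k$ is at least a constant fraction of $d_k$.
\item A summand such as $t(1+t)^m(1+2t)^n$ has $k$-th coefficient at most $\sum_j\binom{m}{j}\binom{n}{k-1-j}2^{k-1-j}$. Compared with $\binom{N}{k}2^k$, this loses a factor exponentially small in $m$ in the window.
\end{itemize}
I would bound each negative increment of a secondary summand by a small multiple of $d_{k+1}-d_k$, and symmetrically on the decreasing side. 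The key quantitative step is an inequality of the form $\binom{m}{j}\le \binom{m}{j}2^{j}\cdot 2^{-j}$, combined with the observation that the relevant $j$ are around $m/2$, where $2^{-j}$ beats any polynomial losses.

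\medskip
\noindent\textbf{Small cases and the companion theorem.} When $m$ is small, the exponential saving is unavailable. Then one writes $(1+t)^m$ as a bounded polynomial, absorbs it into $c(t)$, and reruns the mode argument with $b=n$ only; finitely many residual $(m,n)$ are verified by direct computation. The same scheme, with $E=(1+2t)^3$ and $F=t(1+t)^3$, gives Theorem \ref{thm-uni-t3mn}.
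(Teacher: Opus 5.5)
Your closed form is correct: $I_{T^*_{3,m,n}}=(E+F)B_mB_n+tE(1+2t)^N$ with $B_m=(1+2t)^m+t(1+t)^m$. The easy part is also correct. The summands $D=(1+2t)^N((1+t)E+F)$, $S_2=(E+F)t(1+t)^m(1+2t)^n$, $S_3=(E+F)t(1+t)^n(1+2t)^m$ and $S_4=(E+F)t^2(1+t)^N$ are real-rooted. So the sum is nondecreasing for $k\le N/2-C$ and nonincreasing for $k\ge 2N/3+C$, which even covers the tail where log-concavity genuinely fails.

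\textbf{The gap is the window.} It carries the entire difficulty, and the mechanism you propose there does not work. First, $d_{k+1}-d_k$ is not a constant fraction of $d_k$ for $k\le 2N/3-C$. At $k=2N/3-x$ the ratio $2(N-k)/(k+1)$ is $1+\Theta(x/N)$. At the mode of $D$ the increment changes sign, so on the last increasing steps it can be arbitrarily small relative to $d_k$. Meanwhile $S_3$ and $S_4$ (modes near $n/2+2m/3$ and $N/2$), and $S_2$ once $m$ is moderately large, are strictly decreasing there. So you cannot bound each negative increment of a secondary summand by a small multiple of $d_{k+1}-d_k$ at the peak; first differences alone do not determine the shape of the sum there. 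Your displayed ``key inequality'' $\binom{m}{j}\le\binom{m}{j}2^j\cdot 2^{-j}$ is an identity and carries no information.

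What would work near the peak is a second-order argument. Take a Newton gap $d_k^2-d_{k-1}d_{k+1}\gtrsim d_k^2/N$ for $D$, together with a perturbation bound $|s_k|=o(d_k/N)$ for the other summands; this transfers log-concavity to the sum. That bound holds for $S_3$ and $S_4$, which are exponentially small in $N$ relative to $D$ since $n\ge N/2$. For $S_2$, whose relative size near the peak is of order $(2/3)^m$, it holds only when $m\gg\log N$.

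\textbf{The small-$m$ step does not close the case.} Your ``small $m$'' regime is really $m\lesssim\log N$, not $m$ bounded. There $(1+t)^m$ is not a bounded polynomial, no finite computation closes the case, and rerunning the mode argument only reproduces monotone ranges away from the peak. What you actually need is a log-concavity statement near the mode, with a quantitative gap and uniform in $m$, for $D+S_2=(1+2t)^nQ_m(t)$, where $Q_m=(1+2t)^m((1+t)E+F)+t(1+t)^m(E+F)$. Real-rootedness will not supply it: for large $m$ the zeros of $Q_m$ accumulate on the circle $|1+2t|=|1+t|$.

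This missing log-concavity is exactly what the paper establishes, by an entirely different route. It never computes $I_{T^*_{3,m,n}}$. Instead it shows $[s_{(k,k)}]Y_{T^*_{3,m,n}}\ge 0$ for $k\le m+n+4$ through the injections $\Psi_1,\Psi_2,\Psi_3$, built on the $29$ injections for $T_{3,m,n}$. By Corollary~\ref{lem-skk} this gives log-concavity of $i^*_0,\dots,i^*_{m+n+5}$. The paper then appends the Levit--Mandrescu tail monotonicity (Theorem~\ref{thmLM07-2}) for the last coefficients.
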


The  proofs of Theorem \ref{thm-uni-t3mn} and Theorem \ref{thm-uni-t3mn-star}  rely on the theory of chromatic symmetric functions. Recently, Li, Yang, Zhang and the author \cite{LLYZ25} established a connection between the log-concavity of $I_G(t)$ and the  2-$s$-positivity of $Y_G$ (see Section \ref{sec2} for detailed definitions.) Using this connection, they showed that the independence polynomial of any spider graph is unimodal.

This paper is organized as follows. In Section \ref{sec2}, we recall some necessary results related to the chromatic symmetric functions. Section \ref{sec3} introduces  notations and properties that will be frequently used through out this paper. The proof of Theorem \ref{thm-uni-t3mn} will be presented in Section \ref{sec4}, and the proof of Theorem \ref{thm-uni-t3mn-star} is given in Section \ref{sec5}.

\section{Preliminary}\label{sec2}

In this section, we first recall some basic definitions of symmetric functions and then review the essential concepts related to chromatic symmetric functions.

In this section, we first recall some basic definitions of symmetric functions and then review the essential concepts related to chromatic symmetric functions. The \textit{algebra of symmetric functions} $\Lambda_{\mathbb{Q}}(\mathbf{x})$ is defined to be the subalgebra of $\mathbb{Q}[[\mathbf{x}]]$ consisting of formal power series $f(\mathbf{x})$ of bounded degree and satisfying
\[
f(\mathbf{x})=f(x_1,x_2,\ldots) = f(x_{\omega(1)},x_{\omega(2)},\ldots)
\]
for every permutation $\omega$ of positive integers.

The bases of $\Lambda_{\mathbb{Q}}(\mathbf{x})$ are naturally indexed by (integer) partitions. A \textit{partition} of $n$ is a sequence $\lambda = (\lambda_1,\ldots,\lambda_\ell)$ such that
\[
\lambda_1 \ge \lambda_2 \ge \cdots \ge \lambda_\ell > 0 \quad \mbox{and} \quad \lambda_1 + \lambda_2 + \cdots + \lambda_\ell = n.
\]
The number $\ell=\ell(\lambda)$ is called the \textit{length} of $\lambda$. By identifying $\lambda = (\lambda_1,\ldots,\lambda_\ell)$ with the infinite sequence $(\lambda_1,\ldots,\lambda_\ell,0,0,\ldots)$,
the \textit{Schur function} $s_{\lambda}(\mathbf{x})$ can be defined by the dual Jacobi-Trudi identity
\[
s_{\lambda'}(\mathbf{x}) = \det (e_{\lambda_i-i+j}(\mathbf{x}))_{1 \le i,j \le \ell},
\]
where $e_{\lambda}(\mathbf{x})$ denotes the {elementary symmetric function}, and $\lambda'$ is the conjugate partition of $\lambda$, see \cite{StaEC2} for more details. It is well known that $\{s_{\lambda}(\mathbf{x})\}$ is a base of $\Lambda_{\mathbb{Q}}(\mathbf{x})$. A symmetric function $f(\mathbf{x})$ is said to be  \textit{Schur-positive} or simply \textit{$s$-positive}, if it can be expressed as a nonnegative linear combination of Schur functions.  As usual,  we use $[s_{\lambda}]f(\mathbf{x})$ to denote the coefficient of $s_{\lambda}(\mathbf{x})$ in the expansion of $f(\mathbf{x})$ in terms of $\{s_{\lambda}(\mathbf{x})\}$.

We next introduce the notions of multicolorings and clan graphs. Let $G$ be a graph with vertex set $V(G)$. Given a map $\alpha: V(G) \to \mathbb{N}$,
a \textit{multicoloring of type $\alpha$} is a map $\kappa: V(G) \to 2^{\mathbb{N}_+}$ such that $|\kappa(v)| = \alpha(v)$, where $2^{\mathbb{N}_+}$ denotes the set of all finite subsets of positive integers. A multicoloring is called \textit{proper} if $\kappa(u) \cap \kappa(v) = \emptyset$ for all $uv \in E(G)$. Stanley \cite{Sta98} defined
\[
X^{\alpha}_G = \sum x_1^{a_1}x_2^{a_2} \cdots,
\]
where the sum ranges over all proper multicolorings $\kappa$ of type $\alpha$ and $a_i$ is the number of vertices $v$ such that $i \in \kappa(v)$. In the special case where $\alpha(v) = 1$ for all $v \in V(G)$, $X^\alpha_G$ reduces to $X_G$, the chromatic symmetric function of $G$ introduced by Stanley in    \cite{Sta95}.

Given $\alpha$ as above, the clan graph $G^\alpha$ is obtained from $G$ by replacing each vertex $v$ with a complete graph $K_{\alpha(v)}$, while preserving adjacency in the sense that every vertex in $K_{\alpha(u)}$ is adjacent to every vertex in $K_{\alpha(v)}$ whenever $uv \in E(G)$; see Figure \ref{fig-Galpha} for an example. Stanley \cite{Sta98} noted that
\begin{equation}\label{eq-xg-alpha}
X_{G^{\alpha}} = X_G^{\alpha} \prod_{v \in V(G)}\alpha(v)!,
\end{equation}
and we shall refer to $X_G^\alpha$ as the normalized chromatic symmetric function of $X_{G^\alpha}$.

 Let $P(t)$ be a polynomial with real coefficients satisfying $P(0) = 1$, say
$$P(t) =a_0 + a_1t + \cdots + a_dt^d.$$
Stanley \cite{Sta98} defined an inhomogeneous symmetric function
$$F_P(\mathbf{x}) = \prod_{i\geq 1} P(x_i)$$ and established a relation between real-rootedness of $P(t)$ and positivity of $F_P(\mathbf{x})$.

\begin{thm}[{\cite[Theorem 2.11]{Sta98}}]\label{thm-spos-rr}
Let $P(t)$ and $F_P(\mathbf{x})$ be defined as above. Then the following conditions are equivalent:
\begin{itemize}
  \item The coefficient of $s_{\lambda}(\mathbf{x})$ in $F_P(\mathbf{x})$ is nonnegative for each integer partition $\lambda$.
  \item The coefficient of $e_{\lambda}(\mathbf{x})$ in $F_P(\mathbf{x})$ is nonnegative for each integer partition $\lambda$.
  \item All zeros of $P(t)$ are negative real numbers.
\end{itemize}
\end{thm}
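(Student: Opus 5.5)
The plan is to prove the cycle of implications (third) $\Rightarrow$ (second) $\Rightarrow$ (first) $\Rightarrow$ (third). The unifying device is the dual Cauchy identity combined with a specialization that sends the elementary symmetric functions $e_k(\mathbf{y})$ in an auxiliary alphabet $\mathbf{y}$ to the coefficients $a_k$ of $P$.

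For (third) $\Rightarrow$ (second), suppose all zeros of $P$ are negative reals. Since $P(0)=1$, we can write $P(t)=\prod_{j=1}^{d}(1+r_jt)$ with $r_j>0$. Then
\[
F_P(\mathbf{x})=\prod_{i\ge1}\prod_{j=1}^{d}(1+r_jx_i)=\prod_{j=1}^{d}\Big(\sum_{k\ge0}e_k(\mathbf{x})\,r_j^k\Big).
\]
Expanding this product expresses $F_P$ as a combination of the $e_\lambda(\mathbf{x})$ whose coefficients are polynomials in the $r_j$ with nonnegative coefficients, hence nonnegative.

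For (second) $\Rightarrow$ (first), it suffices that each $e_\lambda$ is Schur-positive. This holds because $e_\lambda=\sum_\mu K_{\mu'\lambda}s_\mu$ with Kostka numbers $K_{\mu'\lambda}\ge 0$; equivalently, iterate the Pieri rule.

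For (first) $\Rightarrow$ (third), first compute the Schur coefficients explicitly. Start from the dual Cauchy identity
\[
\prod_{i,j}(1+x_iy_j)=\sum_\lambda s_\lambda(\mathbf{x})\,s_{\lambda'}(\mathbf{y}).
\]
Its left side equals $\prod_i\big(\sum_k e_k(\mathbf{y})x_i^k\big)$. Apply the ring homomorphism $\varphi$ on $\Lambda(\mathbf{y})$ defined by $\varphi(e_k)=a_k$, with $a_k=0$ for $k>d$. This turns the left side into $F_P(\mathbf{x})$. The dual Jacobi–Trudi identity then gives
\[
[s_\lambda]F_P=\varphi(s_{\lambda'})=\det\big(a_{\lambda_i-i+j}\big)_{1\le i,j\le \ell(\lambda)}.
\]
So condition (first) says that every such Toeplitz minor of the sequence $(a_k)$ is nonnegative. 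Next I would upgrade this to nonnegativity of all minors of the infinite Toeplitz matrix $(a_{j-i})$. Any such minor equals $\varphi(s_{\lambda/\mu})$ for some skew shape $\lambda/\mu$, by the skew Jacobi–Trudi identity. The Littlewood–Richardson rule writes $s_{\lambda/\mu}$ as a nonnegative combination of Schur functions, so $\varphi(s_{\lambda/\mu})\ge0$. Hence $(a_k)$ is a Pólya frequency sequence.

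The final step, and the main obstacle, is the Aissen–Schoenberg–Whitney–Edrei theorem. It says that a finite sequence with $a_0=1$ is a Pólya frequency sequence if and only if $\sum a_kt^k$ has only real nonpositive zeros. Because $a_0=1$, zero is not a root, so all zeros are negative. The nontrivial direction of this theorem is analytic, relying on Edrei's factorization of totally positive generating functions. I would not reprove it here but cite it. If a self-contained argument were required, I would try induction on $d$: use total positivity to show that $P$ has a negative real zero $-1/r$, and that dividing out the factor $1+rt$ preserves the PF property.
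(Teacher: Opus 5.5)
Your proof is correct. The paper does not prove this statement; it quotes it from Stanley. So the comparison is with the standard argument and with the one related computation in the paper, the proof of Lemma~\ref{lem-skk-1}.

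That proof factors $P(t)=\prod_j(1+\theta_j t)$ over $\mathbb{C}$ and reads off $[s_\lambda]F_P=s_{\lambda'}(\theta)$ from the Cauchy identity. Since $e_k(\theta)=a_k$, your homomorphism $e_k\mapsto a_k$ is literally the same specialization, written without the (possibly complex) roots. It yields the same determinant $\det(a_{\lambda_i-i+j})$.

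For the hard implication your route is the standard one: Schur-positivity of the specialization gives nonnegativity of all minors of $(a_{j-i})$, and then the Aissen--Schoenberg--Whitney--Edrei theorem finishes. The skew Jacobi--Trudi plus Littlewood--Richardson step is exactly the right bridge. The Schur coefficients by themselves only give the Toeplitz minors with consecutive columns, whereas the theorem as usually stated requires all minors. Your step shows that for Toeplitz matrices the former already force the latter.

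Two small precisions. First, a minor with rows $r_1<\dots<r_\ell$ and columns $c_1<\dots<c_\ell$ becomes $\det(a_{\lambda_i-\mu_j-i+j})=\varphi(s_{\lambda'/\mu'})$ after transposing and reversing both orders, which introduces no sign. This holds when $\mu\subseteq\lambda$. Otherwise a zero block forces the minor to vanish, so ``for some skew shape'' should allow the zero case (or adopt the convention $s_{\lambda/\mu}=0$).

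Second, your closing induction sketch is not an argument. Producing a negative real zero from total positivity is the entire content of the cited theorem. Since you cite that theorem for the hard direction, though, nothing is actually missing.
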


Recently, Li, Yang, Zhang and the author \cite{LLYZ25} obtained an analogue of Theorem \ref{thm-spos-rr}, which we now recall.
\begin{thm}[\cite{LLYZ25}]\label{thm-2s-lc}
Let $P(t)$ and $F_P(\mathbf{x})$ be defined as above with $a_0,\ldots,a_d$ being positive. Then the following conditions are equivalent:
\begin{itemize}
  \item[\textup{(i)}] The coefficient of $s_{\lambda}$ in $F_P(\mathbf{x})$ is nonnegative for any partition $\lambda$ of length at most~2.
  \item[\textup{(ii)}] The coefficient of $s_{(k,k)}$ in $F_P(\mathbf{x})$ is nonnegative for any $k \ge 1$.
  \item[\textup{(iii)}] $P(t)$ is log-concave.
  \item[\textup{(iv)}] $P(t)$ is strongly log-concave.
\end{itemize}
\end{thm}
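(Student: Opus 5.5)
The plan is to compute the coefficient $[s_\lambda]F_P(\mathbf{x})$ explicitly as a Toeplitz minor of the sequence $(a_k)$. Once that formula is available, all four conditions turn into elementary inequalities among the $a_k$.

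\textbf{Step 1: the minor formula.} Start from the dual Cauchy identity in an auxiliary alphabet $\mathbf{r}=(r_1,r_2,\ldots)$:
\[
\prod_{i,j}(1+x_ir_j)=\prod_i \sum_{k\ge 0} e_k(\mathbf{r})x_i^k=\sum_\lambda s_\lambda(\mathbf{x})\,s_{\lambda'}(\mathbf{r}).
\]
Let $\varphi:\Lambda(\mathbf{r})\to\mathbb{R}$ be the ring homomorphism determined by $e_k(\mathbf{r})\mapsto a_k$, with $a_k=0$ for $k>d$. This is legitimate because the $e_k$ are algebraically independent. Apply $\varphi$ coefficientwise in $\mathbf{x}$. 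The left side becomes $\prod_i P(x_i)=F_P(\mathbf{x})$. On the right, use the dual Jacobi--Trudi identity recalled in Section~\ref{sec2}, $s_{\lambda'}=\det(e_{\lambda_i-i+j})$. This gives
\[
[s_\lambda]F_P(\mathbf{x})=\det\bigl(a_{\lambda_i-i+j}\bigr)_{1\le i,j\le \ell(\lambda)},
\]
with the convention $a_k=0$ for $k<0$. The point of this route is that it needs no real-rootedness of $P$: the identity is purely formal.

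\textbf{Step 2: specialize to length at most $2$.} For $\lambda=(p)$ the coefficient is $a_p\ge 0$. For $\lambda=(p,q)$ with $p\ge q\ge 1$ it is
\[
a_pa_q-a_{p+1}a_{q-1}.
\]
Hence (i) says exactly that $a_pa_q\ge a_{p+1}a_{q-1}$ for all $p\ge q\ge 1$; since the $a_i$ are positive, this is the strong log-concavity condition (iv). Taking $p=q=k$ shows that (ii) says exactly $a_k^2\ge a_{k+1}a_{k-1}$ for all $k\ge1$, which is (iii). (For $k\ge d$ the inequality is trivial because $a_{k+1}=0$.)

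\textbf{Step 3: close the cycle.} The implications (i)$\Rightarrow$(ii) and (iv)$\Rightarrow$(iii) are immediate from Step~2. The only nontrivial step is (iii)$\Rightarrow$(iv), and I expect it to be the main obstacle, mild as it is; it is where positivity of all the $a_0,\ldots,a_d$ is used. Since there are no internal zeros, log-concavity gives
\[
\frac{a_{k+1}}{a_k}\le\frac{a_k}{a_{k-1}}\quad\text{for }1\le k\le d-1.
\]
So the ratios $a_{k+1}/a_k$ are weakly decreasing for $0\le k\le d-1$, and in addition $a_{d+1}/a_d=0$.

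Now take $p\ge q\ge1$. If $p+1>d$, then $a_{p+1}=0$ and the inequality $a_pa_q\ge a_{p+1}a_{q-1}$ holds trivially. Otherwise $q-1<p\le d-1$, and monotonicity of the ratios gives
\[
\frac{a_{p+1}}{a_p}\le\frac{a_q}{a_{q-1}}.
\]
Cross-multiplying by the positive numbers $a_p$ and $a_{q-1}$ yields $a_pa_q\ge a_{p+1}a_{q-1}$. This proves (iv), and hence (i) by Step~2, so all four conditions are equivalent.
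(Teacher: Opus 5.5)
Your proposal is correct and follows essentially the paper's route, visible in its proof of Lemma~\ref{lem-skk-1}: the Cauchy identity plus dual Jacobi--Trudi give $[s_\lambda]F_P(\mathbf{x})=\det\bigl(a_{\lambda_i-i+j}\bigr)$. The paper specializes at the complex numbers $\theta_j$ with $P(t)=\prod_j(1+\theta_j t)$, while you use the formal homomorphism $e_k\mapsto a_k$; this is the same thing, since $e_k(\theta)=a_k$. Your ratio argument for (iii)$\Rightarrow$(iv) correctly supplies the elementary step that the paper leaves to the cited source, reading (iv) as $a_pa_q\ge a_{p+1}a_{q-1}$ for $p\ge q\ge 1$, which is the reading that makes (i)$\Leftrightarrow$(iv) hold.
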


The following result is, in fact, already implicit in the proof of Theorem \ref{thm-2s-lc},
for the sake of clarity, we state it explicitly here.

\begin{lem}\label{lem-skk-1}
Let $P(t)$ and $F_P(\mathbf{x})$ be defined as above with $a_0,\ldots,a_d$ being positive.
We have
\begin{equation}\label{equ-skk}
[s_{(k,k)}]F_P(\mathbf{x})=a_k^2 - a_{k-1}a_{k+1}.
\end{equation}
\end{lem}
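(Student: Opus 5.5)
We need to prove $[s_{(k,k)}]F_P = a_k^2 - a_{k-1}a_{k+1}$, where $F_P(\mathbf{x})=\prod_i P(x_i)$. The plan is to expand $F_P$ in the elementary basis and then use the Jacobi--Trudi identity.

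First, factor formally $P(t)=\prod_{j}(1+r_jt)$ over some auxiliary variables $\mathbf{r}=(r_1,\dots,r_d)$. More precisely, we treat $a_k$ as $e_k(\mathbf{r})$ in an independent set of variables. Then
\[
F_P(\mathbf{x})=\prod_{i,j}(1+x_ir_j).
\]
By the dual Cauchy identity,
\[
\prod_{i,j}(1+x_ir_j)=\sum_\lambda s_\lambda(\mathbf{x})\,s_{\lambda'}(\mathbf{r}).
\]
Hence $[s_\lambda]F_P=s_{\lambda'}(\mathbf{r})$, which is then specialized to $e_k(\mathbf{r})=a_k$.

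This specialization must be justified for arbitrary positive $a_k$, not only for real-rooted $P$. The expression $s_{\lambda'}(\mathbf{r})$ is a polynomial in the $e_k(\mathbf{r})$, given by the dual Jacobi--Trudi determinant already recalled in Section~\ref{sec2}. So the identity $[s_\lambda]F_P=\det(a_{\lambda_i-i+j})$ is an identity in the free polynomial ring $\mathbb{Q}[a_1,a_2,\dots]$, where $a_j=0$ for $j>d$ and $a_0=1$. It therefore holds for any specialization of the coefficients. To see this cleanly, note that $\Lambda(\mathbf{r})$ with $e_k(\mathbf{r})$ algebraically independent lets us identify $a_k\mapsto e_k(\mathbf{r})$, and then we evaluate.

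Next, apply this with $\lambda=(k,k)$, so $\lambda'=(2^k)$. Using the dual Jacobi--Trudi formula in the form $s_{\mu'}=\det(e_{\mu_i-i+j})$ with $\mu=(k,k)$ gives
\[
s_{(2^k)}(\mathbf{r})=\det\begin{pmatrix} e_k & e_{k+1}\\ e_{k-1} & e_k\end{pmatrix}=e_k^2-e_{k-1}e_{k+1}.
\]
Specializing yields $a_k^2-a_{k-1}a_{k+1}$. For $k\ge d$ the formula remains consistent, since then $a_{k+1}=0$, and also $a_k=0$ when $k>d$.

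The only delicate point is the bookkeeping of conjugates: we must make sure the coefficient of $s_{(k,k)}$ corresponds to $s_{(2^k)}(\mathbf{r})$, and hence to the $2\times2$ determinant in the $e$'s, rather than to a $k\times k$ determinant. The dual Cauchy identity pairs $s_\lambda$ with $s_{\lambda'}$, and the paper's convention expresses $s_{\lambda'}$ through $e_{\lambda_i-i+j}$. Together these give exactly the $2\times2$ determinant above. As a sanity check, for $k=1$ the coefficient of $s_{(1,1)}$ in $F_P$ should be $a_1^2-a_2$. This holds: the degree-2 part of $F_P$ is $a_2e_2+a_1^2\sum_{i<j}x_ix_j$ plus terms in the $x_i^2$, and the coefficient of $s_{11}$ equals $[m_{11}]-[m_2]=a_1^2-(a_1^2-a_2)$, which with the correct bookkeeping again gives $a_1^2-a_2$.
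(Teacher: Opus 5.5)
Your argument is correct and is essentially the paper's proof: the dual Cauchy identity gives $[s_\lambda]F_P=s_{\lambda'}$ evaluated at the ``roots'', and the $2\times 2$ dual Jacobi--Trudi determinant then gives $a_k^2-a_{k-1}a_{k+1}$. The only difference is that you justify the specialization through the algebraic independence of $e_1,\dots,e_d$, whereas the paper factors $P(t)=\prod_j(1+\theta_j t)$ over $\mathbb{C}$; that factorization already works for every $P$ with $P(0)=1$, real-rooted or not. Your $k=1$ sanity check is garbled, since the degree-2 part of $F_P$ is $a_2m_2+a_1^2m_{11}$ and so $[s_{(1,1)}]F_P=[m_{11}]-[m_2]=a_1^2-a_2$ directly, but this does not affect the proof.
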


\begin{proof}
Suppose that $P(t) = \prod_{j=1}^d (1+\theta_j t)$ with $0 \neq \theta_j \in \mathbb{C}$.
Let $\mathbf{y}=\{y_1,y_2,\ldots\}$ and $\mathbf{\theta}=\{\theta_1,\theta_2,\ldots,\theta_d\}$.
By the Cauchy identity
\[
\prod_{i,j} (1+y_jx_i) = \sum_{\lambda} s_{\lambda'}(\mathbf{y})s_{\lambda}(\mathbf{x})
\]
we have
\begin{equation}\label{eq-fpx}
F_P(\mathbf{x}) = \sum_{\lambda} s_{\lambda'}(\mathbf{\theta})s_{\lambda}(\mathbf{x}),
\end{equation}
where $s_{\lambda'}(\mathbf{\theta})$ stands for the specialization of $s_{\lambda'}(\mathbf{y})$ by setting $y_1 = \theta_1, \ldots, y_d = \theta_d$ and $y_{l} = 0$ for $l\geq d+1$.
Note that the coefficient $a_i =[t^i]P(t)$ is equal to $e_i(\mathbf{\theta})$.
By \eqref{eq-fpx}, we find that
\[
[s_{(k,k)}]F_P(\mathbf{x}) = s_{(k,k)'}(\mathbf{\theta}) = \begin{vmatrix}
                                           e_k(\theta) & e_{k+1}(\theta) \\
                                           e_{k-1}(\theta) & e_k(\theta)
                                         \end{vmatrix} = a_k^2 - a_{k-1}a_{k+1}.
\]
\end{proof}

We adopt the concept 2-Schur-positivity which was first introduced by Li, Yang, Zhang and the author \cite{LLYZ25}.
Recall that a symmetric function
$f$ is called \textit{2-Schur-positive} (or simply \textit{2-
$s$-positive}) if $[s_{\lambda}]f \ge 0$ for all $\lambda$ with $\ell(\lambda) \le 2$.
 For convenience, we write $f \ge_{2s} 0$ to indicate that $f$ is 2-$s$-positive..
Furthermore, we extend these notations to differences: if $f - g ={2s} 0$, we write $f ={2s} g$; and if $f - g \ge_{2s} 0$, we write $f \ge_{2s} g$.
In addition, if $f-g =_{2s} 0$ we also write $f =_{2s} g$, and $f-g\ge_{2s} 0$ we also write $f\ge_{2s} g$. Moreover, $f<_{2s} g$ means $g\ge_{2s} f$ and $f\ne_{2s} g$.

\begin{figure}[htbp]
  \centering
  \begin{tikzpicture}[scale = 1.5]
\fill (-5,-0.5) circle (0.3ex);
    \fill (-4,-0.5) circle (0.3ex);
    \fill (-3,-0.5) circle (0.3ex);
    \fill (-2,-0.5) circle (0.3ex);
    \draw (-5,-0.5) -- (-4,-0.5) -- (-3,-0.5);
    \draw (-3,-0.5) -- (-2,-0.5);
    \fill (-1,0) circle (0.3ex);
    \draw (-1,0) -- (-1,-1);
    \fill (-1,-1) circle (0.3ex);
    \fill (2,-0.5) circle (0.3ex);
    \fill (3,0) circle (0.3ex);
    \fill (1,-0.5) circle (0.3ex);
    \draw (1,-0.5) -- (2,-0.5);
    \draw (3,0) -- (1,-0.5);
    \draw (3,-1) -- (2,-0.5);
    \fill (3,-1) circle (0.3ex);
    \draw (2,-0.5) -- (3,0);
    \draw (1,-0.5) -- (3,-1);
    \draw (3,0) -- (3,-1);
\node at (-5,0) {$v_1$};
\node at (-4,0) {$v_2$};
\node at (-3,0) {$v_3$};
\node at (-2,0) {$v_4$};
\node at (-0.5,0) {$v_1^{(1)}$};
\node at (-0.5,-1) {$v_1^{(2)}$};
\node at (0.5,-0.5) {$v_3^{(1)}$};
\node at (2.5,-0.5) {$v_4^{(1)}$};
\node at (3.5,0) {$v_4^{(2)}$};
\node at (3.5,-1) {$v_4^{(3)}$};
\end{tikzpicture}
  \caption{The path $P_4$ and the clan graph $P_4^{(2,0,1,3)}$}\label{fig-Galpha}
\end{figure}
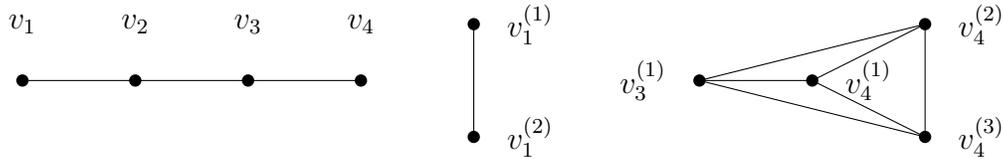

Stanley \cite[Corollary 2.12]{Sta98} directly presented the following identity
\[
\sum_{\alpha:V \to \mathbb{N}} X_G^{\alpha} = \prod_i I_G(x_i).
\]
Define $Y_G=\sum_{\alpha:V \to \mathbb{N}} X_G^{\alpha}$, then
Theorem \ref{thm-2s-lc} implies the following result.

\begin{cor}[\cite{LLYZ25}]\label{cor-2s-lc}
Let $G$ be a graph. Then the following conditions are equivalent:
\begin{itemize}
  \item $[s_{\lambda}] Y_G\ge 0$ for all partitions with $\ell(\lambda) \le 2$.
  \item $[s_{(k,k)}]Y_G \ge 0$ for all $k \ge 1$.
  \item The independence polynomial $I_G(t)$ is log-concave.
  \item The independence polynomial $I_G(t)$ is strongly log-concave.
\end{itemize}
\end{cor}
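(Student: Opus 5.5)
The plan is to recognize $Y_G$ as a specialization of Stanley's inhomogeneous symmetric function $F_P(\mathbf{x})$ and then read off the equivalences directly from Theorem \ref{thm-2s-lc}. By Stanley's identity \cite[Corollary 2.12]{Sta98} recalled just above, $Y_G=\sum_{\alpha:V\to\mathbb{N}}X_G^{\alpha}=\prod_{i\ge 1}I_G(x_i)$. Taking $P(t)=I_G(t)$, this says exactly that $Y_G=F_{I_G}(\mathbf{x})$.

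Before applying Theorem \ref{thm-2s-lc}, I will check its hypotheses on $P$.

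\emph{Normalization.} We have $P(0)=i_0=1$ by convention.

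\emph{Positivity of the coefficients.} Write $P(t)=a_0+a_1t+\cdots+a_dt^d$ with $d=\alpha(G)$ and $a_k=i_k$. Fix an independent set $I$ with $|I|=\alpha(G)$. Every $k$-subset of $I$ is again independent, so $i_k\ge\binom{\alpha(G)}{k}>0$ for $0\le k\le \alpha(G)$. Hence all of $a_0,\ldots,a_d$ are positive, as Theorem \ref{thm-2s-lc} requires.

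With these hypotheses verified, Theorem \ref{thm-2s-lc} applies to $P=I_G$ and $F_P=Y_G$. Its four conditions translate verbatim into the four items of the corollary:
\begin{itemize}
\item condition (i) becomes $[s_\lambda]Y_G\ge 0$ for all $\lambda$ with $\ell(\lambda)\le 2$;
\item condition (ii) becomes $[s_{(k,k)}]Y_G\ge 0$ for all $k\ge 1$;
\item condition (iii) becomes log-concavity of $I_G(t)$;
\item condition (iv) becomes strong log-concavity of $I_G(t)$.
\end{itemize}
Their equivalence is therefore inherited directly.

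As a consistency check on (ii)$\Leftrightarrow$(iii), Lemma \ref{lem-skk-1} gives $[s_{(k,k)}]Y_G=i_k^2-i_{k-1}i_{k+1}$. For $k\ge \alpha(G)$ this value is $i_k^2\ge 0$, so only the indices $1\le k\le \alpha(G)-1$ carry content, and these are exactly the log-concavity inequalities.

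There is no real obstacle here. The only point needing care is the positivity of every coefficient up to the degree, which is essential because Theorem \ref{thm-2s-lc} assumes it. A polynomial with internal zeros can satisfy the inequalities $a_k^2\ge a_{k-1}a_{k+1}$ without being log-concave in the strong sense, and the hereditary property of independent sets is precisely what rules out such internal zeros.
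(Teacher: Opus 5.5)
Your proposal is correct and takes the same route as the paper, which also just identifies $Y_G=\prod_i I_G(x_i)=F_{I_G}(\mathbf{x})$ via Stanley's identity and then invokes Theorem \ref{thm-2s-lc}. Your explicit check that $i_0=1$ and that $i_k\ge\binom{\alpha(G)}{k}>0$ for $0\le k\le\alpha(G)$ supplies the positivity hypothesis of that theorem, which the paper leaves implicit.
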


Moreover, Lemma \ref{lem-skk-1} implies the following result, which plays a crucial role in the proof of Theorem \ref{thm-uni-t3mn} and Theorem \ref{thm-uni-t3mn-star}.

\begin{cor}\label{lem-skk}
Let $G$ be a graph and let $I_G(t)=\sum_{j=0}^{\alpha(G)}i_j t^j$. Then for any $1\le k\le \alpha(G)-1$, the following two conditions are equivalent:
\begin{itemize}
\item $[s_{(k,k)}]Y_G\ge_{2s} 0$;
\item $i_k^2\ge i_{k-1}i_{k+1}$.
\end{itemize}
\end{cor}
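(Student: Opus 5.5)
This corollary is a direct specialization of Lemma \ref{lem-skk-1} to $P(t)=I_G(t)$. First I check the hypotheses of that lemma. We have $P(0)=i_0=1$. Every coefficient $i_j$ with $0\le j\le \alpha(G)$ is positive, because every subset of a maximum independent set is independent, so $i_j\ge\binom{\alpha(G)}{j}>0$. Hence $I_G(t)=\sum_{j=0}^{\alpha(G)} i_jt^j$ has degree $d=\alpha(G)$ and all coefficients positive, as the lemma requires.

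Next I identify $F_P$ with $Y_G$. By Stanley's identity \cite[Corollary 2.12]{Sta98}, recalled above, $Y_G=\sum_{\alpha}X_G^{\alpha}=\prod_i I_G(x_i)=F_{I_G}(\mathbf{x})$. Applying \eqref{equ-skk} gives, for every $k\ge1$,
\[
[s_{(k,k)}]Y_G = i_k^2-i_{k-1}i_{k+1}.
\]
For $1\le k\le\alpha(G)-1$ all three indices $k-1,k,k+1$ lie in $\{0,\ldots,\alpha(G)\}$, so the right-hand side involves genuine coefficients of $I_G$. The coefficient $[s_{(k,k)}]Y_G$ is a rational number. Reading the condition ``$[s_{(k,k)}]Y_G\ge_{2s}0$'' for this scalar as ordinary nonnegativity, the first condition holds exactly when $i_k^2-i_{k-1}i_{k+1}\ge 0$, which is the second condition.

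The only point needing care is the proof of Lemma \ref{lem-skk-1} itself. It factors $P(t)=\prod_j(1+\theta_jt)$ with complex $\theta_j$ and uses the Cauchy identity with the specialization $y_j=\theta_j$. This is a formal identity in the ring of symmetric functions in the $\theta_j$, so it holds for arbitrary complex roots and needs no real-rootedness. The Jacobi--Trudi determinant then gives $e_k^2-e_{k-1}e_{k+1}$ with $e_i(\theta)=i_i$. This is valid for all $k$, using the convention $e_j=0$ for $j>d$. No other obstacle arises.
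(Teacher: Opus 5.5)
Your proposal is correct and follows the paper's route. The paper obtains this corollary directly from Lemma \ref{lem-skk-1} with $P=I_G$, using Stanley's identity $Y_G=\prod_i I_G(x_i)=F_{I_G}(\mathbf{x})$. Your additional checks are all sound: the positivity of the $i_j$ via subsets of a maximum independent set, reading the scalar condition $[s_{(k,k)}]Y_G\ge_{2s}0$ as ordinary nonnegativity, and the validity of specializing the Cauchy identity at complex $\theta_j$.
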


We also need the following two results which are stated in \cite{LLYZ25}.

\begin{prop}[\cite{LLYZ25}]\label{prop-2s-coe}
Let $G$ be a graph with $|V(G)| = n$. Let $(k,l)$ be a partition of $n$ with $k \ge l \ge 1$ and let $\tilde{a}_{(k,l)}$ denote the number of semi-ordered stable partition of $G$ with type $(k,l)$. Then
\[
[s_{(k,l)}] X_G = \tilde{a}_{(k,l)} - \tilde{a}_{(k+1,l-1)}, \,\, [s_{(n)}] X_G = \tilde{a}_{(n)} = \begin{cases}
                          1, & \mbox{if $G$ consists of $n$ isolated vertices;} \\
                          0, & \mbox{otherwise}.
                        \end{cases}
\]
\end{prop}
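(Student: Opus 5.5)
The plan is to read off $[s_{(k,l)}]X_G$ by restricting to two variables. When we set $x_3=x_4=\cdots=0$, every Schur function $s_\mu$ with $\ell(\mu)\ge 3$ vanishes. So the Schur coefficients of $X_G$ indexed by partitions of length at most $2$ are exactly the Schur coefficients of the polynomial $X_G(x_1,x_2)$.

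\textbf{Step 1 (two-variable Schur functions).} For $\mu=(\mu_1,\mu_2)$ the bialternant formula gives
\[
(x_1-x_2)\,s_\mu(x_1,x_2)=x_1^{\mu_1+1}x_2^{\mu_2}-x_1^{\mu_2}x_2^{\mu_1+1}.
\]
Write $X_G(x_1,x_2)=\sum_{\mu} c_\mu s_\mu(x_1,x_2)$ and multiply by $(x_1-x_2)$. The monomials $x_1^{\mu_1+1}x_2^{\mu_2}$ with $\mu_1\ge\mu_2$ are distinct for distinct $\mu$. Also none of them coincides with a monomial $x_1^{\nu_2}x_2^{\nu_1+1}$, because the exponent of $x_1$ exceeds that of $x_2$ in the first kind and not in the second. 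Hence
\[
c_{(k,l)}=[x_1^{k+1}x_2^{l}]\,(x_1-x_2)X_G(x_1,x_2)=[x_1^kx_2^l]X_G-[x_1^{k+1}x_2^{l-1}]X_G .
\]

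\textbf{Step 2 (interpreting monomial coefficients).} By definition, $[x_1^ax_2^b]X_G$ counts proper colorings $\kappa:V(G)\to\{1,2\}$ in which $a$ vertices receive color $1$ and $b$ receive color $2$. Such a coloring is the same as an ordered pair $(B_1,B_2)$ of stable sets partitioning $V(G)$ with $|B_1|=a$ and $|B_2|=b$. This is precisely a semi-ordered stable partition of type $(a,b)$:
\begin{itemize}
\item when $a>b$, the block sizes already determine the order, so the count equals the number of unordered stable partitions of that type;
\item when $a=b$, both orders are counted, which is exactly the semi-ordering convention.
\end{itemize}
Thus $[x_1^ax_2^b]X_G=\tilde a_{(a,b)}$, and Step 1 yields $[s_{(k,l)}]X_G=\tilde a_{(k,l)}-\tilde a_{(k+1,l-1)}$. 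When $l=1$, the second term $\tilde a_{(k+1,0)}$ is read as the one-block count, and the same argument applies.

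\textbf{Step 3 (the one-row case).} For $\lambda=(n)$, Step 1 gives $[s_{(n)}]X_G=[x_1^n]X_G-[x_1^{n+1}]X_G$. The second term is zero because $X_G$ is homogeneous of degree $n$. The first term counts proper colorings using only color $1$, so it equals $1$ if $G$ has no edges and $0$ otherwise.

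The only delicate point is the bookkeeping in Step 2: one must match the paper's definition of ``semi-ordered'' with ordered $2$-colorings, especially in the case $k=l$. This is a matter of definitions rather than a genuine obstacle.
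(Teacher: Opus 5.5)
Your proof is correct and complete. The paper gives no proof of this proposition; it quotes it from \cite{LLYZ25}, so there is no in-text argument to match.

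The usual derivation pairs $X_G$ with the Jacobi--Trudi expansion $s_{(k,l)}=h_kh_l-h_{k+1}h_{l-1}$ under the Hall inner product. Two facts are needed: $\langle m_\mu,h_\nu\rangle=\delta_{\mu\nu}$, and, by Stanley's augmented-monomial expansion, $\tilde a_\lambda=[m_\lambda]X_G$. The second holds because equal-size blocks are ordered in a semi-ordered stable partition, which is the identification you make in Step~2. Together they give $\tilde a_{(k,l)}-\tilde a_{(k+1,l-1)}$ immediately.

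Your two-variable bialternant computation produces the same signed sum. The extraction $[s_\lambda]f=[x^{\lambda+\delta}]\,a_\delta f=\sum_{\sigma}\mathrm{sgn}(\sigma)\,[x^{\lambda+\delta-\sigma(\delta)}]f$ is exactly what pairing with the Jacobi--Trudi determinant yields. What your route buys is self-containment. You need neither the Hall inner product nor Stanley's expansion, because $[x_1^ax_2^b]X_G$ is read off directly as a count of proper $2$-colorings. The edge cases $l=1$ and $\lambda=(n)$ also come out of the same formula without separate treatment.

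One cosmetic point: in Step~3, the subtracted term supplied by Step~1 is really $[x_1^{n+1}x_2^{-1}]X_G$. It vanishes simply because of the negative exponent, so the appeal to homogeneity is unnecessary.
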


\begin{cor}[\cite{LLYZ25}]\label{cor-2s-con-bipar}
For a connected bipartite graph $G$, $X_G$ is 2-$s$-positive if and only if its unique bipartition is balanced, i.e., it has type $(k,l)$ with $l \le k \le l+1$. Equivalently, $X_G$ is not 2-$s$-positive if and only if its bipartition is of type $(k,l)$ with $k \ge l+2$. In addition, $X_G =_{2s} 0$ for all non-bipartite graph $G$.
\end{cor}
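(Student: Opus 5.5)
The plan is to read off every coefficient $[s_\lambda]X_G$ with $\ell(\lambda)\le 2$ directly from Proposition \ref{prop-2s-coe}. This reduces the whole statement to counting stable partitions of $G$ into at most two blocks. The key observation is that such partitions are extremely rigid. A partition of $V(G)$ into two stable blocks is precisely a proper $2$-coloring, and a single stable block means $G$ has no edges.

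First I would treat the non-bipartite case. If $G$ is not bipartite, then $G$ has at least one edge, so $[s_{(n)}]X_G=\tilde a_{(n)}=0$. Moreover, $G$ admits no partition of $V(G)$ into two stable sets, so $\tilde a_{(k,l)}=0$ for every $k\ge l\ge 1$. Proposition \ref{prop-2s-coe} then gives $[s_{(k,l)}]X_G=0-0=0$ for all such $(k,l)$, hence $X_G=_{2s}0$.

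Next, let $G$ be connected and bipartite. If $n=1$, then $X_G=s_{(1)}$ and the bipartition has type $(1,0)$, which is balanced, so this case is trivial. Assume $n\ge 2$ and let the unique bipartition be $\{A,B\}$ with $|A|=a\ge |B|=b\ge 1$. By connectivity, every stable partition into exactly two blocks equals $\{A,B\}$. Since $G$ has an edge, there is none with one block. Hence $\tilde a_{(n)}=0$, and $\tilde a_{(k,l)}$ equals $1$ if $(k,l)=(a,b)$ and $0$ otherwise. Here I need to check the semi-ordered convention when $a=b$: with equal block sizes the two blocks are unordered, so the count is still $1$. Proposition \ref{prop-2s-coe} then yields
\[
[s_{(k,l)}]X_G=\delta_{(k,l),(a,b)}-\delta_{(k+1,l-1),(a,b)} .
\]
The only possible negative coefficient occurs at $(k,l)=(a-1,b+1)$. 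It is a genuine coefficient exactly when $(a-1,b+1)$ is a partition, i.e. $a-1\ge b+1$, i.e. $a\ge b+2$. In that case $[s_{(a-1,b+1)}]X_G=-1<0$. If instead $b\le a\le b+1$, every coefficient with $\ell(\lambda)\le 2$ lies in $\{0,1\}$. This gives both the equivalence and its contrapositive form.

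The only delicate point I anticipate is the bookkeeping of the word ``semi-ordered'' in Proposition \ref{prop-2s-coe}. I must make sure that $\tilde a_{(k,l)}$ counts the bipartition $\{A,B\}$ exactly once in the case $a=b$. I must also make sure no other two-block stable partitions are counted, for example ones with an empty block, which are excluded because $l\ge1$. Once that is confirmed, the rest is the direct case check above.
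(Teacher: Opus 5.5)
Your argument is correct and is the intended derivation of this corollary from Proposition~\ref{prop-2s-coe}; the paper does not prove the corollary and simply quotes it from \cite{LLYZ25}. One small correction: ``semi-ordered'' means blocks of equal size are ordered, so $\tilde a_{(k,l)}=[m_{(k,l)}]X_G$, and for $a=b$ you get $\tilde a_{(a,a)}=2$, not $1$ (check $X_{K_2}=2s_{(1,1)}$). This only changes $[s_{(a,a)}]X_G$ from $1$ to $2$ and affects no sign, since $(a-1,a+1)$ is not a partition.
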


Obviously, Corollary \ref{cor-2s-con-bipar} yields the following result.

\begin{cor}\label{cor-2s0}
Let $G$ be a graph and let $\alpha$ be a map from $V(G)$ to $\mathbb{N}$. If there exists an edge $uv\in E(G)$ such that $\alpha(u)+\alpha(v)\ge 3$, then
$X_G^{\alpha} =_{2s} 0$.
\end{cor}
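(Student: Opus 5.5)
The statement is Corollary \ref{cor-2s0}: if some edge $uv$ has $\alpha(u)+\alpha(v)\ge 3$, then $X_G^{\alpha}=_{2s}0$. I would pass to the clan graph and use \eqref{eq-xg-alpha}, $X_{G^\alpha}=X_G^\alpha\prod_v \alpha(v)!$. The product of factorials is a positive constant, so $X_G^\alpha =_{2s}0$ exactly when $X_{G^\alpha}=_{2s}0$. It therefore suffices to show that every coefficient $[s_\lambda]X_{G^\alpha}$ with $\ell(\lambda)\le 2$ vanishes.

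The key observation is that $G^\alpha$ contains a triangle. The clans $K_{\alpha(u)}$ and $K_{\alpha(v)}$ are complete graphs, and every vertex of one is joined to every vertex of the other because $uv\in E(G)$. Hence the vertices coming from $u$ and $v$ span a complete graph $K_{\alpha(u)+\alpha(v)}$ inside $G^\alpha$, and this has at least $3$ vertices. So $G^\alpha$ contains $K_3$ and is non-bipartite.

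I would then finish as follows. If $G^\alpha$ is connected, Corollary \ref{cor-2s-con-bipar} gives $X_{G^\alpha}=_{2s}0$ directly. In general I would not rely on connectivity and would instead use Proposition \ref{prop-2s-coe}. Every $[s_{(k,l)}]X_{G^\alpha}$ is a difference of counts of semi-ordered stable partitions into at most two blocks. A stable partition into at most two independent blocks is a proper $2$-colouring, which cannot exist in a graph containing a triangle. So all these counts are $0$. Likewise $[s_{(N)}]X_{G^\alpha}=0$, where $N=|V(G^\alpha)|$, since $G^\alpha$ is not edgeless. This gives $X_{G^\alpha}=_{2s}0$, and hence $X_G^\alpha=_{2s}0$.

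The only point needing care is the degenerate situation where the clan graph is empty or has fewer vertices than expected. This cannot occur: $\alpha(u)+\alpha(v)\ge 3$ already forces at least three vertices, pairwise adjacent. I expect no real obstacle beyond correctly citing the triangle argument.
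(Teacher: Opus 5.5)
Your proof is correct and is essentially the paper's argument: the paper just says the result follows ``obviously'' from Corollary~\ref{cor-2s-con-bipar}, implicitly because $G^\alpha$ contains the clique $K_{\alpha(u)+\alpha(v)}$ and is therefore non-bipartite, and then \eqref{eq-xg-alpha} transfers $X_{G^\alpha}=_{2s}0$ to $X_G^\alpha$. The last sentence of Corollary~\ref{cor-2s-con-bipar} already covers all non-bipartite graphs, connected or not, so your detour through Proposition~\ref{prop-2s-coe} is harmless but not needed.
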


Corollary \ref{cor-2s0} directly implies the following proposition.
\begin{prop}\label{prop-416}
For any graph $G$ and any map $\alpha\colon V(G)\rightarrow \mathbb{N}$, if $T$ is a connected component of $G^\alpha$ and $X_G^{\alpha}\neq_{2s} 0$, then for any $v\in  V(\hat{T})$, we have $\alpha(v)=1$ unless $\alpha(v)=2=\# V(T)$.
\end{prop}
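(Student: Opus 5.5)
The plan is to reduce everything to Corollary~\ref{cor-2s0} together with the non-bipartite case of Corollary~\ref{cor-2s-con-bipar}, applied to the clan graph $G^\alpha$ itself. I read $\hat{T}$ as the connected subgraph of $G$, induced on vertices with $\alpha(v)\ge 1$, whose clan graph is the component $T$. So $V(T)$ is the disjoint union of the cliques $K_{\alpha(v)}$ for $v\in V(\hat T)$, and two vertices of $\hat T$ are adjacent in $\hat T$ exactly when they are adjacent in $G$. Throughout, assume $X_G^{\alpha}\neq_{2s}0$, and fix $v\in V(\hat T)$, so that $\alpha(v)\ge 1$.

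\emph{Step 1: rule out $\alpha(v)\ge 3$.} If $\alpha(v)\ge 3$, then $G^\alpha$ contains the clique $K_{\alpha(v)}\supseteq K_3$, so $G^\alpha$ is non-bipartite. By Corollary~\ref{cor-2s-con-bipar}, $X_{G^\alpha}=_{2s}0$; this part of the corollary does not use connectedness. It is also immediate from Proposition~\ref{prop-2s-coe}, since a graph containing a triangle has no stable partition into at most two blocks, so every $\tilde a_{(k,l)}$ vanishes. By \eqref{eq-xg-alpha}, $X_G^\alpha=X_{G^\alpha}/\prod_u\alpha(u)!$ is then also $=_{2s}0$, a contradiction. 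Hence $\alpha(v)\in\{1,2\}$.

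\emph{Step 2: the case $\alpha(v)=2$.} Suppose $\alpha(v)=2$ and $v$ has a neighbour $u$ in $\hat T$. Then $\alpha(u)\ge 1$, so $\alpha(u)+\alpha(v)\ge 3$ on the edge $uv\in E(G)$, and Corollary~\ref{cor-2s0} gives $X_G^\alpha=_{2s}0$, again a contradiction. Therefore $v$ has no neighbour in $\hat T$. Since $\hat T$ is connected, $\hat T$ consists of the single vertex $v$, so $T=K_{\alpha(v)}=K_2$ and $\#V(T)=2=\alpha(v)$.

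Combining the two steps, either $\alpha(v)=1$ or $\alpha(v)=2=\#V(T)$, as claimed. No real obstacle is expected. The only point needing care is to apply the vanishing argument to the whole clan graph $G^\alpha$ rather than to the component $T$ alone, so that we never need to know how $=_{2s}0$ interacts with products over components.
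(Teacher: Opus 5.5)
Your proof is correct and follows the paper's route. The paper just asserts that Corollary~\ref{cor-2s0} directly implies the statement, and your Step 2 is exactly that argument, with connectedness of $\hat T$ forcing $\hat T=\{v\}$ and hence $T=K_2$. Your Step 1 applies the non-bipartite vanishing of Corollary~\ref{cor-2s-con-bipar} to the whole clan graph, and this is a small genuine improvement: it also covers a vertex $v$ with $\alpha(v)\ge 3$ and no neighbour in $G$, where Corollary~\ref{cor-2s0} alone does not apply.
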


Finally, we need to recall the following result due to Levit
and Mandrescu \cite{LM07}. They proved that for any bipartite graphs the last one-third of the coefficients of $I_G(t)$ are weakly decreasing.
\begin{thm}[{\cite[Corollary 3.3]{LM07}}]\label{thmLM07-2}
   For a fixed tree $T$, let $t$ denote the size of a maximum stable set in $T$, and let $c_k$ denote the coefficient of $x^k$ in the independence polynomial of $T$. Then
    \[c_{\lceil (2t-1)/3\rceil}\ge \cdots\ge c_{t-1}\ge c_t.\]
\end{thm}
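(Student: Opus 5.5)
The plan is to prove the inequality $(k+1)c_{k+1}\le 2(t-k)\,c_k$ for every $0\le k\le t-1$ by double counting, and then read off the monotonicity. Note that
\[
2(t-k)\le k+1 \iff k\ge \frac{2t-1}{3}.
\]
So once the inequality holds, we get $c_{k+1}\le c_k$ for all $k\ge\lceil(2t-1)/3\rceil$, which is exactly the chain in Theorem \ref{thmLM07-2}.

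\textbf{Setup.} Let $T$ have $n$ vertices and let $M$ be a maximum matching, with $\mu=|M|$. A tree is bipartite, so König's theorem gives $n=t+\mu$. Hence the number of $M$-unsaturated vertices is $n-2\mu=t-\mu$.

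\textbf{Double counting.} Count pairs $(S,S')$ with $S\subset S'$ stable sets, $|S|=k$ and $|S'|=k+1$. Each $S'$ contains exactly $k+1$ such $S$, so the number of pairs is $(k+1)c_{k+1}$. It therefore suffices to show that every stable $k$-set $S$ has at most $2(t-k)$ \emph{addable} vertices, i.e.\ vertices $v\notin S$ with $S\cup\{v\}$ stable.

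\textbf{Bounding the addable vertices.} Let $u$ be the number of vertices of $S$ left unsaturated by $M$. Then $k-u$ vertices of $S$ are saturated, and since $S$ is stable they lie in $k-u$ distinct edges of $M$. The partner of each such vertex is adjacent to $S$, so it is not addable. The remaining candidates are:
\begin{itemize}
\item unsaturated vertices outside $S$: there are $(t-\mu)-u$ of them;
\item both endpoints of the $\mu-(k-u)$ matching edges that avoid $S$.
\end{itemize}
This gives at most
\[
(t-\mu-u)+2(\mu-k+u)=t+\mu-2k+u
\]
addable vertices. Since $u\le t-\mu$, this is at most $2t-2k$, as required.

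The main point needing care is this counting step. One has to check that no vertex is double counted and that $k-u$ distinct matching edges are indeed blocked, which uses the stability of $S$. One also has to make sure that the equality $n=t+\mu$ is used (it is what turns $n-2\mu$ into $t-\mu$); this is exactly where the tree (bipartite, König–Egerváry) hypothesis enters.
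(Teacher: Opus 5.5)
Your proof is correct. The paper does not prove this statement at all: it quotes it from Levit and Mandrescu and uses it only as a black box at the end of the proofs of Theorems \ref{thm-uni-t3mn} and \ref{thm-uni-t3mn-star}, to attach the last coefficient(s) to the decreasing tail. So there is no internal argument to compare against, and your double count serves as a self-contained substitute for the citation.

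It also gives more than is stated. You get the ratio bound $(k+1)c_{k+1}\le 2(t-k)c_k$ for every $0\le k\le t-1$. The only structural input is $|V|=\alpha+\mu$, so the bound holds for every K\"onig--Egerv\'ary graph, and in particular for all bipartite graphs and forests. That is the generality the paper's sentence preceding the theorem alludes to, even though the theorem as quoted only mentions trees.

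The counting is sound. $V$ is partitioned into the $t-\mu$ unsaturated vertices and the $\mu$ edges of $M$. Stability of $S$ means each edge of $M$ meets $S$ at most once, so exactly $k-u$ edges are hit. Their other endpoints are dominated by $S$, and nothing is counted twice.

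One small correction to the setup: K\"onig's theorem alone gives $\tau=\mu$. You also need that the complements of stable sets are exactly the vertex covers, so $\alpha+\tau=n$, to conclude $n=t+\mu$.
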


\section{Notations and properties}\label{sec3}

In this section, we introduce some notation and establish several auxiliary properties that will be used frequently in the proofs of Theorems \ref{thm-uni-t3mn} and \ref{thm-uni-t3mn-star}. We begin with two definitions.

\begin{defi}
Given a graph $G$, an induced subgraph $H$ of $G$, and a map $\alpha: V(G) \to \mathbb{N}$, we introduce the following two maps derived from $\alpha$.
\begin{itemize}
    \item The map $\alpha|^H \colon V(G) \to \mathbb{N}$ is defined by
  \begin{equation}\label{eq-alpha-H}
    \alpha\mid^H(v)=\begin{cases}
    \alpha(v),&\text{if }v\in V(H);\\
    0,&\text{if }v\not\in V(H).
    \end{cases}
  \end{equation}
\item The restriction of $\alpha$ to $V(H)$ is denoted $\alpha|_H \colon V(H) \to \mathbb{N}$ and is given by
\[\alpha\mid_H(v)=\alpha(v)\ \text{for any}\ v\in V(H).\]
\end{itemize}
\end{defi}

\begin{rem}\label{rem-h-alpha}
For the sake of convenience, in the rest of this paper, we use $H^\alpha$ to denote $H^{\alpha\mid_H}$. Similarly, we use $X_H^\alpha$ to denote $X_H^{\alpha\mid_H}$.
Moreover,  by definition, each of $G^{\alpha\mid^H}$ and $H^\alpha$ is isomorphic to an induced subgraph of $G^\alpha$, and we regard them as such.  \end{rem}

Recall that Stanley  \cite[Proposition 2.3]{Sta95} noted that $X_{G + H} = X_GX_H$, where $G + H$ is the disjoint union of $G$ and $H$. Thus the following result is clear.

\begin{prop}\label{prop-42}
  Given a graph $G$ and an induced subgraph $H$ of $G$. Let $G\setminus H$ denote the induced subgraph on the vertex set $V(G)\setminus V(H)$. For any map $\alpha\colon V(G)\rightarrow \mathbb{N}$, if there are no edges between $G^{\alpha\mid^H}$ and $G^{\alpha\mid^{G\setminus H}}$, then we have
  \begin{equation}\label{eq-x-alpha-g+h}
    X_{G}^\alpha=X_{H}^{\alpha} X_{G\setminus H}^{\alpha}=X_G^{\alpha\mid^{H}}X_G^{\alpha\mid^{G\setminus H}}.
  \end{equation}
\end{prop}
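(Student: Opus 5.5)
The plan is to prove \eqref{eq-x-alpha-g+h} in two steps. First, the hypothesis lets us identify the clan graph $G^\alpha$ with a disjoint union. Second, we apply Stanley's multiplicativity $X_{G+H}=X_GX_H$ and clear the factorial normalization coming from \eqref{eq-xg-alpha}.

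For the first step, note that the vertex set of $G^\alpha$ is the disjoint union of the clans $K_{\alpha(v)}$ over $v\in V(G)$. By Remark \ref{rem-h-alpha}, $G^{\alpha|^H}$ is the induced subgraph of $G^\alpha$ on the clans of vertices of $H$; vertices $v\notin V(H)$ contribute nothing since $\alpha|^H(v)=0$. Likewise, $G^{\alpha|^{G\setminus H}}$ is the induced subgraph on the clans of vertices of $G\setminus H$. These two vertex sets partition $V(G^\alpha)$. Since there are no edges between them, we get
\[
G^\alpha = G^{\alpha|^H} + G^{\alpha|^{G\setminus H}}.
\]
Moreover, $G^{\alpha|^H}$ is isomorphic to $H^{\alpha}=H^{\alpha|_H}$, because deleting empty clans changes nothing. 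Similarly, $G^{\alpha|^{G\setminus H}}\cong (G\setminus H)^{\alpha}$.

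For the second step, apply $X_{G+H}=X_GX_H$ to obtain
\[
X_{G^\alpha}=X_{H^{\alpha}}X_{(G\setminus H)^{\alpha}}.
\]
Now use \eqref{eq-xg-alpha} on each of the three clan graphs:
\begin{itemize}
\item $X_{G^\alpha}=X_G^\alpha\prod_{v\in V(G)}\alpha(v)!$;
\item $X_{H^\alpha}=X_H^\alpha\prod_{v\in V(H)}\alpha(v)!$;
\item $X_{(G\setminus H)^\alpha}=X_{G\setminus H}^\alpha\prod_{v\notin V(H)}\alpha(v)!$.
\end{itemize}
The two partial products of factorials multiply to the full product, which is nonzero. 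Dividing it out gives $X_G^\alpha=X_H^\alpha X_{G\setminus H}^\alpha$.

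The second equality in \eqref{eq-x-alpha-g+h} follows the same way. By \eqref{eq-xg-alpha}, $X_G^{\alpha|^H}$ equals $X_{G^{\alpha|^H}}$ divided by $\prod_{v}\alpha|^H(v)!$. In this product the factors for $v\notin V(H)$ are $0!=1$, so it equals $\prod_{v\in V(H)}\alpha(v)!$. Together with the isomorphism $G^{\alpha|^H}\cong H^\alpha$, this gives $X_G^{\alpha|^H}=X_H^\alpha$, and likewise $X_G^{\alpha|^{G\setminus H}}=X_{G\setminus H}^\alpha$.

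Alternatively, one could argue directly from the definition of $X^\alpha_G$. A proper multicoloring of type $\alpha$ has no constraints linking vertices of $H$ and of $G\setminus H$ that carry positive multiplicity. So it splits uniquely into a pair of independent proper multicolorings, and the monomials multiply. The only point needing care is the bookkeeping: vertices with $\alpha(v)=0$ contribute empty clans and a factor $0!=1$, and the factorial normalization must be cancelled consistently. Neither step is a genuine obstacle.
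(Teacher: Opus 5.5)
Your proposal is correct and follows the paper's route: the paper cites Stanley's multiplicativity $X_{G+H}=X_GX_H$ and calls the result clear. You supply the details it leaves implicit, namely identifying $G^\alpha$ as the disjoint union of the two clan subgraphs, using that $H$ is induced so that $G^{\alpha|^H}\cong H^{\alpha}$, and cancelling the factorial normalization from \eqref{eq-xg-alpha}.
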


Let $S(2^n)$ denote a spider graph with $n$ legs of length $2$, as labeled in Figure \ref{fig-S2n}. We now define two particular families of maps $\alpha\colon V(S(2^n))\rightarrow \mathbb{N}$, denoted by $x_{S,n}^t$ and $\mathcal{A}_k$.

\begin{figure}[h]
  \centering
\begin{tikzpicture}
[thick, every label/.style={font=\footnotesize}, place/.style={thick,fill=black!100,circle,inner sep=0pt,minimum size=1mm,draw=black!100}]
\node [place,label=left:{$v_0$}] (v5) at (-1.5,1.5) {};
\node [place,label=left:{$v_{1}$}] (v6) at (-2.5,0.5) {};
\node [place,label=left:{$v_{1}'$}] (v7) at (-2.5,0) {};
\node[place,label=left:{${v_{2}}$}] (v16) at (-1.5,0.5) {};
\node[place,label=left:{$v_{2}'$}] (v17) at (-1.5,0) {};
\draw (v7) -- (v6) -- (v5);
\draw  (v5) -- (v16) -- (v17);
\node [place,label=right:{$v_{n}$}] (v1) at (-0.5,0.5) {};
\node at (-1,0.5) {$\cdots$};
\node [place,label=right:{$v_{n}'$}] (v2) at (-0.5,0) {};
\draw (v5) -- (v1) -- (v2);
\end{tikzpicture}
  \caption{Spider $S(2^n)$.}\label{fig-S2n}
\end{figure}
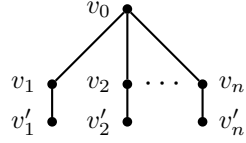

\begin{defi}\label{def-Xin}
Let $S\subseteq \{1,2,\ldots,n\}$ and let $t$ be an integer such that $n\ge t\ge \max S$.
A map $\alpha\colon V(S(2^n))\rightarrow \mathbb{N}$ is said to belong to $x^t_{S,n}$ if it satisfies the following three conditions:
\begin{itemize}
\item[(1)] $\alpha(v_0)=0$;
\item[(2)] $\alpha(v_i)+\alpha(v_i')\le 2$ for $1\le i\le n$;
\item[(3)] Define
\begin{equation}\label{eq-def-T}
T=\{j\colon \alpha(v_j)\ge 1,\ \alpha(v_j')=0\}.
\end{equation}
Then $\#T=t$. Write $T=\{k_1,k_2,\ldots,k_t\}$ with $k_1<k_2<\cdots<k_t$. We require that
\[
\alpha(v_{k_i})=\begin{cases}
2, &\text{if }i\in S;\\
1, &\text{if }i\not\in S.
\end{cases}
\]
\end{itemize}
\end{defi}

\begin{defi}
For $0\le k\le n$, let $\mathcal{A}_k$ denote the set of maps $\alpha\colon V(S(2^n))\rightarrow \mathbb{N}$ satisfying the following conditions:
\begin{itemize}
  \item[(1)] $\alpha(v_0)=1$;
  \item[(2)] For each $1\le i\le n$, $\alpha(v_i)\le 1$ and $\alpha(v_i)+\alpha(v_i')\le 2$;
  \item[(3)] $\#\{j\colon \alpha(v_j)=1,\ \alpha(v_j')=0\}=k$.
\end{itemize}
\end{defi}

Now we define a map $\phi_S\colon \mathcal{A}_k\to x^k_{S,n}$ for any $S\subseteq\{1,2,\ldots,k\}$, which will be used frequently in the remainder of this paper.

\begin{defi}\label{def-phi_S-j-alpha}
Let $\alpha\colon V(S(2^n))\to\mathbb{N}$ and $\alpha\in\mathcal{A}_k$.
Define
\[
T'=\{j\colon \alpha(v_j)=1,\ \alpha(v_j')=0\}.
\]
By the definition of $\mathcal{A}_k$, we see that $\#T'=k$ and write $T'=\{i_1,i_2,\ldots,i_k\}$ with $i_1<i_2<\cdots<i_k$.
Note that $S\subseteq \{1,2,\ldots,k\}$, we define $\psi_S(\alpha)$ as follows
\begin{equation}\label{eq-def-phi_S-s-alpha}
\phi_S(\alpha)(v)=\begin{cases}
2, &\text{if }v=v_{i_j} \text{ and }j\in S;\\
0, &\text{if }v=v_0;\\
\alpha(v), & otherwise.
\end{cases}
\end{equation}
For simplicity, we write $\phi_j(\alpha)$ for $\phi_{\{j\}}(\alpha)$.
\end{defi}


The following result concerns $\phi_S$.

\begin{prop}\label{pro-bijection}
For any $n\ge k\ge 0$ and $S\subseteq \{1,2,\ldots,k\}$,  $\phi_S$ is a bijection between $\mathcal{A}_k$ and $x^k_{S,n}$.
\end{prop}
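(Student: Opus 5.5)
We check directly that $\phi_S$ maps $\mathcal{A}_k$ into $x^k_{S,n}$, and then write down an explicit inverse. Throughout, fix $\alpha\in\mathcal{A}_k$ and let $T'=\{i_1<\cdots<i_k\}$ be as in Definition~\ref{def-phi_S-j-alpha}, and put $\beta=\phi_S(\alpha)$.

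\emph{Well-definedness.} First, $\beta(v_0)=0$ by definition, so condition (1) of Definition~\ref{def-Xin} holds. For condition (2), take $i\in\{1,\ldots,n\}$. If $i=i_j$ with $j\in S$, then $\beta(v_i)+\beta(v_i')=2+0=2$, since $\alpha(v_{i_j}')=0$ and $\phi_S$ does not change primed vertices. Otherwise $\beta$ agrees with $\alpha$ on $v_i,v_i'$, and condition (2) of $\mathcal{A}_k$ gives the bound.

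For condition (3), we determine the set $T=\{j:\beta(v_j)\ge1,\ \beta(v_j')=0\}$. Since $\beta(v_j')=\alpha(v_j')$, and $\beta(v_j)\ge 1$ exactly when $\alpha(v_j)\ge1$, that is, when $\alpha(v_j)=1$ (because $\alpha(v_j)\le1$), we get $T=T'$. Hence $\#T=k$, and its increasing enumeration is again $i_1<\cdots<i_k$. Finally, $\beta(v_{i_j})=2$ exactly for $j\in S$ and $\beta(v_{i_j})=1$ otherwise. This is precisely condition (3) with $t=k$, so $\beta\in x^k_{S,n}$.

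\emph{Inverse.} Given $\beta\in x^k_{S,n}$ with $T=\{k_1<\cdots<k_k\}$, define $\psi(\beta)$ by setting $\psi(\beta)(v_0)=1$, $\psi(\beta)(v_{k_j})=1$ for $j\in S$, and $\psi(\beta)=\beta$ elsewhere. We check that $\psi(\beta)\in\mathcal{A}_k$.

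The main point is to show $\psi(\beta)(v_i)\le1$ for every $i$. There are three cases.
\begin{itemize}
\item If $i=k_j$ with $j\in S$, the value has been reset to $1$.
\item If $i=k_j$ with $j\notin S$, condition (3) of Definition~\ref{def-Xin} gives $\beta(v_i)=1$.
\item If $i\notin T$, then either $\beta(v_i)=0$, or $\beta(v_i)\ge1$ and $\beta(v_i')\ge1$. In the latter case condition (2) forces $\beta(v_i)=\beta(v_i')=1$.
\end{itemize}
The sum condition $\alpha(v_i)+\alpha(v_i')\le 2$ is inherited from $\beta$, because values only decrease on non-primed vertices. By the same argument as above, the set $\{j:\psi(\beta)(v_j)=1,\ \psi(\beta)(v_j')=0\}$ equals $T$, which has size $k$. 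So $\psi(\beta)\in\mathcal{A}_k$.

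Because $T=T'$ under both maps, the two maps use the same enumeration of this set. It follows that $\psi\circ\phi_S$ and $\phi_S\circ\psi$ are both identities, and therefore $\phi_S$ is a bijection. The only step needing care is the case analysis for $i\notin T$ in the inverse, where condition (2) is what prevents a value of $2$ from appearing outside $T$.
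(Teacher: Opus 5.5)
Your proof is correct and follows the same route as the paper. You verify that $\phi_S(\alpha)\in x^k_{S,n}$ by observing that its set $T$ coincides with $T'$, and then exhibit an explicit two-sided inverse. The only cosmetic difference is in that inverse: yours resets $v_{k_j}$ for $j\in S$, whereas the paper resets every $v_i$ with $\beta(v_i)=2$. By condition (2) these are the same vertices on $x^k_{S,n}$, and the paper's phrasing has the small advantage of not depending on $S$ or $k$, which it exploits in Remark~\ref{rem-phi-1}.
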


\begin{proof}
For any $\alpha\in \mathcal{A}_k$, we first show that $\phi_S(\alpha)\in x^k_{S,n}$. By definition we have
\begin{equation}\label{eq-df-k}
    \#\{j\colon \alpha(v_j)= 1,\alpha(v_j')=0\}=k.
\end{equation}
Write $\{j\colon \alpha(v_j)= 1,\alpha(v_j')=0\}=\{r_1,r_2,\ldots,r_k\}$ with $1\le r_1<r_2<\cdots<r_k\le n$. Thus by the construction of $\phi_S(\alpha)$ \eqref{eq-def-phi_S-s-alpha}, we have
\begin{equation}
\phi_S(\alpha)(v)=\begin{cases}
2, & \text{if }v=v_{r_i} \text{ and }i\in S;\\
1, & \text{if }v=v_{r_i} \text{ and }i\not\in S;\\
0, & \text{if } v=v_0;\\
\alpha(v),& \text{otherwise.}
\end{cases}
\end{equation}
Note that for any $1\le h\le n$ and $h\not\in \{j\colon \alpha(v_j)\ge 1,\alpha(v_j')=0\}$, we have either $\alpha(v_h')\ne 0$ or $\alpha(v_h)=0$. Since $\phi_S(\alpha)(v_h)=\alpha(v_h)$ and $\phi_S(\alpha)(v'_h)=\alpha(v'_h)$, we have either $\phi_S(\alpha)(v_h')\ne 0$ or $\phi_S(\alpha)(v_h)=0$.
This implies the set $T$ (as defined in \eqref{eq-def-T}) for $\phi_S(\alpha)$ is $\{r_1,r_2,\ldots,r_k\}$. A routine verification shows  that $\phi_S(\alpha)\in x^k_{S,n}$.

To show that $\phi_S$ is a bijection, we construct its inverse. For any $\beta\in x^k_{S,n}$, define $\phi_S^{-1}$ as follows. Let $T$ be the set defined in \eqref{eq-def-T} for $\beta$, and write $T=\{a_1,a_2,\ldots,a_k\}$, where $k\ge \max S$. By the definition of $x^k_{S,n}$, we see that $\beta(v_{a_i})=2$ for $i\in S$, $\beta(v_{a_i})=1$ for $i\not\in S$ and $\beta(v_{a_i}')=0$ for $1\le i\le k$. Moreover, from the definition of $T$,  if $j\not\in T$, then either $\beta(v_j)=0$ or $\beta(v_j)=\beta(v_j')=1$. Define
\begin{equation}\label{eq-def-phi-s-1}
\phi_S^{-1}(\beta)(v)=\begin{cases}
1,&\text{if }v=v_{i} \text{ and } \beta(v_i)=2\text{ or }v=v_0;\\
\beta(v),&\text{otherwise.}
\end{cases}
\end{equation}
Clearly $\phi_S^{-1}(\beta)(v_0)=1$ and $\phi_S^{-1}(\beta)(v_i)\le 1$ for all $1\le i\le n$. Moreover, since $\phi_S^{-1}(\beta)(v_i)\le \beta(v_i)$ and $\beta(v_i)+\beta(v_i')\le 2$, we obtain $\phi_S^{-1}(\beta)(v_i)+\phi_S^{-1}(\beta)(v_i')\le 2$.
 Furthermore, $\phi_S^{-1}(\beta)(v_{a_i})=1$ and $\phi_S^{-1}(\beta)(v_{a_i}')=0$ for any $1\le i\le k$, and for any $j\not\in T$,  either $\phi_S^{-1}(\beta)(v_j)=0$ or $\phi_S^{-1}(\beta)(v_j)=\phi_S^{-1}(\beta)(v_j')=1$. Hence
\[\{j\colon \phi_S^{-1}(\beta)(v_j)=1, \phi_S^{-1}(\beta)(v_j')=0\}=\{a_1,a_2,\ldots, a_k\}.\]
Thus $\phi_S^{-1}(\beta)\in \mathcal{A}_k$. It is clear that $\phi_S(\phi_S^{-1}(\beta))=\beta$ and $\phi_S^{-1}(\phi_S(\alpha))=\alpha$. This yields $\phi_S$ is a bijection.
\end{proof}

\begin{rem}\label{rem-phi-1}
  Note that in the construction of $\phi_S^{-1}$ \eqref{eq-def-phi-s-1}, it does not concern $S$ or $k$. Thus we may write $\phi^{-1}$ instead of $\phi^{-1}_S$, which will be frequently used in the remainder of this paper. \end{rem}


The following corollary is a direct consequence of Proposition \ref{pro-bijection}.
\begin{cor}\label{cor-phi-distinct}
For any $S_1,S_2\subseteq{1,2,\ldots,n}$ and any maps $\alpha_1,\alpha_2\colon V(S(2^n))\rightarrow \mathbb{N}$, if $\phi_{S_1}(\alpha_1)=\phi_{S_2}(\alpha_2)$ then $\alpha_1=\alpha_2$ and $S_1=S_2$.
\end{cor}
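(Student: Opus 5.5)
The plan is to read the corollary off Proposition \ref{pro-bijection} by using the inverse map $\phi^{-1}$ described in Remark \ref{rem-phi-1}. That inverse depends on neither $S$ nor $k$, which makes it the natural tool here. Suppose $\phi_{S_1}(\alpha_1)=\phi_{S_2}(\alpha_2)=:\beta$, where $\alpha_1\in\mathcal{A}_{k_1}$ and $\alpha_2\in\mathcal{A}_{k_2}$. This is the implicit setting in which $\phi_{S_i}(\alpha_i)$ is defined, with $S_i\subseteq\{1,\ldots,k_i\}$.

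\textbf{Step 1: $\alpha_1=\alpha_2$.} By the proof of Proposition \ref{pro-bijection}, $\phi^{-1}(\phi_{S_i}(\alpha_i))=\alpha_i$ for $i=1,2$. Formula \eqref{eq-def-phi-s-1} involves only $\beta$; it lowers every value $2$ on a $v_i$ to $1$ and sets $v_0$ to $1$. So both $\alpha_1$ and $\alpha_2$ equal $\phi^{-1}(\beta)$, and hence $\alpha_1=\alpha_2=:\alpha$. In particular $k_1=k_2=:k$, because $k$ is the size of the set $T'$ attached to $\alpha$.

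\textbf{Step 2: $S_1=S_2$.} Let $T'=\{i_1<\cdots<i_k\}$ be the set from Definition \ref{def-phi_S-j-alpha} for $\alpha$. This set depends only on $\alpha$, not on $S$. By \eqref{eq-def-phi_S-s-alpha}, for each $1\le j\le k$,
\[\beta(v_{i_j})=\phi_{S_1}(\alpha)(v_{i_j})=2 \iff j\in S_1,\]
and the same equivalence holds with $S_2$ in place of $S_1$. Therefore
\[S_1=\{j:\beta(v_{i_j})=2\}=S_2.\]

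I do not expect a real obstacle; the argument is bookkeeping. The one point needing care is that the comparison in Step 1 requires a common inverse. That inverse exists only because Remark \ref{rem-phi-1} shows $\phi^{-1}$ is independent of $S$ and $k$. This independence is also what lets us conclude that the two $k$'s coincide before comparing the sets $S_i$.
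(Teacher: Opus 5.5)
Your proposal is correct and follows the paper's proof. The paper also applies the $S$-independent inverse $\phi^{-1}$ to get $\alpha_1=\alpha_2$, and then asserts that $\phi_{S_1}(\alpha)\neq\phi_{S_2}(\alpha)$ whenever $S_1\neq S_2$; your Step~2, which reads $S$ off the positions $v_{i_j}$ where the common image takes the value $2$, simply makes that ``easy to see'' step explicit.
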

\begin{proof}
Since $\phi_{S_1}(\alpha_1)=\phi_{S_2}(\alpha_2)$, we have
\[\alpha_1=\phi^{-1}(\phi_{S_1}(\alpha_1))=\phi^{-1}(\phi_{S_2}(\alpha_2))=\alpha_2.\]
Moreover, from the construction of $\phi_{S_1}$ and $\phi_{S_2}$, it is easy to see that $\phi_{S_1}(\alpha)\neq \phi_{S_2}(\alpha)$ whenever $S_1\neq S_2$. Hence $S_1=S_2$.
\end{proof}

We next introduce another definition $X^t_{S,n}$, which is closely related to $x_{S,n}^t$ and will be used frequently  in the proofs of Theorem \ref{thm-uni-t3mn} and Theorem \ref{thm-uni-t3mn-star}.

\begin{defi}\label{defi-xtsn}
Let
\[X^t_{S,n}=\{{S(2^n)}^{\alpha}\colon \alpha\in x^t_{S,n}\}, \quad X_{S,n}=\bigcup_{t=\max S}^nX^t_{S,n}.\]
We also abbreviate $X_{\{j\},n}$ as $X_{j,n}$. Moreover, we set
\begin{equation}\label{equ-def-x0n}
X_{0,n}=\{{S(2^n)}^{\alpha}\colon {S(2^n)}^{\alpha} \text{ is 2-$s$-positive}\}\setminus \bigcup_{j=1}^n X_{j,n}.
\end{equation}
\end{defi}

The following corollary can be deduced from Corollary \ref{cor-phi-distinct}, and it will be used in the classification of the set $\alpha\colon V(T_{3,m,n})\rightarrow \mathbb{N}$.

\begin{cor}\label{cor-xsn-distinct}
For any $S_1,S_2\subseteq\{1,2,\ldots,n\}$, if $S_1\ne S_2$, then $X_{S_1,n}\cap X_{S_2,n}=\emptyset$.
\end{cor}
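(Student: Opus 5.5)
The plan is to argue by contradiction. Suppose $S_1\neq S_2$ and some graph $G$ lies in both $X_{S_1,n}$ and $X_{S_2,n}$. By Definition \ref{defi-xtsn}, there are integers $t_1\ge \max S_1$ and $t_2\ge\max S_2$ and maps $\beta_1\in x^{t_1}_{S_1,n}$, $\beta_2\in x^{t_2}_{S_2,n}$ with $G={S(2^n)}^{\beta_1}={S(2^n)}^{\beta_2}$. I read membership in $X_{S,n}$ as being determined by the map $\alpha$, so that the clan graph is labelled by its type and $\beta_1=\beta_2=:\beta$. This labelling convention is used throughout Section \ref{sec3}, where clan graphs are regarded as induced subgraphs of $G^\alpha$ via Remark \ref{rem-h-alpha}. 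The remaining task is to show that a single map $\beta$ cannot lie in both $x^{t_1}_{S_1,n}$ and $x^{t_2}_{S_2,n}$ when $S_1\ne S_2$.

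To do this I would use the bijections of Proposition \ref{pro-bijection}. Since $\phi_{S_i}\colon\mathcal{A}_{t_i}\to x^{t_i}_{S_i,n}$ is a bijection, there are maps $\alpha_i\in\mathcal{A}_{t_i}$ with $\phi_{S_1}(\alpha_1)=\beta=\phi_{S_2}(\alpha_2)$. Corollary \ref{cor-phi-distinct} then gives $\alpha_1=\alpha_2$ and $S_1=S_2$, which contradicts $S_1\neq S_2$. Hence the intersection is empty.

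As a sanity check, and in case one prefers to avoid the corollary, the same conclusion can be read off directly from Definition \ref{def-Xin}. The set $T$ in \eqref{eq-def-T} depends only on $\beta$, so $t_1=t_2=\#T$, and the ordering $k_1<\cdots<k_t$ of $T$ is also determined by $\beta$. Condition (3) then forces $S=\{i:\beta(v_{k_i})=2\}$, so $S$ is recovered from $\beta$ and $S_1=S_2$.

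The main obstacle is the identification step, namely justifying that equal graphs force equal maps, i.e.\ $\beta_1=\beta_2$. If clan graphs were taken only up to isomorphism, two different maps could give isomorphic graphs, since the legs of the spider can be permuted. In that case I would pass to the labelled interpretation of Remark \ref{rem-h-alpha}, treating $G^\alpha$ as carrying its vertex-clan structure over $V(S(2^n))$. I would state explicitly that this is how $X^t_{S,n}$ is to be understood, since the paper only uses this corollary to classify maps $\alpha$ on $V(T_{3,m,n})$.
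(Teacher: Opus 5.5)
Your argument is correct and is essentially the paper's proof. You assume a common element, pull it back through the bijections $\phi_{S_1},\phi_{S_2}$ of Proposition~\ref{pro-bijection}, and apply Corollary~\ref{cor-phi-distinct} to force $S_1=S_2$; your direct recovery of $S$ as $\{i:\beta(v_{k_i})=2\}$ from Definition~\ref{def-Xin} is an equally valid shortcut.

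Your caveat about passing from $S(2^n)^{\beta_1}=S(2^n)^{\beta_2}$ to $\beta_1=\beta_2$ is well taken. The paper uses this labelled reading silently, and it is genuinely needed. Up to isomorphism, the maps with $(\beta(v_1),\beta(v_2))=(2,1)$ and $(1,2)$ (all other values $0$) would place $K_2+K_1$ in both $X_{\{1\},2}$ and $X_{\{2\},2}$.
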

\begin{proof}
Assume the contrary, if there exists $\beta$ such that $S(2^n)^\beta\in X_{S_1,n}\cap X_{S_2,n}$, then by definition, there exists $t_1,t_2$ such that $\beta\in x_{S_1,n}^{t_1}$ and $\beta\in x_{S_2,n}^{t_2}$. From Proposition \ref{pro-bijection},  there exists $\alpha_1\in \mathcal{A}_{t_1}$ and $\alpha_2\in \mathcal{A}_{t_2}$ such that
\[\phi_{S_1}(\alpha_1)=\beta=\phi_{S_2}(\alpha_2).\]
From Corollary \ref{cor-phi-distinct}, we obtain $S_1=S_2$, a contradiction. Hence $X_{S_1,n}\cap X_{S_2,n}=\emptyset$.
\end{proof}

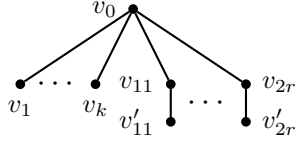
\begin{figure}[h]
  \centering
\begin{tikzpicture}
[thick, every label/.style={font=\footnotesize}, place/.style={thick,fill=black!100,circle,inner sep=0pt,minimum size=1mm,draw=black!100}]
\node [place,label=left:{$v_0$}] (v5) at (-1.5,1.5) {};
\node [place,label=left:{$v_{11}$}] (v6) at (-1,0.5) {};
\node [place,label=left:{$v_{11}'$}] (v7) at (-1,0) {};
\node[place,label=right:{${v_{2r}}$}] (v16) at (0,0.5) {};
\node[place,label=right:{$v_{2r}'$}] (v17) at (0,0) {};
\node at (-0.5,0.25) {$\cdots$};
\draw (v7) -- (v6) -- (v5);
\draw  (v5) -- (v16) -- (v17);
\node [place,label=below:{$v_{1}$}] (v2) at (-3,0.5) {};
\node [place,label=below:{$v_{k}$}] (v1) at (-2,0.5) {};
\node at (-2.5,0.5) {$\cdots$};
\draw (v1) -- (v5) -- (v2);
\end{tikzpicture}
  \caption{Spider $S(1^k, 2^r)$.}\label{fig-S1k2r}
\end{figure}

Let $S(1^k, 2^r)$ be the spider graph with $k$ legs of length 1 and $r$ legs of length 2. The labelling of this graph is shown in Figure \ref{fig-S1k2r}. We have the following result on $X_{S(2^n)}^{\phi_j(\alpha)}$.

\begin{prop}\label{pro-alpha-vs2n-phij}
 Let $\alpha\colon V(S(2^n))\rightarrow \mathbb{N}$ be such that $\alpha(v_0)=1$ and $S(2^n)^\alpha\cong S(1^k,2^r)$ for some $k\ge 3$. Then we have
  \begin{equation}\label{eq-xg-s-r+k-r+1}
X_{S(2^n)}^\alpha=X_{S(1^k,2^r)}=_{2s}s_{(r+k,r+1)}-s_{(r+k-1,r+2)}.
\end{equation}
Moreover, for any $1\le j\le k$,
\begin{equation}\label{equ-s2nphij}
X_{S(2^n)}^{\phi_j(\alpha)}\ge_{2s} s_{(r+k,r+1)}+s_{(r+k-1,r+2)}.
\end{equation}
\end{prop}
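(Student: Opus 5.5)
The plan is to compute both sides directly. For \eqref{eq-xg-s-r+k-r+1} I will use Proposition \ref{prop-2s-coe}. For \eqref{equ-s2nphij} I will use the product formula of Proposition \ref{prop-42} together with the Pieri rule.

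\emph{Step 1: the identity \eqref{eq-xg-s-r+k-r+1}.*}
\begin{itemize}
\item Since $S(2^n)^\alpha\cong S(1^k,2^r)$ is a tree with $\alpha(v_0)=1$, every nonzero value of $\alpha$ is $1$. Hence all clans are single vertices, $\prod_v\alpha(v)!=1$, and \eqref{eq-xg-alpha} gives $X^\alpha_{S(2^n)}=X_{S(1^k,2^r)}$.
\item The tree $S(1^k,2^r)$ has $2r+k+1$ vertices. Its unique bipartition has blocks $\{v_0\}\cup\{\text{the $r$ ends of the length-$2$ legs}\}$, of size $r+1$, and $\{\text{the $k$ leaves}\}\cup\{\text{the $r$ middle vertices}\}$, of size $r+k$.
\item A stable partition into at most two blocks of a connected bipartite graph must be its bipartition. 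So among the types of length at most $2$, the only nonzero count in Proposition \ref{prop-2s-coe} is $\tilde a_{(r+k,r+1)}=1$. Here $k\ge 3$ makes the two block sizes distinct, so the type really is $(r+k,r+1)$. Also $\tilde a_{(n)}=0$.
\item Proposition \ref{prop-2s-coe} then gives $[s_{(r+k,r+1)}]=1-0=1$ and $[s_{(r+k-1,r+2)}]=0-1=-1$. Every other coefficient indexed by a partition of length at most $2$ is $0$. This is exactly \eqref{eq-xg-s-r+k-r+1}.
\end{itemize}

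\emph{Step 2: the product form of $X^{\phi_j(\alpha)}_{S(2^n)}$.*}
\begin{itemize}
\item By Definition \ref{def-phi_S-j-alpha}, $\phi_j(\alpha)$ sets $v_0$ to $0$ and sets one of the $k$ vertices $v_{i_j}$ (with $\alpha(v_{i_j})=1$, $\alpha(v'_{i_j})=0$) to $2$. It leaves everything else unchanged.
\item With $v_0$ deleted, the clan graph is a disjoint union of:
\begin{itemize}
\item one $K_2$, coming from the doubled vertex;
\item $k-1$ isolated vertices;
\item $r$ copies of $P_2$, the legs with $\alpha(v_i)=\alpha(v_i')=1$.
\end{itemize}
\item Apply Proposition \ref{prop-42} repeatedly, with $X_{K_2}=2e_2$ normalized by $2!$ and $X_{P_2}=2e_2$. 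This gives
\[
X^{\phi_j(\alpha)}_{S(2^n)}=e_2\cdot e_1^{k-1}\cdot(2e_2)^r=2^r\,e_2^{\,r+1}e_1^{\,k-1}.
\]
\end{itemize}

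\emph{Step 3: the inequality \eqref{equ-s2nphij}.*}
\begin{itemize}
\item The product $2^r e_2^{r+1}e_1^{k-1}$ is a product of Schur functions, so it is Schur-positive. It therefore suffices to show that each of $s_{(r+k,r+1)}$ and $s_{(r+k-1,r+2)}$ appears with coefficient at least $1$.
\item By the Pieri rule, adding a vertical $2$-strip (one box in each of rows $1,2$) at each of the $r+1$ steps shows that $s_{(r+1,r+1)}$ occurs in $e_2^{r+1}$.
\item Multiplying by $e_1^{k-1}$ and adding single boxes, we reach $(r+k,r+1)$ by putting all $k-1$ boxes in row $1$.
\item We reach $(r+k-1,r+2)$ by first putting $k-2\ge1$ boxes in row $1$ and then one box in row $2$. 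Row $2$ stays no longer than row $1$ throughout, because $k\ge3$.
\item Both coefficients are therefore at least $2^r\ge1$, and all other coefficients are nonnegative, which gives \eqref{equ-s2nphij}.
\end{itemize}

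I expect the main care point to be Step 1. There one must justify that the only stable partitions of length at most $2$ counted by $\tilde a$ come from the bipartition, and one must get the sign and index shift $(k+1,l-1)$ of Proposition \ref{prop-2s-coe} right. The other steps are mechanical.
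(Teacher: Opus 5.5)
Your proposal is correct and matches the paper's proof. Both arguments reduce to $X_{S(1^k,2^r)}$ and read off its length-$\le 2$ coefficients from the bipartition $(r+k,r+1)$ via Proposition \ref{prop-2s-coe}, then factor $X^{\phi_j(\alpha)}_{S(2^n)}=2^r e_2^{r+1}e_1^{k-1}$ (written $\tfrac12(2s_{(1,1)})^{r+1}s_1^{k-1}$ in the paper) and exhibit the two required Schur terms; the only cosmetic difference is that you find them via explicit Pieri paths, whereas the paper writes out the full two-row expansion with coefficients $2^{r+1}f^{(k-a,a-1)}$.
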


\begin{proof}
  Since $S(1^k,2^r)$ contains no $3$-clique, we have $\alpha(u)+\alpha(v)\le 2$ for every edge $uv\in E(S(2^n))$.
  Moreover, since $S(2^n)^\alpha\cong S(1^k,2^r)$ is connected, we deduce that $\alpha(v)\in\{0,1\}$ for all $v\in V(S(2^n))$, and that the subgraph of $S(2^n)$ induced by $\{v\in V(S(2^n))\colon \alpha(v)=1\}$ is isomorphic to $S(1^k,2^r)$. Thus
  \begin{equation}\label{eq-xs2n-alpha-1}
    X_{S(2^n)}^\alpha=X_{S(2^n)^\alpha}=X_{S(1^k,2^r)}.
  \end{equation}
  It is obvious that the bipartition of $S(1^k,2^r)$ is $(r+k,r+1)$. Thus by Proposition \ref{prop-2s-coe} we have
  \begin{equation}\label{eq-xs2n-alpha-2}
    X_{S(2^n)}^\alpha=s_{(r+k,r+1)}-s_{(r+k-1,r+2)}.
  \end{equation}

  On the other hand, by the construction of ${\phi_j(\alpha)}$, the graph ${S(2^n)}^{\phi_j(\alpha)}$ consists of $r+1$ copies of $K_2$ and $k-1$ isolated vertices. Thus by the Littlewood-Richardson rule,
\begin{equation}
X_{{S(2^n)}^{\phi_j(\alpha)}}=(2s_{(1,1)})^{r+1}s_1^{k-1}=_{2s}\sum_{a=1}^{\lfloor \frac{k+1}{2}\rfloor}2^{r+1} f^{(k-a,a-1)} s_{(r+1+k-a,r+1+a-1)},
\end{equation}
where $f^{(k-a,a-1)}$ denotes the number of standard Young tableaux of shape $(k-a,a-1)$. It is clear that $\frac{k+1}{2}\ge 2$ and $f^{(k-a,a-1)}\ge 1$ for each $a$, thus
\begin{equation}\label{eq-46}
X_{{S(2^n)}^{\phi_j(\alpha)}}\ge_{2s}2^{r+1}  s_{(r+k,r+1)}+2^{r+1}  s_{(r+k-1,r+2)}.
\end{equation}
Combining \eqref{eq-xg-alpha}, we have
\begin{equation}
X_{S(2^n)}^{\phi_j(\alpha)}=\frac{1}{2}X_{{S(2^n)}^{\phi_j(\alpha)}}\ge_{2s}2^{r}  s_{(r+k,r+1)}+2^{r}  s_{(r+k-1,r+2)}\ge_{2s} s_{(r+k,r+1)}+s_{(r+k-1,r+2)}.
\end{equation}
\end{proof}

Given a graph $G$ and a map $\alpha\colon V(G)\rightarrow\mathbb{N}$, let $T$ be a connected component of $G^\alpha$. We now define two subgraphs of $G$, denoted $\hat{T}$ and $\overline{T}$, via the following proposition.

\begin{prop}\label{pro-vg-alpha-conn-t-v-alpha}
For any graph $G$ and any map $\alpha\colon V(G)\rightarrow\mathbb{N}$, let $T$ be a connected component of $G^\alpha$. Then for any $v\in V(G)$, either $V(v^\alpha)\cap V(T)=\emptyset$ or $V(v^\alpha)\subseteq V(T)$, where $v^\alpha$ is as in Remark \ref{rem-h-alpha}.
\end{prop}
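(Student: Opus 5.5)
The plan is to argue directly from the definition of the clan graph $G^\alpha$. Recall that $G^\alpha$ is obtained by replacing each vertex $v\in V(G)$ with a complete graph $K_{\alpha(v)}$, and $v^\alpha$ denotes the subgraph $K_{\alpha(v)}$ regarded, via Remark \ref{rem-h-alpha}, as an induced subgraph of $G^\alpha$. The statement is a dichotomy, so I will fix $v\in V(G)$ and assume $V(v^\alpha)\cap V(T)\neq\emptyset$. The goal is then to show $V(v^\alpha)\subseteq V(T)$.

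The key step is the observation that $v^\alpha$ is connected. When $\alpha(v)\ge 1$, it is a complete graph $K_{\alpha(v)}$, which is connected: a single vertex when $\alpha(v)=1$, and any two vertices are adjacent otherwise. When $\alpha(v)=0$, the vertex set $V(v^\alpha)$ is empty, so the first alternative $V(v^\alpha)\cap V(T)=\emptyset$ holds trivially. In the nonempty case, choose $u\in V(v^\alpha)\cap V(T)$. Every other vertex $w\in V(v^\alpha)$ is adjacent to $u$ in $G^\alpha$, because the clique $K_{\alpha(v)}$ is preserved as an induced subgraph of $G^\alpha$. Hence $w$ lies in the same connected component as $u$, namely $T$. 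This gives $V(v^\alpha)\subseteq V(T)$.

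I do not expect a genuine obstacle here. The only point requiring care is bookkeeping: one must check that edges inside $K_{\alpha(v)}$ really are edges of $G^\alpha$. This holds by the construction of the clan graph, since it replaces $v$ by a complete graph and only adds edges between cliques. One must also treat the degenerate case $\alpha(v)=0$ separately, as above. With these two points handled, the proposition follows. It then justifies defining $\hat{T}$ as the subgraph of $G$ induced by those $v$ with $V(v^\alpha)\subseteq V(T)$, which is presumably the use the paper makes of it next.
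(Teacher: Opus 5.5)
Your proposal is correct and is essentially the paper's argument: the paper supposes two vertices of $v^\alpha$ lie inside and outside $T$ and derives a contradiction from their adjacency in $G^\alpha$, which is your observation that the clique $K_{\alpha(v)}$ is connected, phrased by contradiction. Your separate treatment of $\alpha(v)=0$ is harmless, since that case is vacuous in the paper's formulation.
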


\begin{proof}
  Assume the contrary, if there exists $v_1,v_2\in V(v^\alpha)$ with $v_1\in V(T)$ and $v_2\not\in V(T)$. Then from the definition of $G^\alpha$, we see that $v_1v_2\in E(G^\alpha)$, which is contradict to $T$ is a connected component of $G^\alpha$.
\end{proof}

Proposition \ref{pro-vg-alpha-conn-t-v-alpha} enables us to give the following definition.

\begin{defi}\label{defi-t-hat}
For any graph $G$ and $\alpha\colon V(G)\rightarrow \mathbb{N}$, let $T$ be a connected component of $G^\alpha$. If $V'=\{v\in V(G)\colon V(v^\alpha)\subseteq V(T),\alpha(v)>0\}$, then let $\hat{T}$ denote the induced subgraph $G[V']$. Moreover, let $\overline{T}$ denote the subgraph of $G$ induced by $V(G)\setminus V(\hat{T})$.
\end{defi}

Using Proposition \ref{prop-42}, we obtain the following result.

\begin{prop}\label{pro-X-hat-overline-T}
    For any graph $G$,   $\alpha\colon V(G)\rightarrow \mathbb{N}$ and $\beta\colon V(G)\rightarrow \mathbb{N}$,  let $T$ be a connected component of $G^\alpha$ and let $\hat{T}$ and $\overline{T}$ be as in Definition \ref{defi-t-hat}.  Then
    \begin{equation}\label{eq-xg-alpha-xthat-xoverlinet}
        X_G^\alpha=X_{\hat{T}}^\alpha X_{\overline{T}}^\alpha.
    \end{equation}
    Moreover, if $\beta(v)=\alpha(v)$ for any $v\in V(\overline{T})$, then
    \begin{equation}\label{eq-xg-beta-xthat-xoverlinet}
        X_G^\beta=X_{\hat{T}}^\beta X_{\overline{T}}^\alpha.
    \end{equation}
\end{prop}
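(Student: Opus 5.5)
The plan is to reduce both identities to Proposition \ref{prop-42}, applied with $H=\hat{T}$ and $G\setminus H=\overline{T}$. First I will check that $G^{\alpha\mid^{\hat T}}$ is exactly the component $T$. By Proposition \ref{pro-vg-alpha-conn-t-v-alpha}, every $v$ with $\alpha(v)>0$ has its clique $v^\alpha$ either entirely inside $V(T)$ or entirely disjoint from it. Hence $V(T)=\bigcup_{v\in V(\hat T)}V(v^\alpha)$. Since $G^{\alpha\mid^{\hat T}}$ is an induced subgraph of $G^\alpha$ (Remark \ref{rem-h-alpha}), it is the induced subgraph on $V(T)$, namely $T$. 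In the same way, $G^{\alpha\mid^{\overline T}}$ is the union of the cliques $v^\alpha$ for $v\in V(\overline T)$ with $\alpha(v)>0$, and none of these lie in $T$. Vertices with $\alpha(v)=0$ contribute nothing.

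Next I will verify that there are no edges between $G^{\alpha\mid^{\hat T}}=T$ and $G^{\alpha\mid^{\overline T}}$. Such an edge would join a vertex of $T$ to a vertex of $G^\alpha$ outside $T$. This is impossible because $T$ is a connected component of $G^\alpha$. Proposition \ref{prop-42} then gives $X_G^\alpha=X_{\hat T}^\alpha X_{\overline T}^\alpha$, which is \eqref{eq-xg-alpha-xthat-xoverlinet}.

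For the second identity, which I expect to be the main obstacle, the difficulty is that under $\beta$ the cliques over $\hat T$ may have different sizes, and $\hat T^{\beta}$ need not be connected. What I need is that no edge of $G$ joins a vertex $u\in V(\hat T)$ with $\beta(u)>0$ to a vertex $w\in V(\overline T)$ with $\beta(w)=\alpha(w)>0$.

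I will argue this with $\alpha$. Every $u\in V(\hat T)$ has $\alpha(u)>0$ by definition. If some $w$ as above were adjacent to $u$, then $w^\alpha$ would be joined in $G^\alpha$ to $u^\alpha\subseteq V(T)$. This would force $w^\alpha\subseteq V(T)$, so $w\in V(\hat T)$, a contradiction. Edges of $G^\beta$ between the two parts can only come from edges $uw$ of $G$ with both endpoints having positive $\beta$. So no such edges exist, and Proposition \ref{prop-42} applied to $\beta$ gives $X_G^\beta=X_{\hat T}^\beta X_{\overline T}^\beta$.

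Finally, $\beta\mid_{\overline T}=\alpha\mid_{\overline T}$, so $X_{\overline T}^\beta=X_{\overline T}^\alpha$, which yields \eqref{eq-xg-beta-xthat-xoverlinet}.
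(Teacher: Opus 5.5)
Your proposal is correct and follows the paper's proof. Both identities come from Proposition \ref{prop-42} with $H=\hat T$. For $\beta$, the key point, as in the paper, is that any edge of $G$ from $\hat T$ to $\overline T$ must end at a vertex $w$ with $\alpha(w)=0$, hence $\beta(w)=0$. You only add detail the paper leaves implicit, namely why $G^{\alpha\mid^{\hat T}}=T$ and why such an edge would force $w\in V(\hat T)$.
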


\begin{proof}
    Since $T$ is a connected component of $G^\alpha$, Proposition \ref{prop-42} implies \eqref{eq-xg-alpha-xthat-xoverlinet}. Moreover, since $\beta(v)=\alpha(v)$ for all $v\in V(\overline{T})$ and $T$ is a connected component of $G^\alpha$, we have that if $u\in V(\overline{T})$, $v\in V(\hat{T})$, and $uv\in E(G)$, then $\alpha(u)=0=\beta(u)$. Consequently, there are no edges between $\hat{T}^\beta$ and $\overline{T}^\beta$, and \eqref{eq-xg-beta-xthat-xoverlinet} follows from Proposition \ref{prop-42}.
\end{proof}

We next study the 2-$s$-positivity of $X_{S(2^n)}^\alpha+X_{S(2^n)}^{\phi_j(\alpha)}$, which is crucial for the proofs of Theorems \ref{thm-uni-t3mn} and \ref{thm-uni-t3mn-star}.

\begin{prop}\label{pro-xgn+xgalpha}
Fix $\alpha\colon V(S(2^n))\rightarrow \mathbb{N}$. If $S(2^n)^\alpha$ is not 2-$s$-positive, then $\alpha\in \mathcal{A}_k$ for some $k\ge 3$, where $\mathcal{A}_k$ is as in Proposition \ref{pro-bijection}. Moreover, for any $1\le j\le k$, we have
\begin{equation}
X_{S(2^n)}^\alpha+X_{S(2^n)}^{\phi_j(\alpha)}\ge_{2s} 0.
\end{equation}
\end{prop}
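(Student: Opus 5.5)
The plan has two parts: first pin down the shape of $S(2^n)^\alpha$ when $X_{S(2^n)}^\alpha$ fails to be 2-$s$-positive, then feed that shape into Proposition \ref{pro-alpha-vs2n-phij}.

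\textbf{Step 1: reduce to the case where all values of $\alpha$ are $0$ or $1$.} Assume $X_{S(2^n)}^\alpha$ is not 2-$s$-positive; in particular $X_{S(2^n)}^\alpha\neq_{2s}0$. By Corollary \ref{cor-2s0}, every edge $uv$ of $S(2^n)$ has $\alpha(u)+\alpha(v)\le 2$. By Proposition \ref{prop-42}, $X_{S(2^n)}^\alpha$ is the product of the normalized chromatic symmetric functions of the connected components of the clan graph. Each component either has a bipartite, connected clan graph or gives something $=_{2s}0$. A component in which some vertex carries $\alpha=2$ must be a single vertex $v$ with $\alpha(v)=2$ and all neighbours $0$ (Proposition \ref{prop-416}). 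Its clan graph is $K_2$, which is balanced, so by Corollary \ref{cor-2s-con-bipar} its factor is 2-$s$-positive.

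\textbf{Step 2: find the unbalanced component.} A product of 2-$s$-positive factors is 2-$s$-positive. To justify this I restrict to two variables: $f\ge_{2s}0$ exactly when $f(x_1,x_2)$ is Schur-positive in $\Lambda_2$, and that property is preserved under products. Hence some factor fails to be 2-$s$-positive, and by Corollary \ref{cor-2s-con-bipar} this is a connected component $T$ whose vertices all have $\alpha=1$ and whose bipartition $(a,b)$ satisfies $a\ge b+2$. The connected subtrees of $S(2^n)$ not containing $v_0$ are single vertices or edges $v_iv_i'$, and these are balanced. So $T$ contains $v_0$ and $\alpha(v_0)=1$. Write $k$ for the number of legs with $\alpha(v_i)=1,\alpha(v_i')=0$ and $r$ for the number with $\alpha(v_i)=\alpha(v_i')=1$. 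Then $T\cong S(1^k,2^r)$ has bipartition $(r+k,r+1)$, and the imbalance condition forces $k\ge 3$. The remaining legs have $\alpha(v_i)=0$, so $\alpha(v_i')\in\{0,1,2\}$. In every case $\alpha(v_i)\le1$ and $\alpha(v_i)+\alpha(v_i')\le 2$, so $\alpha\in\mathcal{A}_k$.

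\textbf{Step 3: compare $\alpha$ and $\phi_j(\alpha)$ on $T$.} Set $\hat T$ and $\overline T$ as in Definition \ref{defi-t-hat}. Since $\phi_j(\alpha)$ agrees with $\alpha$ outside $\hat T$, Proposition \ref{pro-X-hat-overline-T} gives
\[
X_{S(2^n)}^\alpha+X_{S(2^n)}^{\phi_j(\alpha)}=\bigl(X_{\hat T}^\alpha+X_{\hat T}^{\phi_j(\alpha)}\bigr)X_{\overline T}^\alpha .
\]
In Step 1 I argued that every component other than $T$ has a 2-$s$-positive factor, so $X_{\overline T}^\alpha\ge_{2s}0$. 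For the first factor I apply Proposition \ref{pro-alpha-vs2n-phij} to $\alpha|^{\hat T}$, whose clan graph is $S(1^k,2^r)$:
\[
X_{\hat T}^\alpha+X_{\hat T}^{\phi_j(\alpha)}\ge_{2s}\bigl(s_{(r+k,r+1)}-s_{(r+k-1,r+2)}\bigr)+\bigl(s_{(r+k,r+1)}+s_{(r+k-1,r+2)}\bigr)=2s_{(r+k,r+1)}\ge_{2s}0 .
\]
Multiplying by $X_{\overline T}^\alpha$ and using closure of 2-$s$-positivity under products gives the claim.

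\textbf{Expected obstacle.} The main difficulty is bookkeeping. First, $\phi_j$ is defined on all of $S(2^n)$, so I must check that restricting to $\hat T$ is compatible with the relabelling of the $k$ pendant legs. Second, the product lemma (2-$s$-positive times 2-$s$-positive is 2-$s$-positive) is not stated in the paper. I would justify it via the two-variable restriction described in Step 2, since $s_\lambda(x_1,x_2)=0$ whenever $\ell(\lambda)>2$.
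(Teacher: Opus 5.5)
Your proposal is correct and follows the paper's proof essentially step for step. You isolate the unbalanced component containing $v_0^\alpha$ and identify it as $S(1^k,2^r)$ with $k\ge 3$, so $\alpha\in\mathcal{A}_k$. You then factor off $X^\alpha_{\overline{T}}$, using that $\phi_j(\alpha)$ agrees with $\alpha$ outside $\hat{T}$, and apply Proposition~\ref{pro-alpha-vs2n-phij} to $\alpha|^{\hat T}$. The paper leaves implicit the two points you flag, and both of your resolutions are sound. Closure of $\ge_{2s}$ and $=_{2s}$ under multiplication follows from your restriction to two variables. The identity $\phi_j(\alpha|^{\hat T})=\phi_j(\alpha)|^{\hat T}$ holds because restricting to $\hat T$ leaves the ordered index set $\{i:\alpha(v_i)=1,\ \alpha(v_i')=0\}$ unchanged.
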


\begin{proof}
Let $T_0$ be the connected component which contains $v_0^\alpha$. From Corollary \ref{cor-2s0}, we deduce that every connected component in $S(2^n)^\alpha$ other than $T_0$ is either an isolated vertex or a path. This yields $T_0$ must be non-2-$s$-positive. Thus by Proposition \ref{prop-416}, we have $\alpha(v_0)=1$. Moreover, Corollary \ref{cor-2s0} implies $\alpha(v_j)\le 1$ and $\alpha(v_j)+\alpha(v_j')\le 2$ for all $j$. Hence $T_0$ is a spider $S(1^k, 2^r)$, which implies that
  $$\#\{i\colon \alpha(v_i)=1,\alpha(v_i')=0\}=k.$$
  This yields $\alpha\in \mathcal{A}_k$.
  Furthermore, we see that the bipartition of $T_0$ is $(r+k,r+1)$. From  Corollary \ref{cor-2s-con-bipar}, we see that  $(r+k,r+1)$ is non-balance, hence  $k\ge 3$.

Let $\hat{T}_0$ and $\overline{T_0}$ be as defined in Definition \ref{defi-t-hat}. Assume $T_0\cong S(1^k, 2^r)$, then $X_{T_0}=_{2s} s_{(r+k,r+1)}-s_{(r+k-1,r+2)}$. From Proposition \ref{pro-X-hat-overline-T}, we see that
 \begin{equation}\label{equ-X-S2n-alpha}
X_{S(2^n)}^{\alpha}=_{2s} (s_{(k+r,r+1)}-s_{(k+r-1,r+2)})X_{\overline{T_0}}^{\alpha}.
 \end{equation}

On the other hand, we calculate $X_{S(2^n)}^{\phi_j(\alpha)}$. It is obvious that
\begin{equation}\label{eq-subset-vt0alpha-1}
\{v_j\colon \alpha(v_j)=1,\alpha(v_j')=0\}\subseteq V(\hat{T}_0).
\end{equation}
  From the construction of $\phi_j$, we see that $\alpha(v)=\phi_j(\alpha)(v)$ for any $v\in  V(\overline{T_0})$.
  Thus by Proposition \ref{pro-X-hat-overline-T},
 \begin{equation}\label{equ-X-S2n-phij}
 X_{S(2^n)}^{\phi_j(\alpha)}=X_{\hat{T}_0}^{\phi_j(\alpha)} X_{\overline{T_0}}^{\alpha}.
 \end{equation}
 Moreover, from the construction of $\phi_j$ \eqref{eq-def-phi_S-s-alpha} and \eqref{eq-subset-vt0alpha-1}, one may check that
 \[\phi_j(\alpha\mid^{\hat{T}_0})=\phi_j(\alpha)\mid^{\hat{T}_0}.\]
Therefore
\begin{equation}\label{eq-temp-xs2n-phi-j}
X_{\hat{T}_0}^{\phi_j(\alpha)}=X_{S(2^n)}^{\phi_j(\alpha)\mid^{\hat{T}_0}}=X_{S(2^n)}^{\phi_j(\alpha\mid^{\hat{T}_0})}.
\end{equation}
 Notice that $\hat{T}_0^{\alpha}=T_0\cong S(1^k,2^r)$. Combining \eqref{equ-X-S2n-alpha}, \eqref{equ-X-S2n-phij}, \eqref{eq-temp-xs2n-phi-j}, and Proposition \ref{pro-alpha-vs2n-phij}, we deduce that
\begin{align*}
X_{S(2^n)}^\alpha+X_{S(2^n)}^{\phi_j(\alpha)}&
=(X^{\alpha|^{\hat{T}_0}}_{S(2^n)}+X^{\phi_j(\alpha|^{\hat{T}_0})}_{S(2^n)} )\cdot X^{\alpha}_{\overline{T_0}}\\
 &\ge_{2s}2s_{(r+k,r+1)} X^{\alpha|_{\overline{T_0}}}_{S(2^n)}\ge_{2s} 0.
 \end{align*}
 where the last inequality follows from the fact that every connected component in $S(2^n)^\alpha$ other than $T_0$ is 2-$s$-positive.
\end{proof}

We also need the following result on the 2-$s$-positivity of $S(1,2^n)$, which is analogous to Proposition \ref{pro-xgn+xgalpha}.

\begin{prop}\label{pro-s12n}

The labelling of $S(1,2^n)$ is shown in Figure \ref{fig-S1k2r} for $k=1$ and $r=n$. Let $T$ be the subgraph of $S(1,2^n)$ induced by $V(S(1,2^n))\setminus \{v_1\}$; note that $T\cong S(2^n)$. Suppose $\alpha\colon V(S(1,2^n))\rightarrow \mathbb{N}$ is such that $\alpha\mid_T\in\mathcal{A}_k$ for some $k\ge 2$.
For any $1\le j\le k$, we define $\beta\colon V(S(1,2^n))\rightarrow \mathbb{N}$ as follows:
  \begin{equation}
      \beta(v)=\begin{cases}
          \phi_j(\alpha\mid_T)(v),&\text{if }v\in V(T);\\
          \alpha(v_1),&\text{if }v=v_1.
      \end{cases}
  \end{equation}
  Then we have $X_{S(1,2^n)}^\alpha+X_{S(1,2^n)}^\beta\ge_{2s} 0$.
\end{prop}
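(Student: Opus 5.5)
The plan is to split according to the value $\alpha(v_1)$ on the extra leaf. First I would record two general facts.

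\begin{itemize}
\item \emph{Products.} A product of 2-$s$-positive functions is 2-$s$-positive. By the Littlewood--Richardson rule, $s_\mu s_\nu$ with $\ell(\mu)\ge 3$ or $\ell(\nu)\ge 3$ contributes only to shapes of length $\ge 3$. So only the length-$\le 2$ parts of the factors matter for the length-$\le 2$ coefficients of the product.
\item \emph{$X^\beta$ is always 2-$s$-positive.} Since $\beta(v_0)=\phi_j(\alpha|_T)(v_0)=0$, the graph $S(1,2^n)^\beta$ splits into the pieces $\{v_i,v_i'\}^\beta$ and $v_1^\beta$. Each piece $\{v_i,v_i'\}^\beta$ has total weight $\le 2$, so it is empty, $K_1$, or $K_2$ (the latter either as an edge or as a clan $K_2$). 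The piece $v_1^\beta$ is a clique $K_{\alpha(v_1)}$, whose chromatic function $m!\,e_m$ is $s$-positive. All these pieces are 2-$s$-positive, so by Proposition \ref{prop-42} and the product fact, $X^\beta_{S(1,2^n)}\ge_{2s}0$.
\end{itemize}

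It therefore suffices to control $X^\alpha_{S(1,2^n)}$.

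\textbf{Case $\alpha(v_1)\ge 2$.} Then $\alpha(v_0)+\alpha(v_1)\ge 3$. By Corollary \ref{cor-2s0}, $X^\alpha_{S(1,2^n)}=_{2s}0$, and we are done.

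\textbf{Case $\alpha(v_1)=0$.} Then $X^\alpha_{S(1,2^n)}=X^{\alpha|_T}_{S(2^n)}$ and $X^\beta_{S(1,2^n)}=X^{\phi_j(\alpha|_T)}_{S(2^n)}$. If $S(2^n)^{\alpha|_T}$ is 2-$s$-positive, both terms are nonnegative. Otherwise, Proposition \ref{pro-xgn+xgalpha} applies directly, since $\alpha|_T\in\mathcal{A}_k$, and gives the claim.

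\textbf{Case $\alpha(v_1)=1$.} This is the main case. Let $T_0$ be the component of $S(1,2^n)^\alpha$ containing $v_0$. Since $\alpha|_T\in\mathcal{A}_k$, we have $T_0\cong S(1^{k+1},2^r)$ for some $r$. Its bipartition is $(r+k+1,r+1)$, so Proposition \ref{prop-2s-coe} gives
\[X_{T_0}=_{2s}s_{(r+k+1,r+1)}-s_{(r+k,r+2)}.\]
Following the proof of Proposition \ref{pro-xgn+xgalpha}, I would use Proposition \ref{pro-X-hat-overline-T} to factor
\[X^\alpha=X_{T_0}\,X^\alpha_{\overline{T_0}},\qquad X^\beta=X^\beta_{\hat T_0}\,X^\alpha_{\overline{T_0}}.\]
This uses that $\beta$ agrees with $\alpha$ off $\hat T_0$. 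Every component of $S(1,2^n)^\alpha$ other than $T_0$ is a path on at most two vertices or a clan $K_2$, so $X^\alpha_{\overline{T_0}}\ge_{2s}0$.

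Next I compute $\hat T_0^\beta$. Under $\beta$ the vertex $v_0$ is removed, and the $j$-th short leg becomes a clan $K_2$. It is then the disjoint union of $r+1$ copies of $K_2$ and $k$ isolated vertices, namely the other $k-1$ legs with $\alpha(v_i)=1,\ \alpha(v_i')=0$, together with $v_1$. Using \eqref{eq-xg-alpha} and the Littlewood--Richardson expansion of $(2s_{(1,1)})^{r+1}s_1^{k}$, exactly as in Proposition \ref{pro-alpha-vs2n-phij}, I get
\[X^\beta_{\hat T_0}\ge_{2s}2^r\bigl(s_{(r+k+1,r+1)}+s_{(r+k,r+2)}\bigr).\]
This step needs both shapes to actually appear, which requires $\lfloor (k+2)/2\rfloor\ge 2$, i.e. $k\ge 2$. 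That is exactly where the hypothesis $k\ge 2$ enters, and it is the point I expect to need the most care in the bookkeeping.

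Adding the two expressions, the $-s_{(r+k,r+2)}$ term cancels, leaving
\[X^\alpha+X^\beta\ge_{2s}2\,s_{(r+k+1,r+1)}\,X^\alpha_{\overline{T_0}}\ge_{2s}0,\]
where the last step is the product fact. This finishes the proof.
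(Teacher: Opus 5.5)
Your proof is correct and follows the paper's argument. You use the same three-way split on $\alpha(v_1)$, the same factorization $X^\alpha=X_{T_0}X^\alpha_{\overline{T_0}}$ and $X^\beta=X^\beta_{\hat T_0}X^\alpha_{\overline{T_0}}$, and the same cancellation of $-s_{(r+k,r+2)}$. The only difference is that you expand $X^\beta_{\hat T_0}=2^r s_{(1,1)}^{r+1}s_1^{k}$ directly, instead of writing it as $s_1\cdot X^\beta_{T_0'}$ and quoting Proposition \ref{pro-alpha-vs2n-phij}. Your route is slightly cleaner, because that proposition is only proved for $k\ge 3$ while here $k=2$ is allowed. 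You also explicitly handle the 2-$s$-positive subcase when $\alpha(v_1)=0$, which the paper glosses over.
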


\begin{proof}

If $\alpha(v_1)=0$, then clearly $X_{S(1,2^n)}^\alpha=X_T^\alpha$ and $X_{S(1,2^n)}^\beta=X_T^\beta$, and the desired result follows from Proposition \ref{pro-xgn+xgalpha}.

    Now we assume $\alpha(v_1)\ge 1$.   From $\alpha\mid_T\in \mathcal{A}_k$, we see that $\alpha(v_0)=1$. If $\alpha(v_1)\ge 2$, then clearly $S(1,2^n)^\alpha$ is non-bipartite. By Corollary \ref{cor-2s-con-bipar} we have $X_{S(1,2^n)}^\alpha=_{2s}0$. Moreover, from the construction of $\beta$, we have $X_{S(1,2^n)}^\beta\ge_{2s} 0$, which yields the desired conclusion.

    We next consider the case $\alpha(v_1)=1$. Let $T_0$ be the connected component of $S(1,2^n)^\alpha$ containing $v_0^\alpha$, and let $\hat{T}_0$ and $\overline{T}_0$ be as in Definition \ref{defi-t-hat}.    On the one hand, since $\alpha\mid_T\in\mathcal{A}_k$ we see that $T_0\cong S(1^{k+1},2^r)$ for some $r\ge 0$. Thus $X_{\hat{T}_0}^\alpha=X_{T_0}=s_{(r+k+1,r+1)}-s_{(r+k,r+2)}$. Using Proposition \ref{pro-X-hat-overline-T}, we have
    \begin{equation}\label{eq-x-s1-2n-alpha}
        X_{S(1,2^n)}^\alpha=_{2s} (s_{(r+k+1,r+1)}-s_{(r+k,r+2)})X_{\overline{T_0}}^\alpha.
    \end{equation}

    On the other hand, let $T_0'$ be the subgraph of $\hat{T}_0$ obtained by removing $v_1$. From the construction of $\beta$, we have
    \begin{equation}\label{eq-xt0'-beta-ge2s-0}
        X_{\hat{T}_0}^\beta=s_1\cdot X_{T_0'}^\beta.
    \end{equation}
   Since $\hat{T}_0^\alpha=T_0\cong S(1^{k+1},2^r)$ we have $T_0'^\alpha\cong S(1^k,2^r)$. Because $\beta\mid_{T_0'}=\phi_j(\alpha\mid_{T_0'})$, \eqref{equ-s2nphij} yields
   \begin{equation}\label{eq-xt0'-beta-ge2s}
       X_{T_0'}^\beta\ge_{2s} s_{(k+r,r+1)}+s_{(k+r-1,r+2)}.
   \end{equation}
   Combining \eqref{eq-xt0'-beta-ge2s-0} and \eqref{eq-xt0'-beta-ge2s} gives
   \begin{equation}
       X_{\hat{T}_0}^\beta\ge_{2s} s_1(s_{(k+r,r+1)}+s_{(k+r-1,r+2)})\ge_{2s} s_{(r+k+1,r+1)}+s_{(r+k,r+2)}.
   \end{equation}
   By Proposition \ref{pro-X-hat-overline-T}, we deduce that
   \begin{equation}\label{eq-x-s-1-2n-beta}
       X_{S(1,2^n)}^\beta\ge_{2s} (s_{(r+k+1,r+1)}+s_{(r+k,r+2)})X_{\overline{T_0}}^\alpha.
   \end{equation}
   Combining \eqref{eq-x-s1-2n-alpha} and \eqref{eq-x-s-1-2n-beta}, we have
   \begin{equation}
       X_{S(1,2^n)}^\alpha+X_{S(1,2^n)}^\beta\ge_{2s} 2s_{(r+k+1,r+1)}X_{\overline{T_0}}^\alpha\ge_{2s} 0.
   \end{equation}
\end{proof}



The following result is a generalization of Proposition \ref{pro-xgn+xgalpha}, which will also be used in Section \ref{sec4}.

\begin{prop}\label{Prop-con-com-phi-pos}
Let $F$ be a forest with connected components $F_1,F_2,\ldots, F_n$, where each $F_i\cong S(2^{n_i})$. Let $\alpha\colon V(F)\rightarrow \mathbb{N}$ be such that $\alpha\mid_{F_i}\in \mathcal{A}_{k_i}$ for some $k_i\ge 3$. For any $1\le a_i\le k_i$, define $\beta\colon V(F)\rightarrow \mathbb{N}$ by
\begin{equation}
\beta(v)=
    \phi_{a_i}(\alpha\mid_{F_i})(v),\quad\text{if }v\in V(F_i).
\end{equation}
Then we have $X_F^\alpha+X_F^\beta\ge_{2s} 0$.
\end{prop}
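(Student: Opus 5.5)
The plan is to reduce to the single-component situation already handled in the proof of Proposition \ref{pro-xgn+xgalpha}, and then to combine the components through a simple sign-expansion argument. Throughout I will use one multiplicativity fact about $2$-$s$-positivity. If $\ell(\lambda)\le 2$, the Littlewood--Richardson coefficient $c^{\lambda}_{\mu\nu}$ vanishes unless $\mu,\nu\subseteq\lambda$, and then $\ell(\mu),\ell(\nu)\le 2$. Hence $[s_\lambda](fg)$ depends only on the coefficients of $s_\mu$ in $f$ and $s_\nu$ in $g$ with $\ell(\mu),\ell(\nu)\le 2$. Consequently, $f=_{2s}f'$ implies $fg=_{2s}f'g$; $f,g\ge_{2s}0$ implies $fg\ge_{2s}0$; and $f\ge_{2s}f'\ge_{2s}0$ together with $g\ge_{2s}0$ implies $fg\ge_{2s}f'g$.

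\textbf{Step 1 (factorisation and one-component bounds).} Since the $F_i$ are disjoint, Proposition \ref{prop-42} gives
\[
X_F^\alpha=\prod_i X_{F_i}^{\alpha},\qquad X_F^\beta=\prod_i X_{F_i}^{\beta}.
\]
Fix $i$ and let $T_0^{(i)}$ be the component of $F_i^{\alpha}$ containing the clan of the centre. Since $\alpha|_{F_i}\in\mathcal{A}_{k_i}$, we have $T_0^{(i)}\cong S(1^{k_i},2^{r_i})$ for some $r_i\ge 0$. Set
\[
A_i=(r_i+k_i,\,r_i+1),\qquad B_i=(r_i+k_i-1,\,r_i+2),\qquad G_i=X^{\alpha}_{\overline{T_0^{(i)}}}.
\]
Exactly as in the proof of Proposition \ref{pro-xgn+xgalpha}, Proposition \ref{pro-X-hat-overline-T} together with Proposition \ref{pro-alpha-vs2n-phij} gives
\[
X_{F_i}^{\alpha}=_{2s}(s_{A_i}-s_{B_i})\,G_i,\qquad X_{F_i}^{\beta}\ge_{2s}(s_{A_i}+s_{B_i})\,G_i.
\]
The second relation holds because $\beta$ agrees with $\alpha$ on $\overline{T_0^{(i)}}$ and equals $\phi_{a_i}$ of $\alpha$ on $\hat T_0^{(i)}$.

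\textbf{Step 2 (positivity of the remaining factor).} Next I check $G_i\ge_{2s}0$. By the conditions defining $\mathcal{A}_{k_i}$, every component of $F_i^\alpha$ other than $T_0^{(i)}$ is the clan of some pair $v_j,v_j'$ with $\alpha(v_j)+\alpha(v_j')\le 2$. So each such component is a single vertex, a $K_2$ coming from a single vertex with $\alpha=2$, or an edge. All of these are balanced bipartite graphs or cliques of size at most $2$, so their chromatic symmetric functions are $2$-$s$-positive by Corollary \ref{cor-2s-con-bipar}, and the normalising factor in \eqref{eq-xg-alpha} is positive. By multiplicativity, $G:=\prod_i G_i\ge_{2s}0$.

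\textbf{Step 3 (sign expansion).} Since each $s_{A_i}+s_{B_i}$ is $2$-$s$-positive, multiplicativity lets me multiply the inequalities of Step 1 and obtain
\[
X_F^\alpha+X_F^\beta\ge_{2s}\Big(\prod_i(s_{A_i}-s_{B_i})+\prod_i(s_{A_i}+s_{B_i})\Big)G.
\]
Expanding both products over subsets $U$ of indices, where $U$ is the set of factors from which $s_{B_i}$ is chosen, the terms with $|U|$ odd cancel. What remains is
\[
2\sum_{|U|\ \mathrm{even}}\ \prod_{i\in U}s_{B_i}\prod_{i\notin U}s_{A_i},
\]
which is Schur-positive by the Littlewood--Richardson rule. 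Multiplying by $G\ge_{2s}0$ then gives the claim.

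I expect the main obstacle to be the bookkeeping in Step 1 rather than the final expansion. One must verify that $\beta$ restricted to each $\hat T_0^{(i)}$ really is $\phi_{a_i}$ applied to $\alpha|^{\hat T_0^{(i)}}$. This is the identity \eqref{eq-temp-xs2n-phi-j}, which relies on all indices $j$ with $\alpha(v_j)=1$, $\alpha(v_j')=0$ lying in $\hat T_0^{(i)}$. One must also justify replacing $=_{2s}$ and $\ge_{2s}$ inside products, which is handled by the multiplicativity fact stated at the start.
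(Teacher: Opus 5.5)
Your proof is correct and follows the paper's argument. You factor $X_F^\alpha$ and $X_F^\beta$ over the components, use the one-component relations $X_{F_i}^\alpha=_{2s}(s_{A_i}-s_{B_i})X^{\alpha}_{\overline{T_i}}$ and $X_{F_i}^\beta\ge_{2s}(s_{A_i}+s_{B_i})X^{\alpha}_{\overline{T_i}}$ from Propositions \ref{pro-X-hat-overline-T} and \ref{pro-alpha-vs2n-phij}, and add the two products. Beyond that you make explicit what the paper leaves tacit: that $=_{2s}$ and $\ge_{2s}$ behave well under multiplication by $2$-$s$-positive factors, that each $X^{\alpha}_{\overline{T_i}}$ is $2$-$s$-positive, and that $\prod_i(s_{A_i}-s_{B_i})+\prod_i(s_{A_i}+s_{B_i})$ is twice a sum of Schur products over even subsets.
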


\begin{proof}

As in the proof of Proposition \ref{pro-xgn+xgalpha}, for each $F_i^\alpha$, let $T_i$ be the unique connected component containing $v_i^\alpha$, where $v_i$ is the torso of $F_i$, and define $\hat{T}_i$ as in Definition \ref{defi-t-hat}. Moreover, let $\overline{T_i}$ denote the subgraph of $F_i$ induced by $V(F_i)\setminus V(\hat{T}_i)$. It is clear that $T_i\cong S(1^{k_i},2^{r_i})$ for some $r_i\ge 0$. By Proposition \ref{pro-X-hat-overline-T},  we have
 \begin{equation}\label{equ-X-S2n-alpha-forest}
X_{F_i}^{\alpha}=X_{{T_i}}
X_{\overline{T_i}}^{\alpha}=_{2s}(s_{(k_i+r_i,r_i+1)}-s_{(k_i+r_i-1,r_i+2)})X_{\overline{T_i}}^{\alpha}.
 \end{equation}

 On the other hand, by the definition of $\beta$, we see that $\beta(v)=\alpha(v)$ for any $v\in \overline{T_i}$. Again by Proposition \ref{pro-X-hat-overline-T}, we deduce that
  \begin{equation}\label{equ-X-S2n-phij-forest}
 X_{F_i}^{\phi_{a_i}(\alpha\mid_{F_i})}=X_{\hat{T}_i}^{\phi_{a_i}(\alpha\mid_{F_i})} X_{\overline{T_i}}^{\alpha}=X_{F_i}^{\phi_{a_i}(\alpha\mid_{F_i})\mid^{{\hat{T_i}}}} X_{\overline{T_i}}^{\alpha}.
 \end{equation}
 From \eqref{equ-s2nphij} and $F_i^{(\alpha\mid_{F_i})\mid^{\hat{T}_i}}=\hat{T}_i^\alpha=T_i\cong S(1^{k_i},2^{r_i})$, we see that
\begin{equation}\label{equ-X-S2n-phij-forest-1}
    X_{{F_i}}^{\phi_{a_i}((\alpha\mid_{F_i})\mid^{\hat{T}_i})}\ge_{2s} s_{(k_i+r_i,r_i+1)}+s_{(k_i+r_i-1,r_i+2)}.
\end{equation}
By definition it is easy to check that $\phi_{a_i}((\alpha\mid_{F_i})\mid^{\hat{T}_i})=\phi_{a_i}(\alpha\mid_{F_i})\mid^{\hat{T}_i}$. Thus combining \eqref{equ-X-S2n-phij-forest}, \eqref{equ-X-S2n-phij-forest-1} we have
\begin{equation}\label{equ-X-S2n-phij-forest-2}
    X_{F_i}^{\phi_{a_i}(\alpha\mid_{F_i})}\ge_{2s} (s_{(k_i+r_i,r_i+1)}+s_{(k_i+r_i-1,r_i+2)})X_{\overline{T_i}}^{\alpha}.
\end{equation}
By \eqref{eq-x-alpha-g+h}, we have
\begin{equation}
    X_F^\alpha+X_F^\beta=\prod_{i=1}^n X_{F_i}^\alpha+\prod_{i=1}^n X_{F_i}^{\phi_{a_i}(\alpha\mid_{F_i})}.
\end{equation}
From \eqref{equ-X-S2n-alpha-forest} and \eqref{equ-X-S2n-phij-forest-2}, we have
\begin{align*}
    X_F^\alpha+X_F^\beta &\ge_{2s}\prod_{i=1}^n (s_{(k_i+r_i,r_i+1)}-s_{(k_i+r_i-1,r_i+2)})X_{\overline{T_i}}^{\alpha}+\prod_{i=1}^n (s_{(k_i+r_i,r_i+1)}+s_{(k_i+r_i-1,r_i+2)})X_{\overline{T_i}}^{\alpha}\\
    &=\left(\prod_{i=1}^n (s_{(k_i+r_i,r_i+1)}-s_{(k_i+r_i-1,r_i+2)})+\prod_{i=1}^n (s_{(k_i+r_i,r_i+1)}+s_{(k_i+r_i-1,r_i+2)})\right)\prod_{i=1}^nX_{\overline{T_i}}^{\alpha}\\
    &\ge_{2s} 0.
\end{align*}
\end{proof}

In fact, Proposition \ref{Prop-con-com-phi-pos} can be generalized as follows.

\begin{cor}\label{cor-forest-2-pos-phi}
Using the same notation as in Proposition \ref{Prop-con-com-phi-pos}, assume that $S_i\subseteq\{1,2,\ldots,k_i\}$ for each $i$ and that $\sum_{i=1}^n \#S_i=n$. Define $\gamma\colon V(F)\rightarrow\mathbb{N}$ as
\begin{equation}
\gamma(v)=
\phi_{S_i}(\alpha)(v),\text{if }v\in V(F_i).
\end{equation}
In other words, $\gamma\mid_{F_i}=\phi_{S_i}(\alpha\mid_{F_i})$. Then we have $X_F^\alpha+X_F^\gamma\ge_{2s} 0$.
\end{cor}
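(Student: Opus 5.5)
The plan is to reduce the corollary to Proposition~\ref{Prop-con-com-phi-pos}. The key observation is that $X_F^\gamma$ depends on the sets $S_i$ only through the total $\sum_i \#S_i$. Since this total equals $n$, $X_F^\gamma$ coincides with $X_F^\beta$ for the map $\beta$ obtained by taking $S_i=\{a_i\}$ for every $i$, say with $a_i=1$.

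First I set up the notation of the proof of Proposition~\ref{Prop-con-com-phi-pos}. For each $i$ let $T_i\cong S(1^{k_i},2^{r_i})$ be the component of $F_i^\alpha$ containing the torso, with $\hat T_i$ and $\overline{T_i}$ as in Definition~\ref{defi-t-hat}. The map $\phi_{S_i}$ changes $\alpha\mid_{F_i}$ only on $V(\hat T_i)$: it sets the torso to $0$ and doubles the $\#S_i$ chosen leaves. Hence Proposition~\ref{pro-X-hat-overline-T} gives
\[
X_{F_i}^{\gamma}=X_{\hat T_i}^{\gamma}\,X_{\overline{T_i}}^{\alpha}.
\]

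Next I compute $X_{\hat T_i}^\gamma$ explicitly. Once the torso is removed, $\hat T_i^{\gamma}$ is a disjoint union of three kinds of pieces:
\begin{itemize}
\item $r_i$ copies of $K_2$ coming from the legs $v_jv_j'$ with value $(1,1)$;
\item $\#S_i$ copies of $K_2$ coming from the doubled vertices, i.e.\ clans $K_{2}$ with normalization $2!$;
\item $k_i-\#S_i$ isolated vertices.
\end{itemize}
Using \eqref{eq-xg-alpha} and multiplicativity over disjoint unions, I expect
\[
X_{\hat T_i}^{\gamma}=(2s_{(1,1)})^{r_i}\, s_{(1,1)}^{\#S_i}\, s_1^{\,k_i-\#S_i}.
\]
Taking the product over $i$ and using commutativity of $\Lambda$, this gives
\[
X_F^\gamma=2^{\sum r_i}\, s_{(1,1)}^{\sum r_i+\sum \#S_i}\, s_1^{\sum k_i-\sum \#S_i}\prod_i X_{\overline{T_i}}^{\alpha},
\]
which depends on the $S_i$ only through $\sum_i\#S_i=n$.

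The same formula with every $S_i$ a singleton $\{a_i\}$ computes $X_F^\beta$ for the map $\beta$ of Proposition~\ref{Prop-con-com-phi-pos}. Therefore $X_F^\gamma=X_F^\beta$ (even as an equality, not just $=_{2s}$), and
\[
X_F^\alpha+X_F^\gamma=X_F^\alpha+X_F^\beta\ge_{2s}0
\]
by Proposition~\ref{Prop-con-com-phi-pos}.

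The main point to check carefully is the explicit description of $\hat T_i^{\gamma}$.
\begin{itemize}
\item Setting the torso to $0$ must disconnect everything and create no new adjacencies with $\overline{T_i}$. This holds because every neighbour of $\hat T_i$ in $\overline{T_i}$ has $\alpha$-value $0$, as in the proof of Proposition~\ref{pro-X-hat-overline-T}.
\item A doubled vertex $v_{j}$ has $\gamma(v_j')=0$, so it really is an isolated clan $K_2$ contributing $\tfrac12 X_{K_2}=s_{(1,1)}$.
\item Cases with $\#S_i=0$ or $\#S_i\ge 2$ need no separate treatment; they are exactly what the redistribution argument absorbs.
\end{itemize}
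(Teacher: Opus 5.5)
Your proposal is correct and takes the same route as the paper: the paper's proof simply asserts $X_F^\gamma=X_F^\beta$ and then invokes Proposition~\ref{Prop-con-com-phi-pos}. Your explicit formula $X_F^\gamma=2^{\sum r_i}\,s_{(1,1)}^{\sum r_i+\sum\#S_i}\,s_1^{\sum k_i-\sum\#S_i}\prod_i X_{\overline{T_i}}^{\alpha}$, which depends on the $S_i$ only through $\sum_i\#S_i=n$, is exactly the verification the paper leaves as ``easy to see''.
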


\begin{proof}
It is easy to see that $X_F^\gamma=X_F^\beta$, so the desired result follows from Proposition \ref{Prop-con-com-phi-pos}.
\end{proof}

We conclude this section with the following proposition.

\begin{prop}\label{pro-claw-iso}
Let $F$ be a forest and let $\alpha$ be a map from $V(F)$ to $\mathbb{N}$.
Assume that $X_{F^\alpha}$ is non-2-$s$-positive, and $T$ is the unique connected component of $F^\alpha$ such that $T$ is non-2-$s$-positive. If the unique bipartition of $T$ is $(r+2,r)$ for some $r\ge 1$, then $F^\alpha$ contains no isolated vertices.
\end{prop}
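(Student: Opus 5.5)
The plan is to argue by contradiction. Suppose $F^\alpha$ has an isolated vertex $w$. Since $X$ is multiplicative over disjoint unions (Stanley's $X_{G+H}=X_GX_H$), we can write
\[
X_{F^\alpha}=X_T\cdot s_1\cdot X_R ,
\]
where $R$ is the union of all components of $F^\alpha$ other than $T$ and the chosen isolated vertex. We then show that the right-hand side is $2$-$s$-positive, contradicting the hypothesis.

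\emph{Step 1: compute $X_T$ modulo $=_{2s}$.} First, $T$ is a connected bipartite graph with bipartition $(r+2,r)$. A semi-ordered stable partition of a connected bipartite graph into at most two blocks must be its bipartition. So by Proposition \ref{prop-2s-coe} the only nonzero length-$\le 2$ coefficients are
\[
[s_{(r+2,r)}]X_T=1,\qquad [s_{(r+1,r+1)}]X_T=-1 .
\]
Hence
\[
X_T=_{2s}s_{(r+2,r)}-s_{(r+1,r+1)},
\]
which is the same computation as in Proposition \ref{pro-alpha-vs2n-phij}, now with balance defect $2$.

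\emph{Step 2: multiply by $s_1$.} By Pieri's rule, restricted to shapes of length at most $2$,
\[
s_1s_{(r+2,r)}=_{2s}s_{(r+3,r)}+s_{(r+2,r+1)},\qquad s_1s_{(r+1,r+1)}=_{2s}s_{(r+2,r+1)} .
\]
Therefore $X_T\,s_1=_{2s}s_{(r+3,r)}\ge_{2s}0$. We also need that replacing $X_T$ by a $=_{2s}$-equivalent function does not change the length-$\le 2$ part of a product. This holds because, by the Littlewood--Richardson rule, $s_\lambda$ with $\ell(\lambda)\le 2$ occurs in $s_\mu s_\nu$ only when $\mu,\nu\subseteq\lambda$, so only when $\ell(\mu),\ell(\nu)\le 2$.

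\emph{Step 3: handle the remaining factor.} By uniqueness of $T$, every component of $R$ is $2$-$s$-positive. The same Littlewood--Richardson observation then shows that a product of $2$-$s$-positive functions is $2$-$s$-positive: the length-$\le2$ coefficients of the product are nonnegative combinations of products of length-$\le2$ coefficients of the factors. Hence $X_R\ge_{2s}0$, and so
\[
X_{F^\alpha}=_{2s}s_{(r+3,r)}\,X_R\ge_{2s}0 .
\]
This contradicts the assumption that $X_{F^\alpha}$ is not $2$-$s$-positive, so $F^\alpha$ has no isolated vertex.

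The only delicate point is the Littlewood--Richardson truncation lemma: $2$-$s$-equivalence is compatible with multiplication, and $2$-$s$-positivity is closed under products. I would state it explicitly at the start, since the earlier propositions already use it implicitly. The rest is a direct Pieri computation.
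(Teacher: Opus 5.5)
Your proposal is correct and follows the paper's proof essentially step for step. You compute $X_T=_{2s}s_{(r+2,r)}-s_{(r+1,r+1)}$ from Proposition \ref{prop-2s-coe}, multiply by $s_1$ via Pieri to get $s_{(r+3,r)}$, and conclude that $X_{F^\alpha}$ is $2$-$s$-positive, a contradiction. Your explicit Littlewood--Richardson truncation remark, that $=_{2s}$ is compatible with products and that $2$-$s$-positivity is closed under products, is exactly the fact the paper uses without stating it.
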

\begin{proof}
Since the  bipartition of $T$ is $(r+2,r)$ for some $r\ge 1$,   Proposition \ref{prop-2s-coe} yields
\begin{equation}
X_T=_{2s} s_{(r+2,r)}-s_{(r+1,r+1)}.
\end{equation}
If $F^\alpha$ contained an isolated vertex $v$, then $X_v=s_1$, and
\begin{equation}
X_vX_T=_{2s}s_1(s_{(r+2,r)}-s_{(r+1,r+1)})=_{2s}s_{(r+3,r)}
\end{equation}
which is 2-$s$-positive. Thus
\[X_{F^\alpha}=X_vX_T\prod_{H}X_H\ge_{2s}0,\]
where product ranges over all the connected components of $F^\alpha$ other than $T$ and $v$. This contradicts the assumption that $X_{F^\alpha}$ is not 2-$s$-positive. Hence $F^\alpha$ contains no isolated vertices.
\end{proof}

\section{The unimodality of the independence polynomial on $T_{3,m,n}$}\label{sec4}

In this section, we present a proof of Theorem \ref{thm-uni-t3mn}.
We first outline the main idea of the proof. For any $\alpha$ for which $T_{3,m,n}^\alpha$ is non-2-$s$-positive, we associate to $\alpha$ a map $\psi(\alpha)$ such that $X_{T_{3,m,n}}^\alpha+X_{T_{3,m,n}}^{\psi(\alpha)}\ge_{2s}0$, except for $\alpha\in N_{30}$ (the explicit definition of $N_{30}$ will be given later in this section). We then show that for each $\alpha\in N_{30}$, the only  term in $X_{T_{3,m,n}}^\alpha$ that is non-$2$-$s$-positive is $-s_{(m+n+5,m+n+5)}$. Consequently $Y_{T_{3,m,n}}$ is $2$-$s$-positive except for the term $s_{(m+n+5,m+n+5)}$. On the other hand, it is evident that the independence number of $T_{3,m,n}$ is $m+n+6$. Thus we can write
\[I_{T_{3,m,n}}(t)=i_0+i_1t+\cdots+i_{m+n+6} t^{m+n+6},\]
 and then by Corollary \ref{lem-skk} we see that the sequence $\{i_k\}_{k=0}^{m+n+5}$ is log-concave. Together with Theorem \ref{thmLM07-2}, this implies that $I_{T_{3,m,n}}(t)$ is unimodal.

We first describe the map $\psi$. To this end, we introduce some notations. We use $G_1$(resp. $G_2$, $G_3$) to denote the induced spider with torso $v_1$(resp. $v_2$, $v_3$) with $3$(resp. $m$, $n$)  legs of length 2,  see Figure \ref{fig-G123} for example. Clearly,  $G_1\cong S(2^3)$, $G_2\cong S(2^m)$ and $G_3\cong S(2^n)$. Given $\alpha:V(T_{3,m,n})\rightarrow\mathbb{N}$, set  $k_i=\#\{j\colon \alpha(v_{ij})=1,\alpha(v_{ij}')=0\}$.   Furthermore, we denote by $A^+$   the set of trees $T$ for which $X_T$ is 2-$s$-positive and by $A^-$   the set of trees $T$ for which $X_T$ is non-2-$s$-positive.

\begin{figure}[h]
    \centering
    \begin{tikzpicture}
[thick, every label/.style={font=\footnotesize}, place/.style={thick,fill=black!100,circle,inner sep=0pt,minimum size=1mm,draw=black!100}]
\node [place,label=above:{$v_0$}] (v4) at (0,2) {};
\node [place,label=left:{$v_2$}] (v5) at (0,1) {};
\node [place,label=left:{$v_{21}$}] (v6) at (-1,0.5) {};
\node [place,label=left:{$v_{21}'$}] (v7) at (-1,0) {};
\node[place] (v14) at (0,0.5) {};
\node[place] (v15) at (0,0) {};
\node[place,label=right:{$\footnotesize{v_{2m}}$}] (v16) at (1,0.5) {};
\node[place,label=right:{$v_{2m}'$}] (v17) at (1,0) {};
\node[place,label=right:{$v_{13}$}] (v10) at (-3,0.5) {};
\node[place,label=right:{$v_{13}'$}] (v11) at (-3,0) {};
\node [place,label=left:{$v_{12}$}] (v8) at (-3.5,0.5) {};
\node [place,label=left:{$v_{12'}$}] (v9) at (-3.5,0) {};
\node[place,label=left:{$v_{11}$}] (v2) at (-4.5,0.5) {};
\node[place,label=left:{$v_{11}'$}] (v1) at (-4.5,0) {};
\node[place,label=left:{$v_{1}$}] (v3) at (-3.5,1) {};
\node[place,label=left:{$v_{31}$}] (v19) at (3,0.5) {};
\node[place,label=left:{$v_{31}'$}] (v20) at (3,0) {};
\node[place] (v21) at (4,0.5) {};
\node[place] (v22) at (4,0) {};
\node[place,label=right:{$v_{3n}$}] (v13) at (5,0.5) {};
\node[place,label=right:{$v_{3n}'$}] (v12) at (5,0) {};
\node[place,label=right:{$v_{3}$}] (v18) at (4,1) {};
\node at (-0.5,0.25) {$\cdots$};
\node at (0.5,0.25) {$\cdots$};
\node at (3.5,0.25) {$\cdots$};
\node at (4.5,0.25) {$\cdots$};
\draw (v1) -- (v2) -- (v3) -- (v4) -- (v18) -- (v13) -- (v12);
\draw (v9) -- (v8) -- (v3) -- (v10) -- (v11);
\draw (v7) -- (v6) -- (v5) -- (v4);
\draw (v15) -- (v14) -- (v5) -- (1,0.5) -- (v17);
\draw (v20) -- (v19) -- (v18) -- (v21) -- (v22);
\node[rotate = 0] at (0,-0.25) {$\underbrace{\hspace{2cm}}$};
\node at (0,-0.5) {\footnotesize $m$};
\node[rotate = 0] at (4,-0.25) {$\underbrace{\hspace{2cm}}$};
\node at (4,-0.5) {\footnotesize $n$};
\node (v23) at (-5.5,1.25) {};
\node (v24) at (-5.5,-0.75) {};
\node (v26) at (-2.25,1.25) {};
\node (v25) at (-2.25,-0.75) {};
\draw [red] [dashed] (v23) -- (v24) -- (v25) -- (v26) -- (v23);
\node at (-4,-1) {$\textcolor{red}{G_1}$};
\node (v27) at (-1.75,1.25) {};
\node (v28) at (-1.75,-0.75) {};
\node (v30) at (1.75,1.25) {};
\node (v29) at (1.75,-0.75) {};
\draw [red] [dashed] (v27) -- (v28) -- (v29) -- (v30) -- (v27);
\node (v31) at (2.25,1.25) {};
\node (v34) at (5.75,1.25) {};
\node (v32) at (2.25,-0.75) {};
\node (v33) at (5.75,-0.75) {};
\draw [red] [dashed] (v31) -- (v32) -- (v33) -- (v34) -- (v31);
\node at (4,-1) {$\textcolor{red}{G_3}$};
\node at (0,-1) {$\textcolor{red}{G_2}$};
\end{tikzpicture}
    \caption{$T_{3,m,n}$}
    \label{fig-G123}
\end{figure}

We next partition the set $N=\{\alpha\colon X_{T_{3,m,n}}^\alpha \text{ is non-2-$s$-positive}\}$ into $30$ disjoint subsets.

\begin{itemize}
\item[(1)] Let $N_1$ denote the subset of $N$ such that $\alpha(v_0)=0$ and $G_1^\alpha\in A^-$, $G_2^\alpha,G_3^\alpha\in A^+$.
\item[(2)] Let $N_2$ denote the subset of $N$ such that $\alpha(v_0)=0$ and $G_2^\alpha\in A^-$, $G_1^\alpha,G_3^\alpha\in A^+$ and $k_2=3$.
\item[(3)] Let $N_3$ denote the subset of $N$ such that $\alpha(v_0)=0$ and $G_3^\alpha\in A^-$, $G_1^\alpha,G_2^\alpha\in A^+$ and $k_3=3$.
\item[(4)] Let $N_4$ denote the subset of $N$ such that $\alpha(v_0)=0$ and $G_2^\alpha\in A^-$, $G_1^\alpha,G_3^\alpha\in A^+$ and $k_2\ge 4$.
\item[(5)] Let $N_5$ denote the subset of $N$ such that $\alpha(v_0)=0$ and $G_3^\alpha\in A^-$, $G_1^\alpha,G_2^\alpha\in A^+$ and $k_3\ge 4$.
\item[(6)] Let $N_6$ denote the subset of $N$ such that $\alpha(v_0)=0$ and $G_1^\alpha,G_2^\alpha\in A^-$, $G_3^\alpha\in A^+$. \item[(7)] Let $N_7$ denote the subset of $N$ such that $\alpha(v_0)=0$ and $G_1^\alpha,G_3^\alpha\in A^-$, $G_2^\alpha\in A^+$.
\item[(8)] Let $N_8$ denote the subset of $N$ such that $\alpha(v_0)=0$ and $G_2^\alpha,G_3^\alpha\in A^-$, $G_1^\alpha\in A^+$.
\item[(9)] Let $N_9$ denote the subset of $N$ such that $\alpha(v_0)=0$ and $G_1^\alpha,G_2^\alpha,G_3^\alpha\in A^-$.
\item[(10)] Let $N_{10}$ denote the subset of $N$ such that $\alpha(v_0)=\alpha(v_1)=1$, $\alpha(v_2)=\alpha(v_3)=0$ and $k_1=2$.
\item[(11)] Let $N_{11}$ denote the subset of $N$ such that $\alpha(v_0)=\alpha(v_1)=1$, $\alpha(v_2)=\alpha(v_3)=0$ and $k_1=3$.
\item[(12)] Let $N_{12}$ denote the subset of $N$ such that $\alpha(v_0)=\alpha(v_2)=1$, $\alpha(v_1)=\alpha(v_3)=0$ and $k_2=2$.
\item[(13)] Let $N_{13}$ denote the subset of $N$ such that $\alpha(v_0)=\alpha(v_2)=1$, $\alpha(v_1)=\alpha(v_3)=0$ and $k_2\ge 3$.
\item[(14)] Let $N_{14}$ denote the subset of $N$ such that $\alpha(v_0)=\alpha(v_3)=1$, $\alpha(v_1)=\alpha(v_2)=0$ and $k_3=2$.
\item[(15)] Let $N_{15}$ denote the subset of $N$ such that $\alpha(v_0)=\alpha(v_3)=1$, $\alpha(v_1)=\alpha(v_2)=0$ and $k_3\ge 3$.
\item[(16)] Let $N_{16}$ denote the subset of $N$ such that $\alpha(v_0)=\alpha(v_2)=\alpha(v_3)=1$, $\alpha(v_1)=0$.
\item[(17)] Let $N_{17}$ denote the subset of $N$ such that $\alpha(v_0)=\alpha(v_1)=\alpha(v_3)=1$, $\alpha(v_2)=0$, $G_2^\alpha\not\in X_{1,m}$ and $k_3\ge 1$.
\item[(18)] Let $N_{18}$ denote the subset of $N$ such that $\alpha(v_0)=\alpha(v_1)=\alpha(v_3)=1$, $\alpha(v_2)=0$, $G_2^\alpha\not\in X_{1,m}$ and $k_3=0$.
\item[(19)] Let $N_{19}$ denote the subset of $N$ such that $\alpha(v_0)=\alpha(v_1)=\alpha(v_3)=1$, $\alpha(v_2)=0$, $G_2^\alpha\in X_{1,m}$ and $k_1\ge 2$.
\item[(20)] Let $N_{20}$ denote the subset of $N$ such that $\alpha(v_0)=\alpha(v_1)=\alpha(v_3)=1$, $\alpha(v_2)=0$, $G_2^\alpha\in X_{1,m}$ and $k_1\le 1$.
\item[(21)] Let $N_{21}$ denote the subset of $N$ such that $\alpha(v_0)=\alpha(v_1)=\alpha(v_2)=1$, $\alpha(v_3)=0$, $G_3^\alpha\not\in X_{1,n}$ and $k_2\ge 2$.
\item[(22)] Let $N_{22}$ denote the subset of $N$ such that $\alpha(v_0)=\alpha(v_1)=\alpha(v_2)=1$, $\alpha(v_3)=0$, $G_3^\alpha\not\in X_{1,n}$, $k_2\le 1$ and $k_1=2$.
\item[(23)] Let $N_{23}$ denote the subset of $N$ such that $\alpha(v_0)=\alpha(v_1)=\alpha(v_2)=1$, $\alpha(v_3)=0$, $G_3^\alpha\not\in X_{1,n}$, $k_2\le 1$ and $k_1=3$.
\item[(24)] Let $N_{24}$ denote the subset of $N$ such that $\alpha(v_0)=\alpha(v_1)=\alpha(v_2)=1$, $\alpha(v_3)=0$, $G_3^\alpha\in X_{1,n}$ and $k_2\ge 1$.
\item[(25)] Let $N_{25}$ denote the subset of $N$ such that $\alpha(v_0)=\alpha(v_1)=\alpha(v_2)=1$, $\alpha(v_3)=0$, $G_3^\alpha\in X_{1,n}$ and $k_2=0$.

\item[(26)] Let \(N_{26}\) denote the subset of \(N\) such that \(\alpha(v_0)=\alpha(v_1)=\alpha(v_2)=\alpha(v_3)=1\) and \(k_1+k_2+k_3\ge 4\), and at least one of the following three conditions holds:
\begin{itemize}
\item[(i)] \(k_1 =3\);
\item[(ii)] \(k_2\ge 2\);
\item[(iii)] \(k_3\ge 2\).
\end{itemize}

\item[(27)] Let \(N_{27}\) denote the subset of \(N\) such that \(\alpha(v_0)=\alpha(v_1)=\alpha(v_2)=\alpha(v_3)=1\) and \(k_1=2\), \(k_2=1\), \(k_3=1\).

\item[(28)] Let \(N_{28}\) denote the subset of \(N\) such that \(\alpha(v_0)=\alpha(v_1)=\alpha(v_2)=\alpha(v_3)=1\), \(k_1+k_2+k_3=0\), there exists a vertex \(v_{ij}\) with \(\alpha(v_{ij})=1\), and \(\sum_{v\in V(T_{3,m,n})}\alpha(v)<2m+2n+10\).

\item[(29)] Let \(N_{29}\) denote the subset of \(N\) such that \(\alpha(v_0)=\alpha(v_1)=\alpha(v_2)=\alpha(v_3)=1\) and \(\alpha(v_{ij})=0\) for all \(v_{ij}\).

\item[(30)] Let \(N_{30}\) denote the subset of \(N\) such that \(\alpha(v_0)=\alpha(v_1)=\alpha(v_2)=\alpha(v_3)=1\), \(k_1+k_2+k_3=0\), there exists a vertex \(v_{ij}\) with \(\alpha(v_{ij})=1\), and \(\sum_{v\in V(T_{3,m,n})}\alpha(v)=2m+2n+10\).
\end{itemize}


Using Corollary \ref{cor-2s-con-bipar}, one may check that $\{N_1,N_2,\ldots, N_{30}\}$  is a set partition of $N$.
We next list $29$ pairwise disjoint subsets of $M=\{\alpha\colon T_{3,m,n}^\alpha \text{ is 2-$s$-positive}\}.$ To describe these subsets, we let $Z$  denote the set of all 2-$s$-positive forests that have no isolated vertices. We also recall that $X_{S,n}^t$ is defined in Definition \ref{defi-xtsn}.
\begin{itemize}
\item[(1)] Let $M_{1}$ denote the subset of $M$ such that $\alpha(v_0)=0$, $G_1^\alpha\in X^3_{1,3}$, $G_2^\alpha,G_3^\alpha\in Z$.
\item[(2)] Let $M_{2}$ denote the subset of $M$ such that $\alpha(v_0)=0$, $G_1^\alpha,G_3^\alpha\in Z$ and $G_2^\alpha\in X_{1,m}^3$.
\item[(3)] Let $M_{3}$ denote the subset of $M$ such that $\alpha(v_0)=0$, $G_1^\alpha,G_2^\alpha\in Z$  and $G_3^\alpha\in X_{1,n}^3$.
\item[(4)] Let $M_{4}$ denote the subset of $M$ such that $\alpha(v_0)=0$, $G_1^\alpha\in A^+$, $G_2^\alpha\in X_{i,m}^r$  and $G_3^\alpha\in X_{j,n}$, where $r\ge 4$, $i=3$ or $4$, $j\ge 0$ and $i+j$ odd.
\item[(5)] Let $M_{5}$ denote the subset of $M$ such that $\alpha(v_0)=0$, $G_1^\alpha\in A^+$, $G_2^\alpha\in X_{i,m}$  and $G_3^\alpha\in X_{j,n}^r$, where $r\ge 4$, $j=3$ or $4$, $i\ge 0$ and $i+j$ even.
\item[(6)] Let $M_{6}$ denote the subset of $M$ such that $\alpha(v_0)=0$, $G_1^\alpha\in X_{1,3}^3$, $G_2^\alpha\in X_{i,m}^r$  and $G_3^\alpha\in X_{j,n}$, where $r\ge 3$,  $i=1$ or $2$, $j\ge 0$ and $i+j$ odd.
\item[(7)] Let $M_{7}$ denote the subset of $M$ such that $\alpha(v_0)=0$, $G_1^\alpha\in X_{1,3}^3$, $G_2^\alpha\in X_{i,m}$  and $G_3^\alpha\in X_{j,n}^r$, where  $r\ge 3$, $j=1$ or $2$, $i\ge 0$ and $i+j$ even.
\item[(8)] Let $M_{8}$ denote the subset of $M$ such that $\alpha(v_0)=0$, $G_1^\alpha\in A^+$, $G_2^\alpha\in X_{\{1,2\},m}^r$  and $G_3^\alpha\in X_{\emptyset,n}^s$, where $r\ge 3$, $s\ge 3$.
\item[(9)] Let $M_{9}$ denote the subset of $M$ such that $\alpha(v_0)=0$, $G_1^\alpha\in X_{\emptyset,3}^3$, $G_2^\alpha\in X_{\{1,2,3\},m}^r$  and $G_3^\alpha\in X_{\emptyset,n}^s$, where $r\ge 3$, $s\ge 3$.
\item[(10)] Let $M_{10}$ denote the subset of $M$ such that $\alpha(v_0)=1$, $G_1^\alpha\in X_{1,3}^2$ and $G_2^\alpha,G_3^\alpha\in Z$.
\item[(11)] Let $M_{11}$ denote the subset of $M$ such that $\alpha(v_0)=\alpha(v_1)=\alpha(v_{11})=\alpha(v_{11}')=\alpha(v_{12})=1$ and $\alpha(v_{12}')=\alpha(v_{13})=\alpha(v_{13}')=\alpha(v_2)=\alpha(v_3)=0$.
\item[(12)] Let $M_{12}$ denote the subset of $M$ such that $\alpha(v_0)=1$, $\alpha(v_1)=\alpha(v_2)=\alpha(v_3)=0$, $G_2^\alpha\in X_{1,m}^2$ and $G_1^\alpha,G_3^\alpha\in Z$.
\item[(13)] Let $M_{13}$ denote the subset of $M$ such that $\alpha(v_0)=1$, $\alpha(v_1)=\alpha(v_2)=\alpha(v_3)=0$, $G_2^\alpha\in X_{i,m}^r$ and $G_3^\alpha\in X_{j,n}$, where $r\ge 3$, $i=2$ or $3$ and $i+j$ odd.
\item[(14)] Let $M_{14}$ denote the subset of $M$ such that $\alpha(v_0)=1$, $\alpha(v_1)=\alpha(v_2)=\alpha(v_3)=0$, $G_1^\alpha,G_2^\alpha\in Z$ and $G_3^\alpha\in X_{1,n}^2$.
\item[(15)] Let $M_{15}$ denote the subset of $M$ such that $\alpha(v_0)=1$, $\alpha(v_1)=\alpha(v_2)=\alpha(v_3)=0$, $G_2^\alpha\in X_{i,m}$ and $G_3^\alpha\in X_{j,n}^r$, where $r\ge 3$, $j=2$ or $3$ and $i+j$ even.
\item[(16)] Let $M_{16}$ denote the subset of $M$ such that $\alpha(v_0)=1$, $\alpha(v_1)=\alpha(v_2)=\alpha(v_3)=0$, either $G_2^\alpha\in X_{\{1,2\},m}$ and $G_3^\alpha\in X_{\emptyset,n}$ or $G_2^\alpha\in X_{\emptyset,m}$ and $G_3^\alpha\in X_{\{1,2\},n}$.
\item[(17)] Let $M_{17}$ denote the subset of $M$ such that $\alpha(v_0)=2$, $\alpha(v_1)=\alpha(v_2)=\alpha(v_3)=0$, $G_1^\alpha\in X_{\emptyset, 3}$, $G_2^\alpha\not\in X_{1,m}$ and $G_3^\alpha\in X_{1,n}$. Moreover, $k_1+k_3\ge 2$.
\item[(18)] Let $M_{18}$ denote the subset of $M$ such that   $\alpha(v_0)=\alpha(v_1)=\alpha(v_{11})=\alpha(v_{11}')=\alpha(v_{12})=\alpha(v_{12}')=1$ and $\alpha(v_{13})=\alpha(v_{13}')=\alpha(v_2)=0$, $G_2^\alpha\not\in X_{1,m}$ and $G_3^\alpha\in X_{\emptyset,n}$.
\item[(19)] Let $M_{19}$ denote the subset of $M$ such that $\alpha(v_0)=2$, $\alpha(v_1)=\alpha(v_2)=\alpha(v_3)=0$, $G_1^\alpha\in X_{2, 3}$, $G_2^\alpha\in X_{1,m}$ and $G_3^\alpha\in X_{\emptyset,n}$.
\item[(20)] Let $M_{20}$ denote the subset of $M$ such that $\alpha(v_0)=2$, $\alpha(v_1)=\alpha(v_2)=\alpha(v_3)=0$, $G_1^\alpha\in X_{\emptyset, 3}$, $G_2^\alpha\in X_{1,m}$ and $G_3^\alpha\in X_{2,n}^r$, where $r\ge 2$.
\item[(21)] Let $M_{21}$ denote the subset of $M$ such that $\alpha(v_0)=2$, $\alpha(v_1)=\alpha(v_2)=\alpha(v_3)=0$, $G_1^\alpha\in X_{\emptyset, 3}$, $G_2^\alpha\in X_{2,m}$ and $G_3^\alpha\not\in X_{1,n}$.
\item[(22)] Let $M_{22}$ denote the subset of $M$ such that $\alpha(v_0)=2$, $\alpha(v_1)=\alpha(v_2)=\alpha(v_3)=0$, $G_1^\alpha\in X_{\emptyset, 3}$, $G_2^\alpha\in X^1_{1,m}$ and $G_3^\alpha\not\in X_{1,n}\cup X_{2,n}$.
\item[(23)] Let $M_{23}$ denote the subset of $M$ such that $\alpha(v_0)=\alpha(v_1)=\alpha(v_{11})=\alpha(v_{11}')=\alpha(v_{12}')=\alpha(v_{13})=1$ and $\alpha(v_{12})=\alpha(v'_{13})=\alpha(v_2)=\alpha(v_3)=0$. Moreover $G_2^\alpha\in X_{\emptyset,m}$ and $G_3^\alpha\not\in X_{1,n}$.
\item[(24)] Let $M_{24}$ denote the subset of $M$ such that  $\alpha(v_0)=2$, $\alpha(v_1)=\alpha(v_2)=\alpha(v_3)=0$, $G_1^\alpha\in X_{\emptyset, 3}$, $G_2^\alpha\in X_{1,m}$ and $G_3^\alpha\in X_{1,n}$.
\item[(25)] Let $M_{25}$ denote the subset of $M$ such that $\alpha(v_0)=\alpha(v_1)=\alpha(v_{12})=\alpha(v_{13})=\alpha(v_{11}')=\alpha(v_{12}')=1$ and $\alpha(v_{11})=\alpha(v'_{13})=\alpha(v_2)=\alpha(v_3)=0$. Moreover $G_2^\alpha\in X_{\emptyset,m}$ and $G_3^\alpha\in X_{1,n}$.
\item[(26)]Let $M_{26}$ denote the subset of $M$ such that $\alpha(v_0)=2$, $\alpha(v_1)=\alpha(v_2)=\alpha(v_3)=0$. Moreover, $G_1^\alpha\in X_{P_1,3}$, $G_2^\alpha\in X_{P_2,m}$, $G_3^\alpha\in X_{P_3,n}$, where one of the following three restrictions holds:
\begin{itemize}
\item[(i)] $G_1^\alpha\in X_{\{1,2\},3}^3$, $G_2^\alpha\in X_{\emptyset,m}$ and $G_3^\alpha\in X_{\emptyset,n}$;
\item[(ii)] $G_1^\alpha\in X_{\emptyset,3}$, $G_2^\alpha\in X_{\{1,2\},m}$ and $G_3^\alpha\in X_{\emptyset,n}$;
\item[(iii)] $G_1^\alpha\in X_{\emptyset,3}$, $G_2^\alpha\in X_{\emptyset,m}$ and $G_3^\alpha\in X_{\{1,2\},n}$.
\end{itemize}
\item[(27)]Let $M_{27}$ denote the subset of $M$ such that $\alpha(v_0)=2$, $\alpha(v_1)=\alpha(v_2)=\alpha(v_3)=0$. Moreover, $G_1^\alpha\in X^2_{1,3}$, $G_2^\alpha\in X^1_{\emptyset,m}$, $G_3^\alpha\in X^1_{1,n}$.
\item[(28)] Let $M_{28}$ denote the subset of $M$ such that $\alpha(v_0)=\alpha(v_1)=\alpha(v_{2})=\alpha(v_{3})=1$ and $k_1+k_2+k_3=1$. Moreover there exists a unique $(i,j)$ where $1\le i\le 3$ such that $\alpha(v_{ij})=0$ and $\alpha(v_{ij}')=1$.
\item[(29)] Let $M_{29}$ denote the subset of $M$ such that $\alpha(v_0)=0$, $\alpha(v_1)=2$, $\alpha(v_{2})=\alpha(v_{3})=1$ and $\alpha(v_{ij})=0$ for any $v_{ij}$.
\end{itemize}
Using Corollary \ref{cor-xsn-distinct} and the fact that $Z\cap X_{S,n}^k=\emptyset$ unless $S=\{1,2,\ldots,k\}$, it is routine to check that $\{M_i\}_{i=1}^{29}$ are pairwise disjoint.

We next construct $29$ injections $\psi_i\colon N_i\rightarrow M_i$ for $1\le i\le 29$. Moreover, for $\alpha\in N_{30}$, we show that in Lemma \ref{lem-29} that the only term in $X_{T_{3,m,n}}^\alpha$ that is non-$2$-$s$-positive    is $-s_{(m+n+5,m+n+5)}$. This implies the proof of Theorem \ref{thm-uni-t3mn}.

We now describe each injection $\psi_i$.

\begin{lem}\label{lem-psi1}
There is an injection $\psi_1$ from $N_1$ to $M_1$. Moreover, $X_{T_{3,m,n}}^\alpha+X_{T_{3,m,n}}^{\psi_1(\alpha)}$ is 2-$s$-positive.
\end{lem}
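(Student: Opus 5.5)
We first pin down what $\alpha\in N_1$ looks like. Since $\alpha(v_0)=0$, the graph $T_{3,m,n}^\alpha$ splits into the disjoint union of $G_1^\alpha$, $G_2^\alpha$ and $G_3^\alpha$. By Proposition \ref{prop-42},
\[X_{T_{3,m,n}}^\alpha=X_{G_1}^\alpha X_{G_2}^\alpha X_{G_3}^\alpha .\]
Because $G_1^\alpha\in A^-$, Proposition \ref{pro-xgn+xgalpha} gives $\alpha|_{G_1}\in\mathcal{A}_{k_1}$ with $k_1\ge 3$. As $G_1\cong S(2^3)$, this forces $k_1=3$. Hence $\alpha(v_1)=1$, $\alpha(v_{1j})=1$ and $\alpha(v'_{1j})=0$ for $j=1,2,3$. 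So $G_1^\alpha$ is the claw $S(1^3)$, with bipartition $(3,1)$ and $X_{G_1}^\alpha=_{2s}s_{(3,1)}-s_{(2,2)}$.

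Since $X_{T_{3,m,n}}^\alpha$ is non-2-$s$-positive and $G_2^\alpha,G_3^\alpha\in A^+$, the claw is the unique non-2-$s$-positive component, and its bipartition is $(r+2,r)$ with $r=1$. Proposition \ref{pro-claw-iso} then shows that $T_{3,m,n}^\alpha$ has no isolated vertices. In particular $G_2^\alpha,G_3^\alpha\in Z$.

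Next we define the injection. Set $\psi_1(\alpha)$ to agree with $\alpha$ off $G_1$, and on $G_1$ set $\psi_1(\alpha)|_{G_1}=\phi_1(\alpha|_{G_1})$. Concretely, $v_1$ becomes $0$, $v_{11}$ becomes $2$, and the rest is unchanged. By Proposition \ref{pro-bijection}, $\phi_1(\alpha|_{G_1})\in x^3_{1,3}$, so $G_1^{\psi_1(\alpha)}\in X^3_{1,3}$.

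It remains to check that $\psi_1(\alpha)\in M$. Since $\psi_1(\alpha)(v_0)=0$, the $G_i$ parts are again disconnected. $G_1^{\psi_1(\alpha)}$ is a $K_2$ plus two isolated vertices, and $G_2^\alpha,G_3^\alpha$ are unchanged, lie in $Z$, and are 2-$s$-positive. The product of 2-$s$-positive functions is 2-$s$-positive, by the Littlewood–Richardson rule restricted to length $\le 2$. So $T_{3,m,n}^{\psi_1(\alpha)}$ is 2-$s$-positive and $\psi_1(\alpha)\in M_1$.

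Injectivity follows because $\phi_1$ is a bijection (Proposition \ref{pro-bijection}) and $\psi_1$ is the identity off $G_1$. Equivalently, $\phi^{-1}$ recovers $\alpha|_{G_1}$, and $\alpha(v_0)=0$ is fixed.

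For the positivity, we factor:
\[X_{T_{3,m,n}}^\alpha+X_{T_{3,m,n}}^{\psi_1(\alpha)}=\bigl(X_{G_1}^{\alpha}+X_{G_1}^{\phi_1(\alpha|_{G_1})}\bigr)X_{G_2}^\alpha X_{G_3}^\alpha .\]
Proposition \ref{pro-xgn+xgalpha}, applied with $n=3$ and $j=1$, shows that the bracket is 2-$s$-positive. Explicitly, the bracket is $\ge_{2s}2s_{(3,1)}$, as in Proposition \ref{pro-alpha-vs2n-phij} with $r=0$, $k=3$. The two remaining factors are 2-$s$-positive, so their product with the bracket is 2-$s$-positive, giving the claim.

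The main obstacle is the step showing $G_2^\alpha,G_3^\alpha\in Z$, which $M_1$ requires. This needs the observation that the claw has bipartition type $(r+2,r)$ with $r=1$, so that Proposition \ref{pro-claw-iso} applies. One also needs care that the no-isolated-vertex conclusion concerns the components coming from $G_2,G_3$ and not the claw itself.
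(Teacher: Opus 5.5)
Your proposal follows the paper's proof of this lemma almost line for line:
\begin{itemize}
\item the same map $\psi_1$ (apply $\phi_1$ on $G_1$);
\item the same use of Propositions~\ref{pro-xgn+xgalpha}, \ref{pro-bijection} and \ref{pro-claw-iso};
\item injectivity through $\phi^{-1}$;
\item the same factorization, with the bracket $=_{2s}2s_{(3,1)}$.
\end{itemize}
All of that is correct.

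The one step that does not follow from what you cite is ``$G_2^\alpha,G_3^\alpha\in A^+$, hence the claw is the unique non-2-$s$-positive component''. This is exactly the hypothesis Proposition~\ref{pro-claw-iso} needs, and the paper's proof also applies that proposition without checking it. Suppose $G_2^\alpha\in A^+$ means $X_{G_2}^\alpha\ge_{2s}0$, which is how you use it later. Then a component of $G_2^\alpha$ can be unbalanced and still be compensated by isolated vertices of $G_2^\alpha$. For example, take $\alpha(v_2)=\alpha(v_{21})=\alpha(v_{22})=\alpha(v_{23})=\alpha(v'_{24})=1$ and $\alpha=0$ elsewhere on $G_2$. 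Then $G_2^\alpha$ is a claw plus an isolated vertex, and $X_{G_2}^\alpha=_{2s}s_1(s_{(3,1)}-s_{(2,2)})=_{2s}s_{(4,1)}$. So you have two options:
\begin{itemize}
\item read $A^{\pm}$ componentwise, in which case uniqueness is immediate, but that reading must then be used consistently in the definitions of all the $N_i$;
\item or exclude isolated vertices in $G_2^\alpha,G_3^\alpha$ by a separate argument.
\end{itemize}

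For the second option, half the work is easy. Suppose $v\in V(G_2)$ (say) has $v^\alpha$ an isolated vertex of $G_2^\alpha$, and $X^\alpha_{G_2-v}$ is still 2-$s$-positive. Grouping $v^\alpha$ with the claw gives $X^\alpha_{T_{3,m,n}}=_{2s}s_{(4,1)}X^\alpha_{G_2-v}X^\alpha_{G_3}\ge_{2s}0$, which contradicts $\alpha\in N$.

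What remains are the tight configurations, where no isolated vertex of $G_2^\alpha$ or $G_3^\alpha$ can be removed without losing 2-$s$-positivity of that piece. These need a direct check. A convenient tool is the specialization $x_3=x_4=\cdots=0$:
\begin{itemize}
\item it kills exactly the $s_\lambda$ with $\ell(\lambda)\ge 3$;
\item a symmetric polynomial in $x_1,x_2$ is 2-$s$-positive iff, in each degree, its coefficients weakly increase toward the middle;
\item a connected bipartite component with bipartition $(p,q)$ goes to $(x_1x_2)^q(x_1^{p-q}+x_2^{p-q})$.
\end{itemize}
The question then becomes whether $(1+t^2)\prod_i(1+t^{d_i})$ is unimodal, where $d_i=p_i-q_i$ runs over the components of $G_2^\alpha$ and $G_3^\alpha$.

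This check is not vacuous. Take torsos $S(1^3,2^{r_i})$ in both $G_2^\alpha$ and $G_3^\alpha$, each with one isolated vertex. Then $G_2^\alpha,G_3^\alpha\in A^+$, yet $T_{3,m,n}^\alpha$ has three non-2-$s$-positive components. The resulting polynomial is $(1+t^2)^3(1+t)^2$, with coefficients $1,2,4,6,6,6,4,2,1$. This is unimodal, so such an $\alpha$ is not in $N$, but only with a flat top and no slack.
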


\begin{proof}
For $\alpha\in N_1$, by definition we have $G_1^\alpha\in A^-$ and $G_2^\alpha,G_3^\alpha\in A^+$. From Proposition \ref{pro-xgn+xgalpha}, $G_1^\alpha\in A^-$ implies that $\alpha(v_1)=1$ and $k_1=3$. Hence $G_1^\alpha\cong S(1^3)$ and the bipartition of $G_1^\alpha$ is $(3,1)$. Using Proposition \ref{pro-claw-iso}, we deduce that there are no isolated vertices in $G_2^\alpha$ and $G_3^\alpha$. Hence $G_2^\alpha, G_3^\alpha\in Z$. Define $\psi_1(\alpha)$ as follows:
\begin{equation}
\psi_1(\alpha)(v)=\begin{cases}
\phi_1(\alpha\mid_{G_1})(v),&\text{if }v\in V(G_1);\\
\alpha(v),& \text{otherwise.}
\end{cases}
\end{equation}
Set $\beta=\psi_1(\alpha)$. It is evident that $\alpha\mid_{G_1}\in \mathcal{A}_3$. From the construction of $\psi_1$, we see that $\beta\mid_{G_1}=\phi_1(\alpha\mid_{G_1})$. By Proposition \ref{pro-bijection},  $\beta\mid_{G_1}\in x_{1,3}^3$, so $G_1^\beta=G_1^{\beta\mid_{G_1}}\in X_{1,3}^3$.  Moreover, clearly  $G_2^\beta=G_2^\alpha\in Z$ and $G_3^\beta=G_3^\alpha\in Z$. Hence $\beta\in M_1$.

We proceed to show that $\psi_1$ is an injection. Let $I_1=\{\psi_1(\alpha)\colon \alpha\in N_1\}\subseteq M_1$. For any $\beta\in I_1$, define $\varphi_1\colon I_1\rightarrow N_1$ by
\begin{equation}
    \varphi_1(\beta)(v)=\begin{cases}
        \phi^{-1}(\beta\mid_{G_1})(v),& \text{if }v\in V(G_1);\\
        \beta(v),& \text{otherwise.}
    \end{cases}
\end{equation}
Here $\phi^{-1}$ is as defined in \eqref{eq-def-phi-s-1} and Remark \ref{rem-phi-1}. One easily checks that  $\varphi_1(\psi_1(\alpha))=\alpha$ for any $\alpha\in N_1$. This implies $\psi_1$ is an injection.

We next calculate $X_{T_{3,m,n}}^\alpha+X_{T_{3,m,n}}^\beta$. Note that $\alpha\mid_{G_1}\in \mathcal{A}_3$ and  $\beta\mid_{G_1}=\phi_1(\alpha\mid_{G_1})$.  By Proposition \ref{pro-xgn+xgalpha},
\begin{equation}\label{ine-xt1+xt1'2s0}
X_{G_1}^\alpha+X_{G_1}^{\beta}=X_{G_1}^{\alpha\mid_{G_1}}+X_{G_1}^{\phi_1(\alpha\mid_{G_1})}\ge_{2s} 0.
\end{equation}
Since $\alpha(v)=\beta(v)$ for all $v\not\in G_1$, we have $X_{G_2}^\alpha=X_{G_2}^{\beta}\ge_{2s}0$ and $X_{G_3}^\alpha=X_{G_3}^{\beta}\ge_{2s}0$. Thus from Proposition \ref{prop-42},
\begin{align*}
X_{T_{3,m,n}}^{\alpha}+X_{T_{3,m,n}}^{\psi_1(\alpha)}&=X_{G_1}^\alpha X_{G_2}^\alpha X_{G_3}^\alpha+X_{G_1}^\beta X_{G_2}^\beta X_{G_3}^\beta\\
&=(X_{G_1}^{\alpha}+X_{G_1}^\beta)X_{G_2}^\alpha X_{G_3}^\alpha\ge_{2s}0.
\end{align*}
The last inequality follows from \eqref{ine-xt1+xt1'2s0}.
\end{proof}

\begin{lem}\label{lem-psi2}
There is an injection $\psi_2$ from $N_2$ to $M_2$. Moreover, $X_{T_{3,m,n}}^\alpha+X_{T_{3,m,n}}^{\psi_2(\alpha)}$ is 2-$s$-positive.
\end{lem}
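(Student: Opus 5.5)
The plan mirrors the proof of Lemma \ref{lem-psi1}, with the spider $G_2\cong S(2^m)$ playing the role of $G_1$. Take $\alpha\in N_2$, so $\alpha(v_0)=0$, $G_2^\alpha\in A^-$, $G_1^\alpha,G_3^\alpha\in A^+$ and $k_2=3$.

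First, I will determine the shape of the non-positive component. Proposition \ref{pro-xgn+xgalpha} gives $\alpha\mid_{G_2}\in\mathcal{A}_3$. Let $T_0$ be the component of $G_2^\alpha$ containing $v_2^\alpha$; then $T_0\cong S(1^3,2^r)$ for some $r\ge 0$, whose bipartition is $(r+3,r+1)$. This has the form $(r'+2,r')$ with $r'=r+1\ge1$. Since $\alpha(v_0)=0$, the forest $T_{3,m,n}^\alpha$ is the disjoint union of $G_1^\alpha$, $G_2^\alpha$ and $G_3^\alpha$. Moreover, $T_0$ is its unique non-2-$s$-positive component, because every other component is a single vertex, an edge or $K_2$ (Corollary \ref{cor-2s0}). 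Proposition \ref{pro-claw-iso} then shows that $T_{3,m,n}^\alpha$ has no isolated vertices. Hence $G_1^\alpha,G_3^\alpha\in Z$.

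Second, I will define $\psi_2(\alpha)$ to agree with $\phi_1(\alpha\mid_{G_2})$ on $V(G_2)$ and with $\alpha$ elsewhere. Set $\beta=\psi_2(\alpha)$. By Proposition \ref{pro-bijection}, $\beta\mid_{G_2}\in x^3_{1,m}$, so $G_2^\beta\in X^3_{1,m}$, while $G_1^\beta=G_1^\alpha\in Z$ and $G_3^\beta=G_3^\alpha\in Z$. We also need $\beta\in M$, i.e.\ that $T_{3,m,n}^\beta$ is 2-$s$-positive. Here $G_2^\beta$ consists of $r+1$ copies of $K_2$ (one of them the clan of $v_{2i_1}$ with $\alpha=2$), two isolated vertices, and components that were already 2-$s$-positive. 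As computed in Proposition \ref{pro-alpha-vs2n-phij}, $(2s_{11})^{r+1}s_1^2$ is 2-$s$-positive, so $G_2^\beta\in A^+$ and $\beta\in M_2$. Injectivity follows as before: the map equal to $\phi^{-1}(\beta\mid_{G_2})$ on $G_2$ and to $\beta$ elsewhere is a left inverse of $\psi_2$, by Remark \ref{rem-phi-1}.

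Third, for the positivity claim I will apply Proposition \ref{prop-42}, which is valid because $\alpha(v_0)=\beta(v_0)=0$. This gives
\[X_{T_{3,m,n}}^\alpha+X_{T_{3,m,n}}^{\beta}=(X_{G_2}^\alpha+X_{G_2}^{\beta})X_{G_1}^\alpha X_{G_3}^\alpha.\]
The first factor is $\ge_{2s}0$ by Proposition \ref{pro-xgn+xgalpha} with $j=1$, and the other two factors are 2-$s$-positive. The product of 2-$s$-positive functions is 2-$s$-positive, since by Littlewood--Richardson a product $s_\mu s_\nu$ with $\ell(\mu)$ or $\ell(\nu)>2$ contributes nothing to shapes of length $\le2$.

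The main subtlety is the first step: the hypothesis $k_2=3$ is exactly what puts the bipartition of $T_0$ in the form $(r'+2,r')$ required by Proposition \ref{pro-claw-iso}, and this is what places the image in $Z$ for the $G_1,G_3$ parts, as the definition of $M_2$ demands. Checking that $\beta$ genuinely lands in $M$ is routine by the component analysis above.
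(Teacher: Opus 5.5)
Your $\psi_2$, its left inverse built from $\phi^{-1}$, and the positivity argument via Proposition~\ref{prop-42} and Proposition~\ref{pro-xgn+xgalpha} are the paper's. Your explicit check that $\psi_2(\alpha)\in M$, which the paper only asserts, is correct.

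The step that fails as written is your justification of the uniqueness hypothesis of Proposition~\ref{pro-claw-iso}. It is false that every component of $T_{3,m,n}^\alpha$ other than $T_0$ is a single vertex or an edge. Corollary~\ref{cor-2s0} gives this only for the components of each $G_i^\alpha$ that do not contain $v_i^\alpha$. The components containing $v_1^\alpha$ and $v_3^\alpha$ can be larger spiders $S(1^{k},2^{r})$, and what you actually need is that they are balanced.

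For $G_1\cong S(2^3)$ this follows from $G_1^\alpha\in A^+$: an unbalanced torso forces $k_1=3$, i.e.\ $G_1^\alpha\cong S(1^3)$, which is not 2-$s$-positive. For $G_3$ it does not follow from $X_{G_3}^\alpha\ge_{2s}0$ alone. An unbalanced torso $S(1^{k_3},2^{r_3})$ with $k_3\ge 3$ can be compensated inside $G_3^\alpha$ by isolated vertices $v'_{3j}$. For $k_3=3$ and one such vertex, $s_1(s_{(r_3+3,r_3+1)}-s_{(r_3+2,r_3+2)})=_{2s}s_{(r_3+4,r_3+1)}$. In that case $G_3^\alpha\notin Z$, so $\psi_2(\alpha)\notin M_2$.

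To close the gap you must show that such $\alpha$ do not lie in $N$, i.e.\ that $X_{G_2}^\alpha X_{G_3}^\alpha$ is then already 2-$s$-positive. For $k_3=3$ this is a short check by specializing to two variables: the part of $s_{(a+3,a)}(s_{(b+3,b+1)}-s_{(b+2,b+2)})$ of length at most $2$ is $s_{(a+b+6,a+b+1)}+s_{(a+b+4,a+b+3)}$. For $k_3\ge 4$, where more isolated vertices are needed for the compensation, a further computation is required.

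Alternatively, read $G_3^\alpha\in A^+$ as a condition on the component containing $v_3^\alpha$; then your uniqueness claim is immediate. The paper's proof also invokes Proposition~\ref{pro-claw-iso} at this point without addressing the issue.
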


\begin{proof}
Given $\alpha\in N_2$, by definition we see that $G_2^\alpha\in A^-$, $G_1^\alpha,G_3^\alpha\in A^+$ and $k_2=3$. From Proposition \ref{pro-xgn+xgalpha}, $G_2^\alpha\in A^-$ implies that $\alpha(v_2)=1$. Let ${T}_2$ denote the connected component of $G_2^\alpha$ which contains $v_2^\alpha$. From Proposition \ref{prop-416} and $k_2=3$, we see that ${T}_2\cong S(1^3,2^r)$ for some $r\ge 0$. Hence the bipartition of ${T}_2$ is $(r+3,r+1)$.   Using Proposition \ref{pro-claw-iso} we derive that there are no isolated vertices in $G_1^\alpha$  or $G_3^\alpha$. Hence $G_1^\alpha, G_3^\alpha\in Z$. Define $\psi_2(\alpha)$ as follows:
\begin{equation}
\psi_2(\alpha)(v)=\begin{cases}
\phi_1(\alpha\mid_{G_2})(v),&\text{if }v\in V(G_2);\\
\alpha(v),& \text{otherwise.}
\end{cases}
\end{equation}
Set $\beta=\psi_2(\alpha)$. Since $G_2^\alpha\in A^-$, from Proposition \ref{pro-xgn+xgalpha} and $k_2=3$, we see that $\alpha\mid_{G_2}\in\mathcal{A}_3$. By Proposition \ref{pro-bijection}, we have $\phi_1(\alpha\mid_{G_2})\in x_{1,m}^3$, this implies $G_2^\beta=G_2^{\phi_1(\alpha\mid_{G_2})}\in X_{1,m}^3$. Moreover, $G_1^\beta=G_1^\alpha\in Z$ and $G_3^\beta=G_3^\alpha\in Z$. This yields $\beta\in M_2$.

We proceed to show that $\psi_2$ is an injection. Let $I_2=\{\psi_2(\alpha)\colon \alpha\in N_2\}\subseteq M_2$. For any $\beta\in I_2$, we construct a map $\varphi_2\colon I_2\rightarrow N_2$ as follows:
\begin{equation}
    \varphi_2(\beta)(v)=\begin{cases}
        \phi^{-1}(\beta\mid_{G_2})(v),& \text{if }v\in V(G_2);\\
        \beta(v),& \text{otherwise.}
    \end{cases}
\end{equation}
 It is easy to check that $\varphi_2(\psi_2(\alpha))=\alpha$ for any $\alpha\in N_2$. This implies $\psi_2$ is an injection.

 Moreover, notice that $G_2^{\alpha\mid_{G_2}}\in A^-$ and  $X_{G_2}^\beta=X_{G_2}^{\phi_1(\alpha\mid_{G_2})}$. Using Proposition \ref{pro-xgn+xgalpha}, we see that
\begin{equation}\label{ine-xt1+xt1'2s0-2}
X_{G_2}^\alpha+X_{G_2}^{\beta}=X_{G_2}^{\alpha\mid_{G_2}}+X_{G_2}^{\phi_1(\alpha\mid_{G_2})}\ge_{2s} 0.
\end{equation}
Since for any $v\not\in G_2$, $\alpha(v)=\psi_2(\alpha)(v)$, we see that $X_{G_1}^\alpha=X_{G_1}^{\beta}\ge_{2s}0$ and $X_{G_3}^\alpha=X_{G_3}^{\beta}\ge_{2s}0$. Thus
\begin{align*}
X_{T_{3,m,n}}^{\alpha}+X_{T_{3,m,n}}^{\psi_2(\alpha)}&=X_{G_1}^\alpha X_{G_2}^\alpha X_{G_3}^\alpha+X_{G_1}^\beta X_{G_2}^\beta X_{G_3}^\beta\\
&=(X_{G_2}^\alpha+X_{G_2}^\beta)X_{G_1}^\alpha X_{G_3}^\alpha\ge_{2s}0.
\end{align*}
\end{proof}

\begin{lem}
There is an injection $\psi_3$ from $N_3$ to $M_3$. Moreover, $X_{T_{3,m,n}}^\alpha+X_{T_{3,m,n}}^{\psi_3(\alpha)}$ is 2-$s$-positive.
\end{lem}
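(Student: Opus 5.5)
The plan mirrors the proof of Lemma \ref{lem-psi2}, with the roles of $G_2$ and $G_3$ (and of $m$ and $n$) exchanged.

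\textbf{Structure of $\alpha\in N_3$.} By definition, $\alpha(v_0)=0$, $G_3^\alpha\in A^-$, $G_1^\alpha,G_2^\alpha\in A^+$ and $k_3=3$. Proposition \ref{pro-xgn+xgalpha} applied to $G_3\cong S(2^n)$ gives $\alpha(v_3)=1$ and $\alpha\mid_{G_3}\in\mathcal{A}_3$. Let $T_3$ be the component of $G_3^\alpha$ containing $v_3^\alpha$. Proposition \ref{prop-416} together with $k_3=3$ gives $T_3\cong S(1^3,2^r)$ for some $r\ge 0$, so its bipartition is $(r+3,r+1)$. Since $\alpha(v_0)=0$, the graph $T_{3,m,n}^\alpha$ is the disjoint union $G_1^\alpha+G_2^\alpha+G_3^\alpha$, and $T_3$ is its unique non-2-$s$-positive component. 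Proposition \ref{pro-claw-iso} then shows there are no isolated vertices in $G_1^\alpha$ or $G_2^\alpha$, so $G_1^\alpha,G_2^\alpha\in Z$.

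\textbf{Definition of $\psi_3$.} Set
\[
\psi_3(\alpha)(v)=\begin{cases}\phi_1(\alpha\mid_{G_3})(v),&\text{if }v\in V(G_3);\\ \alpha(v),&\text{otherwise.}\end{cases}
\]
Write $\beta=\psi_3(\alpha)$. By Proposition \ref{pro-bijection}, $\phi_1(\alpha\mid_{G_3})\in x^3_{1,n}$, so $G_3^\beta\in X^3_{1,n}$. Since $\beta$ agrees with $\alpha$ off $G_3$, we have $G_1^\beta=G_1^\alpha\in Z$ and $G_2^\beta=G_2^\alpha\in Z$. Also $\beta(v_0)=0$, and $T_{3,m,n}^\beta$ is a disjoint union of 2-$s$-positive pieces, hence lies in $M$. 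Therefore $\beta\in M_3$.

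\textbf{Injectivity.} On the image $I_3=\psi_3(N_3)$, define $\varphi_3(\beta)$ to be $\phi^{-1}(\beta\mid_{G_3})$ on $V(G_3)$ and $\beta$ elsewhere, using Remark \ref{rem-phi-1}. Proposition \ref{pro-bijection} gives $\varphi_3\circ\psi_3=\mathrm{id}$ on $N_3$, so $\psi_3$ is injective.

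\textbf{Positivity.} Proposition \ref{pro-xgn+xgalpha} gives $X_{G_3}^\alpha+X_{G_3}^\beta\ge_{2s}0$. Since $\alpha(v_0)=\beta(v_0)=0$, Proposition \ref{prop-42} factors both functions over $G_1,G_2,G_3$:
\[
X_{T_{3,m,n}}^\alpha+X_{T_{3,m,n}}^\beta=(X_{G_3}^\alpha+X_{G_3}^\beta)\,X_{G_1}^\alpha X_{G_2}^\alpha .
\]
The factors $X_{G_1}^\alpha$ and $X_{G_2}^\alpha$ are 2-$s$-positive. The product of a 2-$s$-positive function with Schur-positive functions remains 2-$s$-positive, by the Littlewood--Richardson rule: a product $s_\mu s_\nu$ contributes to a shape of length at most 2 only if $\ell(\mu),\ell(\nu)\le 2$. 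This is the same step used in Lemma \ref{lem-psi2}, and the claim follows.

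The only point needing care, and it is routine, is confirming that $\beta$ lands in $M_3$ and not in another $M_i$. This follows from the no-isolated-vertex conclusion and the membership $G_3^\beta\in X^3_{1,n}$.
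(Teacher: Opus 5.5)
Your proposal is the paper's proof. The paper defines exactly this $\psi_3$ ($\phi_1(\alpha|_{G_3})$ on $V(G_3)$, $\alpha$ elsewhere) and defers the rest to Lemma~\ref{lem-psi2}, which is what you carry out with $(G_2,m)$ and $(G_3,n)$ interchanged. There is one point that you, like the paper, pass over. Proposition~\ref{pro-claw-iso} needs $T_3$ to be the \emph{only} non-2-$s$-positive component, and this does not follow from $G_2^\alpha\in A^+$ alone: a claw centred at $v_2$ plus an isolated $v'_{2j}$ gives $X_{G_2}^\alpha=_{2s}s_{(4,1)}$. So strictly one should also check that such configurations make $X^\alpha_{T_{3,m,n}}$ 2-$s$-positive, and hence do not lie in $N_3$.
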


\begin{proof}
The proof of this lemma is analogous to that   of Lemma \ref{lem-psi2}. We give the explicit definition of $\psi_3$ and omit the detail.
\begin{equation}
\psi_3(\alpha)(v)=\begin{cases}
\phi_1(\alpha\mid_{G_3})(v),&\text{if }v\in V(G_3);\\
\alpha(v),& \text{otherwise.}
\end{cases}
\end{equation}
\end{proof}

\begin{lem}\label{lem-psi4}
There is an injection $\psi_4$ from $N_4$ to $M_4$. Moreover, $X_{T_{3,m,n}}^\alpha+X_{T_{3,m,n}}^{\psi_4(\alpha)}$ is 2-$s$-positive.
\end{lem}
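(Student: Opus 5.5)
The plan is to mimic Lemma \ref{lem-psi2}. The modification is in the choice of $\phi_i$: the index $i\in\{3,4\}$ will be chosen according to the type of $G_3^\alpha$, so that the parity condition defining $M_4$ holds.

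\emph{Structure of $\alpha\in N_4$.} We have $\alpha(v_0)=0$, $G_2^\alpha\in A^-$, $G_1^\alpha,G_3^\alpha\in A^+$ and $k_2\ge 4$. By Proposition \ref{pro-xgn+xgalpha}, $\alpha|_{G_2}\in\mathcal{A}_{k_2}$. Since $\alpha(v_0)=0$, there are no edges between $G_1^\alpha$, $G_2^\alpha$, $G_3^\alpha$, so Proposition \ref{prop-42} gives
\[
X^\alpha_{T_{3,m,n}}=X^\alpha_{G_1}X^\alpha_{G_2}X^\alpha_{G_3}.
\]
Since $G_3^\alpha$ is 2-$s$-positive, Definition \ref{defi-xtsn} and Corollary \ref{cor-xsn-distinct} give a unique $j\ge 0$ with $G_3^\alpha\in X_{j,n}$. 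Here $j=0$ is the complementary class $X_{0,n}$, and uniqueness comes from the disjointness of the $X_{j,n}$.

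\emph{Definition of $\psi_4$.} Set $i=3$ if $j$ is even and $i=4$ if $j$ is odd, so that $i+j$ is odd. Since $k_2\ge 4\ge i$, the map $\phi_i$ applies. Define $\beta=\psi_4(\alpha)$ by
\[
\beta|_{G_2}=\phi_i(\alpha|_{G_2}),\qquad \beta(v)=\alpha(v)\ \text{for } v\notin V(G_2).
\]
By Proposition \ref{pro-bijection}, $\phi_i(\alpha|_{G_2})\in x^{k_2}_{i,m}$, so $G_2^\beta\in X^{r}_{i,m}$ with $r=k_2\ge 4$. The other parts are unchanged: $G_1^\beta=G_1^\alpha\in A^+$, $G_3^\beta=G_3^\alpha\in X_{j,n}$, and $\beta(v_0)=0$. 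Each of $G_1^\beta,G_2^\beta,G_3^\beta$ is 2-$s$-positive, hence so is their product, and therefore $\beta\in M_4$.

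\emph{Injectivity.} Let $I_4$ be the image of $\psi_4$, and for $\beta\in I_4$ define $\varphi_4(\beta)$ by replacing $\beta|_{G_2}$ with $\phi^{-1}(\beta|_{G_2})$ and keeping $\beta$ elsewhere. By Remark \ref{rem-phi-1}, $\phi^{-1}$ does not depend on $i$, so $\varphi_4(\psi_4(\alpha))=\alpha$ for every $\alpha\in N_4$. (Consistently, $i$ is recoverable from $\beta$: $j$ is read off $G_3^\beta=G_3^\alpha$, and Corollary \ref{cor-phi-distinct} also pins down $i$.) Hence $\psi_4$ is injective.

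\emph{Positivity.} Since $\beta$ agrees with $\alpha$ off $G_2$, we get
\[
X^\alpha_{T_{3,m,n}}+X^\beta_{T_{3,m,n}}=\bigl(X^{\alpha|_{G_2}}_{G_2}+X^{\phi_i(\alpha|_{G_2})}_{G_2}\bigr)X^\alpha_{G_1}X^\alpha_{G_3}.
\]
The bracket is $\ge_{2s}0$ by Proposition \ref{pro-xgn+xgalpha}, and the other two factors are 2-$s$-positive. A product of functions that are 2-$s$-positive is 2-$s$-positive by the Littlewood--Richardson rule, because the coefficient of any $s_\lambda$ with $\ell(\lambda)\le2$ in a product only involves $s_\mu$ with $\ell(\mu)\le 2$ from each factor. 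This gives the claim.

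\emph{Main obstacle.} The delicate step is not positivity but making sure $\psi_4$ lands in $M_4$ in a way that keeps the images of the different $\psi_i$ disjoint. This is the reason $i\in\{3,4\}$ is tied to the parity of $j$. For example, if both $G_2^\beta$ and $G_3^\beta$ lie in classes with indices in $\{3,4\}$, the parity condition $i+j$ odd separates $M_4$ from $M_5$. Choosing $i\ge3$ avoids $X_{1,m}$ and $X_{2,m}$, which are used by $M_2$ and $M_6$. I would verify that the class index $j$ of $G_3^\alpha$ is well defined (uniqueness via Corollary \ref{cor-xsn-distinct}) and then check the membership $\beta\in M_4$ carefully.
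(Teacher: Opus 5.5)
Your proposal is correct and follows the paper's proof essentially step by step. It uses the same parity-dependent choice of $\phi_3$ or $\phi_4$ on $G_2$, injectivity via the $S$-independent inverse $\phi^{-1}$, and positivity from Proposition~\ref{pro-xgn+xgalpha} after factoring $X^\alpha_{T_{3,m,n}}$ over $G_1,G_2,G_3$. You also spell out why $\beta$ is 2-$s$-positive (a product of 2-$s$-positive factors, by the Littlewood--Richardson rule), a step the paper leaves implicit when it asserts $\beta\in M_4$.
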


\begin{proof}
Given $\alpha\in N_4$, by definition we see that $G_2^\alpha\in A^-$, $G_1^\alpha,G_3^\alpha\in A^+$ and $k_2\ge 4$. Combining $G_2^\alpha\in A^-$ and Proposition \ref{pro-xgn+xgalpha}, we find that $\alpha\mid_{G_2}\in \mathcal{A}_{k_2}$. From \eqref{equ-def-x0n} and Corollary \ref{cor-xsn-distinct}, we see that there exists a unique $j\ge 0$ such that $G_3^\alpha\in X_{j,n}$.  We define the map $\psi_4(\alpha): V(G) \to \mathbb{N}$ by distinguishing the parity of the index $j$:
 \begin{equation}
\psi_4(\alpha)(v)=\begin{cases}
\phi_3(\alpha\mid_{G_2})(v),&\text{if }v\in V(G_2) \text{ and } j \text{ even};\\
\phi_4(\alpha\mid_{G_2})(v),&\text{if }v\in V(G_2) \text{ and } j \text{ odd};\\
\alpha(v),& \text{otherwise.}
\end{cases}
\end{equation}
Set $\beta=\psi_4(\alpha)$.  From the construction of $\psi_4$, we have $\beta\mid_{G_2}=\phi_3(\alpha\mid_{G_2})\in x_{3,m}^{k_2}$ when $j$ is even and $\beta\mid_{G_2}=\phi_4(\alpha\mid_{G_2})\in x_{4,m}^{k_2}$ when $j$ is odd. This yields $G_2^\beta\in X^{k_2}_{3,m}$ when $j$ even and $G_2^\beta\in X^{k_2}_{4,m}$ when $j$ odd.
  Moreover, $G_1^\beta=G_1^\alpha\in A^+$. Hence $\beta\in M_4$.

We next show that $\psi_4$ is an injection.  Let $I_4=\{\psi_4(\alpha)\colon \alpha\in N_4\}\subseteq M_4$. For any $\beta\in I_4$, we construct a map $\varphi_4\colon I_4\rightarrow N_4$ as follows:
\begin{equation}
\varphi_4(\beta)(v)=\begin{cases}
\phi^{-1}(\beta\mid_{G_2})(v),&\text{if }v\in V(G_2);\\
\beta(v),& \text{otherwise.}
\end{cases}
\end{equation}
Since $\phi^{-1}$ is the inverse map of both $\phi_3$ and $\phi_4$, it is easy to check that  $\varphi_4(\psi_4(\alpha))=\alpha$ for any $\alpha\in N_4$. This yields $\psi_4$ is an injection.

Moreover,  from $G_2^\alpha\in A^-$ and $\beta\mid_{G_2}=\phi_3(\alpha\mid_{G_2})$ or $\beta\mid_{G_2}=\phi_4(\alpha\mid_{G_2})$, using Proposition \ref{pro-xgn+xgalpha}, in either case we see that
\begin{equation}\label{ine-xt1+xt1'2s0-4}
X_{G_2}^\alpha+X_{G_2}^{\beta}=X_{G_2}^{\alpha\mid_{G_2}}+X_{G_2}^{\beta\mid_{G_2}}\ge_{2s} 0.
\end{equation}
Since for any $v\not\in G_2$, $\alpha(v)=\psi_4(\alpha)(v)$, we see that $X_{G_1}^\alpha=X_{G_1}^{\beta}\ge_{2s}0$ and $X_{G_3}^\alpha=X_{G_3}^{\beta}\ge_{2s}0$. Thus
\begin{align*}
X_{T_{3,m,n}}^{\alpha}+X_{T_{3,m,n}}^{\psi_4(\alpha)}&=X_{G_1}^\alpha X_{G_2}^\alpha X_{G_3}^\alpha+X_{G_1}^\beta X_{G_2}^\beta X_{G_3}^\beta\\
&=(X_{G_2}^\alpha+X_{G_2}^\beta)X_{G_1}^\alpha X_{G_3}^\alpha\ge_{2s}0.
\end{align*}
\end{proof}

\begin{lem}
There is an injection $\psi_5$ from $N_5$ to $M_5$. Moreover, $X_{T_{3,m,n}}^\alpha+X_{T_{3,m,n}}^{\psi_5(\alpha)}$ is 2-$s$-positive.
\end{lem}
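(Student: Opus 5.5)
This will follow the proof of Lemma \ref{lem-psi4}, with the roles of $G_2$ and $G_3$ exchanged and the parity convention of $M_5$ used instead of that of $M_4$. Let $\alpha\in N_5$, so $\alpha(v_0)=0$, $G_3^\alpha\in A^-$, $G_1^\alpha,G_2^\alpha\in A^+$ and $k_3\ge 4$.

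\textbf{Step 1: structure of $\alpha$.} By Proposition \ref{pro-xgn+xgalpha}, $G_3^\alpha\in A^-$ gives $\alpha\mid_{G_3}\in\mathcal{A}_{k_3}$. Since $G_2^\alpha$ is 2-$s$-positive, \eqref{equ-def-x0n} and Corollary \ref{cor-xsn-distinct} give a unique $i\ge 0$ with $G_2^\alpha\in X_{i,m}$.

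\textbf{Step 2: the map $\psi_5$.} Define $\psi_5(\alpha)$ to agree with $\alpha$ outside $V(G_3)$. On $V(G_3)$ set
\[
\psi_5(\alpha)\mid_{G_3}=\begin{cases}\phi_4(\alpha\mid_{G_3}), & i \text{ even};\\ \phi_3(\alpha\mid_{G_3}), & i \text{ odd}.\end{cases}
\]
This choice makes $i+j$ even, as $M_5$ requires. Both cases are defined because $k_3\ge 4$. Write $\beta=\psi_5(\alpha)$.

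\textbf{Step 3: $\beta\in M_5$.} By Proposition \ref{pro-bijection}, $\beta\mid_{G_3}\in x^{k_3}_{j,n}$ with $j\in\{3,4\}$ and $r=k_3\ge 4$, so $G_3^\beta\in X^{k_3}_{j,n}$. Also $G_1^\beta=G_1^\alpha\in A^+$, $G_2^\beta=G_2^\alpha\in X_{i,m}$, and $\beta(v_0)=0$.

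Membership in $M$ must also be checked, i.e.\ that $X^\beta_{T_{3,m,n}}$ is 2-$s$-positive. Since $\beta(v_0)=0$, Proposition \ref{prop-42} factors it into the three $G_\ell$-parts. The $G_1$ and $G_2$ factors are 2-$s$-positive. The $G_3$ factor is $X_{G_3}^{\phi_j(\cdot)}$, whose central component is a disjoint union of copies of $K_2$ and isolated vertices, and whose other components are 2-$s$-positive, as in the proof of Proposition \ref{pro-xgn+xgalpha}. So the product is 2-$s$-positive.

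\textbf{Step 4: injectivity.} On the image, take $\varphi_5$ to be $\phi^{-1}$ on $G_3$ and the identity elsewhere, as in Remark \ref{rem-phi-1}. Since $\phi^{-1}$ inverts both $\phi_3$ and $\phi_4$, we get $\varphi_5\circ\psi_5=\mathrm{id}$. The parity bit is never needed to invert, because $\alpha$ is recovered directly by $\phi^{-1}$; the parity is recorded only to keep the image inside $M_5$ and disjoint from $M_4$.

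\textbf{Step 5: positivity.} Proposition \ref{pro-xgn+xgalpha} applied to $G_3$ gives $X_{G_3}^\alpha+X_{G_3}^\beta\ge_{2s}0$. Since $\alpha(v_0)=\beta(v_0)=0$, Proposition \ref{prop-42} gives
\[X^\alpha_{T_{3,m,n}}+X^\beta_{T_{3,m,n}}=(X_{G_3}^\alpha+X_{G_3}^\beta)X_{G_1}^\alpha X_{G_2}^\alpha.\]
Both remaining factors are 2-$s$-positive, so the sum is 2-$s$-positive.

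The only delicate point is Step 3: verifying $\beta$ lands in $M_5$ rather than $M_4$, which is exactly what the parity choice guarantees. The rest mirrors Lemma \ref{lem-psi4} verbatim.
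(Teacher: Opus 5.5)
Your proposal is correct and follows the paper's route. The paper proves this lemma by saying it is analogous to Lemma~\ref{lem-psi4} and writing down $\psi_5$; you carry out that analogy in full: injectivity via $\phi^{-1}$, and positivity via Proposition~\ref{pro-xgn+xgalpha} together with the factorization coming from $\alpha(v_0)=0$. The paper's displayed formula literally applies $\phi_4,\phi_3$ to $\alpha\mid_{G_2}$ according to the parity of $j$, which is a slip copied over from Lemma~\ref{lem-psi4}; your version, acting on $\alpha\mid_{G_3}$ according to the parity of $i$ where $G_2^\alpha\in X_{i,m}$, is the intended and correct one, and your explicit check that $\beta\in M$ supplies a step the paper leaves unstated.
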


\begin{proof}
The proof is analogous to that of Lemma \ref{lem-psi4}. We give the explicit map of $\psi_5$ and omit the other detail.
\begin{equation}
\psi_5(\alpha)(v)=\begin{cases}
\phi_4(\alpha\mid_{G_2})(v),&\text{if }v\in V(G_2) \text{ and } j \text{ even};\\
\phi_3(\alpha\mid_{G_2})(v),&\text{if }v\in V(G_2) \text{ and } j \text{ odd};\\
\alpha(v),& \text{otherwise.}
\end{cases}
\end{equation}
\end{proof}

\begin{lem}\label{lem-psi6}
There is an injection $\psi_6$ from $N_6$ to $M_6$. Moreover, $X_{T_{3,m,n}}^\alpha+X_{T_{3,m,n}}^{\psi_6(\alpha)}$ is 2-$s$-positive.
\end{lem}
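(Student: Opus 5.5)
The plan is to mirror Lemmas~\ref{lem-psi1} and~\ref{lem-psi4}, this time modifying two spiders at once. The positivity step will come from Proposition~\ref{Prop-con-com-phi-pos} applied to the forest $F=G_1\cup G_2$.

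\emph{Step 1: the structure of $\alpha\in N_6$.} Here $\alpha(v_0)=0$, $G_1^\alpha,G_2^\alpha\in A^-$ and $G_3^\alpha\in A^+$. Proposition~\ref{pro-xgn+xgalpha} gives $\alpha|_{G_1}\in\mathcal{A}_{k_1}$ and $\alpha|_{G_2}\in\mathcal{A}_{k_2}$ with $k_1,k_2\ge 3$. Since $G_1$ has only three legs, $k_1=3$. Because $G_3^\alpha$ is $2$-$s$-positive, \eqref{equ-def-x0n} and Corollary~\ref{cor-xsn-distinct} give a unique $j\ge 0$ with $G_3^\alpha\in X_{j,n}$.

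\emph{Step 2: the map.} Define $\beta=\psi_6(\alpha)$ by
\[
\beta|_{G_1}=\phi_1(\alpha|_{G_1}),\qquad
\beta|_{G_2}=\begin{cases}\phi_1(\alpha|_{G_2}),& j \text{ even},\\ \phi_2(\alpha|_{G_2}),& j\text{ odd},\end{cases}
\]
and set $\beta(v)=\alpha(v)$ on $G_3\cup\{v_0\}$.
\begin{itemize}
\item By Proposition~\ref{pro-bijection}, $G_1^\beta\in X^3_{1,3}$ and $G_2^\beta\in X^{k_2}_{i,m}$, where $i\in\{1,2\}$, $k_2\ge 3$ and $i+j$ is odd.
\item Also $G_3^\beta=G_3^\alpha\in X_{j,n}$ and $\beta(v_0)=0$.
\item Hence $\beta\in M_6$.
\end{itemize}
For injectivity, take $\varphi_6$ to apply $\phi^{-1}$ (Remark~\ref{rem-phi-1}) on $G_1$ and on $G_2$ and to leave everything else unchanged. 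Since $\phi^{-1}$ inverts $\phi_1$ and $\phi_2$ alike, $\varphi_6\circ\psi_6=\mathrm{id}$ on $N_6$. Note that the parity of $j$ does not need to be recovered for this.

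\emph{Step 3: positivity.} Since $\alpha(v_0)=\beta(v_0)=0$, Proposition~\ref{prop-42} gives
\[
X^\alpha_{T_{3,m,n}}+X^\beta_{T_{3,m,n}}=\bigl(X_F^\alpha+X_F^\beta\bigr)X_{G_3}^\alpha .
\]
Here $F=G_1\cup G_2$ is a forest of two spiders $S(2^3)$ and $S(2^m)$, with $\alpha|_{G_1}\in\mathcal{A}_3$ and $\alpha|_{G_2}\in\mathcal{A}_{k_2}$, $k_2\ge3$. The map $\beta|_F$ is exactly of the form in Proposition~\ref{Prop-con-com-phi-pos}, with $a_1=1$ and $a_2\in\{1,2\}\subseteq\{1,\dots,k_2\}$. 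Therefore $X_F^\alpha+X_F^\beta\ge_{2s}0$. Also $X_{G_3}^\alpha\ge_{2s}0$, because $G_3^\alpha\in A^+$.

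The one point to justify is that a product of two $2$-$s$-positive functions is $2$-$s$-positive. The earlier lemmas use this implicitly. It holds by the Littlewood--Richardson rule: $s_\lambda$ appears in $s_\mu s_\nu$ only if $\mu,\nu\subseteq\lambda$, so when $\ell(\lambda)\le 2$ only terms with $\ell(\mu),\ell(\nu)\le2$ contribute, each with a nonnegative coefficient.

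I expect no serious obstacle. The only delicate part is confirming that the image lands in $M_6$ exactly as defined. This requires $r=k_2\ge 3$ and the parity condition on $i+j$, and the latter is why the choice between $\phi_1$ and $\phi_2$ on $G_2$ is tied to the parity of $j$. That parity choice is also what keeps $\psi_6(N_6)$ disjoint from the images of the other $\psi_i$.
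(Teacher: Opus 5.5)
Your proposal is correct and follows the paper's proof essentially step for step. It uses the same map ($\phi_1$ on $G_1$, and $\phi_1$ or $\phi_2$ on $G_2$ according to the parity of $j$), the same left inverse built from $\phi^{-1}$, and the same positivity argument: Proposition~\ref{Prop-con-com-phi-pos} on the forest induced by $V(G_1)\cup V(G_2)$, multiplied by $X_{G_3}^\alpha$. You also spell out the Littlewood--Richardson fact that products of $2$-$s$-positive functions are $2$-$s$-positive, which the paper uses without comment.
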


\begin{proof}
Given $\alpha\in N_6$, by definition we see that $G_1^\alpha, G_2^\alpha\in A^-$, $G_3^\alpha\in A^+$.   From $G_1^\alpha\in A^-$ we have $G_1^\alpha\cong S(1^3)$. Moreover, from $G_2^\alpha\in A^-$ and Proposition \ref{pro-xgn+xgalpha} we see that $\alpha\mid_{G_2}\in \mathcal{A}_{k_2}$ and $k_2\ge 3$. Similar as in Lemma \ref{lem-psi4}, we see that there exists a unique $j\ge 0$ satisfies that $G_3^\alpha\in X_{j,n}$.  Define $\psi_6(\alpha)$ as follows:
\begin{equation}
\psi_6(\alpha)(v)=\begin{cases}
\phi_1(\alpha\mid_{G_1})(v),&\text{if }v\in V(G_1);\\
\phi_2(\alpha\mid_{G_2})(v),&\text{if }v\in V(G_2) \text{ and } j \text{ odd};\\
\phi_1(\alpha\mid_{G_2})(v),&\text{if }v\in V(G_2) \text{ and } j \text{ even};\\
\alpha(v),& \text{otherwise.}
\end{cases}
\end{equation}
Set $\beta=\psi_6(\alpha)$.  From the construction of $\psi_6$, we have $\beta\mid_{G_1}=\phi_1(\alpha\mid_{G_1})\in x_{1,3}^3$, thus $G_1^\beta\in X^{3}_{1,3}$. Moreover, $\beta\mid_{G_2}=\phi_2(\alpha\mid_{G_2})\in x_{2,m}^{k_2}$ when $j$ is odd and $\beta\mid_{G_2}=\phi_1(\alpha\mid_{G_2})\in x_{1,m}^{k_2}$ when $j$ is even. Thus $G_2^\beta\in X_{2,m}^{k_2}$ when $j$ odd and $G_2^\beta\in X_{1,m}^{k_2}$ when $j$ even.
  This implies $\beta\in M_6$.

We next show that $\psi_6$ is an injection.  Let $I_6=\{\psi_6(\alpha)\colon \alpha\in N_6\}\subseteq M_6$. For any $\beta\in I_6$, we construct a map $\varphi_6\colon I_6\rightarrow N_6$ as follows:
\begin{equation}
\varphi_6(\beta)(v)=\begin{cases}
\phi^{-1}(\beta\mid_{G_1})(v),&\text{if }v\in V(G_1);\\
\phi^{-1}(\beta\mid_{G_2})(v),&\text{if }v\in V(G_2);\\
\beta(v),& \text{otherwise.}
\end{cases}
\end{equation}
From the definition of $\phi^{-1}$ we see that $\phi^{-1}$ is the inverse map of both $\phi_1$ and $\phi_2$. Thus it is easy to check that $\varphi_6(\psi_6(\alpha))=\alpha$ for any $\alpha\in N_6$. This yields $\psi_6$ is an injection.

Note that $G_1^\alpha,G_2^{\alpha}\in A^-$ imply   $\alpha\mid_{G_1}\in\mathcal{A}_3$ and $\alpha\mid_{G_2}\in\mathcal{A}_{k_2}$. Moreover $\beta\mid_{G_1}=\phi_1(\alpha\mid_{G_1})$, and either $\beta\mid_{G_2}=\phi_1(\alpha\mid_{G_2})$ or $\beta\mid_{G_2}=\phi_2(\alpha\mid_{G_2})$.   Let $F$ be the subgraph of $T_{3,m,n}$ induced by $V(G_1)\cup V(G_2)$. From Proposition \ref{Prop-con-com-phi-pos}, in either case we see that
\begin{equation}\label{ine-xt1+xt1'2s0-6}
X_{F}^\alpha+X_{F}^{\beta}\ge_{2s} 0.
\end{equation}

Since for any $v\in V(G_3)$, $\alpha(v)=\psi_6(\alpha)(v)$, we see that $X_{G_3}^\alpha=X_{G_3}^{\beta}\ge_{2s}0$. Thus
\begin{align*}
X_{T_{3,m,n}}^{\alpha}+X_{T_{3,m,n}}^{\psi_6(\alpha)}&=X_{F}^\alpha  X_{G_3}^\alpha+X_{F}^\beta  X_{G_3}^\beta\\
&=(X_{F}^\alpha+X_{F}^\beta) X_{G_3}^\alpha\ge_{2s}0.
\end{align*}
\end{proof}

\begin{lem}\label{lem-psi7}
There is an injection $\psi_7$ from $N_7$ to $M_7$. Moreover, $X_{T_{3,m,n}}^\alpha+X_{T_{3,m,n}}^{\psi_7(\alpha)}$ is 2-$s$-positive.
\end{lem}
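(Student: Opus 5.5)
The argument mirrors Lemma \ref{lem-psi6}, with the roles of $G_2$ and $G_3$ exchanged. Take $\alpha\in N_7$, so that $G_1^\alpha,G_3^\alpha\in A^-$ and $G_2^\alpha\in A^+$.

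First we fix the shape of $\alpha$ on each spider. As in the proof of Lemma \ref{lem-psi1}, $G_1^\alpha\in A^-$ forces $\alpha(v_1)=1$ and $k_1=3$, so $\alpha\mid_{G_1}\in\mathcal{A}_3$ and $G_1^\alpha\cong S(1^3)$. By Proposition \ref{pro-xgn+xgalpha}, $G_3^\alpha\in A^-$ gives $\alpha\mid_{G_3}\in\mathcal{A}_{k_3}$ with $k_3\ge 3$. Since $G_2^\alpha$ is 2-$s$-positive, \eqref{equ-def-x0n} and Corollary \ref{cor-xsn-distinct} give a unique $i\ge 0$ with $G_2^\alpha\in X_{i,m}$.

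Next we define $\beta=\psi_7(\alpha)$. Set $\beta\mid_{G_1}=\phi_1(\alpha\mid_{G_1})$ and leave $\beta$ equal to $\alpha$ outside $V(G_1)\cup V(G_3)$. On $G_3$, choose the index $j\in\{1,2\}$ so that $i+j$ is even: put $\beta\mid_{G_3}=\phi_1(\alpha\mid_{G_3})$ if $i$ is odd and $\beta\mid_{G_3}=\phi_2(\alpha\mid_{G_3})$ if $i$ is even. Both choices are legitimate because $j\le 2<3\le k_3$.

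We then check that $\beta\in M_7$. By Proposition \ref{pro-bijection}, $G_1^\beta\in X^3_{1,3}$ and $G_3^\beta\in X^{k_3}_{j,n}$, with $r=k_3\ge 3$ and $j\in\{1,2\}$. Since $G_2^\beta=G_2^\alpha\in X_{i,m}$ and $i+j$ is even, this is exactly the defining condition of $M_7$. We also need $\beta\in M$, i.e.\ $T_{3,m,n}^\beta$ is 2-$s$-positive; this should follow because $\beta(v_0)=0$ separates $T_{3,m,n}^\beta$ into $G_1^\beta,G_2^\beta,G_3^\beta$, each 2-$s$-positive (the $\phi_j$-images consist of paths and isolated vertices).

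For injectivity, define $\varphi_7$ on the image by applying $\phi^{-1}$ (Remark \ref{rem-phi-1}) to $\beta\mid_{G_1}$ and to $\beta\mid_{G_3}$, and leaving $\beta$ unchanged elsewhere. Because $\phi^{-1}$ inverts both $\phi_1$ and $\phi_2$, we get $\varphi_7\circ\psi_7=\mathrm{id}$ on $N_7$.

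For positivity, let $F$ be the subgraph induced by $V(G_1)\cup V(G_3)$. This is a forest of two spiders with $\alpha\mid_{G_1}\in\mathcal{A}_3$ and $\alpha\mid_{G_3}\in\mathcal{A}_{k_3}$, and $\beta$ applies $\phi_1$ on the first and $\phi_j$ on the second. Proposition \ref{Prop-con-com-phi-pos} therefore gives $X_F^\alpha+X_F^\beta\ge_{2s}0$. Since $\alpha(v_0)=\beta(v_0)=0$, Proposition \ref{prop-42} factors
\[X_{T_{3,m,n}}^\alpha+X_{T_{3,m,n}}^\beta=(X_F^\alpha+X_F^\beta)X_{G_2}^\alpha\ge_{2s}0.\]

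The only delicate step is the parity bookkeeping. The choice of $j$ must land $\beta$ in $M_7$ and not in $M_6$; the two sets are separated by which of $G_2,G_3$ carries $X^r_{\cdot,\cdot}$ and by the parity condition, so disjointness of the $M_i$ (via Corollary \ref{cor-xsn-distinct}) keeps the images distinct.
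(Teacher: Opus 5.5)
Your proposal is correct and is essentially the paper's proof. The paper defines exactly the same $\psi_7$: $\phi_1$ on $G_1$, and $\phi_2$ or $\phi_1$ on $G_3$ according as the index $i$ with $G_2^\alpha\in X_{i,m}$ is even or odd. It then defers to Lemma \ref{lem-psi6} for the inversion by $\phi^{-1}$ and for positivity via Proposition \ref{Prop-con-com-phi-pos} on the forest $G_1\cup G_3$, multiplied by $X_{G_2}^\alpha$; your write-up supplies these omitted details, including the check that $T_{3,m,n}^\beta$ is 2-$s$-positive.
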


\begin{proof}
The proof of this lemma is analogue to the proof of Lemma \ref{lem-psi6}. Assume $G_2^\alpha\in X_{j,m}$, where $0\le j\le m$. We give the explicit map of $\psi_7$ and omit the other detail.
\begin{equation}
\psi_7(\alpha)(v)=\begin{cases}
\phi_1(\alpha\mid_{G_1})(v),&\text{if }v\in V(G_1);\\
\phi_2(\alpha\mid_{G_3})(v),&\text{if }v\in V(G_3) \text{ and } j \text{ even};\\
\phi_1(\alpha\mid_{G_3})(v),&\text{if }v\in V(G_3) \text{ and } j \text{ odd};\\
\alpha(v),& \text{otherwise.}
\end{cases}
\end{equation}
\end{proof}

\begin{lem}\label{lem-psi8}
There is an injection $\psi_8$ from $N_8$ to $M_8$. Moreover, $X_{T_{3,m,n}}^\alpha+X_{T_{3,m,n}}^{\psi_8(\alpha)}$ is 2-$s$-positive.
\end{lem}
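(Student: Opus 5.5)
The map $\psi_8$ will modify $\alpha$ simultaneously on $G_2$ and $G_3$, doubling two leaves in $G_2$ and none in $G_3$, so as to land in $M_8$. The 2-$s$-positivity will then come from Corollary \ref{cor-forest-2-pos-phi} applied to the forest $F$ induced by $V(G_2)\cup V(G_3)$.

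\textbf{Setup.} Let $\alpha\in N_8$, so $\alpha(v_0)=0$, $G_2^\alpha,G_3^\alpha\in A^-$ and $G_1^\alpha\in A^+$. By Proposition \ref{pro-xgn+xgalpha}, $\alpha\mid_{G_2}\in\mathcal{A}_{k_2}$ and $\alpha\mid_{G_3}\in\mathcal{A}_{k_3}$ with $k_2,k_3\ge 3$. Define
\[
\psi_8(\alpha)(v)=\begin{cases}
\phi_{\{1,2\}}(\alpha\mid_{G_2})(v),&\text{if }v\in V(G_2);\\
\phi_{\emptyset}(\alpha\mid_{G_3})(v),&\text{if }v\in V(G_3);\\
\alpha(v),&\text{otherwise.}
\end{cases}
\]
Here $\phi_\emptyset$ simply sets $\alpha(v_3)$ to $0$. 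This is legitimate because $\{1,2\}\subseteq\{1,\dots,k_2\}$.

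\textbf{Membership in $M_8$.} Set $\beta=\psi_8(\alpha)$. By Proposition \ref{pro-bijection}, $\beta\mid_{G_2}\in x^{k_2}_{\{1,2\},m}$ and $\beta\mid_{G_3}\in x^{k_3}_{\emptyset,n}$. Hence $G_2^\beta\in X^{r}_{\{1,2\},m}$ and $G_3^\beta\in X^{s}_{\emptyset,n}$ with $r=k_2\ge3$ and $s=k_3\ge3$, while $G_1^\beta=G_1^\alpha\in A^+$.

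It remains to check that $\beta\in M$, i.e.\ that $T_{3,m,n}^\beta$ is 2-$s$-positive. Since $\beta(v_0)=\beta(v_2)=\beta(v_3)=0$ and no edge carries weight sum $\ge3$ other than at a doubled leaf with empty $v'$, every component of $G_2^\beta$ and $G_3^\beta$ is one of: an isolated vertex, a $K_2$ (a doubled $v_{2i}$), or a path of length at most one. All of these are 2-$s$-positive. Together with $G_1^\beta\in A^+$, this gives the claim.

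\textbf{Injectivity.} On $I_8=\psi_8(N_8)$, define $\varphi_8$ by applying $\phi^{-1}$ (Remark \ref{rem-phi-1}) separately on $G_2$ and on $G_3$, and leaving other vertices unchanged. On $G_3$, $\phi^{-1}$ restores $\beta(v_3)=1$. Since $\phi^{-1}$ inverts every $\phi_S$, we get $\varphi_8\circ\psi_8=\mathrm{id}$, so $\psi_8$ is injective.

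\textbf{Positivity.} Since $\alpha(v_0)=0$, Proposition \ref{prop-42} gives
\[
X^\alpha_{T_{3,m,n}}+X^\beta_{T_{3,m,n}}=(X_F^\alpha+X_F^\beta)\,X^\alpha_{G_1}.
\]
Now $\beta\mid_{G_2}=\phi_{S_2}(\alpha\mid_{G_2})$ and $\beta\mid_{G_3}=\phi_{S_3}(\alpha\mid_{G_3})$ with $\#S_2+\#S_3=2$, which equals the number of components of $F$. Corollary \ref{cor-forest-2-pos-phi} therefore yields $X_F^\alpha+X_F^\beta\ge_{2s}0$. Since $X^\alpha_{G_1}\ge_{2s}0$, the product is 2-$s$-positive.

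\textbf{Main obstacle.} The only delicate point is matching the hypothesis of Corollary \ref{cor-forest-2-pos-phi}, namely $\sum_i\#S_i=n$ with $n$ the number of components. This is exactly why the choice $S_2=\{1,2\}$, $S_3=\emptyset$ is the right one, and it is consistent with the definition of $M_8$.
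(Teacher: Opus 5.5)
Your proposal is correct and is essentially the paper's proof. It uses the same map ($\phi_{\{1,2\}}$ on $G_2$, $\phi_\emptyset$ on $G_3$), the same left inverse built from $\phi^{-1}$, and the same appeal to Corollary~\ref{cor-forest-2-pos-phi} on the forest induced by $V(G_2)\cup V(G_3)$, followed by factoring out $X_{G_1}^\alpha$ via Proposition~\ref{prop-42}.

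Your explicit check that $T_{3,m,n}^\beta$ is 2-$s$-positive (so that $\beta\in M$) supplies a step the paper leaves implicit. One small correction there: under $\beta$, no edge of $G_2$ or $G_3$ has weight sum exceeding $2$ at all, since a doubled $v_{2i}$ sits between $\beta(v_2)=0$ and $\beta(v'_{2i})=0$. The exception in your sentence is therefore unnecessary.
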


\begin{proof}
Given $\alpha\in N_8$, by definition we see that $G_2^\alpha, G_3^\alpha\in A^-$, $G_1^\alpha\in A^+$.  From $G_2^\alpha,G_3^\alpha\in A^-$ and Proposition \ref{pro-xgn+xgalpha}, we see that $\alpha\mid_{G_2}
\in \mathcal{A}_{k_2}$, $\alpha\mid_{G_3}
\in \mathcal{A}_{k_3}$, $k_2\ge 3$ and $k_3\ge 3$.  Define $\psi_8(\alpha)$ as follows:
\begin{equation}
\psi_8(\alpha)(v)=\begin{cases}
\phi_{\{1,2\}}(\alpha\mid_{G_2})(v),&\text{if }v\in V(G_2);\\
\phi_\emptyset(\alpha\mid_{G_3})(v),&\text{if }v\in V(G_3);\\
\alpha(v),& \text{otherwise.}
\end{cases}
\end{equation}
Set $\beta=\psi_8(\alpha)$. From the construction of $\psi_8$, using the same argument as in Lemma \ref{lem-psi4}, we see that $G_1^\beta=G_1^\alpha\in A^+$, $G_2^\beta\in X_{\{1,2\},m}^{k_2}$ and $G_3^\beta\in X_{\emptyset,n}^{k_3}$.   This implies $\beta\in M_8$.

We next show that $\psi_8$ is an injection. Let $I_8=\{\psi_8(\alpha)\colon \alpha\in N_8\}\subseteq M_8$. For any $\beta\in I_8$, we construct a map $\varphi_8\colon I_8\rightarrow N_8$ as follows:
\begin{equation}
\varphi_8(\beta)(v)=\begin{cases}
\phi^{-1}(\beta\mid_{G_2})(v),&\text{if }v\in V(G_2);\\
\phi^{-1}(\beta\mid_{G_3})(v),&\text{if }v\in V(G_3);\\
\beta(v),& \text{otherwise.}
\end{cases}
\end{equation}
From the definition of $\phi^{-1}$ we see that $\phi^{-1}$ is the inverse map of both $\phi_{\{1,2\}}$ and $\phi_\emptyset$. Thus it is easy to check that $\varphi_8(\psi_8(\alpha))=\alpha$ for any $\alpha\in N_8$. This yields $\psi_8$ is an injection.

Note that $G_2^\alpha\in \mathcal{A}_{k_2}$, $G_3^\alpha\in \mathcal{A}_{k_3}$ and $\beta\mid_{G_2}=\phi_{\{1,2\}}(\alpha\mid_{G_2})$, $\beta\mid_{G_3}=\phi_{\emptyset}(\alpha\mid_{G_3})$. From Corollary \ref{cor-forest-2-pos-phi}, if we let $F$ be the subgraph of $T_{3,m,n}$  induced by $V(G_2)\cup V(G_3)$, then
\begin{equation}\label{ine-xt1+xt1'2s0-8}
X_{F}^\alpha+X_{F}^{\beta}\ge_{2s} 0.
\end{equation}

Since for any $v\in V(G_1)$, $\alpha(v)=\psi_8(\alpha)(v)$, we see that $X_{G_1}^\alpha=X_{G_1}^{\beta}\ge_{2s}0$. Thus
\begin{align*}
X_{T_{3,m,n}}^{\alpha}+X_{T_{3,m,n}}^{\psi_8(\alpha)}&=X_{F}^\alpha  X_{G_1}^\alpha+X_{F}^\beta  X_{G_1}^\beta\\
&=(X_{F}^\alpha+X_{F}^\beta) X_{G_1}^\alpha\ge_{2s}0.
\end{align*}
\end{proof}

\begin{lem}\label{lem-psi9}
There is an injection $\psi_9$ from $N_9$ to $M_9$. Moreover, $X_{T_{3,m,n}}^\alpha+X_{T_{3,m,n}}^{\psi_9(\alpha)}$ is 2-$s$-positive.
\end{lem}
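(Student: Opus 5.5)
The plan follows the template of Lemmas \ref{lem-psi6} and \ref{lem-psi8}, now acting on all three spiders at once. Take $\alpha\in N_9$, so $\alpha(v_0)=0$ and $G_1^\alpha,G_2^\alpha,G_3^\alpha\in A^-$.

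\textbf{Structure of $\alpha$.} Proposition \ref{pro-xgn+xgalpha} gives $\alpha\mid_{G_i}\in\mathcal{A}_{k_i}$ with $k_i\ge 3$ for $i=1,2,3$. Since $G_1$ has only three legs, this forces $k_1=3$ and $G_1^\alpha\cong S(1^3)$.

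\textbf{Definition of $\psi_9$.} Set
\[
\psi_9(\alpha)(v)=\begin{cases}
\phi_\emptyset(\alpha\mid_{G_1})(v),&\text{if }v\in V(G_1);\\
\phi_{\{1,2,3\}}(\alpha\mid_{G_2})(v),&\text{if }v\in V(G_2);\\
\phi_\emptyset(\alpha\mid_{G_3})(v),&\text{if }v\in V(G_3);\\
\alpha(v),&\text{otherwise,}
\end{cases}
\]
and write $\beta=\psi_9(\alpha)$. Then $\beta(v_0)=0$, and Proposition \ref{pro-bijection} gives $\beta\mid_{G_1}\in x^3_{\emptyset,3}$, $\beta\mid_{G_2}\in x^{k_2}_{\{1,2,3\},m}$ and $\beta\mid_{G_3}\in x^{k_3}_{\emptyset,n}$. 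Hence $G_1^\beta\in X^3_{\emptyset,3}$, $G_2^\beta\in X^{k_2}_{\{1,2,3\},m}$ with $k_2\ge 3$, and $G_3^\beta\in X^{k_3}_{\emptyset,n}$ with $k_3\ge 3$.

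One must also check that $\beta\in M$, i.e.\ that $T_{3,m,n}^\beta$ is 2-$s$-positive. For $G_2^\beta$, every vertex of value $2$ is adjacent only to $v_2$ or $v_0$, and both have value $0$ under $\beta$, so $G_2^\beta$ is a disjoint union of $K_2$'s, isolated vertices and paths. The same holds for $G_1^\beta$ and $G_3^\beta$. By Corollary \ref{cor-2s-con-bipar} each such component is 2-$s$-positive, so $\beta\in M_9$.

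\textbf{Injectivity.} Define $\varphi_9$ on the image by applying $\phi^{-1}$ (Remark \ref{rem-phi-1}) to each of $\beta\mid_{G_1},\beta\mid_{G_2},\beta\mid_{G_3}$ and keeping $\beta$ elsewhere. On each $G_i$ this restores $\alpha\mid_{G_i}$, including the value $1$ at each torso. So $\varphi_9\circ\psi_9=\mathrm{id}$ on $N_9$, and $\psi_9$ is injective.

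\textbf{2-$s$-positivity of the sum.} Since $\alpha(v_0)=\beta(v_0)=0$, we have $X^\alpha_{T_{3,m,n}}=X^\alpha_F$ and $X^\beta_{T_{3,m,n}}=X^\beta_F$, where $F=G_1+G_2+G_3$ is a forest of three spiders $S(2^{n_i})$. The choice gives $\sum_i\#S_i=0+3+0=3$, which equals the number of components of $F$. Corollary \ref{cor-forest-2-pos-phi} therefore yields $X_F^\alpha+X_F^\beta\ge_{2s}0$.

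The main point to verify is that this corollary really applies. Its proof reduces to Proposition \ref{Prop-con-com-phi-pos}, which needs the observation $X_F^\gamma=X_F^\beta$. This holds because each $\phi_S$ splits the torso $v_i$ and doubles $\#S$ leaves. The resulting product over components depends only on the total number of doubled leaves together with the factors $\hat T_i$, so redistributing the three doublings across components does not change the symmetric function. In the proof I will confirm this identification explicitly, namely that both sides equal $\prod_i(2s_{11})^{\cdot}s_1^{\cdot}$ times common factors.

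Two further checks remain. First, $\#S_i\le k_i$ must hold for each $i$, which is true since $k_i\ge 3$ and $\#S_i\le 3$. Second, I must confirm that $M_9$ is disjoint from $M_1,\dots,M_8$. This follows from $\alpha(v_0)=0$ together with Corollary \ref{cor-xsn-distinct}, since the $G_1$ and $G_2$ labels $X_{\emptyset,3}$ and $X_{\{1,2,3\},m}$ differ from those used in $M_1$--$M_8$.
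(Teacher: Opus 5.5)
Your proposal is correct and is essentially the paper's proof. The paper gives exactly this $\psi_9$ and defers to the argument of Lemma \ref{lem-psi8}: inversion by $\phi^{-1}$ on each spider, and Corollary \ref{cor-forest-2-pos-phi} with $\#S_1+\#S_2+\#S_3=0+3+0=3$. One small slip in your side remark on disjointness: $M_9$ is separated from $M_5$ by the $G_3$ label ($X_{\emptyset,n}$ versus $X^r_{j,n}$ with $j\in\{3,4\}$), not by $G_1$ or $G_2$, because $X^3_{\emptyset,3}\subseteq A^+$ and $M_5$ allows $G_2\in X_{0,m}\supseteq X_{\{1,2,3\},m}$.
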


\begin{proof}
The proof of this lemma is analogue to the proof of Lemma \ref{lem-psi8}. We give the explicit map of $\psi_9$ and omit the other detail.
\begin{equation}
\psi_9(\alpha)(v)=\begin{cases}
\phi_{\{1,2,3\}}(\alpha\mid_{G_2})(v),&\text{if }v\in V(G_2);\\
\phi_\emptyset(\alpha\mid_{G_1})(v),&\text{if }v\in V(G_1);\\
\phi_\emptyset(\alpha\mid_{G_3})(v),&\text{if }v\in V(G_3);\\
0,&\text{if }v=v_0.
\end{cases}
\end{equation}
\end{proof}

\begin{lem}\label{lem-psi10}
There is an injection $\psi_{10}$ from $N_{10}$ to $M_{10}$. Moreover, $X_{T_{3,m,n}}^\alpha+X_{T_{3,m,n}}^{\psi_{10}(\alpha)}$ is 2-$s$-positive.
\end{lem}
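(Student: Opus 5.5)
The plan is to copy the proofs of Lemmas~\ref{lem-psi1} and \ref{lem-psi2}. The one difference is that the torso $v_0$, which carries weight $1$, now acts as an extra leg of length~$1$ attached to $v_1$. Accordingly, the positivity step will use Proposition~\ref{pro-s12n} in place of Proposition~\ref{pro-xgn+xgalpha}.

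\emph{Structure of $\alpha\in N_{10}$.} We have $\alpha(v_0)=\alpha(v_1)=1$, $\alpha(v_2)=\alpha(v_3)=0$ and $k_1=2$.
\begin{itemize}
\item Let $F$ be the subgraph induced by $V(G_1)\cup\{v_0\}$; then $F\cong S(1,2^3)$ with torso $v_1$ and short leg $v_0$.
\item By Corollary~\ref{cor-2s0}, every edge $uv$ satisfies $\alpha(u)+\alpha(v)\le 2$ on the relevant component. Since $\alpha(v_1)=1$ and $k_1=2$, this gives $\alpha|_{G_1}\in\mathcal{A}_2$.
\item Let $T_0$ be the component of $T_{3,m,n}^\alpha$ containing $v_0^\alpha$. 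Then $T_0\cong S(1^3,2^r)$, the three short legs being $v_0$ and the two $v_{1j}$ with $\alpha(v_{1j})=1$ and $\alpha(v_{1j}')=0$. Its bipartition is $(r+3,r+1)$.
\item Since $\alpha(v_2)=\alpha(v_3)=0$, all other components lie in $G_2\setminus v_2$, $G_3\setminus v_3$, or the remaining legs of $G_1$. They are paths or single clans, hence 2-$s$-positive, so $T_0$ is the unique non-2-$s$-positive component.
\item Proposition~\ref{pro-claw-iso}, applied with $r+1$ in place of $r$, then shows that $T_{3,m,n}^\alpha$ has no isolated vertices. In particular $G_2^\alpha,G_3^\alpha\in Z$, which uses $\alpha(v_2)=\alpha(v_3)=0$.
\end{itemize}

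\emph{The map.} Define $\psi_{10}(\alpha)=\beta$ by $\beta|_{G_1}=\phi_1(\alpha|_{G_1})$ and $\beta(v)=\alpha(v)$ otherwise, so in particular $\beta(v_0)=1$.
\begin{itemize}
\item By Proposition~\ref{pro-bijection}, $\beta|_{G_1}\in x^2_{1,3}$, so $G_1^\beta\in X^2_{1,3}$, while $G_2^\beta=G_2^\alpha$ and $G_3^\beta=G_3^\alpha$ lie in $Z$.
\item To check that $\beta\in M$: since $\beta(v_1)=\beta(v_2)=\beta(v_3)=0$, the vertex $v_0$ is isolated in $T_{3,m,n}^\beta$. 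The remaining components are paths or $K_2$'s coming from the doubled leaf. Hence $T_{3,m,n}^\beta$ is 2-$s$-positive and $\beta\in M_{10}$.
\item Injectivity follows from the left inverse $\varphi_{10}$, which applies $\phi^{-1}$ of Remark~\ref{rem-phi-1} on $G_1$ and the identity elsewhere. It restores $\beta(v_1)=1$ and halves the doubled leaf, and $\beta(v_0)=1$ is untouched. Thus $\varphi_{10}(\psi_{10}(\alpha))=\alpha$.
\end{itemize}

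\emph{Positivity.} Because $\alpha$ and $\beta$ vanish on $v_2$ and $v_3$, there are no edges between $F$ and the rest in either clan graph. Proposition~\ref{prop-42} then gives
\[
X_{T_{3,m,n}}^\alpha+X_{T_{3,m,n}}^\beta=\bigl(X_F^\alpha+X_F^\beta\bigr)X_{G_2}^\alpha X_{G_3}^\alpha .
\]
Here $\alpha|_F$ and $\beta|_F$ are exactly in the situation of Proposition~\ref{pro-s12n}, with $k=2$, $j=1$ and $\alpha(v_0)=1$ playing the role of the short leg. That proposition gives $X_F^\alpha+X_F^\beta\ge_{2s}0$. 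Since $X_{G_2}^\alpha,X_{G_3}^\alpha\ge_{2s}0$, the product is 2-$s$-positive.

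\emph{Main obstacle.} The delicate step is the structural claim that $T_0$ is the only bad component and therefore that $G_2^\alpha,G_3^\alpha\in Z$. This needs a careful but routine check via Corollary~\ref{cor-2s0} and Proposition~\ref{prop-416}. One must also check that the index conventions of Proposition~\ref{pro-s12n} match those of $F$ (torso $v_1$, leg $v_0$).
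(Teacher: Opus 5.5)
Your proposal is correct and is essentially the paper's proof. It uses the same map $\psi_{10}$ (apply $\phi_1$ on $G_1$, keep $\alpha(v_0)=1$) and the same left inverse via $\phi^{-1}$, invokes Proposition~\ref{pro-claw-iso} to get $G_2^\alpha,G_3^\alpha\in Z$, and factors through $G_{1,0}\cong S(1,2^3)$ before applying Proposition~\ref{pro-s12n} with $k=2$; your explicit check that $T_{3,m,n}^{\beta}$ is 2-$s$-positive, so that $\beta\in M$, fills in a small step the paper leaves implicit.
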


\begin{proof}
Given $\alpha\in N_{10}$, by definition, we have $\alpha(v_0)=\alpha(v_1)=1$, $k_1=2$ and $\alpha(v_2)=\alpha(v_3)=0$. Clearly the unique non-2-$s$-positive connected component $T$ in $T_{3,m,n}^\alpha$ is the component containing $v_0^\alpha$ (see Figure \ref{fig-N10} for an example). It is evident that $T$ is a spider with torso $v_1^\alpha$,  and either $T\cong S(1^3)$ or $T\cong S(1^3,2)$. Thus $T$ has  bipartition either $(3,1)$ or $(4,2)$. From Corollary \ref{pro-claw-iso}, we see that $G_2^\alpha,G_3^\alpha\in Z$.
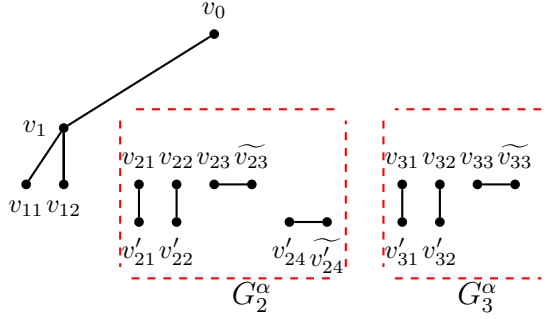
\begin{figure}[h]
    \centering
\begin{tikzpicture}
[thick, every label/.style={font=\footnotesize}, place/.style={thick,fill=black!100,circle,inner sep=0pt,minimum size=1mm,draw=black!100}]
\node [place,label=above:{$v_0$}] (v4) at (-1.5,2.5) {};
\node[place,label=above:{$v_{21}$}] (v6) at (-2.5,0.5) {};
\node [place,label=below:{$v_{21}'$}] (v7) at (-2.5,0) {};
\node[place,label=above:{$v_{23}$}] (v14) at (-1.5,0.5) {};
\node[place,label=above:{$\widetilde{v_{23}}$}] (v15) at (-1,0.5) {};
\node[place,label=below:{$v_{24}'$}] (v16) at (-0.5,0) {};
\node[place,label=below:{$\widetilde{v_{24}'}$}] (v17) at (0,0) {};
\node [place,label=below:{$v_{12}$}] (v8) at (-3.5,0.5) {};
\node[place,label=below:{$v_{11}$}] (v2) at (-4,0.5) {};
\node[place,label=left:{$v_{1}$}] (v3) at (-3.5,1.25) {};
\node [place,label=above:{$v_{31}$}] (v19) at (1,0.5) {};
\node [place,label=below:{$v_{31}'$}] (v20) at (1,0) {};
\node[place,label=above:{$v_{32}$}] (v21) at (1.5,0.5) {};
\node[place,label=below:{$v_{32}'$}] (v22) at (1.5,0) {};
\node [place,label=above:{$v_{33}$}] (v13) at (2,0.5) {};
\node [place,label=above:{$\widetilde{v_{33}}$}] (v12) at (2.5,0.5) {};
\draw (v4) -- (v3) -- (v8) -- (v3) -- (v2);
\draw  (v6) edge (v7);
\draw  (v14) edge (v15);
\draw  (v16) edge (v17);
\node [place,label=above:{$v_{22}$}] (v1) at (-2,0.5) {};
\node [place,label=below:{$v_{22}'$}] (v5) at (-2,0) {};
\draw  (v1) edge (v5);
\draw  (v19) edge (v20);
\draw  (v21) edge (v22);
\draw  (v13) edge (v12);
\node (v9) at (-2.75,1.5) {};
\node (v10) at (-2.75,-0.75) {};
\node (v11) at (0.25,-0.75) {};
\node (v18) at (0.25,1.5) {};
\draw [red] [dashed] (v9) -- (v10) -- (v11) -- (v18) -- (v9);
\node at (-1,-1) {$G_2^\alpha$};
\node (v23) at (0.75,1.5) {};
\node (v24) at (0.75,-0.75) {};
\node (v25) at (3,-0.75) {};
\node (v26) at (3,1.5) {};
\draw [red] [dashed] (v23) -- (v24) -- (v25) -- (v26) -- (v23);
\node at (2,-1) {$G_3^\alpha$};
\end{tikzpicture}
    \caption{An example of $T_{3,4,3}^\alpha(\alpha\in N_{10})$.}
    \label{fig-N10}
\end{figure}

Define $\psi_{10}(\alpha)$ as follows:
\begin{equation}\label{equ-def-psi10}
\psi_{10}(\alpha)(v)=\begin{cases}
\phi_{1}(\alpha\mid_{G_1})(v),&\text{if }v\in V(G_1);\\
\alpha(v),& \text{otherwise.}
\end{cases}
\end{equation}
Set $\beta=\psi_{10}(\alpha)$. From the construction of $\psi_{10}$ and using the same argument as in Lemma \ref{lem-psi4}, we have $G_1^\beta\in X_{1,3}^2$, $G_2^\beta=G_2^\alpha\in Z$ and $G_3^\beta=G_3^\alpha\in Z$.   This implies $\beta\in M_{10}$.

We next show that $\psi_{10}$ is an injection. Let $I_{10}=\{\psi_{10}(\alpha)\colon \alpha\in N_{10}\}\subseteq M_{10}$. For any $\beta\in I_{10}$, we construct a map $\varphi_{10}\colon I_{10}\rightarrow N_{10}$ as follows:
\begin{equation}
\varphi_{10}(\beta)(v)=\begin{cases}
\phi^{-1}(\beta\mid_{G_1})(v),&\text{if }v\in V(G_1);\\
\beta(v),& \text{otherwise.}
\end{cases}
\end{equation}
It is easy to check that $\varphi_{10}(\psi_{10}(\alpha))=\alpha$ for any $\alpha\in N_{10}$. This yields $\psi_{10}$ is an injection.

We next calculate $X_{T_{3,m,n}}^\alpha+X_{T_{3,m,n}}^\beta$. Let $G_{1,0}$ denote the subgraph of $T_{3,m,n}$ induced by $V(G_1)\cup \{v_0\}$ and let $\overline{G_{1,0}}$ denote the subgraph of $T_{3,m,n}$ induced by $V(T_{3,m,n})\setminus V(G_{1,0})$. Clearly $G_{1,0}\cong S(1,2^3)$. Moreover, $\alpha\mid_{G_1}\in \mathcal{A}_2$. From Proposition \ref{pro-s12n}, we deduce that
\begin{equation}\label{eq-x-g-10-alpha-beta}
    X_{G_{1,0}}^\alpha+X_{G_{1,0}}^\beta\ge_{2s} 0.
\end{equation}

From $\alpha(v_2)=\alpha(v_3)=0$, we see that there are no edges between  $G_{1,0}^\alpha$ and $\overline{G_{1,0}}^\alpha$. Similarly, $\beta(v_2)=\beta(v_3)=0$ implies that there are no edges between $G_{1,0}^\beta$ and $\overline{G_{1,0}}^\beta$.   Using Proposition \ref{prop-42}, we find that
\begin{equation}\label{eq-combin-x-alpha-x-beta}
X_{T_{3,m,n}}^\alpha=X_{G_{1,0}}^\alpha    X_{\overline{G_{1,0}}}^\alpha,\quad\text{and}\quad X_{T_{3,m,n}}^\beta=X_{G_{1,0}}^\beta    X_{\overline{G_{1,0}}}^\beta
\end{equation}
By the construction of $\psi_{10}$, we see that $X_{\overline{G_{1,0}}}^\beta=X_{\overline{G_{1,0}}}^\alpha$. Combining \eqref{eq-x-g-10-alpha-beta} and \eqref{eq-combin-x-alpha-x-beta}, we have
\[X_{T_{3,m,n}}^\alpha+X_{T_{3,m,n}}^\beta=(X_{G_{1,0}}^\alpha+X_{G_{1,0}}^\beta)X_{\overline{G_{1,0}}}^\alpha\ge_{2s} 0.\]

\end{proof}

\begin{lem}\label{lem-psi11}
There is an injection $\psi_{11}$ from $N_{11}$ to $M_{11}$. Moreover, $X_{T_{3,m,n}}^\alpha+X_{T_{3,m,n}}^{\psi_{11}(\alpha)}$ is 2-$s$-positive.
\end{lem}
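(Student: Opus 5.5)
The plan is to exploit the fact that on $G_1\cup\{v_0\}$ every $\alpha\in N_{11}$ is completely determined, and to replace this rigid configuration by the fixed one prescribed in $M_{11}$, leaving $\alpha$ unchanged on $G_2$ and $G_3$.

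First I determine $\alpha$ on $G_{1,0}$, the subgraph induced by $V(G_1)\cup\{v_0\}$ (as in the proof of Lemma \ref{lem-psi10}). If $\alpha\in N_{11}$, then $\alpha(v_0)=\alpha(v_1)=1$ and $k_1=3$. Hence $\alpha(v_{1j})=1$ and $\alpha(v_{1j}')=0$ for $j=1,2,3$. So $\alpha|_{G_{1,0}}$ is a single fixed map, and $G_{1,0}^\alpha\cong S(1^4)$ with bipartition $(4,1)$. Since $\alpha(v_2)=\alpha(v_3)=0$, there are no edges between $G_{1,0}^\alpha$ and $\overline{G_{1,0}}^\alpha$. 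Proposition \ref{prop-42} then gives
\[
X_{T_{3,m,n}}^\alpha=X_{G_{1,0}}^\alpha X_{\overline{G_{1,0}}}^\alpha .
\]
Moreover $X_{\overline{G_{1,0}}}^\alpha\ge_{2s}0$: otherwise $\overline{G_{1,0}}^\alpha$ would contain a second non-2-$s$-positive component, which would force some $G_i^\alpha$ ($i=2,3$) to be non-2-$s$-positive, and that requires $\alpha(v_i)=1$ by Proposition \ref{pro-xgn+xgalpha}. By Proposition \ref{prop-2s-coe}, $X_{G_{1,0}}^\alpha=X_{S(1^4)}=_{2s}s_{(4,1)}-s_{(3,2)}$.

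Next I define $\psi_{11}(\alpha)=\beta$ as follows:
\begin{itemize}
\item $\beta(v_0)=\beta(v_1)=\beta(v_{11})=\beta(v_{11}')=\beta(v_{12})=1$;
\item $\beta(v_{12}')=\beta(v_{13})=\beta(v_{13}')=0$;
\item $\beta(v)=\alpha(v)$ for $v\notin V(G_{1,0})$.
\end{itemize}
In particular $\beta(v_2)=\beta(v_3)=0$, so $\beta\in M_{11}$ (the definition of $M_{11}$ places no restriction on $G_2,G_3$). Injectivity is immediate: $\alpha$ is recovered from $\beta$ by restoring the fixed $N_{11}$-configuration on $G_{1,0}$ and keeping $\beta$ elsewhere.

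For positivity, $G_{1,0}^\beta$ is the tree formed by $v_1$ adjacent to $v_0,v_{12},v_{11}$ together with the edge $v_{11}v_{11}'$, i.e.\ $S(1^2,2)$. It has bipartition $(3,2)$, with parts $\{v_0,v_{11},v_{12}\}$ and $\{v_1,v_{11}'\}$. Since it is connected bipartite, the only stable partition of type $(3,2)$ is the bipartition, and it has no stable set of size $4$. So Proposition \ref{prop-2s-coe} gives $\tilde a_{(3,2)}=1$, $\tilde a_{(4,1)}=0$ and $[s_{(5)}]=0$, whence $X_{G_{1,0}}^\beta=_{2s}s_{(3,2)}$. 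Again $\beta(v_2)=\beta(v_3)=0$, so Proposition \ref{prop-42} applies, and $\overline{G_{1,0}}^\beta=\overline{G_{1,0}}^\alpha$. Therefore
\[
X_{T_{3,m,n}}^\alpha+X_{T_{3,m,n}}^\beta=_{2s}\bigl(s_{(4,1)}-s_{(3,2)}+s_{(3,2)}\bigr)X_{\overline{G_{1,0}}}^\alpha=s_{(4,1)}X_{\overline{G_{1,0}}}^\alpha\ge_{2s}0.
\]

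The step needing the most care is the 2-$s$-computation for $S(1^2,2)$, in particular checking that the coefficient convention for semi-ordered partitions of type $(3,2)$ gives exactly $1$. Even if this count were larger, the conclusion would persist, since only $[s_{(3,2)}]X_{G_{1,0}}^\beta\ge 1$ and $[s_{(4,1)}]X_{G_{1,0}}^\beta\ge 0$ are needed.
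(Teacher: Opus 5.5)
Your proposal is correct and is essentially the paper's proof. You use the same map $\psi_{11}$ (turn on $v_{11}'$, turn off $v_{13}$), the same splitting via $G_{1,0}$ and $\overline{G_{1,0}}$, and the same bipartition computation $(4,1)$ versus $(3,2)$, giving $s_{(4,1)}X^\alpha_{\overline{G_{1,0}}}$. Your explicit argument that $X^\alpha_{\overline{G_{1,0}}}\ge_{2s}0$ (which also yields $\beta\in M$) is a step the paper uses without comment.
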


\begin{proof}
Given $\alpha\in N_{11}$, by definition we see that $\alpha(v_0)=\alpha(v_1)=1$, $\alpha(v_2)=\alpha(v_3)=0$ and $k_1=3$. From $k_1=3$ it is clear that $\alpha(v_{11})=\alpha(v_{12})=\alpha(v_{13})=1$ and $\alpha(v'_{11})=\alpha(v'_{12})=\alpha(v'_{13})=0$. Define
\begin{equation}\label{equ-def-psi11}
\psi_{11}(\alpha)(v)=\begin{cases}
1,&\text{if }v=v'_{11};\\
0,&\text{if }v=v_{13};\\
\alpha(v),& \text{otherwise.}
\end{cases}
\end{equation}
Set $\beta=\psi_{11}(\alpha)$. It is obvious $\beta\in M_{11}$.
We next show that $\psi_{11}$ is an injection. Let $I_{11}=\{\psi_{11}(\alpha)\colon \alpha\in N_{11}\}\subseteq M_{11}$. For any $\beta\in I_{11}$, we construct a map $\varphi_{11}\colon I_{11}\rightarrow N_{11}$ as follows:
\begin{equation}
\varphi_{11}(\beta)(v)=\begin{cases}
1,&\text{if }v=v_{13};\\
0,&\text{if }v=v'_{11};\\
\beta(v),& \text{otherwise.}
\end{cases}
\end{equation}
It is easy to check that $\varphi_{11}(\psi_{11}(\alpha))=\alpha$ for any $\alpha\in N_{11}$. This yields $\psi_{11}$ is an injection.

Let $G_{1,0}$ be the subgraph of $T_{3,m,n}$ induced by $V(G_1)\cup\{v_0\}$, and let $\overline{G_{1,0}}$ denote the subgraph of $T_{3,m,n}$ induced by $V(T_{3,m,n})\setminus V({G_{1,0}})$.
From $\alpha(v_2)=\alpha(v_3)=0$, we see that there are no edges between $G_{1,0}^\alpha$ and $\overline{G_{1,0}}^\alpha$. Similarly, $\beta(v_2)=\beta(v_3)=0$ implies that there are no edges between $G_{1,0}^\beta$ and $\overline{G_{1,0}}^\beta$.   Using Proposition \ref{prop-42}, we find that
\begin{equation}\label{eq-combin-x-alpha-x-beta-11}
X_{T_{3,m,n}}^\alpha=X_{G_{1,0}}^\alpha    X_{\overline{G_{1,0}}}^\alpha,\quad\text{and}\quad X_{T_{3,m,n}}^\beta=X_{G_{1,0}}^\beta    X_{\overline{G_{1,0}}}^\beta
\end{equation}

Evidently, the bipartition of $G_{1,0}^\alpha$ is $(4,1)$ and the bipartition of $G_{1,0}^\beta$ is $(3,2)$. From Proposition \ref{prop-2s-coe}, we have
\begin{equation}\label{eq-xt3mn-alpha-xhatt-11}
    X_{G_{1,0}}^\alpha =_{2s}(s_{(4,1)}-s_{(3,2)}) \quad \text{and} \quad   X_{G_{1,0}}^\beta =_{2s}s_{(3,2)}.
\end{equation}

Thus substituting \eqref{eq-xt3mn-alpha-xhatt-11} into \eqref{eq-combin-x-alpha-x-beta-11}, and note that $\overline{G_{1,0}}^\alpha=\overline{G_{1,0}}^\beta$, we have
\begin{equation}
X_{T_{3,m,n}}^\alpha+X_{T_{3,m,n}}^\beta=_{2s}s_{(4,1)}X_{\overline{G_{1,0}}}^\alpha\ge_{2s} 0.
\end{equation}
\end{proof}

\begin{lem}\label{lem-psi12}
There is an injection $\psi_{12}$ from $N_{12}$ to $M_{12}$. Moreover, $X_{{T_{3,m,n}}}^\alpha+X_{T_{3,m,n}}^{\psi_{12}(\alpha)}$ is 2-$s$-positive.
\end{lem}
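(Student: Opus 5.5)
The plan follows Lemma~\ref{lem-psi10}, with $G_2$ in the role of $G_1$ and the root $v_0$ as the extra leg. For $\alpha\in N_{12}$ we have $\alpha(v_0)=\alpha(v_2)=1$, $\alpha(v_1)=\alpha(v_3)=0$ and $k_2=2$.

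\textbf{Step 1: structure of $\alpha$.} First I will identify the unique non-2-$s$-positive component $T$ of $T_{3,m,n}^\alpha$. Since $v_1^\alpha$ and $v_3^\alpha$ are empty, $T$ is the component containing $v_0^\alpha$ and $v_2^\alpha$. By Corollary~\ref{cor-2s0} and Proposition~\ref{prop-416}, every vertex of $\hat T$ has weight $1$. Hence $T\cong S(1^{3},2^r)$ with torso $v_2^\alpha$: its legs of length one are $v_0$ and the two legs counted by $k_2$, and its legs of length two come from the $r$ indices $j$ with $\alpha(v_{2j})=\alpha(v_{2j}')=1$. So $T$ has bipartition $(r+3,r+1)$. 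It is non-balanced, and $\alpha\mid_{G_2}\in\mathcal{A}_2$. Every other component is 2-$s$-positive, so $G_1^\alpha$ and $G_3^\alpha$ are 2-$s$-positive; these are the components on the vertex sets of $G_1$ and $G_3$, disconnected from $T$.

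\textbf{Step 2: no isolated vertices.} Next I will show that $G_1^\alpha$ and $G_3^\alpha$ lie in $Z$. Only when $r=0$ does Proposition~\ref{pro-claw-iso} apply verbatim, since then the bipartition is $(3,1)$. For general $r$, the coefficient computation behind that proposition still works:
\[
s_1\bigl(s_{(r+3,r+1)}-s_{(r+2,r+2)}\bigr)=_{2s}s_{(r+4,r+1)}+s_{(r+3,r+2)}-s_{(r+3,r+2)}=_{2s}s_{(r+4,r+1)},
\]
which is 2-$s$-positive. So an isolated vertex anywhere in $T_{3,m,n}^\alpha$ would make $X^\alpha$ 2-$s$-positive, a contradiction. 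The same observation also rules out isolated vertices of $G_2^\alpha$ outside $T$, though the definition of $M_{12}$ does not require that.

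\textbf{Step 3: define $\psi_{12}$ and prove injectivity.} I will define $\beta=\psi_{12}(\alpha)$ by
\[
\beta(v)=\phi_1(\alpha\mid_{G_2})(v)\ \text{ for } v\in V(G_2),\qquad \beta(v_0)=1,\qquad \beta(v)=\alpha(v)\ \text{ otherwise}.
\]
Here $v_0$ is kept at weight $1$, because it lies outside $G_2$. By Proposition~\ref{pro-bijection}, $\beta\mid_{G_2}\in x^2_{1,m}$, so $G_2^\beta\in X^2_{1,m}$. Since $\beta(v_1)=\beta(v_2)=\beta(v_3)=0$ and $\beta(v_0)=1$, while $G_1^\beta=G_1^\alpha\in Z$ and $G_3^\beta=G_3^\alpha\in Z$, we get $\beta\in M_{12}$. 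To verify $\beta\in M$, note that $v_0^\beta$ becomes an isolated vertex and the doubled leg gives a $K_2$; I expect 2-$s$-positivity to follow from the sum estimate in Step~4. Injectivity follows by applying $\phi^{-1}$ on $G_2$ and restoring $\alpha(v_0)=1$, exactly as for $\varphi_{10}$.

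\textbf{Step 4: positivity of the sum.} Let $G_{2,0}$ be the subgraph induced by $V(G_2)\cup\{v_0\}$, which is isomorphic to $S(1,2^m)$ with $v_0$ as the leg of length one. Since $\alpha(v_1)=\alpha(v_3)=0$, and likewise for $\beta$, there are no edges between $G_{2,0}^\alpha$ and the rest of the graph, and the same holds for $\beta$. Proposition~\ref{prop-42} then gives
\[
X^\alpha_{T_{3,m,n}}+X^\beta_{T_{3,m,n}}=\bigl(X^\alpha_{G_{2,0}}+X^\beta_{G_{2,0}}\bigr)X^\alpha_{\overline{G_{2,0}}}.
\]
Here $\beta\mid_{G_{2,0}}$ is precisely the map $\beta$ of Proposition~\ref{pro-s12n}, with $k=2$ and $j=1$. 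That proposition gives $X^\alpha_{G_{2,0}}+X^\beta_{G_{2,0}}\ge_{2s}0$, and $X^\alpha_{\overline{G_{2,0}}}$ is 2-$s$-positive as a product of 2-$s$-positive factors.

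The main obstacle is the bookkeeping in Steps~1 and~2: the structure of $T$ and the absence of isolated vertices when $r>0$, which goes beyond the literal hypothesis of Proposition~\ref{pro-claw-iso}. I will handle it with the direct $s_1$-multiplication check displayed in Step~2.
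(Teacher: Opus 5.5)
Your proposal follows the paper's proof. The paper defines $\psi_{12}$ exactly as you do: $\phi_1(\alpha\mid_{G_2})$ on $G_2$ and $\alpha$ elsewhere, so $v_0$ keeps weight $1$. As in Lemma~\ref{lem-psi10}, it obtains positivity of the sum by splitting off $G_{2,0}\cong S(1,2^m)$ and applying Proposition~\ref{pro-s12n} with $k=2$, $j=1$.

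One step needs repair. In Step 3 you say you expect $X^\beta_{T_{3,m,n}}\ge_{2s}0$ to follow from the sum estimate of Step 4. It does not: $X^\alpha+X^\beta\ge_{2s}0$ says nothing about $X^\beta$ alone, and $\beta\in M$ is needed for $\psi_{12}$ to land in $M_{12}$. Argue directly instead.
\begin{itemize}
\item Since $\beta(v_1)=\beta(v_2)=\beta(v_3)=0$, the vertex $v_0^\beta$ is isolated and no edge through a torso survives.
\item Every leg $v_{ij}v'_{ij}$ has total weight at most $2$ under $\beta$. For $G_1$ and $G_3$ this is Corollary~\ref{cor-2s0} applied to $\alpha$. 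For $G_2$ it holds because $\alpha\mid_{G_2}\in\mathcal{A}_2$ and $\phi_1$ only turns one $(1,0)$ leg into a $(2,0)$ leg.
\end{itemize}
Hence $T^\beta_{3,m,n}$ is a disjoint union of copies of $K_1$ and $K_2$. So $X^\beta_{T_{3,m,n}}$ is a positive multiple of a product of $s_1$'s and $s_{(1,1)}$'s, which is Schur-positive.

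A minor point: Proposition~\ref{pro-claw-iso} applies verbatim for every $r\ge 0$. Its hypothesis is a bipartition $(r'+2,r')$ with $r'\ge 1$, and $(r+3,r+1)$ has that form with $r'=r+1$. Your explicit $s_1$-computation in Step 2 is exactly the proof of that proposition, so it is correct, just longer than necessary.
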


\begin{proof}
The proof of this lemma is analogue to the proof of Lemma \ref{lem-psi10}. We give the explicit map of $\psi_{12}$ and omit the other detail.
\begin{equation}
\psi_{12}(\alpha)(v)=\begin{cases}
\phi_{1}(\alpha\mid_{G_2})(v),&\text{if }v\in V(G_2);\\
\alpha(v),&\text{otherwise.}
\end{cases}
\end{equation}
\end{proof}

\begin{lem}\label{lem-psi13}
There is an injection $\psi_{13}$ from $N_{13}$ to $M_{13}$. Moreover, $X_{T_{3,m,n}}^\alpha+X_{T_{3,m,n}}^{\psi_{13}(\alpha)}$ is 2-$s$-positive.
\end{lem}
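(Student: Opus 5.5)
Given $\alpha\in N_{13}$, we have $\alpha(v_0)=\alpha(v_2)=1$, $\alpha(v_1)=\alpha(v_3)=0$ and $k_2\ge 3$. Let $G_{2,0}$ be the subgraph induced by $V(G_2)\cup\{v_0\}$, so $G_{2,0}\cong S(1,2^m)$ with $v_0$ playing the role of the leg of length one. Let $\overline{G_{2,0}}$ be the complementary induced subgraph. Since $\alpha(v_1)=\alpha(v_3)=0$, there are no edges between $G_{2,0}^\alpha$ and $\overline{G_{2,0}}^\alpha$. Hence Proposition \ref{prop-42} gives
\[
X_{T_{3,m,n}}^\alpha=X_{G_{2,0}}^\alpha X_{G_1}^\alpha X_{G_3}^\alpha .
\]
The non-2-$s$-positive component is the spider with torso $v_2^\alpha$, isomorphic to $S(1^{k_2+1},2^r)$. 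Every other component is 2-$s$-positive, so $G_1^\alpha,G_3^\alpha\in A^+$. By Definition \ref{defi-xtsn} and Corollary \ref{cor-xsn-distinct}, there is a unique $j\ge 0$ with $G_3^\alpha\in X_{j,n}$. Note that $\alpha\mid_{G_2}\in\mathcal{A}_{k_2}$: indeed $\alpha(v_2)=1$, and Corollary \ref{cor-2s0} forces $\alpha(v_{2i})\le 1$ and $\alpha(v_{2i})+\alpha(v_{2i}')\le 2$.

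Following the pattern of $\psi_4$, define $\psi_{13}(\alpha)$ on $V(G_2)\cup\{v_0\}$ by $\phi_3(\alpha\mid_{G_2})$ on $G_2$ when $j$ is even and $\phi_2(\alpha\mid_{G_2})$ when $j$ is odd, so that $i+j$ is odd. Keep $\psi_{13}(\alpha)(v_0)=1$, and set $\psi_{13}(\alpha)=\alpha$ elsewhere. Since $\phi_i$ sets the torso $v_2$ to $0$, we get $\beta(v_1)=\beta(v_2)=\beta(v_3)=0$ and $\beta(v_0)=1$. By Proposition \ref{pro-bijection}, $G_2^\beta\in X_{i,m}^{k_2}$ with $k_2\ge3$ and $i\in\{2,3\}$, while $G_3^\beta=G_3^\alpha\in X_{j,n}$. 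Thus $\beta\in M_{13}$. (The index $3$ is admissible because $k_2\ge 3$.)

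Injectivity follows as before. The map $\varphi_{13}$, which applies $\phi^{-1}$ (Remark \ref{rem-phi-1}) on $G_2$ and is the identity elsewhere, recovers $\alpha$, since $\phi^{-1}$ inverts both $\phi_2$ and $\phi_3$ and already sets $v_2$ back to $1$. The value $v_0=1$ is untouched.

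For the positivity, note that $\beta$ agrees with $\alpha$ off $G_2$, and that $\beta(v_1)=\beta(v_3)=0$. So
\[
X^\alpha_{T_{3,m,n}}+X^\beta_{T_{3,m,n}}=\bigl(X^\alpha_{G_{2,0}}+X^\beta_{G_{2,0}}\bigr)X^\alpha_{G_1}X^\alpha_{G_3},
\]
and it suffices to show $X^\alpha_{G_{2,0}}+X^\beta_{G_{2,0}}\ge_{2s}0$. This is exactly Proposition \ref{pro-s12n} applied to $S(1,2^m)$ with $v_1:=v_0$, $T=G_2$, $\alpha\mid_T\in\mathcal{A}_{k_2}$ with $k_2\ge 2$, and the index $i\le k_2$. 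Its hypothesis defining $\beta$ (namely $\phi_i$ on $T$, and the leg value unchanged) matches our construction.

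The main obstacle is bookkeeping rather than positivity. First, one has to confirm that the parity choice of $i$ lands inside the precise definition of $M_{13}$. Second, one has to check that $M_{13}$ is disjoint from the images of the other $\psi$'s, in particular from $M_{12}$ (which needs $X^2_{1,m}$) and $M_{15}$ (which has $i+j$ even). The parity device is designed exactly so that the pair $(N_{13},N_{15})$ has disjoint images. I would verify this by checking that $(i,j)$ is recoverable from $\beta$ via Corollary \ref{cor-xsn-distinct}.
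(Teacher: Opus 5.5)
Your proposal is correct and follows the paper's proof essentially verbatim. It uses the same parity-dependent choice of $\phi_2$ or $\phi_3$ on $G_2$ according to the unique $j$ with $G_3^\alpha\in X_{j,n}$, the same injectivity argument via $\phi^{-1}$ on $G_2$, and the same reduction of positivity to Proposition \ref{pro-s12n} applied to $G_{2,0}\cong S(1,2^m)$ with $v_0$ as the length-one leg. The disjointness of $M_{13}$ from $M_{12}$ and $M_{15}$ that you flag is not part of this lemma (the paper asserts the pairwise disjointness of the $M_i$ separately), though your parity argument for it is sound.
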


\begin{proof}
Given $\alpha\in N_{13}$, by definition we see that $\alpha(v_0)=\alpha(v_2)=1$, $\alpha(v_1)=\alpha(v_3)=0$ and $k_2\ge 3$. From \eqref{equ-def-x0n} and Corollary \ref{cor-xsn-distinct}, we see that there exists a unique $j\ge 0$ satisfies that $G_3^\alpha\in X_{j,n}$.   Define
\begin{equation}\label{equ-def-psi13}
\psi_{13}(\alpha)(v)=\begin{cases}
\phi_2(\alpha\mid_{G_2})(v),&\text{if }v\in V(G_2), \text{ and $j$ odd};\\
\phi_3(\alpha\mid_{G_2})(v),&\text{if }v\in V(G_2),  \text{ and $j$ even};\\
\alpha(v),& \text{otherwise.}
\end{cases}
\end{equation}
Set $\beta=\psi_{13}(\alpha)$. From the construction of $\psi_{13}$, we see that $\beta(v_0)=1,$ $\beta(v_1)=\beta(v_3)=0$ and  $G_3^\beta=G_3^\alpha\in X_{j,n}$. Moreover,   $G_2^\beta\in X_{2,m}^{k_2}$ when $j$ is odd and $G_2^\beta\in X_{3,m}^{k_2}$ when $j$ is even.    This implies $\beta\in M_{13}$.

We next show that $\psi_{13}$ is an injection. Let $I_{13}=\{\psi_{13}(\alpha)\colon \alpha\in N_{13}\}\subseteq M_{13}$. For any $\beta\in I_{13}$, we construct a map $\varphi_{13}\colon I_{13}\rightarrow N_{13}$ as follows:
\begin{equation}
\varphi_{13}(\beta)(v)=\begin{cases}
\phi^{-1}(\beta\mid_{G_2})(v),&\text{if }v\in V(G_2);\\
\beta(v),& \text{otherwise.}
\end{cases}
\end{equation}
It is easy to check that $\varphi_{13}(\psi_{13}(\alpha))=\alpha$ for any $\alpha\in N_{13}$. This yields $\psi_{13}$ is an injection.

Let $G_{2,0}$ denote the subgraph of $T_{3,m,n}$ induced by $V(G_2)\cup \{v_0\}$, and let $\overline{G_{2,0}}$ denote the subgraph of $T_{3,m,n}$ induced by $V(T_{3,m,n})\setminus V({G_{2,0}})$. It is clear that $G_{2,0}\cong S(1,2^m)$, and $G_{2}^\alpha\in \mathcal{A}_{k_2}$. Using Proposition \ref{pro-s12n}, we see that
\begin{equation}\label{eq-g20-alpha-beta-13}
    X_{G_{2,0}}^\alpha+X_{G_{2,0}}^\beta\ge_{2s} 0.
\end{equation}

Moreover, from $\alpha(v_1)=\alpha(v_3)=0$ we see that $X_{T_{3,m,n}}^\alpha=X_{G_{2,0}}^\alpha X_{\overline{G_{2,0}}}^\alpha.$ Similarly, $\beta(v_1)=\beta(v_3)=0$ we see that $X_{T_{3,m,n}}^\beta=X_{G_{2,0}}^\beta X_{\overline{G_{2,0}}}^\beta.$ Furthermore, by the definition of $\psi_{13}$, we find that $X_{\overline{G_{2,0}}}^\beta=X_{\overline{G_{2,0}}}^\alpha$. From the above analysis and \eqref{eq-g20-alpha-beta-13}, we have
\begin{equation}
    X_{T_{3,m,n}}^\alpha+X_{T_{3,m,n}}^\beta=(X_{G_{2,0}}^\alpha+X_{G_{2,0}}^\beta)X_{\overline{G_{2,0}}}^\alpha\ge_{2s} 0.
\end{equation}
\end{proof}

\begin{lem}\label{lem-psi14}
There is an injection $\psi_{14}$ from $N_{14}$ to $M_{14}$. Moreover, $X_{{T_{3,m,n}}}^\alpha+X_{T_{3,m,n}}^{\psi_{14}(\alpha)}$ is 2-$s$-positive.
\end{lem}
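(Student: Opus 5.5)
This is the mirror image of Lemma \ref{lem-psi12}, and hence of Lemma \ref{lem-psi10}, with the roles of $G_2$ and $G_3$ swapped. For $\alpha\in N_{14}$ we have $\alpha(v_0)=\alpha(v_3)=1$, $\alpha(v_1)=\alpha(v_2)=0$ and $k_3=2$. I define
\[
\psi_{14}(\alpha)(v)=\begin{cases}
\phi_1(\alpha\mid_{G_3})(v),&\text{if }v\in V(G_3);\\
\alpha(v),&\text{otherwise,}
\end{cases}
\]
and set $\beta=\psi_{14}(\alpha)$. Note that $\phi_1$ also resets the value at the torso $v_0$ to $0$; but $v_0\notin V(G_3)$, so as in Lemma \ref{lem-psi10} I read the restriction through $G_{3,0}$. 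There $\beta(v_0)=\alpha(v_0)$ is kept, and only the value of the first vertex $v_{3i_1}$ with $\alpha(v_{3i_1})=1$, $\alpha(v'_{3i_1})=0$ is raised to $2$.

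To check $\beta\in M_{14}$, I first identify the non-2-$s$-positive component $T$ of $T_{3,m,n}^\alpha$. It contains $v_0^\alpha$ and is a spider with torso $v_3^\alpha$. Its legs are the two pendant vertices from $k_3=2$, the vertex $v_0^\alpha$, and some legs of length $2$, so $T\cong S(1^3,2^r)$ with bipartition $(r+3,r+1)$. Corollary \ref{cor-2s-con-bipar}, together with the condition that all other components are paths or isolated vertices, should force $r\le 1$, giving bipartition $(3,1)$ or $(4,2)$. Then Proposition \ref{pro-claw-iso} shows that $G_1^\alpha,G_2^\alpha$ have no isolated vertices, so $G_1^\alpha,G_2^\alpha\in Z$. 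The $G_3$-part of $\beta$ lies in $X^2_{1,n}$ by Proposition \ref{pro-bijection}, since $\alpha\mid_{G_3}$, with the torso value recorded as in $\mathcal{A}_2$, belongs to $\mathcal{A}_2$. Hence $\beta\in M_{14}$.

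For injectivity, I define $\varphi_{14}$ on the image by applying $\phi^{-1}$ (Remark \ref{rem-phi-1}) on $G_3$ and the identity elsewhere. Then $\varphi_{14}\circ\psi_{14}=\mathrm{id}$, exactly as in the earlier lemmas.

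For positivity, let $G_{3,0}$ be the subgraph induced by $V(G_3)\cup\{v_0\}$, so $G_{3,0}\cong S(1,2^n)$ with the pendant leg $v_0$. Since $\alpha(v_1)=\alpha(v_2)=0=\beta(v_1)=\beta(v_2)$, Proposition \ref{prop-42} factors both $X^\alpha_{T_{3,m,n}}$ and $X^\beta_{T_{3,m,n}}$ as $X_{G_{3,0}}\cdot X_{\overline{G_{3,0}}}$. The second factors coincide and are 2-$s$-positive, since their components are paths and isolated vertices. The restriction of $\beta$ to $G_{3,0}$ is precisely the $\beta$ of Proposition \ref{pro-s12n} with $k=2$ and $j=1$. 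That proposition gives $X^\alpha_{G_{3,0}}+X^\beta_{G_{3,0}}\ge_{2s}0$, and multiplying by the common factor finishes the proof.

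The main obstacle is bookkeeping. I must ensure that the torso convention of $\phi_1$ (which sets $v_0\mapsto 0$) is interpreted consistently with Proposition \ref{pro-s12n} and with the definition of $M_{14}$, and that the bound $r\le 1$ really yields $Z$-membership of $G_1^\alpha$ and $G_2^\alpha$.
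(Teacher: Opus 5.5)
Your route is the one the paper intends. The paper gives exactly this $\psi_{14}$ and refers to Lemma~\ref{lem-psi10}: Proposition~\ref{pro-claw-iso} gives $G_1^\alpha,G_2^\alpha\in Z$, $\phi^{-1}$ on $G_3$ gives injectivity, and Proposition~\ref{pro-s12n} on $G_{3,0}\cong S(1,2^n)$ gives positivity. However, two steps in your write-up are wrong as stated, and one of them is the bookkeeping point you left open.

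\textbf{The bound $r\le 1$ is false.} It is inherited from Lemma~\ref{lem-psi10}, where $G_1$ has only three legs, but $G_3$ has $n$ legs. For $n\ge 4$, take $\alpha(v_0)=\alpha(v_3)=\alpha(v_{31})=\alpha(v_{32})=1$, $\alpha(v_{33})=\alpha(v'_{33})=\alpha(v_{34})=\alpha(v'_{34})=1$, and $0$ elsewhere. This map lies in $N_{14}$ with $r=2$: its only nontrivial component is $S(1^3,2^2)$, with $X=_{2s}s_{(5,3)}-s_{(4,4)}$. Corollary~\ref{cor-2s-con-bipar} only says that $(r+3,r+1)$ is unbalanced, which holds for every $r\ge 0$. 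You do not need any bound: $(r+3,r+1)=(r'+2,r')$ with $r'=r+1\ge 1$, so Proposition~\ref{pro-claw-iso} applies for all $r$ and gives $G_1^\alpha,G_2^\alpha\in Z$.

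\textbf{The torso of $G_3\cong S(2^n)$ is $v_3$, not the root $v_0$.} So $\phi_1(\alpha\mid_{G_3})$ does not merely raise $v_{3i_1}$ to $2$; it also sets $\beta(v_3)=0$. This is exactly what membership in $M_{14}$ and in $x^2_{1,n}$ requires. It is also what Proposition~\ref{pro-s12n} uses, with $v_3$ as the torso of $G_{3,0}$ and the root $v_0$ as its pendant leg. Under the reading you describe, $\beta(v_3)=1$ would be adjacent to $\beta(v_{3i_1})=2$. Corollary~\ref{cor-2s0} would then give $X_{T_{3,m,n}}^\beta=_{2s}0$, and both $\beta\in M_{14}$ and the final positivity would fail.

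With these two corrections your argument is complete.
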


\begin{proof}
The proof of this lemma is analogue to the proof of Lemma \ref{lem-psi10}. We give the explicit map of $\psi_{14}$ and omit the other detail.
\begin{equation}
\psi_{14}(\alpha)(v)=\begin{cases}
\phi_{1}(\alpha\mid_{G_3})(v),&\text{if }v\in V(G_3);\\
\alpha(v),&\text{otherwise.}
\end{cases}
\end{equation}
\end{proof}

\begin{lem}\label{lem-psi15}
There is an injection $\psi_{15}$ from $N_{15}$ to $M_{15}$. Moreover, $X_{T_{3,m,n}}^\alpha+X_{T_{3,m,n}}^{\psi_{15}(\alpha)}$ is 2-$s$-positive.
\end{lem}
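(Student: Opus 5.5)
This is the mirror image of Lemma \ref{lem-psi13}, with the roles of $G_2$ and $G_3$ exchanged; the parity convention is read off from the definition of $M_{15}$. Given $\alpha\in N_{15}$, we have $\alpha(v_0)=\alpha(v_3)=1$, $\alpha(v_1)=\alpha(v_2)=0$ and $k_3\ge 3$. The argument of Proposition \ref{pro-xgn+xgalpha}, applied to the component containing $v_0^\alpha$, together with Corollary \ref{cor-2s0}, gives $\alpha(v_{3j})\le 1$ and $\alpha(v_{3j})+\alpha(v'_{3j})\le 2$, so $\alpha\mid_{G_3}\in\mathcal{A}_{k_3}$. Since $\alpha(v_0)=1$ and $\alpha(v_2)=0$, $G_2^\alpha$ is a union of components of $T_{3,m,n}^\alpha$ other than the unique non-2-$s$-positive one, so $G_2^\alpha$ is 2-$s$-positive. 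By \eqref{equ-def-x0n} and Corollary \ref{cor-xsn-distinct}, there is then a unique $i\ge 0$ with $G_2^\alpha\in X_{i,m}$.

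Define $\psi_{15}(\alpha)$ to agree with $\alpha$ off $V(G_3)$. On $V(G_3)$, set it equal to $\phi_2(\alpha\mid_{G_3})$ when $i$ is even and to $\phi_3(\alpha\mid_{G_3})$ when $i$ is odd, so that $i+j$ is even with $j\in\{2,3\}$. Write $\beta=\psi_{15}(\alpha)$. By Proposition \ref{pro-bijection}, $\beta\mid_{G_3}\in x^{k_3}_{j,n}$, so $G_3^\beta\in X^{k_3}_{j,n}$ with $k_3\ge 3$. We also have $\beta(v_0)=1$, $\beta(v_1)=\beta(v_2)=\beta(v_3)=0$ (since $\phi_j$ sets the torso to $0$), and $G_2^\beta=G_2^\alpha\in X_{i,m}$. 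Hence $\beta\in M_{15}$.

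For injectivity, let $\varphi_{15}$ apply $\phi^{-1}$ of Remark \ref{rem-phi-1} on $V(G_3)$ and the identity elsewhere. Because $\phi^{-1}$ inverts both $\phi_2$ and $\phi_3$, we get $\varphi_{15}\circ\psi_{15}=\mathrm{id}$ on $N_{15}$.

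For positivity, let $G_{3,0}$ be the subgraph induced by $V(G_3)\cup\{v_0\}$; it is isomorphic to $S(1,2^n)$, with $v_0$ playing the role of the leaf $v_1$ there. Since $\alpha(v_1)=\alpha(v_2)=0$ and $\beta(v_1)=\beta(v_2)=0$, Proposition \ref{prop-42} gives
\[
X^\alpha_{T_{3,m,n}}=X^\alpha_{G_{3,0}}X^\alpha_{\overline{G_{3,0}}},\qquad X^\beta_{T_{3,m,n}}=X^\beta_{G_{3,0}}X^\alpha_{\overline{G_{3,0}}}.
\]
Here $\beta$ is exactly the map constructed in Proposition \ref{pro-s12n} from $\alpha\mid_{G_{3,0}}$, since $\alpha\mid_{G_3}\in\mathcal{A}_{k_3}$ with $k_3\ge 2$ and $\beta(v_0)=\alpha(v_0)$. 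That proposition therefore yields $X^\alpha_{G_{3,0}}+X^\beta_{G_{3,0}}\ge_{2s}0$.

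It remains to see that $X^\alpha_{\overline{G_{3,0}}}\ge_{2s}0$. The components of $\overline{G_{3,0}}^\alpha$ are components of $T_{3,m,n}^\alpha$ not containing $v_0^\alpha$. By the argument of Proposition \ref{pro-xgn+xgalpha} and Corollary \ref{cor-2s0}, each such component is either 2-$s$-null or a path or isolated vertex, so the product is 2-$s$-positive (or 2-$s$-null, in which case the claim is trivial).

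The main obstacle is bookkeeping rather than anything conceptual. One point is verifying that $\beta$ truly lands in $M_{15}$ with the right parity, so that the $\psi_i$ have disjoint images as required by the pairwise disjointness of the $M_i$. The other is checking that the matching of $v_0$ with the length-one leg of $S(1,2^n)$ satisfies the hypotheses of Proposition \ref{pro-s12n} exactly.
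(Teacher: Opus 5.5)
Your proof is correct and follows the paper's route. The paper uses exactly this map ($\phi_2(\alpha\mid_{G_3})$ for $i$ even, $\phi_3(\alpha\mid_{G_3})$ for $i$ odd) and defers the rest to the argument of Lemma \ref{lem-psi13}: split off $G_{3,0}\cong S(1,2^n)$ and apply Proposition \ref{pro-s12n}. Your explicit check that $X^\alpha_{\overline{G_{3,0}}}\ge_{2s}0$ fills in a step the paper leaves implicit; the one check you also leave implicit, $\beta\in M$, is immediate, because $\beta(v_1)=\beta(v_2)=\beta(v_3)=0$ makes $T_{3,m,n}^\beta$ a disjoint union of copies of $K_1$ and $K_2$.
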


\begin{proof}
The proof of this lemma is analogue to the proof of Lemma \ref{lem-psi13}. From \eqref{equ-def-x0n} and Corollary \ref{cor-xsn-distinct}, we see that there exists a unique $i\ge 0$ satisfies that $G_2^\alpha\in X_{i,m}$.  We give the explicit map of $\psi_{15}$ and omit the other detail.
\begin{equation}
\psi_{15}(\alpha)(v)=\begin{cases}
\phi_3(\alpha\mid_{G_3})(v),&\text{if }v\in V(G_3), i \text{ odd};\\
\phi_2(\alpha\mid_{G_3})(v),&\text{if }v\in V(G_3), i \text{ even};\\
\alpha(v),& \text{otherwise.}
\end{cases}
\end{equation}
\end{proof}

\begin{lem}\label{lem-psi16}
There is an injection $\psi_{16}$ from $N_{16}$ to $M_{16}$. Moreover, $X_{T_{3,m,n}}^\alpha+X_{T_{3,m,n}}^{\psi_{16}(\alpha)}$ is 2-$s$-positive.
\end{lem}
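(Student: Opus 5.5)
The plan is to mimic Lemmas \ref{lem-psi8} and \ref{lem-psi13}. The map will erase the two torsos $v_2,v_3$ and double exactly two of the "single" leaves, distributing them between $G_2$ and $G_3$.

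\textbf{Structure of $\alpha\in N_{16}$.} Here $\alpha(v_0)=\alpha(v_2)=\alpha(v_3)=1$ and $\alpha(v_1)=0$. By Corollary \ref{cor-2s0}, $\alpha(v_{ij})\le 1$ and $\alpha(v_{ij})+\alpha(v_{ij}')\le 2$ for $i=2,3$. Hence $\alpha\mid_{G_2}\in\mathcal{A}_{k_2}$ and $\alpha\mid_{G_3}\in\mathcal{A}_{k_3}$.

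Let $T_0$ be the component of $T_{3,m,n}^\alpha$ containing $v_0^\alpha$. It is a tree consisting of:
\begin{itemize}
\item the path $v_2v_0v_3$;
\item $k_2+k_3$ pendant leaves $v_{ij}$;
\item $r$ pendant paths $v_{ij}v_{ij}'$ attached at $v_2$ or $v_3$.
\end{itemize}
Its bipartition is therefore $(r+1+k,\,r+2)$ with $k=k_2+k_3$. By Corollary \ref{cor-2s-con-bipar}, non-2-$s$-positivity forces $k\ge 3$. Every other component is an isolated vertex, a $K_2$, or lies in $G_1^\alpha$ (since $\alpha(v_1)=0$), so all other components are 2-$s$-positive. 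Proposition \ref{prop-2s-coe} then gives
$$X_{\hat T_0}^\alpha=_{2s}s_{(r+k+1,r+2)}-s_{(r+k,r+3)}.$$

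\textbf{Definition of $\psi_{16}$.} Keep $\alpha$ unchanged on $V(G_1)\cup\{v_0\}$, and act on $G_2,G_3$ as follows.
\begin{itemize}
\item If $k_2\ge 2$, put $\psi_{16}(\alpha)\mid_{G_2}=\phi_{\{1,2\}}(\alpha\mid_{G_2})$ and $\psi_{16}(\alpha)\mid_{G_3}=\phi_\emptyset(\alpha\mid_{G_3})$.
\item Otherwise $k_3\ge 2$, and we use $\phi_\emptyset$ on $G_2$ and $\phi_{\{1,2\}}$ on $G_3$.
\end{itemize}
By Proposition \ref{pro-bijection}, $\beta=\psi_{16}(\alpha)$ has $\beta(v_0)=1$, $\beta(v_1)=\beta(v_2)=\beta(v_3)=0$, and $G_2^\beta,G_3^\beta$ lie in $X_{\{1,2\},m}\times X_{\emptyset,n}$ or in $X_{\emptyset,m}\times X_{\{1,2\},n}$. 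Hence $\beta\in M_{16}$.

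Injectivity follows as before: the map applying $\phi^{-1}$ (Remark \ref{rem-phi-1}) on $G_2$ and on $G_3$ and fixing everything else is a left inverse. The two cases cannot collide, because by Corollary \ref{cor-xsn-distinct} the pair of types $(\{1,2\},\emptyset)$ versus $(\emptyset,\{1,2\})$ is recovered from $\beta$.

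\textbf{Positivity.} By Proposition \ref{pro-X-hat-overline-T}, $X^\alpha=X^\alpha_{\hat T_0}X^\alpha_{\overline{T_0}}$ and $X^\beta=X^\beta_{\hat T_0}X^\alpha_{\overline{T_0}}$. This holds because $\beta$ agrees with $\alpha$ off $\hat T_0$, since $\phi_S$ only alters the torso and the single leaves, all of which lie in $\hat T_0$.

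In $\hat T_0^\beta$ the components are:
\begin{itemize}
\item $v_0$, now an isolated vertex;
\item the $r$ full legs, each a $K_2$;
\item two doubled leaves, each a $K_2$ that carries the factor $\tfrac12$ from \eqref{eq-xg-alpha};
\item $k-2$ isolated leaves.
\end{itemize}
Hence
$$X^\beta_{\hat T_0}=2^{r}s_{(1,1)}^{r+2}s_1^{k-1}.$$
By the Littlewood--Richardson rule, as in Proposition \ref{pro-alpha-vs2n-phij}, this contains $s_{(r+2+a,\,r+2+b)}$ with $a+b=k-1$ and coefficient at least $1$. Since $k-1\ge 2$, both $b=0$ and $b=1$ occur, so
$$X^\beta_{\hat T_0}\ge_{2s}s_{(r+k+1,r+2)}+s_{(r+k,r+3)}.$$
Adding the two expressions gives
$$X^\alpha+X^\beta\ge_{2s}2s_{(r+k+1,r+2)}X^\alpha_{\overline{T_0}}\ge_{2s}0,$$
using that $X^\alpha_{\overline{T_0}}$ is a product of 2-$s$-positive components.

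\textbf{Main obstacle.} The delicate point is bookkeeping rather than the inequality. One must check that $\phi_S$ applied separately to $G_2$ and $G_3$ touches only vertices of $\hat T_0$, so that the $\overline{T_0}$ factor is shared by $X^\alpha$ and $X^\beta$. One must also check that the images land in $M_{16}$ without overlapping between the two cases.
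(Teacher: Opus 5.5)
Your proposal is correct and follows essentially the same route as the paper. It uses the same case split on $k_2\ge 2$ versus $k_3\ge 2$, the same map built from $\phi_{\{1,2\}}$ and $\phi_\emptyset$, the same case-independent left inverse via $\phi^{-1}$ (so your extra non-collision remark is harmless but unnecessary), and the same $\hat T$/$\overline{T}$ factorization in which $\hat T^\beta$ consists of $r+2$ copies of $K_2$ and $k-1$ isolated vertices — the only difference being that the paper extracts just the term $s_{(r+k,r+3)}$ from $X^\beta_{\hat T}$, which already suffices.
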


\begin{proof}
Given $\alpha\in N_{16}$, by definition we see that $\alpha(v_0)=\alpha(v_2)=\alpha(v_3)=1$ and $\alpha(v_1)=0$. Let $T$ denote the connected component in $T_{3,m,n}^\alpha$ contains $v_0^\alpha$. Assume
\[\#\{j\colon \alpha(v_{2j})=\alpha(v_{2j}')=1\}=r_2,\quad\text{and}\quad \#\{j\colon \alpha(v_{3j})=\alpha(v_{3j}')=1\}=r_3.\]
We see that $T$ has bipartition $(r_2+r_3+k_2+k_3+1,r_2+r_3+2)$. Since $T$ is the only possible non-2-$s$-positive connected component in $T_{3,m,n}^\alpha$, from Corollary \ref{cor-2s-con-bipar}, we have $k_2+k_3\ge 3$. There are two cases.

Case 1. If $k_2\ge 2$, then define
\begin{equation}\label{equ-def-psi16-1}
\psi_{16}(\alpha)(v)=\begin{cases}
\phi_{\{1,2\}}(\alpha\mid_{G_2})(v),&\text{if }v \in V(G_2);\\
\phi_\emptyset(\alpha\mid_{G_3})(v),&\text{if }v\in V(G_3);\\
\alpha(v),& \text{otherwise.}
\end{cases}
\end{equation}

Case 2. If $k_2\le 1$, then from $k_2+k_3\ge 3$ we deduce that $k_3\ge 2$. Define
\begin{equation}\label{equ-def-psi16-2}
\psi_{16}(\alpha)(v)=\begin{cases}
\phi_{\{1,2\}}(\alpha\mid_{G_3})(v),&\text{if }v \in V(G_3);\\
\phi_\emptyset(\alpha\mid_{G_2})(v),&\text{if }v\in V(G_2);\\
\alpha(v),& \text{otherwise.}
\end{cases}
\end{equation}

Set $\beta=\psi_{16}(\alpha)$.
From the construction of $\psi_{16}$, it is easy to see that $\beta(v_0)=1$ and $\beta(v_1)=\beta(v_2)=\beta(v_3)=0$. Moreover, either $G_2^\beta\in X_{\{1,2\},m}$ and $G_3^\beta\in X_{\emptyset,n}$, or $G_2^\beta\in X_{\emptyset,m}$ and $G_3^\beta\in X_{\{1,2\},n}$.
 This implies $\beta\in M_{16}$.

Let $I_{16}=\{\psi_{16}(\alpha)\colon \alpha\in N_{16}\}\subseteq M_{16}$. For any $\beta\in I_{16}$, define $\varphi_{16}\colon I_{16}\rightarrow N_{16}$ as follows:
\begin{equation}
\varphi_{16}(\beta)(v)=\begin{cases}
\phi^{-1}(\beta\mid_{G_2})(v),&\text{if }v\in V(G_2);\\
\phi^{-1}(\beta\mid_{G_3})(v),&\text{if }v\in V(G_3);\\
\beta(v),& \text{otherwise.}
\end{cases}
\end{equation}
Clearly $\varphi_{16}(\psi_{16}(\alpha))=\alpha$ for any $\alpha\in N_{16}$. This yields that $\psi_{16}$ is an injection.

Let $\hat{T}$ and $\overline{T}$ be as in Definition \ref{defi-t-hat}. By the definition of $\psi_{16}$, we have $\alpha(v)=\beta(v)$ for any $v\in\overline{T}$. From Proposition \ref{pro-X-hat-overline-T}, we find that
\begin{equation}\label{eq-psi16-1}
    X_{T_{3,m,n}}^\alpha+X_{T_{3,m,n}}^\beta=X_{\overline{T}}^\alpha(X_{\hat{T}}^\alpha+X_{\hat{T}}^\beta).
\end{equation}
Since $T$ has bipartition $(r_2+r_3+k_2+k_3+1,r_2+r_3+2)$, we see that
\begin{equation}\label{eq-psi16-2}
    X_{\hat{T}}^\alpha=X_T=_{2s} s_{(r_2+r_3+k_2+k_3+1,r_2+r_3+2)}-s_{(r_2+r_3+k_2+k_3,r_2+r_3+3)}.
\end{equation}
Moreover,  from the construction of $\psi_{16}$,   in either case $\hat{T}^\beta$ is the disjoint union of $r_2 + r_3 + 2$ copies of $K_2$ and $k_2 + k_3 - 1$ isolated vertices. Therefore,
\begin{equation}\label{eq-psi16-3}
    X_{\hat{T}}^\beta=\frac{1}{2^2}(2s_{(1,1)})^{r_2+r_3+2}s_1^{k_2+k_3-1}\ge_{2s} s_{(r_2+r_3+k_2+k_3,r_2+r_3+3)}.
\end{equation}
Substituting \eqref{eq-psi16-2} and \eqref{eq-psi16-3} into \eqref{eq-psi16-1}, we get $X_{T_{3,m,n}}^\alpha+X_{T_{3,m,n}}^\beta\ge_{2s} 0$.
\end{proof}

\begin{lem}\label{lem-psi17}
There is an injection $\psi_{17}$ from $N_{17}$ to $M_{17}$. Moreover, $X_{T_{3,m,n}}^\alpha+X_{T_{3,m,n}}^{\psi_{17}(\alpha)}$ is 2-$s$-positive.
\end{lem}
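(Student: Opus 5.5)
The plan is to imitate Lemma~\ref{lem-psi16}: apply $\phi$-type maps simultaneously to the two spiders $G_1$ and $G_3$ that meet $v_0$, and move the extra weight onto the root $v_0$.

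First I describe the structure of $\alpha\in N_{17}$. Here $\alpha(v_0)=\alpha(v_1)=\alpha(v_3)=1$ and $\alpha(v_2)=0$. Let $T$ be the component of $T_{3,m,n}^\alpha$ containing $v_0^\alpha$. Since $G_2^\alpha$ is separated from $T$, every other component is a path or an isolated vertex, so $T$ must be the non-2-$s$-positive component. By Corollary~\ref{cor-2s0}, no edge of $T_{3,m,n}$ carries total weight at least $3$ within $\hat T$. Hence $\alpha(v_{1j}),\alpha(v_{3j})\le 1$ and $\alpha(v_{ij})+\alpha(v_{ij}')\le 2$, which gives $\alpha|_{G_1}\in\mathcal{A}_{k_1}$ and $\alpha|_{G_3}\in\mathcal{A}_{k_3}$. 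Let $r$ be the number of legs of $G_1$ and $G_3$ with $\alpha(v_{ij})=\alpha(v_{ij}')=1$. Then $T$ is bipartite with parts of sizes $1+k_1+k_3+r$ (namely $v_0$, the $k$-type leaves, and the primed vertices) and $r+2$. Corollary~\ref{cor-2s-con-bipar} then forces $k_1+k_3\ge 3$, and Proposition~\ref{prop-2s-coe} gives
\[
X_{\hat T}^\alpha=X_T=_{2s}s_{(r+k_1+k_3+1,r+2)}-s_{(r+k_1+k_3,r+3)}.
\]

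Next I define $\beta=\psi_{17}(\alpha)$ as follows. Put $\beta(v_0)=2$, $\beta|_{G_1}=\phi_\emptyset(\alpha|_{G_1})$, $\beta|_{G_3}=\phi_1(\alpha|_{G_3})$ (this is allowed since $k_3\ge1$), and $\beta=\alpha$ on $G_2$. Then $\beta(v_1)=\beta(v_2)=\beta(v_3)=0$, and Proposition~\ref{pro-bijection} gives $G_1^\beta\in X_{\emptyset,3}$ and $G_3^\beta\in X_{1,n}$. Moreover $G_2^\beta=G_2^\alpha\notin X_{1,m}$. In $\beta$ one leaf of $G_3$ is doubled, so the number of weight-one leaves is $k_1+k_3-1\ge 2$. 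I read the condition ``$k_1+k_3\ge 2$'' in $M_{17}$ as referring to this count; this is exactly what $k_1+k_3\ge3$ provides, so $\beta\in M_{17}$. For injectivity, define $\varphi_{17}$ on the image by resetting $v_0$ to $1$, applying $\phi^{-1}$ of Remark~\ref{rem-phi-1} on $G_1$ and on $G_3$ (which restores both torsos to $1$ and undoes the doubling), and keeping $G_2$ unchanged. Then $\varphi_{17}\circ\psi_{17}=\mathrm{id}$, as in the earlier lemmas.

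For positivity, observe that $\beta$ agrees with $\alpha$ on $\overline T$, since all changes occur inside $\hat T$. Proposition~\ref{pro-X-hat-overline-T} then gives
\[
X_{T_{3,m,n}}^\alpha+X_{T_{3,m,n}}^\beta=\bigl(X_{\hat T}^\alpha+X_{\hat T}^\beta\bigr)X_{\overline T}^\alpha ,
\]
and $X_{\overline T}^\alpha\ge_{2s}0$. Now $\hat T^\beta$ is a disjoint union of $r+2$ copies of $K_2$ (coming from $v_0$, the doubled leaf, and the $r$ full legs) together with $k_1+k_3-1$ isolated vertices. Using \eqref{eq-xg-alpha} and the Littlewood--Richardson rule as in Proposition~\ref{pro-alpha-vs2n-phij}, we get
\[
X_{\hat T}^\beta=\tfrac14(2s_{(1,1)})^{r+2}s_1^{k_1+k_3-1}\ge_{2s}s_{(r+k_1+k_3,r+3)},
\]
where $k_1+k_3-1\ge2$ guarantees that this term occurs. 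Adding this to the expression for $X_{\hat T}^\alpha$ cancels the negative term.

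The main obstacle is bookkeeping rather than algebra. One must check that the structural claim ($\alpha|_{G_1}\in\mathcal A_{k_1}$, $\alpha|_{G_3}\in\mathcal A_{k_3}$, and all changes confined to $\hat T$) really holds for every $\alpha\in N_{17}$. One must also confirm that the image satisfies the precise membership conventions of $M_{17}$, in particular the meaning of $k_1+k_3\ge2$ there and that $G_2^\beta\notin X_{1,m}$ is preserved, so that the $M_i$ stay disjoint.
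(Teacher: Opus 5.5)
Your proposal is correct and follows the paper's proof essentially verbatim. It uses the same map ($\beta(v_0)=2$, $\phi_\emptyset$ on $G_1$, $\phi_1$ on $G_3$), the same inverse via $\phi^{-1}$, and the same cancellation of $-s_{(r+k_1+k_3,r+3)}$ by the expansion of $X_{\hat T}^\beta$ as $K_2$'s and isolated vertices. Your explicit check that the image has $k_1+k_3-1\ge 2$ weight-one leaves is a detail the paper leaves implicit.

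There is one harmless slip in your description of the bipartition of $T$. The larger part consists of $v_0$ and all weight-one unprimed $v_{ij}$, namely the $k$-type leaves together with the inner vertices of the $r$ full legs. The primed vertices lie with $v_1,v_3$ in the smaller part. The part sizes you state are still correct, so nothing downstream changes.
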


\begin{proof}
Given $\alpha\in N_{17}$, by definition we see that $\alpha(v_0)=\alpha(v_1)=\alpha(v_3)=1$, $\alpha(v_2)=0$, $G_2^\alpha\not\in X_{1,m}$ and $k_3\ge 1$. Define
\begin{equation}\label{equ-def-psi17}
\psi_{17}(\alpha)(v)=\begin{cases}
2,&\text{if }v=v_0;\\
\phi_{\emptyset}(\alpha\mid_{G_1})(v),&\text{if }v \in V(G_1);\\
\phi_1(\alpha\mid_{G_3})(v),&\text{if }v\in V(G_3);\\
\alpha(v),& \text{otherwise.}
\end{cases}
\end{equation}
Set $\beta=\psi_{17}(\alpha)$. From the construction of $\psi_{17}$, it is easy to see that $\beta(v_0)=2$, $\beta(v_1)=\beta(v_2)=\beta(v_3)=0$. Moreover  $G_1^\beta\in X_{\emptyset,3}$, $G_3^\beta\in X_{1,n}$ and $G_2^\beta=G_2^\alpha\not\in X_{1,m}$.    This implies $\beta\in M_{17}$.

Let $I_{17}=\{\psi_{17}(\alpha)\colon \alpha\in N_{17}\}\subseteq M_{17}$. For any $\beta\in I_{17}$, define $\varphi_{17}\colon I_{17}\rightarrow N_{17}$ as follows:
\begin{equation}
\varphi_{17}(\beta)(v)=\begin{cases}
1,&\text{if }v=v_0;\\
\phi^{-1}(\beta\mid_{G_1})(v),&\text{if }v\in V(G_1);\\
\phi^{-1}(\beta\mid_{G_3})(v),&\text{if }v\in V(G_3);\\
\beta(v),& \text{otherwise.}
\end{cases}
\end{equation}
Clearly $\varphi_{17}(\psi_{17}(\alpha))=\alpha$ for any $\alpha\in N_{17}$. This yields that $\psi_{17}$ is an injection.

Similar as in Lemma \ref{lem-psi16}, let $T$ denote the connected component in $T_{3,m,n}^\alpha$ contains $v_0^\alpha$. Moreover, let $\hat{T}$ and $\overline{T}$ be as in Definition \ref{defi-t-hat}. Assume
\[\#\{j\colon \alpha(v_{1j})=\alpha(v_{1j}')=1\}=r_1,\quad\text{and}\quad \#\{j\colon \alpha(v_{3j})=\alpha(v_{3j}')=1\}=r_3.\]
We see that $T$ has bipartition $(r_1+r_3+k_1+k_3+1,r_1+r_3+2)$. Since $T$ is the only possible non-2-$s$-positive connected component in $T_{3,m,n}^\alpha$, from Corollary \ref{cor-2s-con-bipar}, we have $k_1+k_3\ge 3$. Thus
\begin{equation}\label{eq-psi17-1}
    X_T=X_{\hat{T}}^\alpha=_{2s}s_{(r_1+r_3+k_1+k_3+1,r_1+r_3+2)}-s_{(r_1+r_3+k_1+k_3,r_1+r_3+3)}.
\end{equation}

From the construction of  $\psi_{17}$, we have $\alpha(v)=\beta(v)$ for any $v\in V(\overline{T})$. Using Proposition \ref{pro-X-hat-overline-T}, we derive that
\begin{equation}\label{eq-psi17-2}
    X_{T_{3,m,n}}^\alpha+X_{T_{3,m,n}}^\beta=(X_{\hat{T}}^\alpha+X_{\hat{T}}^\beta)X_{\overline{T}}^\alpha.
\end{equation}
From the construction of $\psi_{17}$, we derive that $\hat{T}^\beta$ is the disjoint union of $r_1 + r_3 + 2$ copies of $K_2$ and $k_1 + k_3 - 1$ isolated vertices. This yields
\begin{equation}\label{eq-psi17-3}
    X_{\hat{T}}^\beta=\frac{1}{2^2}(2s_{(1,1)})^{r_1+r_3+2} s_1^{k_1+k_3-1}\ge_{2s} s_{(r_1+r_3+k_1+k_3,r_1+r_3+3)}.
\end{equation}
Substituting \eqref{eq-psi17-1} and \eqref{eq-psi17-3} into \eqref{eq-psi17-2}, we derive that $X_{T_{3,m,n}}^\alpha+X_{T_{3,m,n}}^\beta\ge_{2s} 0$.
\end{proof}

\begin{lem}\label{lem-psi18}
There is an injection $\psi_{18}$ from $N_{18}$ to $M_{18}$. Moreover, $X_{T_{3,m,n}}^\alpha+X_{T_{3,m,n}}^{\psi_{18}(\alpha)}$ is 2-$s$-positive.
\end{lem}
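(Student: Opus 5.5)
The plan is to pin down the shape of $\alpha\in N_{18}$, and then define $\psi_{18}$ by a local move that keeps the total weight $\sum_v\alpha(v)$ fixed, in the spirit of $\psi_{11}$. Weight must be preserved because $X^\alpha$ and $X^{\psi_{18}(\alpha)}$ have to be homogeneous of the same degree for their sum to be compared.

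\emph{Step 1: the shape of $\alpha$.} Take $\alpha\in N_{18}$, so $\alpha(v_0)=\alpha(v_1)=\alpha(v_3)=1$, $\alpha(v_2)=0$ and $k_3=0$. By Corollary \ref{cor-2s0}, we have $\alpha(v_{1j}),\alpha(v_{3j})\le 1$ and $\alpha(v_{ij})+\alpha(v'_{ij})\le 2$. Let $T$ be the component of $T_{3,m,n}^\alpha$ containing $v_0^\alpha$. Let $r_1$ (resp. $r_3$) be the number of legs of $v_1$ (resp. $v_3$) with $\alpha(v_{ij})=\alpha(v'_{ij})=1$. Then $T$ has bipartition $(1+k_1+r_1+r_3,\,2+r_1+r_3)$.

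Since $T$ is the only possibly non-2-$s$-positive component, Corollary \ref{cor-2s-con-bipar} forces $k_1\ge 3$. Hence $k_1=3$ and $r_1=0$, that is, $\alpha(v_{1j})=1$ and $\alpha(v'_{1j})=0$ for $j=1,2,3$. The bipartition is then $(r_3+4,r_3+2)$, so Proposition \ref{prop-2s-coe} gives
\[
X_T=_{2s}s_{(r_3+4,r_3+2)}-s_{(r_3+3,r_3+3)}.
\]
Moreover, Proposition \ref{pro-claw-iso} applies (with $r=r_3+2$), so $T_{3,m,n}^\alpha$ has no isolated vertices.

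\emph{Step 2: the map.} Define $\beta=\psi_{18}(\alpha)$ by
\[
\beta(v_{13})=\beta(v_3)=0,\qquad \beta(v'_{11})=\beta(v'_{12})=1,
\]
and $\beta(v)=\alpha(v)$ otherwise. This map preserves the total weight.

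Then $\beta(v_0)=\beta(v_1)=\beta(v_{11})=\beta(v'_{11})=\beta(v_{12})=\beta(v'_{12})=1$ and $\beta(v_{13})=\beta(v'_{13})=\beta(v_2)=0$. Also $G_2^\beta=G_2^\alpha\notin X_{1,m}$.

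For $G_3$ we have $\beta(v_3)=0$. Every leg of $v_3$ satisfies $\beta(v_{3j})+\beta(v'_{3j})\le 2$, and no leg has $\beta(v_{3j})=2$ with $\beta(v'_{3j})=0$. Hence $G_3^\beta\in X_{\emptyset,n}$, and so $\beta\in M_{18}$.

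Injectivity follows from the explicit inverse on $I_{18}=\psi_{18}(N_{18})$. This inverse sets $v_{13},v_3\mapsto 1$ and $v'_{11},v'_{12}\mapsto 0$, and fixes all other values. One checks $\varphi_{18}(\psi_{18}(\alpha))=\alpha$, because $\alpha(v_{13})=\alpha(v_3)=1$ and $\alpha(v'_{11})=\alpha(v'_{12})=0$ always hold on $N_{18}$.

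\emph{Step 3: positivity.} Proposition \ref{pro-X-hat-overline-T} cannot be used verbatim, because $\beta$ differs from $\alpha$ on $v'_{11},v'_{12}$, which lie in $\overline{T}$ with $\alpha=0$. Instead, let $H$ be the induced subgraph on $V(\hat T)\cup\{v'_{11},v'_{12},v'_{13}\}$. Both $H^\alpha$ and $H^\beta$ have no edges to the rest of the tree, and $\alpha=\beta$ outside $H$. So Proposition \ref{prop-42} gives
\[
X^\alpha_{T_{3,m,n}}+X^\beta_{T_{3,m,n}}=(X^\alpha_H+X^\beta_H)\,X^\alpha_{\overline H},
\]
with $X^\alpha_{\overline H}\ge_{2s}0$.

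Now $X_H^\alpha=X_T$. On the other side, $H^\beta$ is the disjoint union of the spider $S(1,2^2)$ (torso $v_1^\beta$) and $r_3$ copies of $K_2$ (the edges $v_{3j}v'_{3j}$). The spider has balanced bipartition $(3,3)$ and independence number $3$, so $\tilde a_{(4,2)}=0$. By Proposition \ref{prop-2s-coe} its coefficient of $s_{(3,3)}$ is $\tilde a_{(3,3)}\ge1$, and it is 2-$s$-positive by Corollary \ref{cor-2s-con-bipar}. Multiplying by $(2s_{(1,1)})^{r_3}$ and using the Littlewood--Richardson rule yields
\[
X_H^\beta\ge_{2s}s_{(r_3+3,r_3+3)}.
\]
Therefore $X_H^\alpha+X_H^\beta\ge_{2s}s_{(r_3+4,r_3+2)}\ge_{2s}0$, which proves the lemma.

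\emph{Main obstacle.} The delicate points are the bookkeeping in Step 3 (a subgraph on which $\alpha$ and $\beta$ agree outside, replacing $\hat T$) and the final product step. The latter uses that a product of 2-$s$-positive factors is 2-$s$-positive, as in the earlier lemmas; this holds since $s_\mu s_\nu$ only involves $s_\lambda$ with $\lambda\supseteq\mu,\nu$ having nonnegative coefficients. The new fact needed is that $[s_{(3,3)}]X_{S(1,2^2)}\ge 1$.
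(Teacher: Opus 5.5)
Your proof is correct and follows the paper's argument essentially verbatim. It uses the same forcing of $k_1=3$, $r_1=0$, the same map $\psi_{18}$ and inverse, and the same comparison of $X_T=_{2s}s_{(r_3+4,r_3+2)}-s_{(r_3+3,r_3+3)}$ against $S(1,2^2)$ plus $r_3$ copies of $K_2$. The only differences are cosmetic. You enlarge $\hat T$ by $\{v'_{11},v'_{12},v'_{13}\}$ and apply Proposition~\ref{prop-42} directly, where the paper uses $T_1=\hat T\cup\{v'_{11},v'_{12}\}$. Also, the 2-$s$-positivity of $X^{\beta}$ needed for $\beta\in M_{18}$ is established only in your Step~3 rather than Step~2.
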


\begin{proof}
Given $\alpha\in N_{18}$, by definition we see that $\alpha(v_0)=\alpha(v_1)=\alpha(v_3)=1$, $\alpha(v_2)=0$, $G_2^\alpha\not\in X_{1,m}$ and $k_3=0$. Let $T$ denote the connected component in $T_{3,m,n}^\alpha$ contains $v_0^\alpha$.  Assume
\[\#\{j\colon \alpha(v_{1j})=\alpha(v_{1j}')=1\}=r_1,\quad\text{and}\quad \#\{j\colon \alpha(v_{3j})=\alpha(v_{3j}')=1\}=r_3.\]
From $k_3=0$, we see that $T$ has bipartition $(r_1+r_3+k_1+1,r_1+r_3+2)$. Since $T^\alpha$ is the only possible non-2-$s$-positive connected component in $T_{3,m,n}^\alpha$, from Corollary \ref{cor-2s-con-bipar}, we have $ k_1\ge 3$, which leads to $3=r_1+k_1\ge k_1\ge 3$. Hence $r_1=0$ and $k_1=3$. This yields $\alpha(v_{11})=\alpha(v_{12})=\alpha(v_{13})=1$ and $\alpha(v'_{11})=\alpha(v'_{12})=\alpha(v'_{13})=0$. Define
\begin{equation}\label{equ-def-psi18}
\psi_{18}(\alpha)(v)=\begin{cases}
1,&\text{if }v\in\{v'_{11},v'_{12}\};\\
0,&\text{if }v\in\{v_3,v_{13}\};\\
\alpha(v),& \text{otherwise.}
\end{cases}
\end{equation}
Set $\beta=\psi_{18}(\alpha)$, it is routine to verify $\beta\in M_{18}$.

Let $I_{18}=\{\psi_{18}(\alpha)\colon \alpha\in N_{18}\}\subseteq M_{18}$. For any $\beta\in I_{18}$, define $\varphi_{18}\colon I_{18}\rightarrow N_{18}$ as follows:
\begin{equation}
\varphi_{18}(\beta)(v)=\begin{cases}
0,&\text{if }v\in\{v'_{11},v'_{12}\};\\
1,&\text{if }v\in\{v_3,v_{13}\};\\
\beta(v),& \text{otherwise.}
\end{cases}
\end{equation}
Clearly $\varphi_{18}(\psi_{18}(\alpha))=\alpha$ for any $\alpha\in N_{18}$. This yields that $\psi_{18}$ is an injection.

Let $\hat{T}$ be as defined in Definition \ref{defi-t-hat}, let $T_1$ denote the subgraph of $T_{3,m,n}$ induced by  $V(\hat{T})\cup\{v'_{11},v'_{12}\}$, and let $\overline{T_1}$ denote the subgraph of $T_{3,m,n}$ induced by $V(T_{3,m,n})\setminus V(T_1)$. From $\alpha(v'_{11})=\alpha(v'_{12})=0$, we get $T=T_1^\alpha$, thus $T_1^\alpha$ is also a connected component in $T_{3,m,n}^\alpha$. Moreover, $\alpha(v)=\beta(v)$ for any $v\in\overline{T}_1$. Using Proposition \ref{pro-X-hat-overline-T}, we derive that
\begin{equation}\label{eq-psi-18-1}
    X_{T_{3,m,n}}^\alpha+X_{T_{3,m,n}}^\beta=(X_{T_1}^\alpha+X_{T_1}^\beta) X_{\overline{T}_1}^\alpha.
\end{equation}
Since $T$ has a bipartition $(r_3+4,r_3+2)$ and $T=T_1^\alpha$, we derive that
\begin{equation}\label{eq-psi-18-2}
    X_{T_1}^\alpha=X_T=s_{(r_3+4,r_3+2)}-s_{(r_3+3,r_3+3)}.
\end{equation}
Moreover, from the construction of $\psi_{18}$, see Figure \ref{fig-t1} as an example, we have $T_1^\beta$ consists of one spider $S(1,2^2)$ and $r_3$ disjoint $K_2$.
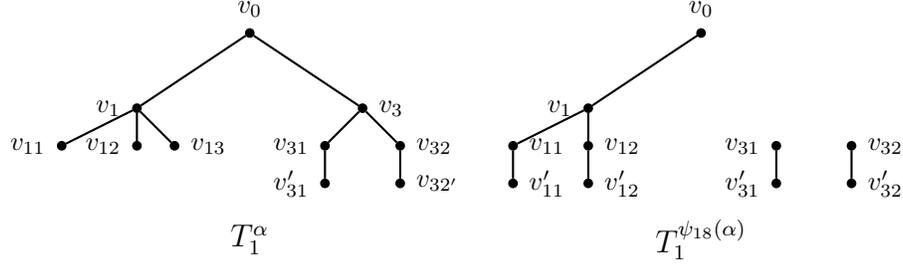
\begin{figure}[h]
    \centering
    \begin{tikzpicture}
[thick, every label/.style={font=\footnotesize}, place/.style={thick,fill=black!100,circle,inner sep=0pt,minimum size=1mm,draw=black!100}]
\node [place,label=above:{$v_0$}] (v4) at (-8,3) {};
\node[place,label=right:{$v_{13}$}] (v10) at (-9,1.5) {};
\node [place,label=left:{$v_{12}$}] (v8) at (-9.5,1.5) {};
\node[place,label=left:{$v_{11}$}] (v2) at (-10.5,1.5) {};
\node[place,label=left:{$v_{1}$}] (v3) at (-9.5,2) {};
\node[place,label=left:{$v_{31}$}] (v19) at (-7,1.5) {};
\node[place,label=left:{$v_{31}'$}] (v20) at (-7,1) {};
\node[place,label=right:{$v_{32}$}] (v21) at (-6,1.5) {};
\node[place,label=right:{$v_{32'}$}] (v22) at (-6,1) {};
\node[place,label=right:{$v_{3}$}] (v18) at (-6.5,2) {};
\draw (v2) -- (v3) -- (v4) -- (v18) -- (v19) -- (v20);
\draw (v10) -- (v3) -- (v8);
\draw (v22) -- (v21) -- (v18);
 \node [place,label=above:{$v_0$}]  at (-2,3) {};
\node [place,label=right:{$v_{12}$}] (v11) at (-3.5,1.5) {};
\node [place,label=right:{$v_{12}'$}] (v12) at (-3.5,1) {};
\node[place,label=right:{$v_{11}$}] (v5) at (-4.5,1.5) {};
\node[place,label=right:{$v_{11}'$}] (v6) at (-4.5,1) {};
\node[place,label=left:{$v_{1}$}] (v7) at (-3.5,2) {};
\node[place,label=left:{$v_{31}$}] (v14) at (-1,1.5) {};
\node[place,label=left:{$v_{31}'$}] (v13) at (-1,1) {};
\node[place,label=right:{$v_{32}$}] (v16) at (0,1.5) {};
\node[place,label=right:{$v_{32}'$}] (v15) at (0,1) {};
\draw (-2,3) -- (v7) -- (v5) -- (v6);
\draw (v7) -- (v11) -- (v12);
\draw  (v13) edge (v14);
\draw  (v15) edge (v16);
\node at (-8,0.25) {$T_1^\alpha$};
\node at (-2,0.25) {$T_1^{\psi_{18}(\alpha)}$};
\end{tikzpicture}
    \caption{$T_1^\alpha$ and $T_1^{\psi_{18}(\alpha)}$($\alpha\in N_{18}$).}
    \label{fig-t1}
\end{figure}

Thus
\begin{equation}\label{eq-psi-18-3}
    X_{T_1}^\beta=_{2s}s_{(3,3)} (2s_{(1,1)})^{r_3}=_{2s}2^{r_3}s_{(r_3+3,r_3+3)}.
\end{equation}
Substituting \eqref{eq-psi-18-2} and \eqref{eq-psi-18-3} into \eqref{eq-psi-18-1}, we derive that
\[X_{T_{3,m,n}}^\alpha+X_{T_{3,m,n}}^\beta\ge_{2s} 0.\]
\end{proof}

\begin{lem}\label{lem-psi19}
There is an injection $\psi_{19}$ from $N_{19}$ to $M_{19}$. Moreover, $X_{T_{3,m,n}}^\alpha+X_{T_{3,m,n}}^{\psi_{19}(\alpha)}$ is 2-$s$-positive.
\end{lem}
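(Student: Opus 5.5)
The plan is to mimic the proofs of Lemmas \ref{lem-psi17} and \ref{lem-psi16}: move the weight of $v_1$ and $v_3$ onto $v_0$, which becomes a clique of size $2$, and double one pendant vertex of $G_1$. Let $\alpha\in N_{19}$. Then $\alpha(v_0)=\alpha(v_1)=\alpha(v_3)=1$, $\alpha(v_2)=0$, $G_2^\alpha\in X_{1,m}$ and $k_1\ge 2$. The conditions $\alpha(v_1)=\alpha(v_3)=1$ and $X_{T_{3,m,n}}^\alpha\neq_{2s}0$ force $\alpha\mid_{G_1}\in\mathcal{A}_{k_1}$ and $\alpha\mid_{G_3}\in\mathcal{A}_{k_3}$, by Corollary \ref{cor-2s0}. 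I would define
\[
\psi_{19}(\alpha)(v)=\begin{cases}
2,&\text{if }v=v_0;\\
\phi_{2}(\alpha\mid_{G_1})(v),&\text{if }v\in V(G_1);\\
\phi_{\emptyset}(\alpha\mid_{G_3})(v),&\text{if }v\in V(G_3);\\
\alpha(v),&\text{otherwise,}
\end{cases}
\]
which makes sense because $k_1\ge 2$. Set $\beta=\psi_{19}(\alpha)$. Then $\beta(v_0)=2$, $\beta(v_1)=\beta(v_3)=0$ since $\phi_S$ kills the torso, and $\beta(v_2)=0$. By Proposition \ref{pro-bijection} we get $G_1^\beta\in X^{k_1}_{2,3}\subseteq X_{2,3}$ and $G_3^\beta\in X_{\emptyset,n}$. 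Also $G_2^\beta=G_2^\alpha\in X_{1,m}$. Hence $\beta\in M_{19}$.

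For injectivity, I would define $\varphi_{19}$ on the image $I_{19}$ exactly as in Lemma \ref{lem-psi17}. It resets $v_0$ to $1$, applies $\phi^{-1}$ (Remark \ref{rem-phi-1}) on $G_1$ and on $G_3$, and leaves the rest unchanged. Since $\phi^{-1}$ restores the torsos $v_1,v_3$ to weight $1$ and undoes both $\phi_2$ and $\phi_\emptyset$, we get $\varphi_{19}\circ\psi_{19}=\mathrm{id}$ on $N_{19}$.

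For the 2-$s$-positivity, let $T$ be the component of $T_{3,m,n}^\alpha$ containing $v_0^\alpha$. Since $\alpha(v_2)=0$, $T$ consists of $v_0$, $v_1$, $v_3$, the $k_1+k_3$ pendant vertices, and $r_1+r_3$ legs of length two. Here $r_i=\#\{j\colon\alpha(v_{ij})=\alpha(v'_{ij})=1\}$. So $T$ has bipartition $(r_1+r_3+k_1+k_3+1,\,r_1+r_3+2)$. It is the only possible non-2-$s$-positive component, so Corollary \ref{cor-2s-con-bipar} gives $k_1+k_3\ge3$ and
\[
X_{\hat T}^\alpha=_{2s}s_{(a,b)}-s_{(a-1,b+1)},\qquad a=r_1+r_3+k_1+k_3+1,\ b=r_1+r_3+2.
\]
By construction $\beta=\alpha$ on $\overline{T}$, so Proposition \ref{pro-X-hat-overline-T} gives $X_{T_{3,m,n}}^\alpha+X_{T_{3,m,n}}^\beta=(X_{\hat T}^\alpha+X_{\hat T}^\beta)X_{\overline T}^\alpha$. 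The factor $X_{\overline T}^\alpha$ is 2-$s$-positive.

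Next I would compute $\hat T^\beta$. The vertex $v_0$ becomes an isolated $K_2$, because all of $v_1,v_2,v_3$ now have weight $0$. Each of the $r_1+r_3$ legs becomes a $K_2$. One pendant vertex becomes a $K_2$, and the other $k_1+k_3-1$ pendant vertices become isolated. Thus, as in \eqref{eq-psi17-3},
\[
X_{\hat T}^\beta=\tfrac14(2s_{(1,1)})^{r_1+r_3+2}s_1^{k_1+k_3-1}\ge_{2s}s_{(a-1,b+1)},
\]
using the Littlewood--Richardson rule and $k_1+k_3-1\ge2$. Adding the two displays finishes the proof.

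The main obstacle I expect is bookkeeping rather than estimation. One must check that $\hat T^\beta$ really splits as described, in particular that $v_0^\beta$ meets none of $G_2^\beta$, which holds since $\alpha(v_2)=0$. One must also check that $\beta$ satisfies exactly the defining conditions of $M_{19}$, including that $\max\{2\}\le k_1$ so that $\phi_2$ is defined.
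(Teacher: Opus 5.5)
Your proposal is correct and follows the paper's proof: the paper uses exactly this map $\psi_{19}$ ($v_0\mapsto 2$, $\phi_2$ on $G_1$, $\phi_\emptyset$ on $G_3$) and defers the rest to the argument of Lemma \ref{lem-psi17}. That argument is the left inverse via $\phi^{-1}$ and the $\hat T/\overline{T}$ factorization with the estimate $f^{(k_1+k_3-2,1)}\ge 1$, which you carry out explicitly. The only check you leave implicit is that $\beta\in M$, which is immediate because $T_{3,m,n}^\beta$ is a disjoint union of copies of $K_1$ and $K_2$.
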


\begin{proof}
The proof of this lemma is analogue to the proof of Lemma \ref{lem-psi17}. We give the explicit map of $\psi_{19}$ and omit the other detail.
\begin{equation}
\psi_{19}(\alpha)(v)=\begin{cases}
2,& \text{if }v=v_0;\\
\phi_2(\alpha\mid_{G_1})(v),&\text{if }v\in V(G_1);\\
\phi_\emptyset(\alpha\mid_{G_3})(v),&\text{if }v\in V(G_3);\\
\alpha(v),& \text{otherwise.}
\end{cases}
\end{equation}
\end{proof}

\begin{lem}\label{lem-psi20}
There is an injection $\psi_{20}$ from $N_{20}$ to $M_{20}$. Moreover, $X_{T_{3,m,n}}^\alpha+X_{T_{3,m,n}}^{\psi_{20}(\alpha)}$ is 2-$s$-positive.
\end{lem}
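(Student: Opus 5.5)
The plan is to copy the pattern of Lemmas \ref{lem-psi17} and \ref{lem-psi19}. The difference is that here the extra weight is pushed into $G_3$ instead of $G_1$, since $k_1\le 1$ leaves too little room in $G_1$.

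\textbf{Structure of $\alpha\in N_{20}$.} Let $\alpha\in N_{20}$, so $\alpha(v_0)=\alpha(v_1)=\alpha(v_3)=1$, $\alpha(v_2)=0$, $G_2^\alpha\in X_{1,m}$ and $k_1\le 1$.
\begin{itemize}
\item Let $T$ be the component of $T_{3,m,n}^\alpha$ containing $v_0^\alpha$. Let $r_1$ and $r_3$ count the legs $j$ with $\alpha(v_{ij})=\alpha(v_{ij}')=1$ for $i=1$ and $i=3$ respectively.
\item By Corollary \ref{cor-2s0} and Proposition \ref{prop-416}, $T$ is a tree in which every vertex has weight $1$. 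Its bipartition is $(r_1+r_3+k_1+k_3+1,\,r_1+r_3+2)$.
\item Since $T$ is the only possible non-2-$s$-positive component, Corollary \ref{cor-2s-con-bipar} gives $k_1+k_3\ge 3$. With $k_1\le 1$ this yields $k_3\ge 2$.
\item In particular $\alpha|_{G_1}\in\mathcal{A}_{k_1}$ and $\alpha|_{G_3}\in\mathcal{A}_{k_3}$, with $k_3\ge 2$.
\end{itemize}

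\textbf{Definition of $\psi_{20}$.} Set $\psi_{20}(\alpha)(v_0)=2$, $\psi_{20}(\alpha)|_{G_1}=\phi_\emptyset(\alpha|_{G_1})$ and $\psi_{20}(\alpha)|_{G_3}=\phi_2(\alpha|_{G_3})$, leaving all other values unchanged. Let $\beta=\psi_{20}(\alpha)$.
\begin{itemize}
\item Then $\beta(v_1)=\beta(v_2)=\beta(v_3)=0$ and $\beta(v_0)=2$.
\item By Proposition \ref{pro-bijection}, $G_1^\beta\in X^{k_1}_{\emptyset,3}$ and $G_3^\beta\in X^{k_3}_{2,n}$ with $k_3\ge 2$.
\item Also $G_2^\beta=G_2^\alpha\in X_{1,m}$.
\item Hence $\beta\in M_{20}$.
\end{itemize}

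\textbf{Injectivity.} Define $\varphi_{20}$ on the image as in Lemma \ref{lem-psi17}: reset $v_0$ to $1$ and apply $\phi^{-1}$ (Remark \ref{rem-phi-1}) on $G_1$ and on $G_3$. Then $\varphi_{20}(\psi_{20}(\alpha))=\alpha$, so $\psi_{20}$ is injective.

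\textbf{Positivity estimate.} Let $\hat T$ and $\overline T$ be as in Definition \ref{defi-t-hat}.
\begin{itemize}
\item $\alpha$ and $\beta$ agree on $\overline{T}$: the vertices changed are $v_0$, $v_1$, $v_3$ and the $v_{1j}$, $v_{3j}$ counted by $k_1,k_3$, all of which lie in $\hat T$. So Proposition \ref{pro-X-hat-overline-T} gives
\[X^\alpha_{T_{3,m,n}}+X^\beta_{T_{3,m,n}}=(X^\alpha_{\hat T}+X^\beta_{\hat T})X^\alpha_{\overline T}.\]
\item By Proposition \ref{prop-2s-coe}, $X^\alpha_{\hat T}=X_T=_{2s}s_{(r_1+r_3+k_1+k_3+1,\,r_1+r_3+2)}-s_{(r_1+r_3+k_1+k_3,\,r_1+r_3+3)}$.
\item Since $\beta(v_1)=\beta(v_3)=0$, the graph $\hat T^\beta$ splits into pieces. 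The $K_2$ coming from $v_0$ is isolated. The doubled vertex $v_{3j}$ chosen by $\phi_2$ gives another $K_2$. The $r_1+r_3$ edges $v_{ij}v_{ij}'$ remain. The remaining $k_1+k_3-1$ weight-one leaves become isolated vertices.
\item Therefore, using \eqref{eq-xg-alpha} with the factor $1/2^2$ from the two weight-$2$ vertices,
\[X^\beta_{\hat T}=\tfrac14(2s_{(1,1)})^{r_1+r_3+2}s_1^{k_1+k_3-1}\ge_{2s}s_{(r_1+r_3+k_1+k_3,\,r_1+r_3+3)}.\]
\item This follows from the Littlewood--Richardson argument of Proposition \ref{pro-alpha-vs2n-phij}, since $k_1+k_3-1\ge 2$.
\item The negative term cancels, so the sum is $\ge_{2s}0$. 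Finally $X^\alpha_{\overline T}\ge_{2s}0$, because every component outside $T$ is 2-$s$-positive.
\end{itemize}

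\textbf{Expected obstacle.} The delicate step is the membership check $\beta\in M_{20}$. One must confirm that the parameter $r$ in the definition of $M_{20}$ equals $k_3\ge2$. This is precisely where $k_1\le1$ forces $k_3\ge2$ and makes $\phi_2$ legitimate; with $k_3=1$ the map $\phi_2$ would not be defined.
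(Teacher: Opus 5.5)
Your proposal is correct and is essentially the paper's proof. The paper defines exactly the same map $\psi_{20}$: weight $2$ on $v_0$, $\phi_\emptyset$ on $G_1$, and $\phi_2$ on $G_3$, which is legitimate because $k_1+k_3\ge 3$ and $k_1\le 1$ force $k_3\ge 2$. For the injectivity and for the cancellation of $-s_{(r_1+r_3+k_1+k_3,\,r_1+r_3+3)}$ by $X^\beta_{\hat T}$, the paper simply defers to the argument of Lemma~\ref{lem-psi17}, which you have written out in full (with one cosmetic imprecision: $\phi_\emptyset$ changes no leaf $v_{1j}$ and $\phi_2$ changes only one $v_{3j}$, but all you need is that the changed vertices lie in $\hat T$).
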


\begin{proof}
The proof of this lemma is analogue to the proof of Lemma \ref{lem-psi17}. Note that $T_{3,m,n}^\alpha$ is non-2-$s$-positive implies $k_3\ge 2$. We give the explicit map of $\psi_{20}$ and omit the other detail.
\begin{equation}
\psi_{20}(\alpha)(v)=\begin{cases}
2,& \text{if }v=v_0;\\
\phi_2(\alpha\mid_{G_3})(v),&\text{if }v\in V(G_3);\\
\phi_\emptyset(\alpha\mid_{G_1})(v),&\text{if }v\in V(G_1);\\
\alpha(v),& \text{otherwise.}
\end{cases}
\end{equation}
\end{proof}

\begin{lem}\label{lem-psi21}
There is an injection $\psi_{21}$ from $N_{21}$ to $M_{21}$. Moreover, $X_{T_{3,m,n}}^\alpha+X_{T_{3,m,n}}^{\psi_{21}(\alpha)}$ is 2-$s$-positive.
\end{lem}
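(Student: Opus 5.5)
The plan is to imitate the proof of Lemma \ref{lem-psi17}, with the roles of $G_2$ and $G_3$ interchanged. Let $\alpha\in N_{21}$, so that $\alpha(v_0)=\alpha(v_1)=\alpha(v_2)=1$, $\alpha(v_3)=0$, $G_3^\alpha\not\in X_{1,n}$ and $k_2\ge 2$. Define
\begin{equation*}
\psi_{21}(\alpha)(v)=\begin{cases}
2,&\text{if }v=v_0;\\
\phi_{\emptyset}(\alpha\mid_{G_1})(v),&\text{if }v\in V(G_1);\\
\phi_2(\alpha\mid_{G_2})(v),&\text{if }v\in V(G_2);\\
\alpha(v),&\text{otherwise.}
\end{cases}
\end{equation*}
Since $\alpha(v_1)=\alpha(v_2)=1$ and, by Corollary \ref{cor-2s0}, all $\alpha$-values on $G_1,G_2$ are at most $1$ on the $v_{ij}$ with $\alpha(v_{ij})+\alpha(v'_{ij})\le 2$, we have $\alpha\mid_{G_1}\in\mathcal{A}_{k_1}$ and $\alpha\mid_{G_2}\in\mathcal{A}_{k_2}$. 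The hypothesis $k_2\ge 2$ is exactly what makes $\phi_2(\alpha\mid_{G_2})$ defined.

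\textbf{Step 1: $\beta=\psi_{21}(\alpha)$ lies in $M_{21}$.} By Proposition \ref{pro-bijection}, $G_1^\beta\in X^{k_1}_{\emptyset,3}$ and $G_2^\beta\in X^{k_2}_{2,m}$. Moreover $\beta(v_1)=\beta(v_2)=\beta(v_3)=0$, $\beta(v_0)=2$, and $G_3^\beta=G_3^\alpha\not\in X_{1,n}$. The clan graph $T_{3,m,n}^\beta$ is a disjoint union of copies of $K_2$, isolated vertices, and the unchanged components of $G_3^\alpha$. Hence it is 2-$s$-positive, so $\beta\in M$ and indeed $\beta\in M_{21}$.

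\textbf{Step 2: injectivity.} On $I_{21}=\psi_{21}(N_{21})$ define the left inverse $\varphi_{21}(\beta)$ by setting $v_0\mapsto 1$, applying $\phi^{-1}$ (Remark \ref{rem-phi-1}) on $G_1$ and on $G_2$, and leaving all other values unchanged. Since $\phi^{-1}$ inverts both $\phi_\emptyset$ and $\phi_2$, we get $\varphi_{21}\circ\psi_{21}=\mathrm{id}$.

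\textbf{Step 3: 2-$s$-positivity of the sum.} Let $T$ be the component of $T_{3,m,n}^\alpha$ containing $v_0^\alpha$, and let $r_i=\#\{j\colon \alpha(v_{ij})=\alpha(v'_{ij})=1\}$ for $i=1,2$. Then $T$ is bipartite with parts $\{v_0,\text{the }k_1+k_2\text{ pendant }v_{ij},\ r_1+r_2\text{ vertices }v_{ij}\}$ and $\{v_1,v_2,\ r_1+r_2\text{ vertices }v'_{ij}\}$. Its bipartition is therefore $(r_1+r_2+k_1+k_2+1,\ r_1+r_2+2)$. Because $T$ is the only possible non-2-$s$-positive component, Corollary \ref{cor-2s-con-bipar} forces $k_1+k_2\ge 3$, and Proposition \ref{prop-2s-coe} gives
\[
X_T=_{2s}s_{(r+k_1+k_2+1,\,r+2)}-s_{(r+k_1+k_2,\,r+3)},\qquad r=r_1+r_2 .
\]
Since $\alpha(v_3)=0$, the map $\beta$ agrees with $\alpha$ on $\overline{T}$, which contains all of $G_3$. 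Proposition \ref{pro-X-hat-overline-T} then yields
\[
X_{T_{3,m,n}}^\alpha+X_{T_{3,m,n}}^\beta=\bigl(X_{\hat T}^\alpha+X_{\hat T}^\beta\bigr)X_{\overline T}^\alpha .
\]

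It remains to bound $X_{\hat T}^\beta$. The clan graph $\hat T^\beta$ consists of $r+2$ copies of $K_2$, namely $v_0^\beta$, the doubled vertex from $\phi_2$, and the $r$ pairs $v_{ij}v'_{ij}$, together with $k_1+k_2-1$ isolated vertices. Consequently
\[
X_{\hat T}^\beta=\tfrac14(2s_{(1,1)})^{r+2}s_1^{k_1+k_2-1}\ge_{2s}s_{(r+k_1+k_2,\,r+3)},
\]
where the inequality follows from the Littlewood--Richardson rule exactly as in \eqref{eq-psi17-3}; the condition $k_1+k_2\ge 3$ guarantees that this shape occurs. Combining the two displays shows that the sum is 2-$s$-positive.

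The main point needing care is this last bookkeeping step: checking that the two doubled vertices really give exactly $r+2$ copies of $K_2$ with the normalization factor $1/2^2$, and that $\overline T$ is untouched by $\psi_{21}$. Once both are confirmed, the argument is identical to that of Lemma \ref{lem-psi17}.
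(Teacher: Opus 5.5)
Your proposal is correct and follows the paper's approach. The paper defines exactly this $\psi_{21}$ ($v_0\mapsto 2$, $\phi_\emptyset$ on $G_1$, $\phi_2$ on $G_2$) and defers everything else to the argument of Lemma~\ref{lem-psi17}, which is what you carry out: the bipartition count forcing $k_1+k_2\ge 3$, the left inverse via $\phi^{-1}$, and $\hat T^\beta$ consisting of $r+2$ copies of $K_2$ plus $k_1+k_2-1$ isolated vertices.
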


\begin{proof}
The proof of this lemma is analogue to the proof of Lemma \ref{lem-psi17}.  We give the explicit map of $\psi_{21}$ and omit the other detail.
\begin{equation}
\psi_{21}(\alpha)(v)=\begin{cases}
2,& \text{if }v=v_0;\\
\phi_2(\alpha\mid_{G_2})(v),&\text{if }v\in V(G_2);\\
\phi_\emptyset(\alpha\mid_{G_1})(v),&\text{if }v\in V(G_1);\\
\alpha(v),& \text{otherwise.}
\end{cases}
\end{equation}
\end{proof}

\begin{lem}\label{lem-psi22}
There is an injection $\psi_{22}$ from $N_{22}$ to $M_{22}$. Moreover, $X_{T_{3,m,n}}^\alpha+X_{T_{3,m,n}}^{\psi_{22}(\alpha)}$ is 2-$s$-positive.
\end{lem}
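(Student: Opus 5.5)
The plan is to follow the template of Lemma \ref{lem-psi17}: raise the weight of $v_0$ to $2$, kill the torsos $v_1,v_2$, and split the unique $k_2$-vertex into a $K_2$. The first step is to pin down the structure of $\alpha\in N_{22}$. Let $T$ be the component of $T_{3,m,n}^\alpha$ containing $v_0^\alpha$, and set $r_i=\#\{j\colon \alpha(v_{ij})=\alpha(v_{ij}')=1\}$ for $i=1,2$. By Corollary \ref{cor-2s0} and Proposition \ref{prop-416}, $T$ is a tree all of whose vertices have weight $1$. Since $\alpha(v_3)=0$, its bipartition is
\[
(1+k_1+k_2+r_1+r_2,\ 2+r_1+r_2)=(3+k_2+r,\ 2+r), \qquad r=r_1+r_2 .
\]
Non-2-$s$-positivity and Corollary \ref{cor-2s-con-bipar} force $k_2\ge 1$, and together with $k_2\le 1$ this gives $k_2=1$. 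Hence $T$ has bipartition $(r+4,r+2)$. Proposition \ref{pro-claw-iso} then says $T_{3,m,n}^\alpha$ has no isolated vertices. In particular no index $j$ has $\alpha(v_{3j})\ge1$ and $\alpha(v_{3j}')=0$, so the set $T$ of \eqref{eq-def-T} for $G_3^\alpha$ is empty. Therefore $G_3^\alpha\notin X_{1,n}\cup X_{2,n}$, which is exactly what $M_{22}$ requires of $G_3$.

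Next I define $\beta=\psi_{22}(\alpha)$ by
\[
\beta(v_0)=2,\qquad \beta|_{G_1}=\phi_\emptyset(\alpha|_{G_1}),\qquad \beta|_{G_2}=\phi_1(\alpha|_{G_2}),
\]
and $\beta=\alpha$ on $G_3$. The membership checks are as follows. We have $\alpha|_{G_1}\in\mathcal{A}_2$ and $\alpha|_{G_2}\in\mathcal{A}_1$. Proposition \ref{pro-bijection} gives $G_1^\beta\in X^2_{\emptyset,3}\subseteq X_{\emptyset,3}$ and $G_2^\beta\in X^1_{1,m}$. The torsos $v_1,v_2,v_3$ all receive weight $0$, so $\beta\in M_{22}$. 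For injectivity, define $\varphi_{22}$ on the image by resetting $v_0$ to $1$ and applying $\phi^{-1}$ of Remark \ref{rem-phi-1} on $G_1$ and on $G_2$; this restores the torsos $v_1,v_2$ to weight $1$. As in the earlier lemmas, $\varphi_{22}\circ\psi_{22}=\mathrm{id}$.

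For positivity, let $\hat T,\overline T$ be as in Definition \ref{defi-t-hat}. Since $\beta=\alpha$ on $\overline T$, Proposition \ref{pro-X-hat-overline-T} gives
\[
X^\alpha_{T_{3,m,n}}+X^\beta_{T_{3,m,n}}=\bigl(X^\alpha_{\hat T}+X^\beta_{\hat T}\bigr)X^\alpha_{\overline T},
\]
and $X^\alpha_{\hat T}=X_T=_{2s}s_{(r+4,r+2)}-s_{(r+3,r+3)}$ by Proposition \ref{prop-2s-coe}. Under $\beta$, the vertex $v_0$ becomes an isolated $K_2$ because all its neighbours now have weight $0$. The doubled vertex $v_{2j}$ becomes a $K_2$, and the $r$ legs of type $(1,1)$ remain $K_2$'s. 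The two $k_1$-vertices become isolated. So $\hat T^\beta$ is $r+2$ copies of $K_2$ plus $2$ isolated vertices, and using \eqref{eq-xg-alpha},
\[
X^\beta_{\hat T}=\tfrac14(2s_{(1,1)})^{r+2}s_1^2=2^r s_{(1,1)}^{r+2}(s_{(2)}+s_{(1,1)}).
\]
The main obstacle is to check that this dominates $s_{(r+3,r+3)}$ in the 2-$s$ order. By the Pieri rule, $s_{(1,1)}^{r+3}$ contains $s_{(r+3,r+3)}$ with coefficient $1$, and $s_{(1,1)}^{r+2}s_{(2)}$ contains $s_{(r+4,r+2)}$ with coefficient $1$. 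Hence $X^\beta_{\hat T}\ge_{2s}s_{(r+3,r+3)}$, the sum is $\ge_{2s}0$, and multiplying by the 2-$s$-positive factor $X^\alpha_{\overline T}$ finishes the proof.
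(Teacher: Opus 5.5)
Your proposal is correct and follows essentially the paper's proof. You use the same map $\psi_{22}$ (weight $2$ on $v_0$, $\phi_\emptyset$ on $G_1$, $\phi_1$ on $G_2$), the same inverse via $\phi^{-1}$, and the $\hat T/\overline T$ computation that the paper leaves as ``analogous to Lemma \ref{lem-psi17}'', which you carry out correctly, including $X^\beta_{\hat T}=2^r s_{(1,1)}^{r+2}s_1^2\ge_{2s}s_{(r+3,r+3)}$. One small slip: the absence of isolated vertices does not make the set \eqref{eq-def-T} for $G_3^\alpha$ empty, since a leg with $\alpha(v_{3j})=2$, $\alpha(v'_{3j})=0$ is a $K_2$ component; it only forces every element of that set to carry weight $2$, which rules out $X_{2,n}$, while $G_3^\alpha\notin X_{1,n}$ is already part of the definition of $N_{22}$.
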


\begin{proof}
Given $\alpha\in N_{22}$, by definition we have $\alpha(v_0)=\alpha(v_1)=\alpha(v_2)=1$, $\alpha(v_3)=0$, $G_3^\alpha\not\in X_{1,n}$, $k_2\le 1$ and $k_1=2$. Let $T$ denote the connected component in $T_{3,m,n}^\alpha$ contains $v_0^\alpha$.  Assume
\[\#\{j\colon \alpha(v_{1j})=\alpha(v_{1j}')=1\}=r_1,\quad\text{and}\quad \#\{j\colon \alpha(v_{2j})=\alpha(v_{2j}')=1\}=r_2.\]
We see that $T$ has bipartition $(r_1+r_2+k_1+k_2+1,r_1+r_2+2)$. Since $T$ is the only possible non-2-$s$-positive connected component in $T_{3,m,n}^\alpha$, from Corollary \ref{cor-2s-con-bipar}, we have $k_1+k_2\ge 3$. Together with $k_1=2$ and $k_2\le 1$ we arrive at $k_2=1$. From Proposition \ref{pro-claw-iso}, we find that there are no isolated vertices in $T_{3,m,n}^\alpha$. In particular, there are no isolated vertex in $G_3^\alpha$, which implies $G_3^\alpha\not\in X_{2,n}$. Define
\begin{equation}
\psi_{22}(\alpha)(v)=\begin{cases}
2,& \text{if }v=v_0;\\
\phi_1(\alpha\mid_{G_2})(v),&\text{if }v\in V(G_2);\\
\phi_\emptyset(\alpha\mid_{G_1})(v),&\text{if }v\in V(G_1);\\
\alpha(v),& \text{otherwise.}
\end{cases}
\end{equation}

The rest part of the proof is analogue to the proof of Lemma \ref{lem-psi17}. We omit the detailed proof.
\end{proof}

\begin{lem}\label{lem-psi23}
There is an injection $\psi_{23}$ from $N_{23}$ to $M_{23}$. Moreover, $X_{T_{3,m,n}}^\alpha+X_{T_{3,m,n}}^{\psi_{23}(\alpha)}$ is 2-$s$-positive.
\end{lem}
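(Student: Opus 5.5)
The plan is to imitate Lemma \ref{lem-psi18}: move the ``excess'' leaves of $G_1$ onto the primed vertices and delete the edge to $v_2$, so that $v_0$ is no longer attached to the bulk of the component.

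\textbf{Structure of $\alpha$.} Let $\alpha\in N_{23}$, so $\alpha(v_0)=\alpha(v_1)=\alpha(v_2)=1$, $\alpha(v_3)=0$, $G_3^\alpha\notin X_{1,n}$, $k_2\le 1$ and $k_1=3$. From $k_1=3$ we get $\alpha(v_{1j})=1$ and $\alpha(v_{1j}')=0$ for $j=1,2,3$. Let $T$ be the component of $T_{3,m,n}^\alpha$ containing $v_0^\alpha$. Put $r_2=\#\{j\colon \alpha(v_{2j})=\alpha(v'_{2j})=1\}$. Then $T$ has bipartition $(r_2+k_2+4,\,r_2+2)$, so by Proposition \ref{prop-2s-coe}
\[
X_T=_{2s}s_{(r_2+k_2+4,r_2+2)}-s_{(r_2+k_2+3,r_2+3)}.
\]
Moreover, $\alpha|_{G_2}\in\mathcal{A}_{k_2}$, because Corollary \ref{cor-2s0} forces all values on $T$ to be $1$.

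\textbf{Definition of $\psi_{23}$.} Define $\beta=\psi_{23}(\alpha)$ by
\[
\beta(v)=\begin{cases}1,& v\in\{v'_{11},v'_{12}\};\\ 0,& v=v_{12};\\ \phi_\emptyset(\alpha|_{G_2})(v),& v\in V(G_2);\\ \alpha(v),&\text{otherwise.}\end{cases}
\]
The map $\phi_\emptyset$ only sets $\beta(v_2)=0$. Then $\beta(v_0)=\beta(v_1)=\beta(v_{11})=\beta(v'_{11})=\beta(v'_{12})=\beta(v_{13})=1$ and $\beta(v_{12})=\beta(v'_{13})=\beta(v_2)=\beta(v_3)=0$. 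By Proposition \ref{pro-bijection}, $G_2^\beta\in X^{k_2}_{\emptyset,m}$, and $G_3^\beta=G_3^\alpha\notin X_{1,n}$. Hence $\beta\in M_{23}$.

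\textbf{Injectivity.} The inverse on the image resets $v_{12}\mapsto1$ and $v'_{11},v'_{12}\mapsto0$, and applies $\phi^{-1}$ on $G_2$. As in Remark \ref{rem-phi-1}, this recovers $\alpha$, so $\psi_{23}$ is injective.

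\textbf{Positivity.} Let $T_1$ be the subgraph induced by $V(\hat T)\cup\{v'_{11},v'_{12}\}$. Since $\alpha(v'_{11})=\alpha(v'_{12})=0$, we have $T_1^\alpha=T$. Also $\alpha=\beta$ off $T_1$. By Proposition \ref{pro-X-hat-overline-T},
\[
X^\alpha_{T_{3,m,n}}+X^\beta_{T_{3,m,n}}=(X^\alpha_{T_1}+X^\beta_{T_1})X^\alpha_{\overline{T_1}},
\]
and $X^\alpha_{\overline{T_1}}\ge_{2s}0$ because $T$ is the only non-2-$s$-positive component. Now $T_1^\beta$ is the disjoint union of the following pieces:
\begin{itemize}
\item the tree on $v'_{11},v_{11},v_1,v_0,v_{13}$, which has balanced bipartition $(3,2)$, so its $X$ is $=_{2s}c\,s_{(3,2)}$ with $c\ge1$ by Proposition \ref{prop-2s-coe};
\item the isolated vertex $v'_{12}$;
\item $r_2$ copies of $K_2$;
\item $k_2$ isolated vertices.
\end{itemize}
Hence
\[
X^\beta_{T_1}\ge_{2s}s_{(3,2)}\,s_1^{k_2+1}\,s_{(1,1)}^{r_2}.
\]
Adding $r_2$ columns of height two to $(3,2)$ gives $(r_2+3,r_2+2)$. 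Pieri's rule with the $k_2+1\ge1$ single boxes then produces $s_{(r_2+k_2+3,r_2+3)}$ with coefficient at least $1$. This cancels the negative term of $X_T$, so $X^\alpha_{T_1}+X^\beta_{T_1}\ge_{2s}0$.

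\textbf{Main obstacle.} The delicate point is the structure step: one must check carefully that $\hat T$ and the bipartition are as claimed (in particular that $r_1=0$ and that no vertex of $T$ has $\alpha$-value $2$). One must also confirm that making $v'_{12}$ an isolated vertex is what keeps $\beta$ 2-$s$-positive: the Pieri step needs at least one free box, and that box comes from $v'_{12}$.
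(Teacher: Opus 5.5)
Your proposal is correct and follows the paper. Since $\phi_\emptyset$ on $G_2$ only sets $v_2$ to $0$, your $\psi_{23}$ is exactly the paper's explicit map: $v'_{11},v'_{12}\mapsto 1$ and $v_2,v_{12}\mapsto 0$.

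Your positivity argument is the Lemma~\ref{lem-psi18} template the paper invokes but does not write out. Here $T_1^\beta$ is $S(1^2,2)$, the isolated vertex $v'_{12}$, $r_2$ copies of $K_2$ and $k_2$ isolated vertices, and their product supplies the $s_{(r_2+k_2+3,r_2+3)}$ that cancels the negative term of $X_T$.
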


\begin{proof}
The proof of this lemma is analogue to the proof of Lemma \ref{lem-psi18}.  We give the explicit map of $\psi_{23}$ and omit the other detail.
\begin{equation}
\psi_{23}(\alpha)(v)=\begin{cases}
1,&\text{if }v\in\{v'_{11},v'_{12}\};\\
0,&\text{if }v\in\{v_2,v_{12}\};\\
\alpha(v),& \text{otherwise.}
\end{cases}
\end{equation}

As an illustration, consider the following example.
\begin{figure}[h]
    \centering
\begin{tikzpicture}
[thick, every label/.style={font=\footnotesize}, place/.style={thick,fill=black!100,circle,inner sep=0pt,minimum size=1mm,draw=black!100}]
\node [place,label=above:{$v_0$}] (v4) at (-8,3) {};
\node[place,label=right:{$v_{13}$}] (v10) at (-9,1.5) {};
\node [place,label=left:{$v_{12}$}] (v8) at (-9.5,1.5) {};
\node[place,label=left:{$v_{11}$}] (v2) at (-10.5,1.5) {};
\node[place,label=left:{$v_{1}$}] (v3) at (-9.5,2) {};
\node[place,label=left:{$v_{21}$}] (v19) at (-7,1.5) {};
\node[place,label=left:{$v_{21}'$}] (v20) at (-7,1) {};
\node[place,label=right:{$v_{22}$}] (v21) at (-6,1.5) {};
\node[place,label=right:{$v_{22}'$}] (v22) at (-6,1) {};
\node[place,label=right:{$v_{2}$}] (v18) at (-6.5,2) {};
\draw (v2) -- (v3) -- (v4) -- (v18) -- (v19) -- (v20);
\draw (v10) -- (v3) -- (v8);
\draw (v22) -- (v21) -- (v18);
 \node [place,label=above:{$v_0$}]  at (-2,3) {};
\node [place,label=right:{$v_{12}'$}] (v12) at (-3.5,1) {};
\node[place,label=right:{$v_{11}$}] (v5) at (-4.5,1.5) {};
\node[place,label=right:{$v_{11}'$}] (v6) at (-4.5,1) {};
\node[place,label=left:{$v_{1}$}] (v7) at (-3.5,2) {};
\node[place,label=left:{$v_{21}$}] (v14) at (-1,1.5) {};
\node[place,label=left:{$v_{21}'$}] (v13) at (-1,1) {};
\node[place,label=right:{$v_{22}$}] (v16) at (0,1.5) {};
\node[place,label=right:{$v_{22}'$}] (v15) at (0,1) {};
\draw (-2,3) -- (v7) -- (v5) -- (v6);
\draw  (v13) edge (v14);
\draw  (v15) edge (v16);
\node at (-8,0.25) {$T_{3,m,n}^\alpha$};
\node at (-2,0.25) {$T_{3,m,n}^{\psi_{23}(\alpha)}$};
\node [place,label=right:{$v_{13}$}] at (-2.5,1.5) {};
\draw (-2.5,1.5)--(-3.5,2);
\end{tikzpicture}
    \caption{An example for $\psi_{23}$.}
\end{figure}
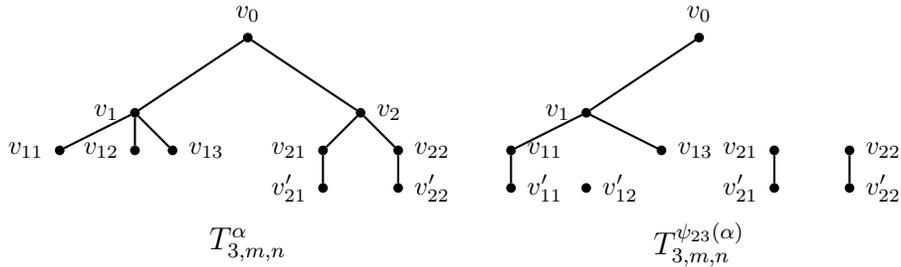
\end{proof}

\begin{lem}\label{lem-psi24}
There is an injection $\psi_{24}$ from $N_{24}$ to $M_{24}$. Moreover, $X_{T_{3,m,n}}^\alpha+X_{T_{3,m,n}}^{\psi_{24}(\alpha)}$ is 2-$s$-positive.
\end{lem}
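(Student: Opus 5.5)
The plan is to follow the pattern of Lemma \ref{lem-psi17}: raise $v_0$ to weight $2$, clear the torsos $v_1,v_2$ using the maps $\phi_S$, and leave $G_3$ unchanged. Let $\alpha\in N_{24}$. Then $\alpha(v_0)=\alpha(v_1)=\alpha(v_2)=1$, $\alpha(v_3)=0$, $G_3^\alpha\in X_{1,n}$ and $k_2\ge 1$. Define
\begin{equation*}
\psi_{24}(\alpha)(v)=\begin{cases}
2,& \text{if }v=v_0;\\
\phi_\emptyset(\alpha\mid_{G_1})(v),&\text{if }v\in V(G_1);\\
\phi_1(\alpha\mid_{G_2})(v),&\text{if }v\in V(G_2);\\
\alpha(v),& \text{otherwise,}
\end{cases}
\end{equation*}
and set $\beta=\psi_{24}(\alpha)$.

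\textbf{Step 1: $\beta\in M_{24}$.} Corollary \ref{cor-2s0} forces $\alpha(v_{ij})\le 1$ and $\alpha(v_{ij})+\alpha(v'_{ij})\le 2$ in $G_1$ and $G_2$. Hence $\alpha\mid_{G_1}\in\mathcal{A}_{k_1}$ and $\alpha\mid_{G_2}\in\mathcal{A}_{k_2}$. Since $k_2\ge1$, the map $\phi_1$ is defined on $\alpha\mid_{G_2}$. Proposition \ref{pro-bijection} then gives $G_1^\beta\in X_{\emptyset,3}^{k_1}$ and $G_2^\beta\in X^{k_2}_{1,m}\subseteq X_{1,m}$. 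We also have $G_3^\beta=G_3^\alpha\in X_{1,n}$, $\beta(v_0)=2$ and $\beta(v_1)=\beta(v_2)=\beta(v_3)=0$. Therefore $\beta\in M_{24}$.

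\textbf{Step 2: injectivity.} Define the inverse $\varphi_{24}$ on the image by
\begin{equation*}
\varphi_{24}(\beta)(v)=\begin{cases}
1,& \text{if }v=v_0;\\
\phi^{-1}(\beta\mid_{G_i})(v),& \text{if }v\in V(G_i),\ i=1,2;\\
\beta(v),& \text{otherwise.}
\end{cases}
\end{equation*}
By Remark \ref{rem-phi-1}, $\phi^{-1}$ inverts both $\phi_\emptyset$ and $\phi_1$, so $\varphi_{24}\circ\psi_{24}=\mathrm{id}$ and $\psi_{24}$ is injective.

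\textbf{Step 3: 2-$s$-positivity.} Let $T$ be the component of $T_{3,m,n}^\alpha$ containing $v_0^\alpha$, and let $r_i=\#\{j\colon \alpha(v_{ij})=\alpha(v'_{ij})=1\}$. Since $\alpha(v_3)=0$, the component $T$ lies inside $G_1\cup G_2\cup\{v_0\}$ and has bipartition $(r_1+r_2+k_1+k_2+1,\,r_1+r_2+2)$. Every other component of $T_{3,m,n}^\alpha$ is 2-$s$-positive, so $T$ must be non-2-$s$-positive, and Corollary \ref{cor-2s-con-bipar} forces $k_1+k_2\ge3$. Proposition \ref{prop-2s-coe} then gives
\begin{equation*}
X^\alpha_{\hat T}=X_T=_{2s}s_{(r_1+r_2+k_1+k_2+1,\,r_1+r_2+2)}-s_{(r_1+r_2+k_1+k_2,\,r_1+r_2+3)}.
\end{equation*}
Now compare $\alpha$ and $\beta$.
\begin{itemize}
\item The map $\psi_{24}$ changes values only at $v_0$ and on $V(\hat T)$, so $\beta=\alpha$ on $\overline{T}$.
\item All neighbours of $v_0$ have $\beta$-value $0$, so $\hat T^\beta$ is separated from the rest.
\end{itemize}
Proposition \ref{pro-X-hat-overline-T} therefore gives
\begin{equation*}
X_{T_{3,m,n}}^\alpha+X_{T_{3,m,n}}^\beta=(X_{\hat T}^\alpha+X_{\hat T}^\beta)X_{\overline T}^\alpha .
\end{equation*}
The graph $\hat T^\beta$ is a disjoint union of $r_1+r_2+2$ copies of $K_2$ and $k_1+k_2-1$ isolated vertices. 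The copies of $K_2$ are the clan of $v_0$, the clan of the doubled leaf in $G_2$, and the $r_1+r_2$ edges $v_{ij}v'_{ij}$. By \eqref{eq-xg-alpha} and the Littlewood--Richardson rule, as in \eqref{eq-psi17-3},
\begin{equation*}
X^\beta_{\hat T}=\tfrac14(2s_{(1,1)})^{r_1+r_2+2}s_1^{k_1+k_2-1}\ge_{2s}s_{(r_1+r_2+k_1+k_2,\,r_1+r_2+3)} .
\end{equation*}
This cancels the only negative term of $X^\alpha_{\hat T}$. Since $X^\alpha_{\overline T}\ge_{2s}0$, the sum is 2-$s$-positive.

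The main point needing care is the bookkeeping in Step 3. One must confirm that $T$ does not reach into $G_3$, which holds because $\alpha(v_3)=0$. One must also check that $\hat T^\beta$ has exactly the claimed structure, so that the Littlewood--Richardson expansion contains the shape $(r_1+r_2+k_1+k_2,\,r_1+r_2+3)$ with coefficient at least $1$. This needs $k_1+k_2-1\ge2$, which is guaranteed by $k_1+k_2\ge 3$.
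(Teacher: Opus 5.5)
Your proposal is correct and matches the paper's proof: the paper defines exactly this $\psi_{24}$ (weight $2$ on $v_0$, $\phi_\emptyset$ on $G_1$, $\phi_1$ on $G_2$, $G_3$ unchanged) and then defers to the argument of Lemma \ref{lem-psi17}, which your Steps 1--3 carry out. The one point left implicit in Step 1 is that $\beta\in M$, i.e.\ $X^\beta_{T_{3,m,n}}\ge_{2s}0$. This follows immediately from your Step 3: $X^\beta_{T_{3,m,n}}=X^\beta_{\hat T}X^\alpha_{\overline T}$, where $\hat T^\beta$ is a disjoint union of copies of $K_2$ and $K_1$ and $X^\alpha_{\overline T}\ge_{2s}0$.
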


\begin{proof}
The proof of this lemma is analogue to the proof of Lemma \ref{lem-psi17}.  We give the explicit map of $\psi_{24}$ and omit the other detail.
\begin{equation}
\psi_{24}(\alpha)(v)=\begin{cases}
2,&\text{if }v=v_0;\\
\phi_\emptyset(\alpha\mid_{G_1})(v),&\text{if }v\in V(G_1);\\
\phi_1(\alpha\mid_{G_2})(v),&\text{if }v\in V(G_2);\\
\alpha(v),& \text{otherwise.}
\end{cases}
\end{equation}
\end{proof}

\begin{lem}\label{lem-psi25}
There is an injection $\psi_{25}$ from $N_{25}$ to $M_{25}$. Moreover, $X_{T_{3,m,n}}^\alpha+X_{T_{3,m,n}}^{\psi_{25}(\alpha)}$ is 2-$s$-positive.
\end{lem}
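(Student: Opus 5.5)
In $N_{25}$ the conditions $k_2=0$ and non-$2$-$s$-positivity force $k_1=3$. I would then use the same local surgery on $G_1$ as in Lemmas \ref{lem-psi18} and \ref{lem-psi23}, and compare the single non-$2$-$s$-positive component with its image.

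\textbf{Step 1: structure of $\alpha\in N_{25}$.} We have $\alpha(v_0)=\alpha(v_1)=\alpha(v_2)=1$, $\alpha(v_3)=0$, $G_3^\alpha\in X_{1,n}$ and $k_2=0$. Let $T$ be the component of $T_{3,m,n}^\alpha$ containing $v_0^\alpha$, and let $r_1,r_2$ count the legs of $G_1,G_2$ with $\alpha(v_{ij})=\alpha(v'_{ij})=1$. As in Lemma \ref{lem-psi22}, $T$ has bipartition $(r_1+r_2+k_1+1,\,r_1+r_2+2)$. By Corollary \ref{cor-2s-con-bipar} we need $k_1\ge 3$, and $r_1+k_1\le 3$ then forces $r_1=0$ and $k_1=3$. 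So $\alpha(v_{11})=\alpha(v_{12})=\alpha(v_{13})=1$, $\alpha(v'_{1j})=0$, and $T$ has bipartition $(r_2+4,r_2+2)$. Hence
\[
X_T=_{2s}s_{(r_2+4,r_2+2)}-s_{(r_2+3,r_2+3)}.
\]

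\textbf{Step 2: definition of $\psi_{25}$ and injectivity.} Define $\beta=\psi_{25}(\alpha)$ by $\beta(v_{11}')=\beta(v_{12}')=1$, $\beta(v_{11})=\beta(v_2)=0$, and $\beta(v)=\alpha(v)$ otherwise.
\begin{itemize}
\item On $G_1$, $\beta$ has exactly the pattern required by $M_{25}$.
\item $G_3^\beta=G_3^\alpha\in X_{1,n}$.
\item In $G_2$ we have $\beta(v_2)=0$ and no leg has $\beta(v_{2j})\ge1$, $\beta(v'_{2j})=0$ (because $k_2=0$). Hence $G_2^\beta\in X^0_{\emptyset,m}\subseteq X_{\emptyset,m}$.
\end{itemize}
Thus $\beta\in M_{25}$. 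The inverse on the image $I_{25}$ resets $v_{11},v_2$ to $1$ and $v_{11}',v_{12}'$ to $0$, which gives injectivity exactly as for $\varphi_{18}$.

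\textbf{Step 3: 2-$s$-positivity.} Let $T_1$ be the subgraph induced by $V(\hat T)\cup\{v_{11}',v_{12}'\}$. Since $\alpha(v'_{11})=\alpha(v'_{12})=0$, we have $T_1^\alpha=T$. Also $\alpha=\beta$ off $T_1$. By Proposition \ref{pro-X-hat-overline-T}, as in \eqref{eq-psi-18-1},
\[
X^\alpha_{T_{3,m,n}}+X^\beta_{T_{3,m,n}}=(X^\alpha_{T_1}+X^\beta_{T_1})X^\alpha_{\overline{T_1}}.
\]
Now $T_1^\beta$ decomposes as follows:
\begin{itemize}
\item the tree on $v_0,v_1,v_{12},v_{12}',v_{13}$, which is a spider $S(1^2,2)$ with bipartition $(3,2)$;
\item the isolated vertex $v_{11}'$;
\item $r_2$ copies of $K_2$ coming from the $G_2$ legs.
\end{itemize}
For the spider, Proposition \ref{prop-2s-coe} gives $X=_{2s}s_{(3,2)}$, since no stable $4$-set exists and so $\tilde a_{(4,1)}=0$. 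Therefore
\[
X^\beta_{T_1}=_{2s}s_1 s_{(3,2)}(2s_{(1,1)})^{r_2}\ge_{2s}2^{r_2}\bigl(s_{(r_2+4,r_2+2)}+s_{(r_2+3,r_2+3)}\bigr).
\]
This dominates the negative term $-s_{(r_2+3,r_2+3)}$ in $X_{T_1}^\alpha$. The cofactor $X^\alpha_{\overline{T_1}}$ is $2$-$s$-positive, since all other components are $2$-$s$-positive, so the sum is $2$-$s$-positive.

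\textbf{Main obstacle.} The delicate point is the Littlewood--Richardson bookkeeping in Step 3. One must confirm that the product $s_1s_{(3,2)}s_{(1,1)}^{r_2}$ really contains $s_{(r_2+3,r_2+3)}$ among its length-$\le 2$ terms, with coefficient at least $1$. It also remains to check that no hidden constraint, such as a leg of $G_2$ with $\alpha(v_{2j})=0$ and $\alpha(v'_{2j})=2$, breaks the membership $G_2^\beta\in X_{\emptyset,m}$.
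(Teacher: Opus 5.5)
Your proposal is correct and follows the paper's route: the same map $\psi_{25}$, obtained after forcing $k_1=3$ and $r_1=0$ from the bipartition $(r_1+r_2+k_1+1,r_1+r_2+2)$, followed by the $T_1$-argument of Lemma \ref{lem-psi18}, which the paper only invokes by analogy. The two points you flag both go through, and the second sentence below records why.

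First, Pieri gives $s_1s_{(3,2)}s_{(1,1)}^{r_2}=_{2s}s_{(r_2+4,r_2+2)}+s_{(r_2+3,r_2+3)}$ exactly, since $s_1s_{(3,2)}=s_{(4,2)}+s_{(3,3)}+s_{(3,2,1)}$ and $s_{(a,b)}s_{(1,1)}=_{2s}s_{(a+1,b+1)}$. Second, a $(0,2)$ leg of $G_2$ satisfies condition (2) of Definition \ref{def-Xin} without entering the set $T$, so $G_2^\beta\in X^0_{\emptyset,m}$; the same computation also shows $X^\beta_{T_{3,m,n}}\ge_{2s}0$, which is the fact $\beta\in M$ that your Step 2 asserted without checking.
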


\begin{proof}
Given $\alpha\in N_{25}$, by definition we have $\alpha(v_0)=\alpha(v_1)=\alpha(v_2)=1$, $\alpha(v_3)=0$, $G_3^\alpha\in X_{1,n}$ and $k_2=0$. Let $T$ denote the connected component in $T_{3,m,n}^\alpha$ contains $v_0^\alpha$.  Assume
\[\#\{j\colon \alpha(v_{1j})=\alpha(v_{1j}')=1\}=r_1,\quad\text{and}\quad \#\{j\colon \alpha(v_{2j})=\alpha(v_{2j}')=1\}=r_2.\]
We see that $T$ has bipartition $(r_1+r_2+k_1+1,r_1+r_2+2)$. Since $T$ is the only possible non-2-$s$-positive connected component in $T_{3,m,n}^\alpha$, from Corollary \ref{cor-2s-con-bipar}, we have $ k_1\ge 3$. Combining $3=k_1+r_1\ge k_1\ge 3$, we derive that  $k_1=3$ and $r_1=0$. Hence $\alpha(v_{11})=\alpha(v_{12})=\alpha(v_{13})=1$ and $\alpha(v'_{11})=\alpha(v'_{12})=\alpha(v'_{13})=0$. Define
\begin{equation}
\psi_{25}(\alpha)(v)=\begin{cases}
1,& \text{if }v\in\{v'_{11},v'_{12}\};\\
0,&\text{if }v\in \{v_{11},v_2\};\\
\alpha(v),& \text{otherwise.}
\end{cases}
\end{equation}

We give an example for illustration.
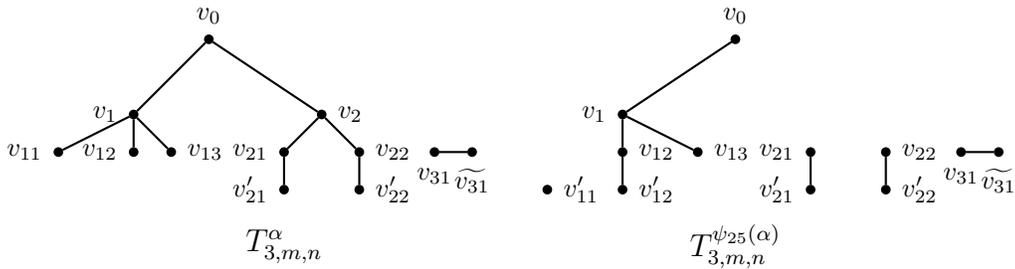
\begin{figure}[h]
    \centering
\begin{tikzpicture}
[thick, every label/.style={font=\footnotesize}, place/.style={thick,fill=black!100,circle,inner sep=0pt,minimum size=1mm,draw=black!100}]
\node [place,label=above:{$v_0$}] (v4) at (-9,3) {};
\node[place,label=right:{$v_{13}$}] (v10) at (-9.5,1.5) {};
\node [place,label=left:{$v_{12}$}] (v8) at (-10,1.5) {};
\node[place,label=left:{$v_{11}$}] (v2) at (-11,1.5) {};
\node[place,label=left:{$v_{1}$}] (v3) at (-10,2) {};
\node[place,label=left:{$v_{21}$}] (v19) at (-8,1.5) {};
\node[place,label=left:{$v_{21}'$}] (v20) at (-8,1) {};
\node[place,label=right:{$v_{22}$}] (v21) at (-7,1.5) {};
\node[place,label=right:{$v_{22}'$}] (v22) at (-7,1) {};
\node[place,label=right:{$v_{2}$}] (v18) at (-7.5,2) {};
\draw (v2) -- (v3) -- (v4) -- (v18) -- (v19) -- (v20);
\draw (v10) -- (v3) -- (v8);
\draw (v22) -- (v21) -- (v18);
 \node [place,label=above:{$v_0$}]  at (-2,3) {};
\node [place,label=right:{$v_{12}'$}] (v12) at (-3.5,1) {};
\node[place,label=right:{$v_{11}'$}] (v6) at (-4.5,1) {};
\node[place,label=left:{$v_{1}$}] (v7) at (-3.5,2) {};
\node[place,label=left:{$v_{21}$}] (v14) at (-1,1.5) {};
\node[place,label=left:{$v_{21}'$}] (v13) at (-1,1) {};
\node[place,label=right:{$v_{22}$}] (v16) at (0,1.5) {};
\node[place,label=right:{$v_{22}'$}] (v15) at (0,1) {};
\draw (-2,3) -- (v7);
\draw  (v13) edge (v14);
\draw  (v15) edge (v16);
\node at (-8,0.25) {$T_{3,m,n}^\alpha$};
\node at (-2,0.25) {$T_{3,m,n}^{\psi_{25}(\alpha)}$};
\node [place,label=right:{$v_{13}$}] (v11) at (-2.5,1.5) {};
\node [place,label=below:{$v_{31}$}] (v1) at (-6,1.5) {};
\node [place,label=below:{$\widetilde{v_{31}}$}] (v9) at (-5.5,1.5) {};
\draw (v1) -- (v9) ;
\node [place,label=right:{$v_{12}$}] (v5) at (-3.5,1.5) {};
\draw (v12) -- (v5) -- (v7) -- (v11);
\node [place,label=below:{$v_{31}$}] (v17) at (1,1.5) {};
\node [place,label=below:{$\widetilde{v_{31}}$}] (v23) at (1.5,1.5) {};
\draw  (v17) edge (v23);
\end{tikzpicture}
    \caption{An example for $\psi_{25}$.}
\end{figure}

The rest part of the proof is analogue to the proof of Lemma \ref{lem-psi18}. We omit the detailed proof.
\end{proof}

\begin{lem}\label{lem-psi26}
There is an injection $\psi_{26}$ from $N_{26}$ to $M_{26}$. Moreover, $X_{T_{3,m,n}}^\alpha+X_{T_{3,m,n}}^{\psi_{26}(\alpha)}$ is 2-$s$-positive.
\end{lem}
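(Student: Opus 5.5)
The plan is to follow the template of Lemmas \ref{lem-psi16} and \ref{lem-psi17}: push $v_0$ up to $2$ and clear $v_1,v_2,v_3$, with a case split matching conditions (i)--(iii) of $N_{26}$ and landing in the corresponding parts (i)--(iii) of $M_{26}$.

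For $\alpha\in N_{26}$ we have $\alpha(v_0)=\alpha(v_1)=\alpha(v_2)=\alpha(v_3)=1$. By Corollary \ref{cor-2s0}, $\alpha(v_{ij})\le 1$ and $\alpha(v_{ij})+\alpha(v'_{ij})\le 2$, so $\alpha\mid_{G_i}\in\mathcal{A}_{k_i}$ for $i=1,2,3$. Let $T$ be the component of $T_{3,m,n}^\alpha$ containing $v_0^\alpha$ and put $r_i=\#\{j\colon \alpha(v_{ij})=\alpha(v'_{ij})=1\}$. Then $T$ has bipartition $(r_1+r_2+r_3+k_1+k_2+k_3+1,\,r_1+r_2+r_3+3)$, and $k_1+k_2+k_3\ge 4$ makes it non-balanced, so Proposition \ref{prop-2s-coe} gives
\[
X_T=_{2s}s_{(R+K+1,R+3)}-s_{(R+K,R+4)},\qquad R=r_1+r_2+r_3,\ K=k_1+k_2+k_3 .
\]

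Define $\beta=\psi_{26}(\alpha)$ by $\beta(v_0)=2$, $\beta(v_1)=\beta(v_2)=\beta(v_3)=0$, and $\beta\mid_{G_i}=\phi_{S_i}(\alpha\mid_{G_i})$, where $S_i=\{1,2\}$ for a single distinguished index $i$ and $S_i=\emptyset$ for the other two. The distinguished index is chosen as follows: if $k_2\ge 2$, take $i=2$ (part (ii)); else if $k_3\ge 2$, take $i=3$ (part (iii)); else $k_1=3$, and take $i=1$ (part (i), where $G_1^\beta\in X^3_{\{1,2\},3}$). Proposition \ref{pro-bijection} puts $\beta$ in the required $X_{S_i,\cdot}$ classes, so $\beta\in M_{26}$. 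For injectivity, define $\varphi_{26}$ by resetting $v_0,v_1,v_2,v_3$ to $1$ and applying $\phi^{-1}$ of Remark \ref{rem-phi-1} on each $G_i$; then $\varphi_{26}(\psi_{26}(\alpha))=\alpha$. Note that the case is recoverable from $\beta$ alone, since Corollary \ref{cor-xsn-distinct} tells us which $G_i^\beta$ lies in an $X_{\{1,2\},\cdot}$ class.

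For positivity, $\beta$ agrees with $\alpha$ off $\hat T$, so Proposition \ref{pro-X-hat-overline-T} gives
\[
X^\alpha_{T_{3,m,n}}+X^\beta_{T_{3,m,n}}=(X^\alpha_{\hat T}+X^\beta_{\hat T})\,X^\alpha_{\overline T},
\]
and it suffices to show $X^\beta_{\hat T}\ge_{2s}s_{(R+K,R+4)}$. Now $\hat T^\beta$ consists of:
\begin{itemize}
\item[(a)] a $K_2$ clan at $v_0$ (now isolated from the rest, since $v_1,v_2,v_3$ have weight $0$);
\item[(b)] two $K_2$ clans at the two doubled legs $v_{ij}$;
\item[(c)] $R$ copies of $K_2$ coming from the length-two legs;
\item[(d)] $K-2$ isolated vertices.
\end{itemize}
By \eqref{eq-xg-alpha}, this gives
\[
X^\beta_{\hat T}=\tfrac{1}{2^3}(2s_{(1,1)})^{R+3}s_1^{K-2},
\]
so there are $R+3$ copies of $s_{(1,1)}$ and $K-2$ copies of $s_1$, with total degree $2R+K+4$. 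The main check is that the Littlewood--Richardson expansion contains $s_{(R+K,R+4)}$ with coefficient at least $1$, not merely $s_{(R+K+1,R+3)}$. Since $K-2\ge 2$, we can place two boxes of $s_1^{K-2}$ in the second row, so $f^{(K-4,2)}\ge1$ as in the proof of Proposition \ref{pro-alpha-vs2n-phij}. The prefactor $2^{R}\ge 1$ then finishes the positivity claim.

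I expect the main obstacle to be the bookkeeping showing that the ranges of $\psi_{26}$ and of the other $\psi_i$ are disjoint. Pairwise disjointness of the sets $M_i$ was asserted earlier, so in practice this reduces to confirming that each image satisfies the exact membership conditions of $M_{26}$. The delicate spot is case (i), where $k_1=3$ forces $G_1^\beta\in X^3_{\{1,2\},3}$.
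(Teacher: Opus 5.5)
Your construction and injectivity argument are the paper's. The paper uses the same $\beta$ ($v_0\mapsto 2$, $\phi_{\{1,2\}}$ on one spider, $\phi_\emptyset$ on the other two), only with the case priority $k_1=3$, then $k_2\ge 2$, then $k_3\ge 2$. Your different order is equally valid. The left inverse via $\phi^{-1}$ is also the paper's, as is the reduction, through the $\hat T/\overline{T}$ factorization, to showing $X^\beta_{\hat T}\ge_{2s}s_{(R+K,R+4)}$, with $\hat T^\beta$ consisting of $R+3$ copies of $K_2$ and $K-2$ isolated vertices.

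The step that is wrong as written is your Littlewood--Richardson count, which is the one computation the paper leaves implicit. The only term of length at most $2$ in $s_{(1,1)}^{R+3}$ is $s_{(R+3,R+3)}$, and multiplying by $s_1$ cannot shorten a partition. Hence the coefficient of $s_{(R+3+a,R+3+b)}$ in
\[
X^\beta_{\hat T}=2^R s_{(1,1)}^{R+3}s_1^{K-2}
\]
is $2^R f^{(a,b)}$. The target $(R+K,R+4)$ corresponds to $(a,b)=(K-3,1)$. So its coefficient is $2^R(K-3)$, which is at least $1$ precisely because $K\ge 4$.

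Your shape $(K-4,2)$ puts two new boxes in the second row. It therefore gives the coefficient of $s_{(R+K-1,R+5)}$, not of the term you need to cancel. Moreover, $f^{(K-4,2)}\ge 1$ requires $K\ge 6$, which $N_{26}$ does not guarantee; your hypothesis $K-2\ge 2$ is not enough. With the count corrected to $f^{(K-3,1)}$, the positivity argument and hence the proof go through.
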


\begin{proof}
Given $\alpha\in N_{26}$, by definition we see that $\alpha(v_0)=\alpha(v_1)=\alpha(v_2)=\alpha(v_3)=1$, $k_1+k_2+k_3\ge 4$ and at least one of $k_1=3$, $k_2\ge 2$, $k_3\ge 2$ is true. There are three cases.

Case 1. $k_1=3$. In this case, define
\begin{equation}\label{equ-def-psi26}
\psi_{26}(\alpha)(v)=\begin{cases}
2,&\text{if }v=v_0;\\
\phi_{\{1,2\}}(\alpha\mid_{G_1})(v),&\text{if }v\in V(G_1);\\
\phi_\emptyset(\alpha\mid_{G_2})(v),&\text{if }v\in V(G_2);\\
\phi_\emptyset(\alpha\mid_{G_3})(v),&\text{if }v\in V(G_3).
\end{cases}
\end{equation}
Set $\beta=\psi_{26}(\alpha)$. It is clear that $G_1^\beta\in X^3_{\{1,2\},3}$, $G_2^\beta\in X_{\emptyset, m}$ and $G_3^\beta\in X_{\emptyset,n}$. This implies $\beta\in M_{26}$.

Case 2. $k_1\le 2$ and $k_2\ge 2$. In this case, define
\begin{equation}\label{equ-def-psi26-1}
\psi_{26}(\alpha)(v)=\begin{cases}
2,&\text{if }v=v_0;\\
\phi_\emptyset(\alpha\mid_{G_1})(v),&\text{if }v\in V(G_1);\\
\phi_{\{1,2\}}(\alpha\mid_{G_2})(v),&\text{if }v\in V(G_2);\\
\phi_\emptyset(\alpha\mid_{G_3})(v),&\text{if }v\in V(G_3).
\end{cases}
\end{equation}
Set $\beta=\psi_{26}(\alpha)$. It is clear that $G_2^\beta\in X_{\{1,2\},m}$, $G_1^\beta\in X_{\emptyset, 3}$ and $G_3^\beta\in X_{\emptyset,n}$. This implies $\beta\in M_{26}$.

Case 3. $k_1\le 2$, $k_2\le 1$ and $k_3 \ge 2$. In this case, define
\begin{equation}\label{equ-def-psi26-2}
\psi_{26}(\alpha)(v)=\begin{cases}
2,&\text{if }v=v_0;\\
\phi_\emptyset(\alpha\mid_{G_1})(v),&\text{if }v\in V(G_1);\\
\phi_\emptyset(\alpha\mid_{G_2})(v),&\text{if }v\in V(G_2);\\
\phi_{\{1,2\}}(\alpha\mid_{G_3})(v),&\text{if }v\in V(G_3).
\end{cases}
\end{equation}
Set $\beta=\psi_{26}(\alpha)$. It is clear that $G_3^\beta\in X_{\{1,2\},n}$, $G_1^\beta\in X_{\emptyset, 3}$ and $G_2^\beta\in X_{\emptyset,m}$. This implies $\beta\in M_{26}$.

So in each case, we have $\beta\in M_{26}$.
Let $I_{26}=\{\psi_{26}(\alpha)\colon \alpha\in N_{26}\}\subseteq M_{26}$. For any $\beta\in I_{26}$, define $\varphi_{26}\colon I_{26}\rightarrow N_{26}$ as follows:
\begin{equation}
\varphi_{26}(\beta)(v)=\begin{cases}
1,&\text{if }v=v_0;\\
\phi^{-1}(\beta\mid_{G_1})(v)& \text{if }v\in V(G_1);\\
\phi^{-1}(\beta\mid_{G_2})(v)& \text{if }v\in V(G_2);\\
\phi^{-1}(\beta\mid_{G_3})(v)& \text{if }v\in V(G_3).
\end{cases}
\end{equation}
Clearly $\varphi_{26}(\psi_{26}(\alpha))=\alpha$ for any $\alpha\in N_{26}$. This yields that $\psi_{26}$ is an injection.

Let $T$ denote the connected component in $T_{3,m,n}^\alpha$ contains $v_0^\alpha$.  Assume
\[\#\{(i,j)\colon \alpha(v_{ij})=\alpha(v_{ij}')=1\}=r.\]
We see that $T$ has bipartition $(r+k_1+k_2+k_3+1,r+3)$. From Proposition \ref{prop-2s-coe}, we have
\begin{equation}\label{equ-cal-scalpha-26'}
X_T=_{2s}s_{(r+k_1+k_2+k_3+1,r+3)}-s_{(r+k_1+k_2+k_3,r+4)}.
\end{equation}

Moreover, define $\hat{T}$ and $\overline{T}$ as in Definition \ref{defi-t-hat}. From the construction of $\psi_{26}$, we see that in each case $\alpha(v)=\beta(v)$ for any $v\in\overline{T}$. By Proposition \ref{pro-X-hat-overline-T}, we have
\begin{equation}\label{equ-cal-scalpha-26'-1}
    X_{T_{3,m,n}}^\alpha+X_{T_{3,m,n}}^\beta=(X_{\hat{T}}^\alpha+X_{\hat{T}}^\beta)X_{\overline{T}}^\alpha.
\end{equation}

From the construction of $\psi_{26}$,   in each case we see that $\hat{T}^\beta$ consists of $r+3$ copies of $K_2$ and $k_1+k_2+k_3-2$ isolated vertices. This yields that
\begin{equation}\label{equ-cal-scalpha-26'-2}
    X_{\hat{T}}^\beta=\frac{1}{2^3}(2s_{(1,1)})^{r+3} s_{1}^{k_1+k_2+k_3-2}\ge_{2s} s_{(r+k_1+k_2+k_3,r+4)}.
\end{equation}
Substituting \eqref{equ-cal-scalpha-26'} and \eqref{equ-cal-scalpha-26'-2} into \eqref{equ-cal-scalpha-26'-1}, we get $X_{T_{3,m,n}}^\alpha+X_{T_{3,m,n}}^\beta\ge_{2s} 0$.
\end{proof}

\begin{lem}\label{lem-psi27}
There is an injection $\psi_{27}$ from $N_{27}$ to $M_{27}$. Moreover, $X_{T_{3,m,n}}^\alpha+X_{T_{3,m,n}}^{\psi_{27}(\alpha)}$ is 2-$s$-positive.
\end{lem}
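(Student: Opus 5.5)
We mirror the construction of Lemma \ref{lem-psi26}, with the doublings chosen to match the target set $M_{27}$. For $\alpha\in N_{27}$ we have $\alpha(v_0)=\alpha(v_1)=\alpha(v_2)=\alpha(v_3)=1$, $k_1=2$ and $k_2=k_3=1$. By Corollary \ref{cor-2s0}, $\alpha(v_i)\le 1$ and $\alpha(v_{ij})+\alpha(v'_{ij})\le 2$ for all $i,j$, so $\alpha\mid_{G_1}\in\mathcal{A}_2$, $\alpha\mid_{G_2}\in\mathcal{A}_1$ and $\alpha\mid_{G_3}\in\mathcal{A}_1$. Define
\[
\psi_{27}(\alpha)(v)=\begin{cases}
2,&\text{if }v=v_0;\\
\phi_{1}(\alpha\mid_{G_1})(v),&\text{if }v\in V(G_1);\\
\phi_\emptyset(\alpha\mid_{G_2})(v),&\text{if }v\in V(G_2);\\
\phi_{1}(\alpha\mid_{G_3})(v),&\text{if }v\in V(G_3).
\end{cases}
\]
Set $\beta=\psi_{27}(\alpha)$. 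Each $\phi_S$ sets the torso to $0$, so $\beta(v_1)=\beta(v_2)=\beta(v_3)=0$. Proposition \ref{pro-bijection} then gives $G_1^\beta\in X^2_{1,3}$, $G_2^\beta\in X^1_{\emptyset,m}$ and $G_3^\beta\in X^1_{1,n}$. Hence $\beta\in M_{27}$.

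For injectivity, we define $\varphi_{27}$ on the image $I_{27}$. It sends $v_0$ to $1$ and applies $\phi^{-1}$ of Remark \ref{rem-phi-1} on each of $G_1$, $G_2$ and $G_3$. Since $\phi^{-1}$ inverts every $\phi_S$, we get $\varphi_{27}(\psi_{27}(\alpha))=\alpha$, exactly as in Lemma \ref{lem-psi26}.

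For positivity, let $T$ be the component of $T_{3,m,n}^\alpha$ containing $v_0^\alpha$, and let $r=\#\{(i,j)\colon \alpha(v_{ij})=\alpha(v'_{ij})=1\}$. Then $T$ has bipartition $(r+k_1+k_2+k_3+1,r+3)=(r+5,r+3)$. By Proposition \ref{prop-2s-coe},
\[
X_T=_{2s}s_{(r+5,r+3)}-s_{(r+4,r+4)}.
\]
$T$ is the only component that can fail 2-$s$-positivity; every other component is a vertex or an edge. We now take $\hat{T}$ and $\overline{T}$ as in Definition \ref{defi-t-hat}. Since $\beta$ agrees with $\alpha$ on $\overline{T}$, Proposition \ref{pro-X-hat-overline-T} gives
\[
X_{T_{3,m,n}}^\alpha+X_{T_{3,m,n}}^\beta=(X_{\hat T}^\alpha+X_{\hat T}^\beta)X_{\overline T}^\alpha .
\]
Here $X_{\overline T}^\alpha\ge_{2s}0$.

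Next we identify $\hat T^\beta$. It consists of the clique $v_0^\beta\cong K_2$, which is isolated because $v_1,v_2,v_3$ receive $0$. It also contains the two doubled leaves $v_{1a}$ and $v_{3b}$, each a $K_2$, together with the $r$ unchanged edges $v_{ij}v'_{ij}$. Finally it contains $k_1+k_2+k_3-2=2$ isolated vertices. Hence, by \eqref{eq-xg-alpha},
\[
X_{\hat T}^\beta=\frac{1}{2^3}(2s_{(1,1)})^{r+3}s_1^{2}\ge_{2s} s_{(r+4,r+4)}.
\]
The last inequality holds because $s_1^2\ge s_{(1,1)}$ and $s_{(1,1)}^{r+4}$ contains $s_{(r+4,r+4)}$ with coefficient $1$ in the 2-row range, the remaining factor being $2^{r}\ge 1$. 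Adding this to $X_{\hat T}^\alpha=X_T$ cancels the negative term, so the sum is 2-$s$-positive.

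The main point needing care is the count of $K_2$'s and isolated vertices in $\hat T^\beta$. In particular we must verify that the doubled $v_0$ really forms a separate $K_2$ component, and that the choice of $\phi_1$ (rather than $\phi_\emptyset$) on $G_1$ and $G_3$ lands in the precise sets $X^2_{1,3}$ and $X^1_{1,n}$ required by $M_{27}$.
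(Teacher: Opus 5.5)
Your proposal is correct and follows the paper's proof essentially verbatim. It uses the same map $\psi_{27}$ (doubling $v_0$ and applying $\phi_1$, $\phi_\emptyset$, $\phi_1$ on $G_1$, $G_2$, $G_3$), the same left inverse built from $\phi^{-1}$, and the same cancellation of $-s_{(r+4,r+4)}$ in $X_T$ against $X_{\hat T}^\beta=2^r s_{(1,1)}^{r+3}s_1^2$; the paper records the exact value $2^r(s_{(r+5,r+3)}+s_{(r+4,r+4)})$ (up to $=_{2s}$), but your lower bound suffices. One cosmetic slip: membership $\alpha\mid_{G_i}\in\mathcal{A}_{k_i}$ needs $\alpha(v_{ij})\le 1$, which follows from Corollary \ref{cor-2s0} on the edge $v_iv_{ij}$ since $\alpha(v_i)=1$, rather than $\alpha(v_i)\le 1$ as you wrote.
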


\begin{proof}
Given $\alpha\in N_{27}$, by definition we see that $\alpha(v_0)=\alpha(v_1)=\alpha(v_2)=\alpha(v_3)=1$, $k_1=2$ and $k_2=k_3=1$. Define
\begin{equation}\label{equ-def-psi26'}
\psi_{27}(\alpha)(v)=\begin{cases}
2,&\text{if }v=v_0;\\
\phi_{1}(\alpha\mid_{G_1})(v),&\text{if }v\in V(G_1);\\
\phi_\emptyset(\alpha\mid_{G_2})(v),&\text{if } v\in V(G_2);\\
\phi_1(\alpha\mid_{G_3})(v),&\text{if } v\in V(G_3).\\
\end{cases}
\end{equation}
Set $\beta=\psi_{27}(\alpha)$. It is clear that $G_1^\beta\in X_{1,3}^2$, $G_2^\beta\in X^1_{\emptyset, m}$ and $G_3^\beta\in X^1_{1,n}$. This implies $\beta\in M_{27}$.

Let $I_{27}=\{\psi_{27}(\alpha)\colon \alpha\in N_{27}\}\subseteq M_{27}$. For any $\beta\in I_{27}$, define $\varphi_{27}\colon I_{27}\rightarrow N_{27}$ as follows:
\begin{equation}
\varphi_{27}(\beta)(v)=\begin{cases}
1,&\text{if }v=v_0;\\
\phi^{-1}(\beta\mid_{G_1})(v),& \text{if }v\in V(G_1);\\
\phi^{-1}(\beta\mid_{G_2})(v),& \text{if }v\in V(G_2);\\
\phi^{-1}(\beta\mid_{G_3})(v),& \text{if }v\in V(G_3).
\end{cases}
\end{equation}
Clearly $\varphi_{27}(\psi_{27}(\alpha))=\alpha$ for any $\alpha\in N_{27}$. This yields that $\psi_{27}$ is an injection.

Let $T$ denote the connected component in $T_{3,m,n}^\alpha$ contains $v_0^\alpha$. Moreover, define $\hat{T}$ and $\overline{T}$ as in Definition \ref{defi-t-hat}. Assume
\[\#\{(i,j)\colon \alpha(v_{ij})=\alpha(v_{ij}')=1\}=r.\]
We see that $T$ has bipartition $(r+5,r+3)$. Thus
\begin{equation}\label{eq-x-t-27-1}
    X_T=X_{\hat{T}}^\alpha=_{2s} s_{(r+5,r+3)}-s_{(r+4,r+4)}.
\end{equation}
Moreover, from the construction of $\psi_{27}$, we have $\alpha(v)=\beta(v)$ for any $v\in\overline{T}$. By Proposition \ref{pro-X-hat-overline-T}, we deduce that
\begin{equation}\label{eq-x-t-27-2}
    X_{T_{3,m,n}}^\alpha+X_{T_{3,m,n}}^\beta=(X_{\hat{T}}^\alpha+X_{\hat{T}}^\beta)X_{\overline{T}}^\alpha.
\end{equation}
Furthermore, again from the construction of $\psi_{27}$, We see that $\hat{T}^\beta$ consists of $r+3$ copies of $K_2$ and two isolated vertices. Hence
\begin{equation}\label{eq-x-t-27-3}
    X_{\hat{T}}^\beta=\frac{1}{2^3}(2s_{(1,1)})^{r+3} s_1^2=_{2s} 2^r (s_{(r+5,r+3)}+s_{(r+4,r+4)}).
\end{equation}
Substituting \eqref{eq-x-t-27-1} and \eqref{eq-x-t-27-3} into \eqref{eq-x-t-27-2}, we obtain $X_{T_{3,m,n}}^\alpha+X_{T_{3,m,n}}^\beta\ge_{2s} 0$.
\end{proof}

In order to describe the structure in $T_{3,m,n}^\alpha$ when $\alpha\in N_{28}\cup N_{29}$, we need to introduce the following definition.

\begin{defi}
    For fixed $\alpha\in N_{28}\cup N_{29}\cup N_{30}$,   let $\alpha_{i,j}$ denote the integer pair $(\alpha(v_{ij}),\alpha(v'_{ij}))$, where $v_{ij}, v'_{ij}\in V(T_{3,m,n})$.
\end{defi}

\begin{lem}\label{lem-psi28}
There is an injection $\psi_{28}$ from $N_{28}$ to $M_{28}$. Moreover, $X_{T_{3,m,n}}^\alpha+X_{T_{3,m,n}}^{\psi_{28}(\alpha)}$ is 2-$s$-positive.
\end{lem}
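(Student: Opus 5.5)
The plan is to handle $N_{28}$ the same way as $N_{26}$ and $N_{27}$. First describe the shape of $T_{3,m,n}^\alpha$ for $\alpha\in N_{28}$. Then alter $\alpha$ on exactly two legs so that the unique non-2-$s$-positive component becomes balanced and an isolated vertex appears. Finally, compare the two products of Schur functions component by component.

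\textbf{Step 1 (structure of $\alpha$).} Let $\alpha\in N_{28}$, so $\alpha(v_0)=\alpha(v_1)=\alpha(v_2)=\alpha(v_3)=1$ and $k_1=k_2=k_3=0$. Let $T$ be the component of $T_{3,m,n}^\alpha$ containing $v_0^\alpha$. By Corollary \ref{cor-2s0} and Proposition \ref{prop-416}, every pair satisfies $\alpha_{i,j}\in\{(0,a),(1,1)\}$; the case $(1,0)$ is excluded because all $k_i=0$. A pair $(0,a)$ contributes a separate $K_a$, so $a\le 2$, since otherwise $X^\alpha_{T_{3,m,n}}=_{2s}0$. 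Let $r\ge 1$ be the number of $(1,1)$-pairs; the hypothesis guarantees $r\ge1$. Then $T$ consists of $v_0,v_1,v_2,v_3$ together with $r$ pendant paths $v_{ij}v'_{ij}$, and its bipartition is $(r+3,r+1)$: one side is $\{v_1,v_2,v_3\}$ together with the $v'_{ij}$, the other is $v_0$ together with the $v_{ij}$. Hence
\[X_{\hat T}^\alpha=X_T=_{2s}s_{(r+3,r+1)}-s_{(r+2,r+2)}.\]
By Proposition \ref{pro-claw-iso}, $T_{3,m,n}^\alpha$ has no isolated vertices, so no pair equals $(0,1)$. Every pair therefore lies in $\{(0,0),(0,2),(1,1)\}$. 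The total weight is then $4+2\#\{(0,2),(1,1)\text{-pairs}\}$, and the condition $\sum_v\alpha(v)<2m+2n+10$ is equivalent to the existence of at least one pair $(0,0)$. This is exactly what separates $N_{28}$ from $N_{30}$.

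\textbf{Step 2 (the map and injectivity).} Fix a total order on the legs $(i,j)$. Let $(i_1,j_1)$ be the first leg with $\alpha_{i_1,j_1}=(1,1)$ and $(i_2,j_2)$ the first leg with $\alpha_{i_2,j_2}=(0,0)$. Define $\psi_{28}(\alpha)=\beta$ to agree with $\alpha$ except that $\beta(v'_{i_1j_1})=0$ and $\beta(v'_{i_2j_2})=1$. Then $\beta$ keeps all four top values equal to $1$, has $k_1+k_2+k_3=1$ (coming from leg $(i_1,j_1)$), and has a unique pair equal to $(0,1)$, namely $(i_2,j_2)$.

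Injectivity is immediate. From $\beta$ one reads off the unique $(1,0)$-leg and the unique $(0,1)$-leg, and the inverse changes them back to $(1,1)$ and $(0,0)$ respectively. Membership $\beta\in M_{28}$ reduces to the positivity computed in Step 3.

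\textbf{Step 3 (positivity).} Only $T$ and the leg $(i_2,j_2)$ change, so by Proposition \ref{pro-X-hat-overline-T} the common factor $R$ is the same for $\alpha$ and $\beta$. Here $R$ is a product of normalized $K_2$ terms equal to $s_{(1,1)}$, hence 2-$s$-positive. For $\beta$, the component through $v_0$ loses $v'_{i_1j_1}$, so it has bipartition $(r+2,r+1)$, which is balanced. By Proposition \ref{prop-2s-coe} its 2-$s$ part is $s_{(r+2,r+1)}$. The vertex $v'_{i_2j_2}$ becomes an isolated vertex contributing $s_1$.

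It follows that
\[X^\alpha_{T_{3,m,n}}+X^\beta_{T_{3,m,n}}=_{2s}\bigl(s_{(r+3,r+1)}-s_{(r+2,r+2)}+s_1s_{(r+2,r+1)}\bigr)R\ge_{2s}2s_{(r+3,r+1)}R\ge_{2s}0.\]
The middle inequality holds because, by the Pieri rule, $s_1s_{(r+2,r+1)}=s_{(r+3,r+1)}+s_{(r+2,r+2)}+\cdots$.

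The main point needing care is Step 1. One must justify that pairs $(0,1)$ cannot occur, via Proposition \ref{pro-claw-iso}, and that the sum condition forces a $(0,0)$-pair. Once these hold, the target leg for the new isolated vertex exists and the Schur computation is routine.
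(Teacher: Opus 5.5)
Your proof is correct. It follows the paper's strategy: pair one $(1,1)$-leg with one $(0,0)$-leg so that the component through $v_0$ goes from bipartition $(r+3,r+1)$ to the balanced $(r+2,r+1)$ while an isolated vertex appears, then finish with Pieri. Your map, however, is the mirror image of the paper's.

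You toggle the leaves ($v'_{i_1j_1}\mapsto 0$, $v'_{i_2j_2}\mapsto 1$). The paper instead toggles the middle vertices: $v_{pq}\mapsto 1$ on the $(0,0)$-leg and $v_{ij}\mapsto 0$ on the $(1,1)$-leg, with $p$ chosen maximal and then $q$ minimal. For this lemma alone the two are interchangeable: both images have one $(1,0)$-leg and one $(0,1)$-leg, the same component structure, and an inverse obtained by reading those legs off.

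The paper's choice is made with Section \ref{sec5} in mind. Because $\psi_{28}$ never touches a leaf, Proposition \ref{pro-psi-alpha-v13} ($\psi(\alpha)(v'_{13})=\alpha(v'_{13})$) holds. The ``max $p$, min $q$'' rule is what confines the failures of Proposition \ref{prop-ine-alpha-v-13-psi} to $\overline{N}_{28}$. Your map changes $v'_{13}$ whenever $(1,3)$ is the selected leg, and no fixed order avoids this (e.g.\ when $(1,3)$ is the only $(1,1)$-leg). So your $\psi_{28}$ could not be fed into $\Psi_1,\Psi_2,\Psi_3$ for $T^*_{3,m,n}$ without reworking those lemmas.

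One citation detail: Proposition \ref{pro-X-hat-overline-T} needs $\beta=\alpha$ on $\overline{T}$, which fails at $v'_{i_2j_2}$. The factorization $X^\beta_{T_{3,m,n}}=X_{T'}\,s_1\,s_{(1,1)}^h$, with $h$ the number of $(0,2)$-legs, is better read off directly from Proposition \ref{prop-42}. That same formula is what shows $X^\beta_{T_{3,m,n}}\ge_{2s}0$, i.e.\ $\beta\in M$; positivity of the sum alone does not give this.
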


\begin{proof}
Given $\alpha\in N_{28}$, by definition we see that $\alpha(v_0)=\alpha(v_1)=\alpha(v_2)=\alpha(v_3)=1$ and $k_1+k_2+k_3=0$. Moreover, there exists $v_{ij}$ such that $\alpha(v_{ij})=1$ and $\sum_{v\in V(T_{3,m,n})} \alpha(v)<2m+2n+10$. From $k_1+k_2+k_3=0$ we see that there is no $k,l$ such that $\alpha_{k,l}=(1,0)$. From Corollary \ref{cor-2s0}, we see that $\alpha(v_{kl})\le 1$ for any vertex $v_{kl}$. Thus $\alpha_{k,l}$ must be chosen from the following four cases: $\{(0,0), (0,1), (1,1), (0,2)\}$. Let $T$ denote the connected component in $T_{3,m,n}^\alpha$ which contains $v_0^\alpha$. Assume
\begin{equation}\label{eq-psi-28-1}
    \#\{(k,l)\colon \alpha_{k,l}=(1,1)\}=r\quad \text{and}\quad\#\{(k,l)\colon \alpha_{k,l}=(0,2)\}=h.
\end{equation}
Then clearly $T$ has bipartition $(r+3,r+1)$. From Proposition \ref{pro-claw-iso}, there are no isolated vertex in $T_{3,m,n}^\alpha$, which implies there does not exist $(k,l)$ such that $\alpha_{k,l}=(0,1)$. Furthermore, from $\sum_v \alpha(v)<2m+2n+10$ we see that there exists $\alpha_{p,q}=(0,0)$. We first choose such $p$ to be maximum, then choose the minimum $q$ among the same value $p$. On the other hand, by the definition of $N_{28}$, there exists $(i,j)$ such that $\alpha(v_{ij})=1$, this  implies  $\alpha_{i,j}=(1,1)$. We first choose such $i$ to be minimum, and then choose the minimum $j$ among the same value $i$. Define
\begin{equation}\label{equ-def-psi27}
\psi_{28}(\alpha)(v)=\begin{cases}
1,&\text{if }v=v_{pq};\\
0,&\text{if }v=v_{ij};\\
\alpha(v),&\text{otherwise.}
\end{cases}
\end{equation}
Set $\beta=\psi_{28}(\alpha)$.  From the construction of $\psi_{28}$, it is clear that $\beta_{p,q}=(1,0)$, $\beta_{i,j}=(0,1)$ and all the other $\beta_{k,l}=\alpha_{k,l}\in\{(0,2),(1,1),(0,0)\}$ for any $(k,l)\not\in \{(i,j),(p,q)\}$.  In other words, there is a unique $(p,q)$ such that $\beta_{p,q}=(1,0)$ and a unique $(i,j)$ satisfies $\beta_{i,j}=(0,1)$. This yields   $\beta\in M_{28}$.

We next show that $\psi_{28}$ is an injection. Let $I_{28}=\{\psi_{28}(\alpha)\colon \alpha\in N_{28}\}\subseteq M_{28}$. For any $\beta\in I_{28}$, let  $(i,j)$ be the unique pair such that $\beta_{i,j}=(0,1)$ and   let  $(p,q)$ be the unique pair such that $\beta_{p,q}=(1,0)$. We construct a map $\varphi_{28}\colon I_{28}\rightarrow N_{28}$ as follows:
\begin{equation}
\varphi_{28}(\beta)(v)=\begin{cases}
0,&\text{if }v=v_{pq};\\
1,&\text{if }v=v_{ij};\\
\beta(v),&\text{otherwise.}
\end{cases}
\end{equation}
It is easy to check that the pair $(p,q)$ in $\beta$ is coincides with the pair $(p,q)$ in the construction of $\psi_{28}$. Moreover, the pair $(i,j)$ in $\beta$ is coincides with the pair $(i,j)$ in the construction of $\psi_{28}$. Thus $\varphi_{28}(\psi_{28}(\alpha))=\alpha$ for any $\alpha\in N_{28}$. This yields $\psi_{28}$ is an injection.

We next calculate $X_{T_{3,m,n}}^\alpha$. Note that $r$ and $h$ are defined as in \eqref{eq-psi-28-1}. Then $T_{3,m,n}^\alpha$ consists of $T$ together with $h$ copies of $K_2$. Since $T$ has a bipartition $(r+3,r+1)$, using Proposition \ref{prop-2s-coe},
\begin{equation}\label{equ-xgalpha-27}
    X_{T_{3,m,n}}^\alpha=_{2s} \frac{1}{2^h}(s_{(r+3,r+1)}-s_{(r+2,r+2)})(2 s_{(1,1)})^h=_{2s}s_{(h+r+3,h+r+1)}-s_{(h+r+2,h+r+2)}.
\end{equation}

Now we calculate $X_{T_{3,m,n}}^\beta$. Let $T'$ denote the connect component in $T_{3,m,n}^\beta$ which contains $v_0^\beta$. Then we see that $T'$ has a bipartition $(r+2,r+1)$. From Proposition \ref{prop-2s-coe}, we have  $X_{T'}=_{2s}s_{(r+2,r+1)}$. Moreover, $T_{3,m,n}^\beta$ consists of $h$ copies of $K_2$ together with an isolated vertex $v'_{ij}$ (outside $T'$). This yields
\begin{equation}\label{equ-xgbeta-27}
X_{T_{3,m,n}}^\beta=\frac{1}{2^h}X_{T'} (2s_{(1,1)})^h s_1=_{2s}s_{(r+h+3,r+h+1)}+s_{(r+h+2,r+h+2)}.
\end{equation}

Combining \eqref{equ-xgalpha-27} and \eqref{equ-xgbeta-27}, we obtain $X_{T_{3,m,n}}^\alpha+X_{T_{3,m,n}}^\beta\ge_{2s} 0$.
\end{proof}

\begin{lem}\label{lem-psi29}
There is an injection $\psi_{29}$ from $N_{29}$ to $M_{29}$. Moreover, $X_{T_{3,m,n}}^\alpha+X_{T_{3,m,n}}^{\psi_{29}(\alpha)}$ is 2-$s$-positive.
\end{lem}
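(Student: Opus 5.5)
We would prove the lemma by an explicit local modification of $\alpha$ near the root, in the same spirit as Lemma \ref{lem-psi11}, and then compare the two normalized chromatic symmetric functions directly.

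\textbf{Structure of $\alpha\in N_{29}$.} By definition $\alpha(v_0)=\alpha(v_1)=\alpha(v_2)=\alpha(v_3)=1$ and $\alpha(v_{ij})=0$ for every $v_{ij}$. Let $T$ be the component of $T_{3,m,n}^\alpha$ containing $v_0^\alpha$. Then $T$ is the claw $S(1^3)$ with centre $v_0$, whose bipartition is $(3,1)$. Every other component lies over a single leaf $v'_{ij}$ and is the clique $K_{\alpha(v'_{ij})}$.

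Since $X^\alpha_{T_{3,m,n}}$ is non-2-$s$-positive, Corollary \ref{cor-2s0} forces $\alpha(v'_{ij})\le 2$ for all $(i,j)$. Proposition \ref{pro-claw-iso} then excludes isolated vertices, so in fact $\alpha(v'_{ij})\in\{0,2\}$. Let $h=\#\{(i,j)\colon \alpha(v'_{ij})=2\}$.

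\textbf{Definition of $\psi_{29}$ and injectivity.} Define
\[
\psi_{29}(\alpha)(v)=\begin{cases}0,&\text{if } v=v_0;\\ 2,&\text{if } v=v_1;\\ \alpha(v),&\text{otherwise.}\end{cases}
\]
Set $\beta=\psi_{29}(\alpha)$. Then $\beta(v_0)=0$, $\beta(v_1)=2$, $\beta(v_2)=\beta(v_3)=1$ and $\beta(v_{ij})=0$ for all $(i,j)$. Since $v_1$ now has no coloured neighbour, $T_{3,m,n}^\beta$ consists of $h+1$ disjoint copies of $K_2$ together with the isolated vertices $v_2^\beta,v_3^\beta$. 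This is 2-$s$-positive, so $\beta\in M_{29}$.

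Injectivity is immediate. The map $\varphi_{29}$ that resets $v_0$ and $v_1$ to $1$ and leaves all other values unchanged satisfies $\varphi_{29}(\psi_{29}(\alpha))=\alpha$.

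\textbf{The 2-$s$ computation.} We use Proposition \ref{prop-42}, Proposition \ref{prop-2s-coe}, and the normalization $X_{K_2}^{(2)}=\tfrac12 X_{K_2}=s_{(1,1)}$ from \eqref{eq-xg-alpha}. For $\alpha$ this gives
\[
X_{T_{3,m,n}}^\alpha=_{2s}(s_{(3,1)}-s_{(2,2)})\,s_{(1,1)}^h=_{2s}s_{(h+3,h+1)}-s_{(h+2,h+2)}.
\]
Here we use that multiplying by $s_{(1,1)}$ shifts both rows of a two-row shape by one, modulo terms of length at least $3$.

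For $\beta$ we get
\[
X_{T_{3,m,n}}^\beta=s_{(1,1)}^{h+1}s_1^2=_{2s}s_{(h+3,h+1)}+s_{(h+2,h+2)}.
\]
Adding the two expressions yields $X^\alpha_{T_{3,m,n}}+X^\beta_{T_{3,m,n}}=_{2s}2s_{(h+3,h+1)}\ge_{2s}0$.

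The only real point of care is the first step: Proposition \ref{pro-claw-iso} is what rules out $\alpha(v'_{ij})=1$. Such an isolated vertex would otherwise add an unmatched $s_1$ factor and break the exact cancellation of the $s_{(h+2,h+2)}$ terms. The remaining steps are routine Littlewood–Richardson bookkeeping restricted to partitions of length at most $2$.
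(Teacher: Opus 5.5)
Your proposal is correct and follows the paper's proof essentially step for step. You use the same map $\psi_{29}$ (send $v_0\mapsto 0$ and $v_1\mapsto 2$), the same left inverse $\varphi_{29}$, the same restriction $\alpha(v'_{ij})\in\{0,2\}$, and the same two computations $X^\alpha_{T_{3,m,n}}=_{2s}s_{(h+3,h+1)}-s_{(h+2,h+2)}$ and $X^{\psi_{29}(\alpha)}_{T_{3,m,n}}=_{2s}s_{(h+3,h+1)}+s_{(h+2,h+2)}$. The only difference is that you spell out the appeal to Corollary~\ref{cor-2s0} and Proposition~\ref{pro-claw-iso}, which the paper invokes only as ``the same argument as in Lemma~\ref{lem-psi28}.''
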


\begin{proof}
Given $\alpha\in N_{29}$, by definition we see that $\alpha(v_0)=\alpha(v_1)=\alpha(v_2)=\alpha(v_3)=1$ and $k_1+k_2+k_3=0$. Moreover, $\alpha(v_{ij})=0$ for any $i,j$. Using the same argument as in Lemma \ref{lem-psi28},  we see that $\alpha_{i,j}\in\{(0,2),(0,0)\}$ for any $i,j$.  Define
\begin{equation}\label{equ-def-psi28}
\psi_{29}(\alpha)(v)=\begin{cases}
2,&\text{if }v=v_1;\\
0,&\text{if }v=v_0;\\
\alpha(v),&\text{otherwise.}
\end{cases}
\end{equation}
It is evident that $\psi_{29}(\alpha)\in M_{29}$.

We next show that $\psi_{29}$ is an injection. Let $I_{29}=\{\psi_{29}(\alpha)\colon \alpha\in N_{29}\}\subseteq M_{29}$. For any $\beta\in I_{29}$, we construct a map $\varphi_{29}\colon I_{29}\rightarrow N_{29}$ as follows:
\begin{equation}
\varphi_{29}(\beta)(v)=\begin{cases}
1,&\text{if }v\in\{v_0,v_1\};\\
\beta(v),&\text{otherwise.}
\end{cases}
\end{equation}
Clearly $\varphi_{29}(\psi_{29}(\alpha))=\alpha$ for any $\alpha\in N_{29}$. This yields $\psi_{29}$ is an injection.

We next calculate $X_{T_{3,m,n}}^\alpha$. Assume
\[\#\{(i,j)\colon \alpha_{i,j}=(0,2)\}=h.\]
 Using the same argument as in \eqref{equ-xgalpha-27}, we have
\begin{equation}\label{equ-xgalpha-28}
X_{T_{3,m,n}}^\alpha=_{2s} s_{(h+3,h+1)}-s_{(h+2,h+2)}.
\end{equation}
Now we calculate $X_{T_{3,m,n}}^\beta$. Clearly $T_{3,m,n}^\beta$ consists of $(h+1)$ copies of $K_2$ and two isolated vertices, namely $v_2^\beta$ and $v_3^\beta$. Thus
\begin{equation}\label{equ-xgbeta-28}
    X_{T_{3,m,n}}^\beta=\frac{1}{2^{h+1}} (2s_{(1,1)})^{h+1} s_1^2=_{2s}s_{(h+3,h+1)}+s_{(h+2,h+2)}.
\end{equation}
Combining \eqref{equ-xgalpha-28} and \eqref{equ-xgbeta-28}, we obtain $X_{T_{3,m,n}}^\alpha+X_{T_{3,m,n}}^\beta\ge_{2s} 0$.
\end{proof}

\begin{lem}\label{lem-29}
For any $\alpha\in N_{30}$ and $k\ne m+n+5$, we have
\[[s_{(k,k)}] X_{T_{3,m,n}}^\alpha\ge 0.\]
\end{lem}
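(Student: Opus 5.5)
Fix $\alpha\in N_{30}$. We first pin down the structure of $T_{3,m,n}^\alpha$, exactly as in the proof of Lemma \ref{lem-psi28}. Since $k_1+k_2+k_3=0$, Corollary \ref{cor-2s0} gives $\alpha(v_{ij})\le 1$, and so every pair $\alpha_{i,j}$ lies in $\{(0,0),(0,1),(1,1),(0,2)\}$. The component $T$ containing $v_0^\alpha$ is the subtree on $v_0,v_1,v_2,v_3$ together with the legs $v_{ij}v'_{ij}$ having $\alpha_{i,j}=(1,1)$. If there are $r$ such legs, then $T$ has bipartition $(r+3,r+1)$. Proposition \ref{pro-claw-iso} then rules out $\alpha_{i,j}=(0,1)$.

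Next we use the weight condition. Since $\sum_v\alpha(v)=2m+2n+10$ and $v_0,\dots,v_3$ contribute $4$, the $m+n+3$ legs contribute $2(m+n+3)$. Each leg contributes at most $2$, so no leg has $\alpha_{i,j}=(0,0)$. Hence every leg is $(1,1)$ or $(0,2)$, and $r+h=m+n+3$, where $h$ is the number of $(0,2)$ legs. Each $(0,2)$ leg gives a $K_2$ component with normalized contribution $\frac12\cdot 2s_{(1,1)}=s_{(1,1)}$.

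The key point is that we need the full coefficient of $s_{(k,k)}$, not just the 2-$s$ part. Note, however, that $[s_{(k,k)}]$ of any symmetric function depends only on its 2-$s$ part, i.e. on its coefficients of $s_\lambda$ with $\ell(\lambda)\le 2$. Multiplication by $s_{(1,1)}$ preserves this truncation: by Pieri's rule, the coefficients of $s_\mu s_{(1,1)}$ on shapes of length $\le 2$ come only from $\mu$ with $\ell(\mu)\le 2$, so $=_{2s}$ is compatible with multiplication by $s_{(1,1)}$. This is already used implicitly in \eqref{equ-xgalpha-27}.

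We then compute. By Proposition \ref{prop-2s-coe}, $X_T=_{2s}s_{(r+3,r+1)}-s_{(r+2,r+2)}$. Repeated Pieri with $s_{(1,1)}$ adds a box to each of the two rows, so
\[
X^\alpha_{T_{3,m,n}}=_{2s}s_{(r+h+3,r+h+1)}-s_{(r+h+2,r+h+2)}=s_{(m+n+6,m+n+4)}-s_{(m+n+5,m+n+5)}.
\]
Therefore $[s_{(k,k)}]X^\alpha_{T_{3,m,n}}=0$ for every $k\neq m+n+5$, which in particular is $\ge 0$.

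The only delicate step is the justification that the 2-$s$ truncation is multiplicative with respect to the factor $s_{(1,1)}^h$, so that we may pass from $=_{2s}$ to an exact statement about the coefficients $[s_{(k,k)}]$. This follows from the Littlewood–Richardson rule, since $c^\lambda_{\mu\nu}\neq0$ forces $\mu,\nu\subseteq\lambda$, and hence $\ell(\mu)\le\ell(\lambda)\le 2$. We also need to check the degree count: the total degree is $2r+4+2h=2m+2n+10$, which matches the shape $(m+n+5,m+n+5)$.
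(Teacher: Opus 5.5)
Your argument is correct, but it takes a much longer route than the paper. The paper's proof is a pure degree count. $X^\alpha_{T_{3,m,n}}$ is homogeneous of degree $\sum_v\alpha(v)$, and this equals $2m+2n+10$ by the definition of $N_{30}$. So $s_{(k,k)}$ can have a nonzero coefficient only if $2k=2m+2n+10$, and no structural information about $T_{3,m,n}^\alpha$ is needed. You write down exactly this degree check in your last sentence, and it alone already finishes the proof.

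Your route instead classifies the legs, using Proposition \ref{pro-claw-iso} and the weight condition. You then compute the full 2-$s$ part, $X^\alpha_{T_{3,m,n}}=_{2s}s_{(m+n+6,m+n+4)}-s_{(m+n+5,m+n+5)}$, via Proposition \ref{prop-2s-coe} and Pieri's rule. This gives strictly more information. It shows that $-s_{(m+n+5,m+n+5)}$ is the only negative 2-$s$ term, which is the claim made in the outline at the start of Section \ref{sec4}. The paper's one-line proof does not establish that claim, and the theorem does not need it.

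The cost is that your version depends on the leg classification. Corollary \ref{cor-2s0} by itself does not exclude $\alpha_{i,j}=(0,a)$ with $a\ge 3$, which produces an isolated clique $K_a$. For that case you need Proposition \ref{prop-416}, or the observation that $K_a$ is non-bipartite so the whole product is $=_{2s}0$. This is a small omission, and the degree argument avoids it entirely.
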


\begin{proof}
Given $\alpha\in N_{30}$, by definition we see that $\alpha(v_0)=\alpha(v_1)=\alpha(v_2)=\alpha(v_3)=1$ and $k_1+k_2+k_3=0$. Moreover $\sum_v \alpha(v)=2m+2n+10$. If $[s_{(k,k)}] X_{T_{3,m,n}}^\alpha\ne 0$, then we have $2k=\sum_{v\in V(T_{3,m,n})} \alpha(v)=2m+2n+10$. Thus $k=m+n+5$.

\end{proof}

{\noindent \it Proof of Theorem \ref{thm-uni-t3mn}.} It is obvious that the independence number of $T_{3,m,n}$ is $m+n+6$. Assume
\[I_{T_{3,m,n}}(t)=\sum_{j=0}^{m+n+6}i_j t^j.\]

Define $\psi\colon N\setminus N_{30}\rightarrow M$ as follows:
\begin{equation}\label{equ-psi-all}
\psi(\alpha)=\psi_i(\alpha),\quad\text{if }\alpha\in N_i.
\end{equation}
From Lemma \ref{lem-psi1} $\sim$ Lemma \ref{lem-psi29}, we see that $\psi$ is an injection and \begin{equation}\label{equ-thm-1-xt3mn-alpha-psi-alpha}
X_{T_{3,m,n}}^\alpha+X_{T_{3,m,n}}^{\psi(\alpha)}\ge_{2s} 0.
\end{equation}
Recall that $I_i=\{\psi_i(\alpha)\colon \alpha\in N_i\}\subseteq M_i$. We have
\begin{align*}
Y_{T_{3,m,n}}&=\sum_{\alpha\in N}X_{T_{3,m,n}}^\alpha+\sum_{\alpha\in M}X_{T_{3,m,n}}^\alpha\\
&=\sum_{i=1}^{29}\sum_{\alpha\in N_i}\left(X_{T_{3,m,n}}^\alpha+X_{T_{3,m,n}}^{\psi(\alpha)}\right)+\sum_{\alpha\in M\atop\alpha\not\in\bigcup_{i=1}^{29}I_i}X_{T_{3,m,n}}^\alpha+\sum_{\alpha\in N_{30}}X_{T_{3,m,n}}^\alpha.
\end{align*}
Using \eqref{equ-thm-1-xt3mn-alpha-psi-alpha}, we have $$\sum_{i=1}^{29}\sum_{\alpha\in N_i}\left(X_{T_{3,m,n}}^\alpha+X_{T_{3,m,n}}^{\psi(\alpha)}\right)\ge_{2s} 0.$$
From the definition of $M$ we have $X_{T_{3,m,n}}^\alpha\ge_{2s} 0$ for each $\alpha\in M\setminus\bigcup_{i=1}^{29}I_i$.
 Moreover, by Lemma \ref{lem-29}, for any $k\ne m+n+5$, and $\alpha\in N_{30}$, we have $[s_{(k,k)}]X_{T_{3,m,n}}^\alpha\ge 0$. From the above analysis, we get for $1\le k\le m+n+4$, we have
\[[s_{(k,k)}]Y_{T_{3,m,n}}\ge 0\]
 Hence by Corollary \ref{lem-skk}, we have $\{i_j\}_{j=0}^{m+n+5}$ is log-concave, which yields the sequence $\{i_j\}_{j=0}^{m+n+5}$ is unimodal. Thus there exists $k$ such that
\[i_0\le i_1\le \cdots\le i_k\ge i_{k+1}\ge\cdots\ge i_{m+n+5}.\]
Combining Theorem \ref{thmLM07-2} we get
\[i_0\le i_1\le \cdots\le i_k\ge i_{k+1}\ge\cdots\ge i_{m+n+5}\ge i_{m+n+6}.\]
This completes the proof.\qed

\section{Unimodality of the independence polynomial on $T^*_{3,m,n}$}\label{sec5}

This section is devoted to proving Theorem \ref{thm-uni-t3mn-star}. To this end, we first list two properties that are crucial to our proof.  As in Section \ref{sec4}, we partition the set $N'=\{\alpha\colon {X_{T_{3,m,n}}^*}^\alpha$ is non-\break2-$s$-positive$\}$ into four subsets $N'_1,N'_2,N'_3$ and $N'_4$. Moreover, we list three pairwise disjoint subsets of the set $M'=\{\alpha\colon {X_{T_{3,m,n}}^*}^\alpha\text{ is 2-$s$-positive}\}$, namely $M'_1$, $M'_2$ and $M'_3$. Furthermore, we show that for each $i\in\{1,2,3\}$, there exists an injection $\Psi_i$ from $N'_i$ into $M'_i$ such that $X_{T_{3,m,n}^*}^\alpha+X_{T_{3,m,n}^*}^{\Psi_i(\alpha)}\ge_{2s} 0$,   and that for any $\alpha\in N'_4$ and $k\le m+n+4$, we have $[s_{(k,k)}]X_{T_{3,m,n}^*}^\alpha=0$. In this way, analogously to the proof of Theorem \ref{thm-uni-t3mn}, we obtain a proof of Theorem \ref{thm-uni-t3mn-star}.

We first prove the following property concerning the map $\psi$ defined in \eqref{equ-psi-all}.
\begin{prop}\label{pro-psi-alpha-v13}
    For any $\alpha\in N\setminus N_{30}$, we have
    \begin{equation}\label{equ-alpha-psi-alpha-v-11'}
\alpha(v'_{13})=\psi(\alpha)(v'_{13}).
\end{equation}
\end{prop}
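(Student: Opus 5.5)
The plan is a case check over the partition $N=\bigcup_{i=1}^{30}N_i$. For $\alpha\in N_i$ with $i\le 29$ we have $\psi(\alpha)=\psi_i(\alpha)$. So it suffices to inspect, for each of the 29 explicit definitions of $\psi_i$, whether the value at $v'_{13}$ can change. I will group the injections according to which vertices they alter.

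\textbf{Injections not touching $G_1$.} The maps $\psi_2,\psi_3,\psi_4,\psi_5,\psi_8,\psi_{12},\psi_{13},\psi_{14},\psi_{15},\psi_{16}$ only modify $\alpha$ on $V(G_2)\cup V(G_3)$. Hence they fix $v'_{13}$ trivially. For $\psi_5$ I take the intended definition acting on $G_3$. Next, $\psi_{28}$ changes only $v_{pq}$ and $v_{ij}$, which are non-primed vertices, and $\psi_{29}$ changes only $v_0$ and $v_1$. So these also fix $v'_{13}$. Likewise $\psi_{11}$, $\psi_{18}$, $\psi_{23}$ and $\psi_{25}$ alter only $v'_{11}$, $v'_{12}$ and some of $v_{11},v_{12},v_{13},v_2,v_3$, never $v'_{13}$.

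\textbf{Injections applying some $\phi_S$ on $G_1$.} The remaining maps $\psi_1,\psi_6,\psi_7,\psi_9,\psi_{10},\psi_{17},\psi_{19},\psi_{20},\psi_{21},\psi_{22},\psi_{24},\psi_{26},\psi_{27}$ act on $G_1$ through $\phi_S(\alpha|_{G_1})$ for some $S$, including $S=\emptyset$. Outside $G_1$ they change only $v_0$ or vertices of $G_2,G_3$. By the definition of $\phi_S$ in \eqref{eq-def-phi_S-s-alpha}, $\phi_S(\alpha)$ differs from $\alpha$ only at the torso (here $v_1$), set to $0$, and at vertices $v_{i_j}$, set to $2$. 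It never changes any leaf $v'_{1j}$. So $\phi_S(\alpha|_{G_1})(v'_{13})=\alpha(v'_{13})$.

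\textbf{Main obstacle.} The only delicate point is bookkeeping. I need to confirm that in every injection whose proof was abbreviated ("analogous", with details omitted), the only modifications are the displayed ones. In particular, the explicit formulas for $\psi_{11},\psi_{18},\psi_{23},\psi_{25}$ must be checked against the listed vertex sets, and none of them contains $v'_{13}$. Once this table is verified, \eqref{equ-alpha-psi-alpha-v-11'} follows for all $\alpha\in N\setminus N_{30}$.
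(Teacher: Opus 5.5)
Your proposal is correct and follows the same route as the paper: a case-by-case inspection of the explicit formulas for $\psi_1,\ldots,\psi_{29}$, using that $\phi_S$ only alters the torso and the non-primed vertices $v_{i_j}$, so only the ad hoc maps $\psi_{11},\psi_{18},\psi_{23},\psi_{25},\psi_{28}$ (and the trivial $\psi_{29}$) need a separate look. Your version is slightly more careful than the paper's: you claim only invariance at $v'_{13}$, whereas the paper asserts the common value is $0$ in those five cases, which is not accurate for $N_{28}$, where $\alpha_{1,3}$ can be $(1,1)$ or $(0,2)$.
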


\begin{proof}
    This result can be verified by checking that \eqref{equ-alpha-psi-alpha-v-11'} holds for each $\alpha\in N_i$, where $1\le i\le 29$. In fact, since applying the map $\phi_{S}$ to $\alpha\mid_{G_1}$ does not change the value of $\alpha(v'_{13})$ for any $S$, we only need to check $\psi_{11}, \psi_{18}, \psi_{23}$, $\psi_{25}$ and $\psi_{28}$. In all  five cases, we have $\alpha(v'_{13})=\psi(\alpha)(v'_{13})=0$. This yields our desired result.
\end{proof}

We next record a result that will be essential in the proof.

\begin{prop}
Let $G$ be a graph and let $v \in V(G)$. Denote by  $G_v$ the graph obtained from
 $G$ by appending a path of length $2$ at $v$, this path consists of two new vertices $c$, $d$ and edges $vc$, $cd$. For any $\alpha\colon V(G_v)\rightarrow \mathbb{N}$, the following hold.
     \begin{itemize}
        \item[(1)] If $\alpha(c)=0$, then
        \begin{equation}\label{eq-xgv-alpha-xg-xd-alpha-0}
            X_{G_v}^\alpha=X_G^\alpha X_d^\alpha.
        \end{equation}
        \item[(2)] If $\alpha(c)=2$ and $X_{G_v}^\alpha\neq_{2s} 0$, then
        \begin{equation}\label{eq-xgv-alpha-xg-xd-alpha-1}
            X_{G_v}^\alpha=X_G^\alpha s_{(1,1)}.
        \end{equation}
        \item[(3)] If $\alpha(c)=\alpha(d)=\alpha(v)=1$, then
        \begin{equation}\label{eq-xgv-alpha-xg-xd-alpha-2}
            X_{G_v}^\alpha=_{2s}X_G^\alpha s_{(1,1)}.
        \end{equation}
        \item[(4)] If $\alpha(c)=\alpha(d)=1$ and $\alpha(v)=0$, then
        \begin{equation}\label{eq-xgv-alpha-xg-xd-alpha-3}
            X_{G_v}^\alpha=X_G^\alpha 2s_{(1,1)}.
        \end{equation}
    \end{itemize}
\end{prop}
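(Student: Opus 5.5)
Each of the four items comes from the product rule for disjoint unions (Proposition \ref{prop-42}) together with the normalization \eqref{eq-xg-alpha} and the 2-$s$-vanishing results (Corollary \ref{cor-2s0}, Corollary \ref{cor-2s-con-bipar}). Throughout, $X_d^\alpha$ means $X_{\{d\}}^{\alpha|_{\{d\}}}$, which is $1$ if $\alpha(d)=0$, $s_1$ if $\alpha(d)=1$, and $e_j$ if $\alpha(d)=j$. This uses $X_{K_j}=j!\,e_j$ and divides by $j!$.

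\emph{Item (1).} If $\alpha(c)=0$, the clan graph $G_v^\alpha$ has no edges between $G^\alpha$ and $d^\alpha$, since the only neighbours of $d$ and of the $G$-side through the new path pass through $c$. Proposition \ref{prop-42}, applied with $H=G$ and $G_v\setminus H=\{c,d\}$ (where $c$ contributes nothing), gives $X_{G_v}^\alpha=X_G^\alpha X_d^\alpha$ directly.

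\emph{Item (2).} Suppose $\alpha(c)=2$ and $X_{G_v}^\alpha\ne_{2s}0$. Corollary \ref{cor-2s0} forces $\alpha(v)+2\le 2$ and $\alpha(d)+2\le 2$, so $\alpha(v)=\alpha(d)=0$. Then $c^\alpha=K_2$ is a separate component, disconnected from $G^\alpha$. Proposition \ref{prop-42} gives $X_{G_v}^\alpha=X_G^\alpha\cdot X_{K_2}/2!=X_G^\alpha e_2=X_G^\alpha s_{(1,1)}$.

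\emph{Item (4).} If $\alpha(c)=\alpha(d)=1$ and $\alpha(v)=0$, then $\{c,d\}^\alpha\cong K_2$ is a component separate from $G^\alpha$. Hence $X_{G_v}^\alpha=X_G^\alpha X_{K_2}=X_G^\alpha\cdot 2s_{(1,1)}$, with no normalization factor because each clan is a single vertex.

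\emph{Item (3).} This is the only item that needs more than the product rule, and I expect it to be the main point. Here the path $v,c,d$ attaches a pendant $P_2$ to the clan vertex $v^\alpha$ (a single vertex). I split into two cases.

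First suppose $G_v^\alpha$ is non-bipartite, or has a clan of size $\ge 2$ on an edge with a neighbour of positive weight. Then Corollary \ref{cor-2s0} and Corollary \ref{cor-2s-con-bipar} make both sides $=_{2s}0$. The left side vanishes because $G_v^\alpha$ contains the same offending configuration. The right side vanishes because $G^\alpha$ does, and products preserve $=_{2s}0$: a product with a factor that is $=_{2s}0$ is again $=_{2s}0$, by the Littlewood–Richardson rule, since 2-row coefficients of a product depend only on the 2-row parts of the factors.

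Otherwise all relevant components are forests with clans of size $1$, except for isolated $K_2$'s. Let $C$ be the component of $G^\alpha$ containing $v$, with bipartition $(a,b)$, and let $C'$ be $C$ with the pendant $P_2$ attached. Attaching the pendant $P_2$ to $v$ adds one vertex to each side, so $C'$ has bipartition $(a+1,b+1)$. By Proposition \ref{prop-2s-coe} and Corollary \ref{cor-2s-con-bipar}, the 2-row part of $X$ of a connected bipartite graph is $s_{(a,b)}-s_{(a+1,b-1)}$ when $a\ge b$, which depends only on the bipartition.

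It remains to check that $s_{(1,1)}(s_{(a,b)}-s_{(a+1,b-1)})=_{2s}s_{(a+1,b+1)}-s_{(a+2,b)}$. Under the Pieri rule, multiplying by $s_{(1,1)}$ and keeping only shapes with at most two rows adds one box to each row. This gives the identity exactly, with the care needed when $a=b$: then $s_{(a+1,b-1)}$ is absent, and the identity still holds since $s_{(a+2,b)}$ appears with the correct sign on the right. Combining this with Proposition \ref{prop-42} over the other components yields $X_{G_v}^\alpha=_{2s}X_G^\alpha s_{(1,1)}$.
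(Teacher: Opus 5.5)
Your plan is the same as the paper's: items (1), (2) and (4) follow from the product rule (Proposition~\ref{prop-42}), with $\alpha(v)=\alpha(d)=0$ forced in (2) by Corollary~\ref{cor-2s0}. For item (3) you dispose of the non-bipartite case, then compare the component $C\ni v^\alpha$ with bipartition $(a,b)$ to $C'$ with bipartition $(a+1,b+1)$, and multiply by $s_{(1,1)}$.

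However, the explicit formula your item (3) rests on is wrong. By Proposition~\ref{prop-2s-coe}, $[s_{(k,l)}]X_C=\tilde a_{(k,l)}-\tilde a_{(k+1,l-1)}$. For a connected bipartite $C$ with $a\ge b$, the only nonzero $\tilde a$ on two-part shapes is $\tilde a_{(a,b)}$, which equals $1$ if $a>b$ and $2$ if $a=b$ (from the two proper $2$-colourings). So the 2-row part of $X_C$ is:
\begin{itemize}
\item $s_{(a,b)}-s_{(a-1,b+1)}$ when $a\ge b+2$;
\item $s_{(a,b)}$ when $a=b+1$;
\item $2s_{(a,a)}$ when $a=b$.
\end{itemize}
The negative term sits at the \emph{more} balanced shape, not at $s_{(a+1,b-1)}$. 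For the claw, $X_{K_{1,3}}=_{2s}s_{(3,1)}-s_{(2,2)}$, whereas your formula would give $s_{(3,1)}-s_{(4)}$. The identity you check is a true instance of the truncated Pieri rule, but it is about the wrong function. Your remark on $a=b$ is also incorrect: $s_{(a+1,a-1)}$ is not absent there, and the genuinely degenerate cases are $a=b+1$ and $a=b$.

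The repair is local. The truncated Pieri rule $s_{(1,1)}s_{(k,l)}=_{2s}s_{(k+1,l+1)}$ sends
\begin{itemize}
\item $s_{(a,b)}-s_{(a-1,b+1)}$ to $s_{(a+1,b+1)}-s_{(a,b+2)}$;
\item $s_{(b+1,b)}$ to $s_{(b+2,b+1)}$;
\item $2s_{(a,a)}$ to $2s_{(a+1,a+1)}$.
\end{itemize}
These are exactly the 2-row parts for bipartition $(a+1,b+1)$ in the corresponding regimes. The rest of your argument then goes through as in the paper. The paper's proof also lumps $a=b$ with $a=b+1$, suppressing the factor $2$; this is harmless because the factor appears on both sides.

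Separately, your claim that the remaining components are forests is unjustified for an arbitrary graph $G$, since even cycles may survive in $G^\alpha$. It is not needed, because only connectedness and bipartiteness of $C$ are used.
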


\begin{proof}
We prove each part separately.

(1). If $\alpha(c)=0$, then there are no edges between $G^\alpha$ and $d^\alpha$  and $G_v^\alpha=G^\alpha+d^\alpha$. Proposition \ref{prop-42} immediately gives \eqref{eq-xgv-alpha-xg-xd-alpha-0}.

(2). If $\alpha(c)=2$, then from Corollary \ref{cor-2s0}, the condition  $X_{G_v}^\alpha\neq_{2s} 0$ forces  $\alpha(x)+\alpha(y)\le 2$ for any edge $xy\in E(G_v)$. Thus $\alpha(v)=\alpha(d)=0$. Consequently $G_v^\alpha=G^\alpha+c^\alpha$. By Proposition \ref{prop-42}, we get
\begin{equation}
    X_{G_v}^\alpha=X_G^\alpha X_c^\alpha=X_G^\alpha \frac12 (2s_{(1,1)})=X_G^\alpha s_{(1,1)}.
\end{equation}

(3). If $\alpha(c)=\alpha(d)=\alpha(v)=1$, then let $T$ be the  connected component of $G^\alpha$  containing $v^\alpha$, and let $T'$ be the   connected component of $G_v^\alpha$  containing $v^\alpha$. Define $\hat{T}, \hat{T'}$ and $\overline{T'}$  as in Definition \ref{defi-t-hat}. If $T$ is non-bipartite, then we have $G^\alpha$ and $G_v^\alpha$ are both non-bipartite. From Corollary \ref{cor-2s0}, we get
$$X_{G_v}^\alpha=0=0\cdot s_{(1,1)}=X_G^\alpha s_{(1,1)},$$
which is \eqref{eq-xgv-alpha-xg-xd-alpha-2}.

Now assume that $T$ has bipartition $(p,q)$. Since $T'$ is obtained from $T$ by attaching a path of length $2$ at $v^\alpha$, it has bipartition $(p+1,q+1)$. If $p\ge q+2$, then we have $X_T=_{2s} s_{(p,q)}-s_{(p-1,q+1)}$ and $X_{T'}=_{2s} s_{(p+1,q+1)}-s_{(p,q+2)}$. If $q\le p\le q+1$, then we have $X_T=_{2s} s_{(p,q)}$ and $X_{T'}=_{2s} s_{(p+1,q+1)}$. It is trivial to check that in either case,
\begin{equation}\label{eq-xt'-2s-xts-11-1}
    X_{T'}=_{2s}X_Ts_{(1,1)}
\end{equation}
holds.

On the other hand, observe that $G=\hat{T}+\overline{T'}$. Using Proposition \ref{prop-42}, we get
\begin{equation}\label{eq-xt'-2s-xts-11-2}
    X_{G}^\alpha=X_{\hat{T}}^\alpha X_{\overline{T'}}^\alpha.
\end{equation}
From Proposition \ref{pro-X-hat-overline-T}, we get
\begin{equation}\label{eq-xt'-2s-xts-11-3}
    X_{G_v}^\alpha=X_{\hat{T'}}^\alpha X_{\overline{T'}}^\alpha.
\end{equation}
It is clear that $X_T=X_{\hat{T}}^\alpha$ and $X_{T'}=X_{\hat{T'}}^\alpha$. From \eqref{eq-xt'-2s-xts-11-1}, \eqref{eq-xt'-2s-xts-11-2} and \eqref{eq-xt'-2s-xts-11-3}, we deduce \eqref{eq-xgv-alpha-xg-xd-alpha-2}.

(4). Since $\alpha(c)=\alpha(d)=1$ and $\alpha(v)=0$, Proposition \ref{prop-42} directly yields \eqref{eq-xgv-alpha-xg-xd-alpha-3}.
\end{proof}

Now we are in a position to give explicit definitions of $N'_1$, $N'_2$, $N'_3$ and $N'_4$.
Let $T$ be the subgraph of $T^*_{3,m,n}$ induced by $V(T^*_{3,m,n})\setminus \{x,y\}$. Clearly $T\cong T_{3,m,n}$.
 As stated in the beginning of this section, we partition the set $N'$ into the following four  subsets:
\begin{itemize}
\item[(1)] $N'_1=\{\alpha\in N'\colon \alpha(x)\ne 1,\alpha\mid_T\not\in N_{30}\}$;
\item[(2)] $N'_2=\{\alpha\in N'\colon \alpha(x)=\alpha(y)=1,\alpha\mid_T\not\in N_{30}\}$;
\item[(3)] $N'_3=\{\alpha\in N'\colon \alpha(x)=1, \alpha(y)=0, \sum_{v\in V(T)}\alpha(v)\le 2m+2n+7\}$.
\item[(4)] $N'_4=\{\alpha\in N'\colon \alpha\mid_T\in N_{30}\}\cup \{\alpha\in N'\colon \alpha(x)=1, \alpha(y)=0, \sum_{v\in V(T)}\alpha(v)\ge 2m+2n+8\}$.
\end{itemize}

Similarly,
we now list three pairwise disjoint subsets of $M'$ as follows:
\begin{itemize}
\item[(1)] $M'_1=\{\alpha\in M'\colon \alpha(x)\ne 1\}$;
\item[(2)] $M'_2=\{\alpha\in M'\colon \alpha(x)=\alpha(y)=1\}$;
\item[(3)] $M'_3=\{\alpha\in M'\colon \alpha(x)=1,\alpha(y)=0\}$.
\end{itemize}

We next build three injections $\Psi_i\colon N'_i\rightarrow M'_i$, where $1\le i\le 3$, and  show that for any
$\alpha \in N'_i$, we have $X_{T_{3,m,n}^*}^\alpha+X_{T_{3,m,n}^*}^{\Psi_i(\alpha)}\ge_{2s} 0$. Then using the same method as in the proof of Theorem \ref{thm-uni-t3mn}, we confirm Theorem \ref{thm-uni-t3mn-star}.

\begin{lem}\label{lem-t-star-psi-1}
There is an injection $\Psi_{1}$ from $N'_{1}$ to $M'_{1}$. Moreover, $X_{T_{3,m,n}^*}^\alpha+X_{T_{3,m,n}^*}^{\Psi_{1}(\alpha)}$ is 2-$s$-positive.
\end{lem}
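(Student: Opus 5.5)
The plan is to define $\Psi_1$ by applying the map $\psi$ of \eqref{equ-psi-all} to the restriction to $T\cong T_{3,m,n}$ and keeping the values on the appended path unchanged. Explicitly, for $\alpha\in N'_1$ set $\Psi_1(\alpha)(v)=\psi(\alpha\mid_T)(v)$ for $v\in V(T)$, and $\Psi_1(\alpha)(x)=\alpha(x)$, $\Psi_1(\alpha)(y)=\alpha(y)$. The first thing to check is that this is well defined, i.e.\ that $\alpha\mid_T\in N\setminus N_{30}$. By definition of $N'_1$ we already have $\alpha\mid_T\notin N_{30}$, so what remains is to show that $X_T^{\alpha}$ is non-2-$s$-positive.

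To get this, view $T^*_{3,m,n}$ as $T$ with the path $x,y$ appended at $v'_{13}$, i.e.\ $G_v$ with $G=T$, $v=v'_{13}$, $c=x$, $d=y$, and use the preceding proposition on appending a path of length $2$. The condition $\alpha(x)\neq 1$ in $N'_1$ leaves two cases.
\begin{itemize}
\item If $\alpha(x)=0$, part (1) gives $X^\alpha_{T^*}=X_T^\alpha X_y^\alpha$.
\item If $\alpha(x)\ge 2$, then either $X^\alpha_{T^*}=_{2s}0$, which is impossible since $\alpha\in N'$, or $\alpha(x)=2$ and part (2) gives $X^\alpha_{T^*}=X_T^\alpha s_{(1,1)}$.
\end{itemize}
In both cases $X^\alpha_{T^*}=X_T^\alpha\cdot g$ with $g\in\{1,s_1,s_{(1,1)}\}$, where $g$ depends only on $(\alpha(x),\alpha(y))$. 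If $X_T^\alpha$ were 2-$s$-positive, then by the Littlewood--Richardson rule (multiplying by $s_1$ or $s_{(1,1)}$ preserves 2-$s$-positivity) so would be $X^\alpha_{T^*}$, contradicting $\alpha\in N'$. Hence $\alpha\mid_T\in N\setminus N_{30}$.

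Next, $X^{\Psi_1(\alpha)}_{T^*}$ has the same factorization with the same $g$. For $\alpha(x)=0$ this holds because part (1) applies to any map with $x$-value $0$. For $\alpha(x)=2$ we need $\psi(\alpha\mid_T)(v'_{13})=0$ to apply part (2), or else both sides vanish modulo 2-$s$. This is exactly what Proposition \ref{pro-psi-alpha-v13} provides: $\psi(\alpha\mid_T)(v'_{13})=\alpha(v'_{13})$, so the adjacency between $v'_{13}$ and $x$ behaves identically for $\alpha$ and $\Psi_1(\alpha)$. We then get
\[X^\alpha_{T^*}+X^{\Psi_1(\alpha)}_{T^*}=\bigl(X_T^{\alpha}+X_T^{\psi(\alpha\mid_T)}\bigr)g\ge_{2s}0,\]
using \eqref{equ-thm-1-xt3mn-alpha-psi-alpha} together with the fact that multiplication by $g$ preserves 2-$s$-positivity. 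This same identity shows that $X^{\Psi_1(\alpha)}_{T^*}=X_T^{\psi(\alpha\mid_T)}g$ is 2-$s$-positive. Since $\Psi_1(\alpha)(x)=\alpha(x)\neq1$, we conclude $\Psi_1(\alpha)\in M'_1$.

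Injectivity is immediate: if $\Psi_1(\alpha)=\Psi_1(\alpha')$, then the values on $x,y$ agree and $\psi(\alpha\mid_T)=\psi(\alpha'\mid_T)$, so $\alpha\mid_T=\alpha'\mid_T$ because $\psi$ is injective (Lemmas \ref{lem-psi1}--\ref{lem-psi29}, with the $M_i$ pairwise disjoint). The main obstacle is the case $\alpha(x)=2$, where we must ensure that $\psi$ never makes $v'_{13}$ positive; otherwise $X^{\Psi_1(\alpha)}_{T^*}$ would vanish modulo 2-$s$ and the cancellation would fail. Proposition \ref{pro-psi-alpha-v13} is precisely the tool for this, so I would invoke it there and check that the remaining cases are routine.
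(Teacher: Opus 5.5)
Your proposal is correct and follows essentially the same route as the paper: you restrict to $T$, apply $\psi$ while keeping the values on $x,y$, factor $X^\alpha_{T^*_{3,m,n}}$ in the cases $\alpha(x)=0$ and $\alpha(x)=2$ to see that $\alpha\mid_T\in N\setminus N_{30}$, and then use Proposition \ref{pro-psi-alpha-v13} to get the same factorization for $\Psi_1(\alpha)$. One point should be made explicit. In the case $\alpha(x)=2$, Corollary \ref{cor-2s0} forces $\alpha(v'_{13})=\alpha(y)=0$, hence $\Psi_1(\alpha)(v'_{13})=\Psi_1(\alpha)(y)=0$, and $X^{\Psi_1(\alpha)}_{T^*_{3,m,n}}=X_T^{\psi(\alpha\mid_T)}s_{(1,1)}$ then follows directly from Proposition \ref{prop-42}; you should not appeal to part (2), whose hypothesis $X\neq_{2s}0$ you do not know in advance for $\Psi_1(\alpha)$.
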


\begin{proof}
Given $\alpha \in N'_1$, by definition, we see that $\alpha(x)\ne 1$ and $\alpha\mid_T\not\in N_{30}$.
We first show that $X_T^\alpha$ is non-2-$s$-positive. There are two cases.

Case 1. If $\alpha(x)=0$, then by \eqref{eq-xgv-alpha-xg-xd-alpha-0},
\begin{equation}\label{equ-alpha-N'1-0}
X_{T_{3,m,n}^*}^\alpha=X_{T}^\alpha X_y^\alpha.
\end{equation}
Since ${T_{3,m,n}^*}^\alpha$ is non-2-$s$-positive and  $y^\alpha$ is always $s$-positive, it follows that $X_T^\alpha$ is non-2-$s$-positive.

Case 2. If $\alpha(x)=2$, then by \eqref{eq-xgv-alpha-xg-xd-alpha-1},
\begin{equation}\label{equ-alpha-N'1-1}
X_{T_{3,m,n}^*}^\alpha=s_{(1,1)}X_T^\alpha.
\end{equation}
Again, since ${T_{3,m,n}^*}^\alpha$ is non-2-$s$-positive, we conclude that $X_T^\alpha$ is non-2-$s$-positive.

Note that Corollary \ref{cor-2s0} implies $\alpha(x)\le 2$, and by the definition of $N_1'$ we have $\alpha(x)\ne 1$. Hence in either case we have $X_T^\alpha$ is non-2-$s$-positive. In other words, $\alpha\mid_T\in N\setminus N_{30}$.

Define $\Psi_1$ as follows:
\begin{equation}\label{equ-def-Psi-1}
\Psi_1(\alpha)(v)=\begin{cases}
\psi(\alpha\mid_T)(v),&\text{if }v\not\in \{x,y\};\\
\alpha(v),&\text{if }v\in\{x,y\}.
\end{cases}
\end{equation}
Let $\beta=\Psi_1(\alpha)$, we proceed to show that $\beta\in M'_1$. Since $\beta(x)=\alpha(x)\ne 1$, we have either $\beta(x)=0$ or $2$. If $\beta(x)=0$, then by \eqref{eq-xgv-alpha-xg-xd-alpha-0}, we have
\begin{equation}\label{eq-star-Psi-1-1}
    X_{T_{3,m,n}^*}^\beta=X_T^\beta X_y^\beta.
\end{equation}
Since $\beta\mid_T\in M$ we have that $T^\beta$ is 2-$s$-positive. Thus $\beta\in M'$. If $\beta(x)=2$, then by the construction of $\Psi_1$, we see that $\alpha(x)=2$. From Corollary \ref{cor-2s0} and the fact that ${T_{3,m,n}^*}^\alpha$ is non-2-$s$-positive, we have $\alpha(y)=\alpha(v'_{13})=0$. Again by the construction of $\Psi_1$ we have $\beta(y)=\alpha(y)=0$. Moreover, using Proposition \ref{pro-psi-alpha-v13}, we get $\alpha(v'_{13})=\beta(v'_{13})=0.$ This yields
\begin{equation}\label{eq-star-Psi-1-2}
X_{T_{3,m,n}^*}^\beta=X_T^\beta \cdot X_x^\beta=X_T^\beta \cdot \frac{1}{2}\left(2s_{(1,1)}\right)\ge_{2s} 0.
\end{equation}
So in either case $\beta\in M'$. Moreover, $\beta(x)=\alpha(x)\ne 1$. This yields $\beta\in M'_1$.

Since $\psi$ is an injection, it is easy to see that $\Psi_1$ is an injection.

We next show that $X_{T_{3,m,n}^*}^\alpha+X_{T_{3,m,n}^*}^{\Psi_{1}(\alpha)}$ is 2-$s$-positive. By the construction of $\Psi_1$, we have $\beta\mid_T=\psi(\alpha\mid_T)$. Thus
\begin{equation}\label{equ-alpha-N'1-}
    X_T^\beta+X_T^\alpha=X_T^{\beta\mid_T}+X_T^{\alpha\mid_T}\ge_{2s} 0.
\end{equation}

If $\alpha(x)=\beta(x)=0$, then combining \eqref{equ-alpha-N'1-0},  \eqref{eq-star-Psi-1-1}, \eqref{equ-alpha-N'1-} and the fact $\alpha(y)=\beta(y)$, we have
\begin{equation}
    X_{T_{3,m,n}^*}^\alpha+X_{T_{3,m,n}^*}^\beta=\left(X_T^\beta+X_T^\alpha\right) X_y^\alpha\ge_{2s} 0.
\end{equation}

Similarly, if $\alpha(x)=\beta(x)=2$, then
combining \eqref{equ-alpha-N'1-1}, \eqref{eq-star-Psi-1-2},  and \eqref{equ-alpha-N'1-} we have
\begin{equation}\label{equ-alpha-M'1}
X_{T_{3,m,n}^*}^\alpha+X_{T_{3,m,n}^*}^\beta=(X_T^\alpha+X_T^\beta)s_{(1,1)}\ge_{2s} 0.
\end{equation}
Thus in either case  $X_{T_{3,m,n}^*}^\alpha+X_{T_{3,m,n}^*}^\beta\ge_{2s} 0$.
\end{proof}

\begin{lem}\label{lem-t-star-psi-2}
There is an injection $\Psi_{2}$ from $N'_{2}$ to $M'_{2}$. Moreover, $X_{T_{3,m,n}^*}^\alpha+X_{T_{3,m,n}^*}^{\Psi_{2}(\alpha)}$ is 2-$s$-positive.
\end{lem}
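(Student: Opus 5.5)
We follow the template of Lemma \ref{lem-t-star-psi-1}: reduce to the restriction $\alpha\mid_T$ and apply the map $\psi$ from \eqref{equ-psi-all}. Here the factorization is supplied by part (3) or part (4) of the last proposition, since $x$ and $y$ play the roles of $c$ and $d$ attached at $v=v'_{13}$.

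\emph{Step 1: factor $X_{T^*_{3,m,n}}^\alpha$.} Take $\alpha\in N'_2$, so $\alpha(x)=\alpha(y)=1$ and $\alpha\mid_T\notin N_{30}$. Since $X^\alpha_{T^*_{3,m,n}}\neq_{2s}0$, Corollary \ref{cor-2s0} forces $\alpha(v'_{13})\le 1$ (if $\alpha(v'_{13})\ge 2$ then $\alpha(v'_{13})+\alpha(x)\ge 3$). We split on $\alpha(v'_{13})$.
\begin{itemize}
\item If $\alpha(v'_{13})=0$, part (4) gives $X^\alpha_{T^*_{3,m,n}}=2s_{(1,1)}X_T^\alpha$.
\item If $\alpha(v'_{13})=1$, part (3) gives $X^\alpha_{T^*_{3,m,n}}=_{2s}s_{(1,1)}X_T^\alpha$.
\end{itemize}
In both cases $X^\alpha_{T^*_{3,m,n}}$ is, up to $=_{2s}$, a positive multiple of $s_{(1,1)}X_T^\alpha$. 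Multiplication by $s_{(1,1)}$ preserves $\ge_{2s}0$, because by Littlewood--Richardson $s_{(1,1)}s_{(a,b)}$ contains only the length-$\le 2$ term $s_{(a+1,b+1)}$, with coefficient $1$. Hence $X_T^\alpha$ must be non-2-$s$-positive, so $\alpha\mid_T\in N\setminus N_{30}$ and $\psi(\alpha\mid_T)$ is defined.

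\emph{Step 2: define $\Psi_2$.} Set $\Psi_2(\alpha)=\psi(\alpha\mid_T)$ on $V(T)$ and $\Psi_2(\alpha)(x)=\Psi_2(\alpha)(y)=1$, and write $\beta=\Psi_2(\alpha)$. By Proposition \ref{pro-psi-alpha-v13}, $\beta(v'_{13})=\alpha(v'_{13})$, so the same case of the proposition applies to $\beta$.
\begin{itemize}
\item If $\beta(v'_{13})=0$, part (4) gives $X^\beta_{T^*_{3,m,n}}=2s_{(1,1)}X_T^\beta$.
\item If $\beta(v'_{13})=1$, part (3) gives $X^\beta_{T^*_{3,m,n}}=_{2s}s_{(1,1)}X_T^\beta$.
\end{itemize}
Since $\beta\mid_T\in M$, $X_T^\beta\ge_{2s}0$, and therefore $X^\beta_{T^*_{3,m,n}}\ge_{2s}0$. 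Together with $\beta(x)=\beta(y)=1$ this gives $\beta\in M'_2$. Injectivity of $\Psi_2$ follows from injectivity of $\psi$, because the values on $x,y$ are fixed.

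\emph{Step 3: the sum.} In either case, using the same constant $c\in\{1,2\}$ for $\alpha$ and $\beta$,
\[X^\alpha_{T^*_{3,m,n}}+X^\beta_{T^*_{3,m,n}}=_{2s}c\,s_{(1,1)}\bigl(X_T^\alpha+X_T^\beta\bigr)\ge_{2s}0,\]
since $X_T^\alpha+X_T^{\psi(\alpha\mid_T)}\ge_{2s}0$ by Lemmas \ref{lem-psi1}--\ref{lem-psi29}.

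\emph{Main obstacle.} The delicate point is Step 2: one needs $\beta(v'_{13})=\alpha(v'_{13})$, so that $\alpha$ and $\beta$ fall into the same case and carry the same constant $c$. Proposition \ref{pro-psi-alpha-v13} supplies exactly this. One should also confirm that part (3) applies to $\beta$ without any hypothesis on $T^\beta$; its proof already covers the non-bipartite case.
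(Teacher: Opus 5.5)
Your proposal is correct and follows the paper's proof essentially step for step. It uses the same case split on $\alpha(v'_{13})\in\{0,1\}$ via parts (3) and (4) of the path-appending proposition, the same definition of $\Psi_2(\alpha)$ as $\psi(\alpha\mid_T)$ extended by $1$ on $x,y$, and the same appeal to Proposition~\ref{pro-psi-alpha-v13} to put $\alpha$ and $\Psi_2(\alpha)$ in the same case. Your bound $\alpha(v'_{13})\le 1$ via Corollary~\ref{cor-2s0} (where the paper cites Proposition~\ref{prop-42}) and your Pieri argument that multiplying by $s_{(1,1)}$ preserves $\ge_{2s}0$ are both correct, and they make explicit points the paper leaves implicit.
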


\begin{proof}
Given $\alpha \in N'_2$. By definition, we see that $\alpha(x)=\alpha(y)=1$ and $\alpha\mid_T\not\in N_{30}$. We next show that $T^\alpha$ is non-2-$s$-positive, which implies $\alpha\mid_T\in N\setminus N_{30}$. There are two cases.

Case 1. If $\alpha(v'_{13})=0$, then by \eqref{eq-xgv-alpha-xg-xd-alpha-3},
\begin{equation}\label{eq-t-3mn-*-alpha-v13-0}
    X_{T_{3,m,n}^*}^\alpha=X_T^\alpha 2s_{(1,1)}.
\end{equation}
From ${T_{3,m,n}^*}^\alpha$ is non-2-$s$-positive, we have $X_T^\alpha$ is non-2-$s$-positive, which means $\alpha\mid_T\in N\setminus N_{30}$.

Case 2. If $\alpha(v'_{13})\ne 0$, then by Proposition \ref{prop-42} and $\alpha(x)=1$, we deduce that $\alpha(v'_{13})=1$. From \eqref{eq-xgv-alpha-xg-xd-alpha-2},
\begin{equation}\label{eq-xt3mn*-n2'-1}
    X_{T_{3,m,n}^*}^\alpha=s_{(1,1)} X_T^\alpha.
\end{equation}
Thus $X_T^\alpha<_{2s} 0$. This yields $\alpha\mid_T\in N\setminus N_{30}$.

Define $\Psi_2$ as follows:
\begin{equation}\label{equ-def-Psi-2}
\Psi_2(\alpha)(v)=\begin{cases}
\psi(\alpha\mid_T)(v),&\text{if }v\in V(T);\\
\alpha(v),&\text{if }v\in\{x,y\}.
\end{cases}
\end{equation}
Let $\beta=\Psi_2(\alpha)$, we proceed to show that $\beta\in M'_2$. Since $\beta(x)=\alpha(x)=1$ and $\beta(y)=\alpha(y)=1$,   it suffices to show that $\beta\in M'$. To this end, we calculate $X_{T_{3,m,n}^*}^\beta$ as follows. When $\alpha(v'_{13})=0$, by Proposition \ref{pro-psi-alpha-v13}, we have $\beta(v'_{13})=0$. Using \eqref{eq-xgv-alpha-xg-xd-alpha-3}, we have
\begin{equation}\label{eq-t-3mn-*-beta-v13-0}
    X_{T_{3,m,n}^*}^\beta=X_T^\beta 2s_{(1,1)}.
\end{equation}
Since $\beta\mid_T=\psi(\alpha\mid_T)\in M$, we have $\beta\in M'$.
Similarly, when $\alpha(v'_{13})=1$, by Proposition \ref{pro-psi-alpha-v13}, we have $\beta(v'_{13})=1$. Using \eqref{eq-xgv-alpha-xg-xd-alpha-2}, we deduce that
\begin{equation}\label{eq-t-3mn-*-beta-v13-1}
    X_{T_{3,m,n}^*}^\beta=_{2s}X_T^\beta s_{(1,1)}.
\end{equation}
Again by $\beta\mid_T=\psi(\alpha\mid_T)\in M$, we have $\beta\in M'$. Hence in either case, $\beta\in M'$. Moreover, since $\psi$ is an injection, we see that $\Psi_2$ is an injection.


We next show that $X_{T_{3,m,n}^*}^\alpha+X_{T_{3,m,n}^*}^{\beta}$ is 2-$s$-positive. Since $\beta\mid_T=\psi(\alpha\mid_T)$ we get
\begin{equation}\label{eq-xt-alpha-beta-n'2}
    X_T^\alpha+X_T^\beta\ge_{2s} 0.
\end{equation}
If $\alpha(v'_{13})=0$, then by \eqref{eq-t-3mn-*-alpha-v13-0}, \eqref{eq-t-3mn-*-beta-v13-0} and \eqref{eq-xt-alpha-beta-n'2}, we deduce that
\begin{equation}
    X_{T_{3,m,n}^*}^\alpha+X_{T_{3,m,n}^*}^\beta=2s_{(1,1)}(X_T^\alpha+X_T^\beta)\ge_{2s} 0.
\end{equation}
If $\alpha(v'_{13})=1$, then by \eqref{eq-xt3mn*-n2'-1}, \eqref{eq-t-3mn-*-beta-v13-1} and \eqref{eq-xt-alpha-beta-n'2}, we deduce that
\begin{equation}
    X_{T_{3,m,n}^*}^\alpha+X_{T_{3,m,n}^*}^\beta=s_{(1,1)}(X_T^\alpha+X_T^\beta)\ge_{2s} 0.
\end{equation}
So in either case, we get $X_{T_{3,m,n}^*}^\alpha+X_{T_{3,m,n}^*}^\beta\ge_{2s} 0$.
\end{proof}

In order to define the injection $\Psi_3$, we need a proposition on $\psi$. To this end, we introduce  a specific subset $\overline{N}_{28}$ of $N_{28}$.
\begin{defi}
Let $\overline{N}_{28}$ be the subset of $N_{28}$ consisting of those $\alpha$ such that
$\{(i,j)\colon \alpha_{i,j}=(0,0)\}=\{(1,3)\}.$
\end{defi}

We now state the property of $\psi$.

\begin{prop}\label{prop-ine-alpha-v-13-psi}
    For any $\alpha\in N\setminus (N_{30}\cup \overline{N}_{28})$, we have
    \begin{equation}\label{ine-alpha-v-13-psi}
        \alpha(v_{13})\ge \psi(\alpha)(v_{13}).
    \end{equation}
\end{prop}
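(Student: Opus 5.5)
The plan is a case check over the 29 injections $\psi_i$, in the same spirit as the proof of Proposition~\ref{pro-psi-alpha-v13}. We must show that $\psi$ never raises the value at $v_{13}$, except on $\overline{N}_{28}$.

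\emph{Step 1: generic argument for the $\phi_S$-type maps.} The injections $\psi_1,\dots,\psi_{10}$, $\psi_{12}$--$\psi_{17}$, $\psi_{19}$--$\psi_{22}$, $\psi_{24}$, $\psi_{26}$ and $\psi_{27}$ act on $G_1$ either as the identity or by applying some $\phi_S$ to $\alpha\mid_{G_1}$. The identity case is trivial. In the $\phi_S$ case, \eqref{eq-def-phi_S-s-alpha} shows that $\phi_S$ changes values only at $v_0$ and at those $v_{1j}$ with $\alpha(v_{1j})=1$, $\alpha(v_{1j}')=0$, where the value is raised from $1$ to $2$. So it can raise $\alpha(v_{13})$.

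To control this, I would read off the index sets explicitly. For $v_{13}$ to be hit, it must be the $j$-th element of $T'$ (the set of legs with $\alpha(v_{1j})=1$, $\alpha(v'_{1j})=0$) for some $j\in S$. This happens exactly when $S$ contains $\#T'$, since $v_{13}$ is the last leg and so the largest element of $T'$. In $\psi_1$ and $\psi_6$ we have $S=\{1\}$ with $k_1=3$. In $\psi_{10}$ and $\psi_{27}$ we have $S=\{1\}$ with $k_1=2$. In $\psi_{19}$ we have $S=\{2\}$, and in $\psi_{26}$ Case~1 we have $S=\{1,2\}$ with $k_1=3$. In all these cases the last leg is untouched. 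This is the main obstacle: $\psi_{19}$ requires $k_1\ge 2$, and $\phi_2$ hits $v_{13}$ precisely when $k_1=2$ and $v_{13}\in T'$. I would first check whether the paper's convention (or the constraints forced by membership in $N_{19}$) excludes this. If it does not, the inequality as stated would fail there, and I would re-examine whether $k_1=2$ in $N_{19}$ is actually compatible with non-2-$s$-positivity.

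\emph{Step 2: the explicit maps.} Next I would verify the explicit maps directly.
\begin{itemize}
\item $\psi_{11}$ and $\psi_{18}$ set $v_{13}$ to $0$.
\item $\psi_{23}$, $\psi_{25}$ and $\psi_{29}$ leave $v_{13}$ unchanged.
\item $\psi_{28}$ sets $v_{pq}$ to $1$, where $(p,q)$ is the pair with $\alpha_{p,q}=(0,0)$ chosen by taking $p$ maximal and then $q$ minimal.
\end{itemize}

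\emph{Step 3: the only way the value at $v_{13}$ can rise.} In $\psi_{28}$ the value at $v_{13}$ increases only if $(p,q)=(1,3)$. Because $p$ is chosen maximal, this requires that there is no $(0,0)$ pair in legs $2$ or $3$, and no $(0,0)$ pair at $(1,1)$ or $(1,2)$. Hence the set of $(0,0)$ pairs is exactly $\{(1,3)\}$, which means $\alpha\in\overline{N}_{28}$, the excluded set. Combining Steps 1--3 gives \eqref{ine-alpha-v-13-psi}.
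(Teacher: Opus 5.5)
You have a genuine gap, and it cannot be closed: the statement is false, and the paper's proof has the same hole.

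Your Step~3 is correct and is exactly the paper's argument for $N_{28}$. Step~1, however, leaves the $\psi_{19}$ case open, and you then conclude ``Combining Steps 1--3'' anyway. (You even list $\psi_{19}$ among the maps for which ``the last leg is untouched'', one sentence before saying it may not be.)

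Your fallback worry is correct. Membership in $N_{19}$ does not exclude $k_1=2$ with $v_{13}\in T'$. For any $m,n\ge 1$, let $\alpha(v_0)=\alpha(v_1)=\alpha(v_3)=\alpha(v_{11})=\alpha(v_{13})=\alpha(v_{31})=1$, $\alpha(v_{21})=2$, and $\alpha=0$ on all other vertices.
\begin{itemize}
\item Then $G_2^\alpha\in X^1_{1,m}\subseteq X_{1,m}$ and $k_1=2$.
\item The component of $T_{3,m,n}^\alpha$ containing $v_0^\alpha$ is the tree on $v_0,v_1,v_3,v_{11},v_{13},v_{31}$, with bipartition $(4,2)$.
\item The only other component is the $K_2$ coming from $v_{21}$.
\end{itemize}
So by Proposition~\ref{prop-2s-coe},
\[
X_{T_{3,m,n}}^\alpha=_{2s}\bigl(s_{(4,2)}-s_{(3,3)}\bigr)s_{(1,1)}=_{2s}s_{(5,3)}-s_{(4,4)},
\]
and hence $\alpha\in N_{19}$. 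For $\alpha\mid_{G_1}$ we have $T'=\{1,3\}$, so $\phi_2$ sends $v_{13}$ to $2$. This gives $\psi(\alpha)(v_{13})=2>1=\alpha(v_{13})$. Thus the proposition is false as stated, and no completion of the case check can prove it. The paper's proof has the same hole: after treating $N_{28}$, it declares the remaining $\psi_i$ ``routine'', which is where this case is missed.

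What your Steps~1--3 do establish is the following. Your criterion is right: $\phi_S$ raises $v_{13}$ exactly when $v_{13}\in T'$ and $\#T'\in S$. The only map meeting it is $\psi_{19}$ with $k_1=2$. Hence, for $\alpha\notin N_{30}\cup\overline{N}_{28}$, the inequality fails only when $\alpha\in N_{19}$, $k_1=2$ and $(\alpha(v_{13}),\alpha(v'_{13}))=(1,0)$. In particular, $\alpha(v_{13})=0$ always forces $\psi(\alpha)(v_{13})=0$.

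That weaker statement suffices for Case~B of Lemma~\ref{lem-t-star-psi-3}. Case~D, where $\gamma(v_{13})=1$ and $\gamma(v'_{13})=0$, needs precisely the excluded configuration:
\begin{itemize}
\item Extend the $\alpha$ above by $\alpha(v'_{13})=\alpha(x)=1$ and $\alpha(y)=0$. This gives an element of $N'_3$ with $X_{T^*_{3,m,n}}^\alpha=_{2s}s_{(6,4)}-s_{(5,5)}$.
\item $\Psi_3(\alpha)$ takes the value $2$ at $v_{13}$ and $1$ at $v'_{13}$. By Corollary~\ref{cor-2s0}, $X_{T^*_{3,m,n}}^{\Psi_3(\alpha)}=_{2s}0$, so the pairing fails.
\end{itemize}
So the defect has to be repaired in the definition of $\psi_{19}$ or inside Lemma~\ref{lem-t-star-psi-3} itself, not in the proof of this proposition.
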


\begin{proof}
    We first consider the case $\alpha\in N_{28}$. Given $\alpha\in N_{28}$, by the construction of $\psi_{28}$ \eqref{equ-def-psi27}, we find that $\alpha(v_{13})\ge \psi(\alpha)(v_{13})$ unless $(p,q)=(1,3)$. But from the choice of $(p,q)$, where we first choose the maximum $p$, and then choose the minimum $q$ among the same value $p$, we find that $(p,q)=(1,3)$ implies $\{(i,j)\colon \alpha_{i,j}=(0,0)\}=\{(1,3)\}$. Hence $\alpha(v_{13})< \psi(\alpha)(v_{13})$ means $\alpha\in \overline{N}_{28}$.

    It is routine to check that for $\alpha\in N\setminus (N_{30}\cup N_{28})$, \eqref{ine-alpha-v-13-psi} holds. This completes the proof.
\end{proof}

\begin{lem}\label{lem-t-star-psi-3}
There is an injection $\Psi_{3}$ from $N'_{3}$ to $M'_{3}$. Moreover, $X_{T_{3,m,n}^*}^\alpha+X_{T_{3,m,n}^*}^{\Psi_{3}(\alpha)}$ is 2-$s$-positive.
\end{lem}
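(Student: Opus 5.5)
The plan is to follow the pattern of Lemmas \ref{lem-t-star-psi-1} and \ref{lem-t-star-psi-2}. Throughout, $\alpha\in N'_3$ means $\alpha(x)=1$ and $\alpha(y)=0$, so $y^\alpha$ is empty and $x^\alpha$ is a single vertex hanging off $v_{13}'^\alpha$. We set
\[\Psi_3(\alpha)\mid_T=\psi(\alpha\mid_T),\qquad \Psi_3(\alpha)(x)=1,\qquad \Psi_3(\alpha)(y)=0,\]
whenever $\alpha\mid_T\in N\setminus(N_{30}\cup\overline{N}_{28})$, and treat the remaining configurations by a separate explicit map. The bound $\sum_{v\in V(T)}\alpha(v)\le 2m+2n+7$ is exactly what rules out $\alpha\mid_T\in N_{30}$, since every element of $N_{30}$ has total weight $2m+2n+10$. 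Injectivity on the main part is inherited from $\psi$. The image lies in $M'_3$ automatically once 2-$s$-positivity of $X_{T^*_{3,m,n}}^{\Psi_3(\alpha)}$ is checked, because the values on $x,y$ are kept.

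\emph{Case $\alpha(v'_{13})=0$.} Here $x^\alpha$ is isolated, so by Proposition \ref{prop-42}
\[X_{T^*_{3,m,n}}^\alpha=s_1X_T^\alpha .\]
Since ${T^*_{3,m,n}}^\alpha$ is non-2-$s$-positive, $X_T^\alpha$ is non-2-$s$-positive, and hence $\alpha\mid_T\in N$. Proposition \ref{pro-psi-alpha-v13} gives $\psi(\alpha\mid_T)(v'_{13})=0$, so likewise $X_{T^*_{3,m,n}}^{\beta}=s_1X_T^{\beta}$. The sum is then $s_1\bigl(X_T^\alpha+X_T^{\psi(\alpha\mid_T)}\bigr)\ge_{2s}0$.

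\emph{Case $\alpha(v'_{13})=1$.} Then $x^\alpha$ lengthens the leg through $v'_{13}$ by one vertex, and the bipartition of the component containing it shifts by $(1,0)$ or $(0,1)$. I would compare $X_{T^*_{3,m,n}}^\alpha$ with $X_T^\alpha$ through this bipartition, using Proposition \ref{prop-2s-coe} and the analysis of part (3) of the last proposition. The main point is that $\psi$ must not change the local picture at $v_{13},v'_{13}$ in a way that alters which side $x$ joins. Proposition \ref{pro-psi-alpha-v13} fixes $v'_{13}$, and Proposition \ref{prop-ine-alpha-v-13-psi} gives $\psi(\alpha)(v_{13})\le\alpha(v_{13})$. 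So outside $\overline{N}_{28}$, either $v_{13}$ stays in the $v_0$-component with $x$ attached identically, or $v_{13}$ is dropped, which makes $v'_{13}x$ a $K_2$ component. The latter only helps, by the same $s_{(1,1)}$ bookkeeping used in Lemmas \ref{lem-psi28} and \ref{lem-psi29}. In each case $X^\alpha+X^\beta$ factors as a common 2-$s$-positive part times a difference of the form (component with $x$ under $\alpha$) plus (component with $x$ under $\beta$). I would verify this term by term, mirroring the $\hat T/\overline T$ argument of Proposition \ref{pro-X-hat-overline-T}.

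\emph{The obstacle.} I expect the main difficulty in two places. First, when $\alpha\mid_T$ itself is 2-$s$-positive but $\alpha$ is not: $x$ can unbalance a $K_2$ or a balanced component. The typical situation is $\alpha_{1,3}=(0,1)$ with the rest of total weight at most $2m+2n+7$. Here I would instead map by a local move inside $G_1\cup\{x\}$, e.g.\ raising $\alpha(v_{13})$ from $0$ to $1$ in the spirit of $\psi_{28}$, and check disjointness from the main image via the value pattern on $(v_{13},v'_{13},x)$. Second, the set $\overline{N}_{28}$, where $\psi_{28}$ would raise $v_{13}$. For these I would define $\Psi_3$ directly: set $v'_{13}$ to $0$, leaving $x$ isolated, and use Proposition \ref{pro-claw-iso}-type reasoning to get a $+s_{(k,k)}$ compensation. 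The weight bound again guarantees that the resulting shapes are not the extreme $(m+n+5,m+n+5)$ ones reserved for $N'_4$.
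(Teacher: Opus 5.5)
\textbf{There is a genuine gap.} Your case $\alpha(v'_{13})=0$ is fine; there $\alpha\mid_T$ is exactly what the paper feeds into $\psi$. For $\alpha(v'_{13})=1$, however, $\alpha\mid_T$ is the wrong input, and the failure is not confined to the configuration $(\alpha(v_{13}),\alpha(v'_{13}))=(0,1)$ that you flag.

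Take $\alpha(v_0)=\alpha(v_1)=\alpha(v_{11})=\alpha(v_{13})=\alpha(v'_{13})=\alpha(x)=1$ and $\alpha=0$ elsewhere. Then ${T^*_{3,m,n}}^\alpha$ is a tree with bipartition $(4,2)$, so $\alpha\in N'_3$. But $T^\alpha$ has bipartition $(3,2)$, so $\alpha\mid_T\in M$ and $\psi(\alpha\mid_T)$ is undefined. Your proposed local move (raising $v_{13}$ from $0$ to $1$) does not apply to this $\alpha$.

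More fundamentally, attaching the single vertex $x$ at $v'_{13}$ is not multiplication by a fixed Schur-positive factor. It can create imbalance, as in $(3,2)\to(4,2)$. It can also destroy it: if $v'_{13}$ lies on the larger side of a $(b+2,b)$ component, the result is $(b+2,b+1)$. Which of these happens depends on the component through $v'_{13}$, and that component differs between $\alpha$ and $\psi(\alpha\mid_T)$. So $X_T^\alpha+X_T^{\psi(\alpha\mid_T)}\ge_{2s}0$ gives you nothing directly on $T^*_{3,m,n}$. The ``term by term'' check over the $29$ maps, the unspecified fallback map, and the disjointness of its image from the main image (needed for injectivity) are all left open.

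The missing idea is to strip $v'_{13}$ before applying $\psi$. Let $\gamma\colon V(T)\to\mathbb{N}$ agree with $\alpha$ except that $\gamma(v'_{13})=0$. The three possible values of $(\alpha(v_{13}),\alpha(v'_{13}))$ are $(\ast,0)$, $(0,1)$ and $(1,1)$. By \eqref{eq-xgv-alpha-xg-xd-alpha-0}, \eqref{eq-xgv-alpha-xg-xd-alpha-3} and \eqref{eq-xgv-alpha-xg-xd-alpha-2} respectively, $X^\alpha_{T^*_{3,m,n}}$ equals $s_1X_T^\gamma$, $2s_{(1,1)}X_T^\gamma$, or ($=_{2s}$) $s_{(1,1)}X_T^\gamma$. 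The last holds because appending the path $v'_{13}x$ at $v_{13}$ shifts the bipartition by exactly $(1,1)$. Hence $\gamma\in N$ always, and no residual class with $\alpha\mid_T\in M$ ever arises.

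The bound $\sum_{v\in V(T)}\alpha(v)\le 2m+2n+7$ is calibrated to exclude $\overline{N}_{28}$, whose elements all have weight exactly $2m+2n+8$, not merely $N_{30}$. So $\overline{N}_{28}$ is an empty case, not one needing a special map. Your suggested treatment there (zeroing $v'_{13}$) is vacuous anyway, since $\alpha(v'_{13})=0$ on $\overline{N}_{28}$.

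Now set $\Psi_3(\alpha)=\psi(\gamma)$ off $\{v'_{13},x,y\}$ and $\Psi_3(\alpha)=\alpha$ on that set. Propositions \ref{pro-psi-alpha-v13} and \ref{prop-ine-alpha-v-13-psi} give $\psi(\gamma)(v'_{13})=0$ and $\psi(\gamma)(v_{13})\le\gamma(v_{13})$. So $\beta=\Psi_3(\alpha)$ lies in the same local configuration as $\alpha$, except possibly when $\alpha(v_{13})=1$ and $\beta(v_{13})=0$. In that case $\beta$ carries the larger factor $2s_{(1,1)}$, and the sum is $s_{(1,1)}(X_T^\gamma+X_{T'}^\beta)+s_{(1,1)}X_{T'}^\beta\ge_{2s}0$, where $T'$ is $T$ with $v'_{13}$ deleted. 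Injectivity is inherited from $\psi$: $\gamma$ is $\beta$ with $v'_{13}$ zeroed, and $\alpha$ is recovered from $\gamma$ together with $\alpha(v'_{13})$, $\alpha(x)$, $\alpha(y)$.
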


\begin{proof}
Given $\alpha \in N'_3$. By definition, we see that $\alpha(x)=1,$ $\alpha(y)=0$ and $\sum_{v\in V(T)}\alpha(v)\le 2m+2n+7$. We use $T'$ to denote the subgraph of $T$ induced by $V(T)\setminus \{v'_{13}\}$, and let $\gamma$ denote $(\alpha\mid^{T'})\mid_T$ for simplicity. In other words, $\gamma\colon V(T)\rightarrow \mathbb{N}$ as follows:
\begin{equation}
    \gamma(v)=\begin{cases}
        \alpha(v),&\text{if }v\ne v'_{13};\\
        0,&\text{if }v=v'_{13}.
    \end{cases}
\end{equation}
We proceed to show that $\gamma\in N\setminus (N_{30}\cup \overline{N}_{28})$. Clearly $\alpha\mid_{T'}=\gamma\mid_{T'}$. There are three cases.

Case 1. $\alpha(v'_{13})=0=\gamma(v'_{13})$. In this case, $\gamma=\alpha\mid_T$, so $X_T^\alpha=X_T^\gamma$ and $T'^\alpha=T^\alpha$. By \eqref{eq-xgv-alpha-xg-xd-alpha-0}, we have
\begin{equation}\label{eq-x-t3mn*-alpha-1-n'-3-0}
    X_{T_{3,m,n}^*}^\alpha=X_{T'}^\alpha s_1=X_T^\gamma s_1.
\end{equation}
Since $X_{T_{3,m,n}^*}^\alpha$ is non-2-$s$-positive, it follows that $X_T^\gamma$ is non-2-$s$-positive. This yields $\gamma=\alpha\mid_T\in N$.

Case 2. $\alpha(v'_{13})=1$ and $\alpha(v_{13})=0$. In this case, we see that $T^\gamma=T'^\alpha$. Moreover, from \eqref{eq-xgv-alpha-xg-xd-alpha-3},
\begin{equation}\label{eq-x-t3mn*-alpha-1-n'-3-1}
    X_{T_{3,m,n}^*}^\alpha=X_{T'}^\alpha 2s_{(1,1)}=X_T^\gamma 2s_{(1,1)}.
\end{equation}
Thus since $X_{T_{3,m,n}^*}^\alpha$ is non-2-$s$-positive, we obtain that $X_T^\gamma$ is non-2-$s$-positive. This yields $\gamma=\alpha\mid_T\in N$.

Case 3. $\alpha(v'_{13})=1$ and $\alpha(v_{13})\ne 0$. In this case, from Corollary \ref{cor-2s0}, we see that $\alpha(v_{13})=1$. By \eqref{eq-xgv-alpha-xg-xd-alpha-2},
\begin{equation}\label{eq-x-t3mn*-alpha-1-n'-3-2}
    X_{T_{3,m,n}^*}^\alpha=X_{T'}^\alpha s_{(1,1)}=X_T^\gamma s_{(1,1)}.
\end{equation}
Thus from $X_{T_{3,m,n}^*}^\alpha$ is non-2-$s$-positive we get $X_T^\gamma$ is non-2-$s$-positive. This yields $\gamma=\alpha\mid_T\in N$.

In each case,  we get $\gamma\in N$. We next show that $\gamma\not\in \overline{N}_{28}\cup N_{30}$. From the definition of $N'_3$, we see that $\sum_{v\in V(T)}\alpha(v)\le 2m+2n+7$. From $\gamma(v'_{13})=0$, $\alpha(v'_{13})\le 1$, and $\alpha(v)=\gamma(v)$ for any $v\in V(T')$, we deduce that
\begin{equation}\label{ine-sum-gammavt}
    \sum_{v\in V(T)}\gamma(v)\le \sum_{v\in V(T)}\alpha(v)\le 2m+2n+7,
\end{equation}
Thus  $\gamma\not\in N_{30}$. If $\gamma\in \overline{N}_{28}$, from the analysis in Lemma \ref{lem-psi28}, we find that $\gamma_{i,j}\in \{(0,2),(1,1)\}$ unless $(i,j)=(1,3)$. Thus $\gamma(v_{ij})+\gamma(v'_{ij})=2$ for any $(i,j)\ne (1,3)$. This yields that
\begin{equation}
    \sum_{v\in V(T)}\gamma(v)=\sum_{i=0}^3\gamma(v_i)+\sum_{(i,j)\ne (1,3)}\gamma(v_{ij})+\gamma(v'_{ij})=4+2(m+n+2)=2m+2n+8,
\end{equation}
which contradicts \eqref{ine-sum-gammavt}. Hence $\gamma\in N\setminus (N_{30}\cup \overline{N}_{28})$.

Now we may define $\Psi_3$ as follows:
\begin{equation}\label{equ-def-Psi-3-1}
\Psi_3(\alpha)(v)=\begin{cases}
\psi(\gamma)(v),&\text{if }v\not\in \{x,y,v'_{13}\};\\
\alpha(v),&\text{if }v\in\{x,y,v'_{13}\}.
\end{cases}
\end{equation}
Set $\beta=\Psi_3(\alpha)$. We proceed to show that $\beta\in M'_3$. From $\beta(x)=\alpha(x)=1$ and $\beta(y)=\alpha(y)=0$, it suffices to prove $\beta\in M'$. Moreover, from Proposition \ref{pro-psi-alpha-v13}, we have $\psi(\gamma)(v'_{13})=\gamma(v'_{13})=0$. Furthermore, from the construction of $\Psi_3$, we get $\beta(v)=\psi(\gamma)(v)$ for any $v\in V(T')$. From the above analysis, we deduce that
$$X_{T'}^\beta=X_T^{\psi(\gamma)}.$$
Since $\psi(\gamma)\in M$, we have $X_{T'}^\beta=X_T^{\psi(\gamma)}\ge_{2s} 0$. There are three cases.

Case (i) $\alpha(v_{13}')=\beta(v'_{13})=0$. In this case, by \eqref{eq-xgv-alpha-xg-xd-alpha-0},
\begin{equation}\label{eq-x-t3mn*-beta-1-n'-3-0}
    X_{T_{3,m,n}^*}^\beta=s_1 X_{T'}^\beta\ge_{2s}0.
\end{equation}

Case (ii) $\alpha(v_{13}')=\beta(v'_{13})=1=\beta(v_{13})$. In this case, using \eqref{eq-xgv-alpha-xg-xd-alpha-2}, we have
\begin{equation}\label{eq-x-t3mn*-beta-1-n'-3-1}
    X_{T_{3,m,n}^*}^\beta=_{2s}X_{T'}^\beta s_{(1,1)}\ge_{2s} 0.
\end{equation}

Case (iii) $\alpha(v_{13}')=\beta(v'_{13})=1$ and $\beta(v_{13})=0$, then by \eqref{eq-xgv-alpha-xg-xd-alpha-3}, we get
\begin{equation}\label{eq-x-t3mn*-beta-1-n'-3-2}
    X_{T_{3,m,n}^*}^\beta=X_{T'}^\beta 2s_{(1,1)}\ge_{2s} 0.
\end{equation}

Clearly, in each case, we have $\beta\in M'$.

We next show that $\Psi_3$ is an injection. Let $I'_3=\{\Psi_3(\alpha)\colon \alpha\in N'_3\}.$  For any $\beta\in I'_3$, let $\mu$ denote $(\beta\mid^{T'})\mid_T$ for simplicity. To be specific, define $\mu\colon V(T)\rightarrow \mathbb{N}$ as follows:
\begin{equation}\label{defi-mu}
    \mu(v)=\begin{cases}
        \beta(v),&\text{if }v\ne v'_{13};\\
        0,& \text{if }v=v'_{13}.
    \end{cases}
\end{equation}
From the construction of $\Psi_3$ and Proposition \ref{pro-psi-alpha-v13}, we see that $\psi(\gamma)=\mu$, which implies $\mu\in \bigcup_{i=1}^{29}M_i$. Assume $\mu\in M_i$ for some $1\le i\le 29$, define $\pi\colon V(T_{3,m,n}^*)\rightarrow \mathbb{N}$ as follows:
\begin{equation}
    \pi(v)=\begin{cases}
        \varphi_i(\mu)(v),& \text{if }v\not\in\{v'_{13},x,y\};\\
        \beta(v),& \text{if }v\in\{v'_{13},x,y\}.
    \end{cases}
\end{equation}
From $\varphi_i(\psi_i(\gamma))=\gamma$ for any $1\le i\le 29$, we see that $\pi(\Psi_3(\alpha))=\alpha$. This yields $\Psi_3$ is an injection.

We proceed to show that $X_{T_{3,m,n}^*}^\alpha+X_{T_{3,m,n}^*}^{\beta}$ is 2-$s$-positive. From $\mu=\psi(\gamma)$, we see that $X_T^\mu+X_T^\gamma\ge_{2s} 0$. Since $\mu(v'_{13})=0$ and $\beta(v)=\mu(v)$ for any $v\in V(T')$, we deduce that $X_{T}^\mu=X_{T'}^\mu=X_{T'}^\beta$. Therefore $$X_{T'}^\beta+X_{T}^\gamma\ge_{2s} 0.$$
There are four cases.

Case A. $\alpha(v'_{13})=0$. From \eqref{eq-x-t3mn*-alpha-1-n'-3-0} and \eqref{eq-x-t3mn*-beta-1-n'-3-0}, we have
\begin{equation}
X_{T_{3,m,n}^*}^\alpha+X_{T_{3,m,n}^*}^\beta=_{2s}(X_{T}^{\gamma}+X_{T'}^\beta) s_1 \ge_{2s} 0.
\end{equation}

Case B. $\alpha(v'_{13})=1$ and $\alpha(v_{13})=0$. From Proposition \ref{prop-ine-alpha-v-13-psi} we have $$\beta(v_{13})=\psi(\gamma)(v_{13})\le \gamma(v_{13})=\alpha(v_{13})=0.$$
Thus $\beta(v_{13})=0$. Using \eqref{eq-x-t3mn*-alpha-1-n'-3-1} and \eqref{eq-x-t3mn*-beta-1-n'-3-2}, we get
\begin{equation}
X_{T_{3,m,n}^*}^\alpha+X_{T_{3,m,n}^*}^\beta=_{2s}2s_{(1,1)}(X_{T}^{\gamma}+X_{T'}^\beta)\ge_{2s} 0.
\end{equation}

Case C. $\alpha(v'_{13})=\alpha(v_{13})=\beta(v_{13})=1$.  Using \eqref{eq-x-t3mn*-alpha-1-n'-3-2} and \eqref{eq-x-t3mn*-beta-1-n'-3-1}, we get
\begin{equation}
X_{T_{3,m,n}^*}^\alpha+X_{T_{3,m,n}^*}^\beta=_{2s}s_{(1,1)}(X_{T}^{\gamma}+X_{T'}^\beta)\ge_{2s} 0.
\end{equation}

Case D. $\alpha(v'_{13})=1=\alpha(v_{13})$ and $\beta(v_{13})\ne 1$. Again from Proposition \ref{prop-ine-alpha-v-13-psi} we have
$$\beta(v_{13})=\psi(\gamma)(v_{13})\le \gamma(v_{13})=\alpha(v_{13})=1.$$
Hence $\beta(v_{13})=0$. Moreover, from the construction of $\Psi_3$, we have $\mu=\psi(\gamma)$, which yields $X_T^\mu\ge_{2s} 0$. Thus $X_{T'}^\beta=X_T^\mu\ge_{2s} 0$. From \eqref{eq-x-t3mn*-alpha-1-n'-3-2} and \eqref{eq-x-t3mn*-beta-1-n'-3-2},
\begin{equation}
X_{T_{3,m,n}^*}^\alpha+X_{T_{3,m,n}^*}^\beta=_{2s}s_{(1,1)}(X_{T}^{\gamma}+X_{T'}^\beta)+ s_{(1,1)}X_{T'}^\beta\ge_{2s} 0.
\end{equation}
Hence in each case we have $X_{T_{3,m,n}^*}^\alpha+X_{T_{3,m,n}^*}^\beta\ge_{2s} 0$.
\end{proof}

\begin{lem}\label{lem-n'4}
Given $\alpha\in N'_4$ and $k\le m+n+4$, we have
\[[s_{(k,k)}]X_{T_{3,m,n}^*}^\alpha=0.\]
\end{lem}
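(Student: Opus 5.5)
The plan is a pure degree count, as in Lemma \ref{lem-29}. The function $X_{T_{3,m,n}^*}^\alpha$ is homogeneous of degree $\sum_{v\in V(T^*_{3,m,n})}\alpha(v)$. Hence $[s_{(k,k)}]X_{T_{3,m,n}^*}^\alpha\neq 0$ forces
\[
2k=\sum_{v\in V(T^*_{3,m,n})}\alpha(v).
\]
It therefore suffices to show that every $\alpha\in N'_4$ has total weight at least $2m+2n+9$. Then $2k\ge 2m+2n+9$, so $k\ge m+n+5$, and every coefficient with $k\le m+n+4$ vanishes.

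\emph{First part of $N'_4$.} Here $\alpha\mid_T\in N_{30}$, so by definition
\[
\sum_{v\in V(T)}\alpha(v)=2m+2n+10.
\]
Since $\alpha(x),\alpha(y)\ge 0$, the total weight on $T^*_{3,m,n}$ is at least $2m+2n+10$. This gives $k\ge m+n+5$ directly.

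\emph{Second part of $N'_4$.} Here $\alpha(x)=1$, $\alpha(y)=0$ and $\sum_{v\in V(T)}\alpha(v)\ge 2m+2n+8$. The total weight is then at least $2m+2n+9$. Because the total must equal $2k$, which is even, it is in fact at least $2m+2n+10$, and again $k\ge m+n+5$.

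I expect no real obstacle. The only care needed is to note that the $s_{(k,k)}$ coefficient concerns partitions of the degree of the homogeneous component. Since $X_G^\alpha$ is homogeneous (every monomial has degree $\sum_v\alpha(v)$, the number of colour incidences), any Schur function of a different degree has coefficient zero.
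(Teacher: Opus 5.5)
Your proof is correct and is essentially the paper's argument: a homogeneity (degree) count showing that every $\alpha\in N'_4$ has total weight $\sum_{v}\alpha(v)\ge 2m+2n+9$, split along the same two defining cases of $N'_4$, which forces $k\ge m+n+5$ whenever $[s_{(k,k)}]X_{T_{3,m,n}^*}^\alpha\neq 0$. Your parity remark is a harmless extra, since integrality of $k$ already turns $2k\ge 2m+2n+9$ into $k\ge m+n+5$.
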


\begin{proof}
Given $\alpha\in N'_4$, by definition,  either $\alpha\mid_T\in N_{30}$ or $\alpha(x)=1$, $\alpha(y)=0$ and $\sum_{v\in V(T)}\alpha(v) \ge 2m+2n+8$.

If $\alpha\mid_T\not\in N_{30}$, we see that
$$\sum_{v\in V(T_{3,m,n}^*)}\alpha(v)=\alpha(x)+\alpha(y)+\sum_{v\in V(T)}\alpha(v)\ge 2m+2n+9.$$
If $\alpha\mid_T\in N_{30}$, by the definition of $N_{30}$, we have $\sum_{v\in V(T)}\alpha(v)= 2m+2n+10$. Thus
\[\sum_{v\in V(T_{3,m,n}^*)}\alpha(v)\ge \sum_{v\in V(T)}\alpha(v)= 2m+2n+10.\]
So in either case,  $$\sum_{v\in V(T_{3,m,n}^*)}\alpha(v)\ge 2m+2n+9.$$
This implies if $[s_{(k,k)}]X_{T_{3,m,n}^*}^\alpha\ne 0$, then $k\ge m+n+5$.
\end{proof}

{\noindent \it Proof of Theorem \ref{thm-uni-t3mn-star}.} It is obvious that the independence number of $T^*_{3,m,n}$ is $m+n+7$. Assume
\[I_{T^*_{3,m,n}}(t)=\sum_{j=0}^{m+n+7}i^*_j t^j.\]

Define $\Psi\colon N'\setminus N'_{4}\rightarrow M'$ as follows:
\begin{equation}\label{equ-psi-all-star}
\Psi(\alpha)=\begin{cases}
\Psi_1(\alpha),&\text{if }\alpha\in N'_1;\\
\Psi_2(\alpha),&\text{if }\alpha\in N'_2;\\
\Psi_3(\alpha),&\text{if }\alpha\in N'_3.
\end{cases}\end{equation}
From Lemma  \ref{lem-t-star-psi-1}, Lemma \ref{lem-t-star-psi-2} and Lemma \ref{lem-t-star-psi-3} we see that $\Psi$ is an injection and
\begin{equation}\label{equ-thm-1-xt3mn-alpha-psi-alpha-star}
X_{T^*_{3,m,n}}^\alpha+X_{T^*_{3,m,n}}^{\Psi(\alpha)}\ge_{2s} 0.
\end{equation}
Let $I^*=\{\Psi(\alpha)\colon \alpha\in N'\setminus N'_{4}\}\subseteq M'$.  We have
\begin{align*}
Y_{T^*_{3,m,n}}&=\sum_{\alpha\in N'}X_{T^*_{3,m,n}}^\alpha+\sum_{\alpha\in M'}X_{T^*_{3,m,n}}^\alpha\\
&=\sum_{i=1}^{3}\sum_{\alpha\in N'_i}\left(X_{T^*_{3,m,n}}^\alpha+X_{T^*_{3,m,n}}^{\Psi(\alpha)}\right)+\sum_{\alpha\in M'\setminus I^*}X_{T^*_{3,m,n}}^\alpha+\sum_{\alpha\in N'_{4}}X_{T^*_{3,m,n}}^\alpha.
\end{align*}
Using \eqref{equ-thm-1-xt3mn-alpha-psi-alpha-star}, we have $$\sum_{i=1}^{3}\sum_{\alpha\in N'_i}\left(X_{T^*_{3,m,n}}^\alpha+X_{T^*_{3,m,n}}^{\Psi(\alpha)}\right)\ge_{2s} 0.$$
From the definition of $M'$ we have $X_{T^*_{3,m,n}}^\alpha\ge_{2s} 0$ for each $\alpha\in M'\setminus I^*$.
 Moreover, by Lemma \ref{lem-n'4}, for any $k\le m+n+4$ and $\alpha\in N'_{4}$, we have $[s_{(k,k)}]X_{T^*_{3,m,n}}^\alpha\ge 0$. From the above analysis, we get for $1\le k\le m+n+4$,
\[[s_{(k,k)}]Y_{T^*_{3,m,n}}\ge 0\]
 Hence by Corollary \ref{lem-skk}, we have $\{i^*_j\}_{j=0}^{m+n+5}$ is log-concave, which yields the sequence $\{i^*_j\}_{j=0}^{m+n+5}$ is unimodal. Thus there exists $k$ such that
\[i^*_0\le i^*_1\le \cdots\le i^*_k\ge i^*_{k+1}\ge\cdots\ge i^*_{m+n+5}.\]
Combining Theorem \ref{thmLM07-2} we get
\[i^*_0\le i^*_1\le \cdots\le i^*_k\ge i^*_{k+1}\ge\cdots\ge i^*_{m+n+5}\ge i^*_{m+n+6}\ge i^*_{m+n+7}.\]
This completes the proof.\qed

\end{document}